\newtheorem{theorem}{Theorem}[section]
\newtheorem{corollary}[theorem]{Corollary}
\newtheorem{definition}[theorem]{Definition}
\newtheorem{lemma}[theorem]{Lemma}
\theoremstyle{definition}
\theoremstyle{plain}
\newtheorem{proposition}[theorem]{Proposition}
\newtheorem{remark}[theorem]{Remark}
\newcounter{ex}[section]
\newcommand{\cal}{\mathcal}
\newcommand{\g}{\gamma}
\newcommand{\E}{{\mathcal E}}
\newcommand{\rE}{{\rm E}}
\newcommand{\St}{{\rm St}}
\newcommand{\GL}{{\rm GL}}
\newcommand{\SL}{{\rm SL}}
\newcommand{\what}{\hat}
\newcommand{\Q}{{\mathbb Q}}
\newcommand{\Hh}{{\mathcal H}}
\newcommand{\Z}{{\mathbb Z}}
\newcommand{\F}{{\mathcal F}}
\newcommand{\ti}{\tilde}
 \renewcommand{\O}{{\mathcal O}}
\newcommand{\rK}{{\rm K}}
\newcommand{\rSK}{{\rm SK}}
\renewcommand{\ti}{\tilde}
\newcommand{\und}{\underline}
\newcommand{\M}{{\mathcal M}}
\renewcommand{\L}{{\mathcal L}}
\newcommand{\Kr}{{\rm K}}
\newcommand{\LL}{{\mathcal L}}
\newcommand{\PP}{{\Bbb P}}
\newcommand{\DD}{{\mathcal D}}
\newcommand{\V}{{\mathcal V}}
\newcommand{\Cl}{{\mathrm {Cl}}}
\renewcommand{\mod}{{\rm \ mod\ }}
\def\thfill{\null\nobreak\hfill}
\def\endproof{\thfill\vbox{\hrule
  \hbox{\vrule\hbox to 5pt{\vbox to 5pt{\vfil}\hfil}\vrule}\hrule}}
\newcommand{\lps}{[\![}
\newcommand{\rps}{]\!]}
\newcommand{\llps}{(\!(}
\newcommand{\lrps}{)\!)}
\newcommand{\ldb}{\{\!\{}
\newcommand{\rdb}{{\}\!\}}}
\def\arrow#1#2{\smash{\mathop{\longrightarrow}\limits^{#1}_{#2}}} 
\def\arrowdown#1#2{\Big\downarrow \rlap{$\vcenter{\hbox{$\scriptstyle#2$}}$}
{\hbox to -10pt{\hss{$\vcenter{\hbox{$\scriptstyle#1$}}$}}}}
\def\arrowup#1#2{\Big\uparrow \rlap{$\vcenter{\hbox{$\scriptstyle#2$}}$}
{\hbox to -10pt{\hss{$\vcenter{\hbox{$\scriptstyle#1$}}$}}}}
\newtheoremstyle{examplestyle}
                {3pt}
                {3pt}
                {}
                {}
                {\bf}
                {}
                {.5em}
                {\thmname{#1}.\ \thmnote{#3}.}
\theoremstyle{examplestyle}
\DeclareMathOperator{\Spec}{Spec}
\begin{document}

\title[Higher adeles and  non-abelian Riemann-Roch]{ Higher   adeles and   non-abelian Riemann-Roch   }

\author[T. Chinburg]{T. Chinburg}
\address{T. Chinburg, Dept. of Mathematics\\ Univ. of Pennsylvania \\ Phila. PA. 19104, U.S.A.}
\email{ted@math.upenn.edu}
\thanks{T. C.  is partially supported by  NSF Grant  DMS11-00355.}

\author[G. Pappas]{G. Pappas}
\thanks{G. P.  is partially supported by  NSF Grant DMS11-02208.}

\address{G. Pappas\\ Dept. of
Mathematics\\
Michigan State
University\\
E. Lansing\\
MI 48824\\
USA}
\email{pappas@math.msu.edu}

\author[M. J. Taylor]{M. J. Taylor}
\address{M. J. Taylor\\ School of Math. \\ Merton College, Univ. of  
Oxford\\ Oxford, OX1 4JD, U.K.}
\email{martin.taylor@merton.ox.ac.uk}

\date{\tomorrow}

\begin{abstract}
{ We show a Riemann-Roch theorem for group ring bundles over an arithmetic surface;
this is expressed using the higher adeles of Beilinson-Parshin and the tame symbol
via a theory of adelic equivariant Chow groups and Chern classes. The theorem is obtained by combining
a group ring coefficient version of the local Riemann-Roch formula as in Kapranov-Vasserot
with results on $\rm K$-groups of group rings and an explicit description of group ring bundles over
${\mathbb P}^1$. 
Our set-up provides an extension of several aspects of the classical Fr\"ohlich theory of 
the Galois module structure of 
rings of integers of number fields to arithmetic surfaces.}

 \end{abstract}
 \date{\today}

\maketitle





 \bigskip
\bigskip
\bigskip

\centerline{\sc Contents}
\medskip

\noindent  \ \ \ \ \ \ Introduction\\
\S 1. \ Beilinson-Parshin adeles on a surface\\
\S 2. \ Equivariant adelic Chow groups\\
\S 3. \ Lattices, determinant functors and determinant theories \\
\S 4. \ Pushdown maps and reciprocity laws\\
\S 5. \ Transition matrices and the first Chern class\\
\S 6. \ Elementary structures and the second Chern class\\
\S 7. \ Equivariant Euler characteristics and the Riemann-Roch theorem\\
\S 8. \ The proof of the theorem; reduction to the case of ${\Bbb P}^1_\Z$\\
\S 9. \ The proof of the theorem; bundles over ${\Bbb P}^1_\Z$\\
\S 10. Appendix: Adelic Riemann-Roch for general bundles when $G=1$\\
\phantom{a} \ \ \ References
\bigskip
\medskip

\bigskip


\vfill\eject

\section*{Introduction}
In this paper we initiate an adelic theory of Galois module structure for arithmetic surfaces 
which extends many aspects of the corresponding theory  
for ring of integers of finite Galois extensions of number fields. 
Adelic methods have played an important role in the classical theory of Galois module structure 
(\cite{FrohlichBook});
the starting point is Fr\"ohlich's adelic description of the class group of finitely generated locally free modules
for the integral group ring of a finite group. Here, we introduce into the picture the higher dimensional adeles of Beilinson and Parshin,
 certain ``adelic Chow groups" defined using these, and also a host of other constructions, some
 of which are inspired from the theory of loop groups.  Our main result is
an adelic Riemann-Roch theorem for group ring bundles over an arithmetic surface;
this can be used for the calculation of equivariant Euler characteristics of arithmetic surfaces with 
a finite group action.

To explain further  we need to introduce some notation.  Let $Y$ be a projective regular arithmetic surface over $\Z$;
\emph{i.e.}   the structure morphism $Y\to \Spec(\Z)$ is projective and flat of relative dimension $1$ and $Y$ is regular and irreducible. 
Suppose that $G$ is a finite group. By definition, an $\O_Y[G]$-bundle $\E$ of rank $n$ on $Y$ is a coherent sheaf of (left) $\O_Y[G]$-modules
which is locally  free on $Y$, \emph{i.e.} there is a finite   affine Zariski open  cover $Y=\cup_{i\in I} U_i$, $U_i=\Spec(A_i)$, of $Y$
such that $\E|_{U_i}$ is the sheaf that corresponds to a   free $A_i[G]$-module of rank $n$.  To such an $\E$ we
can associate a projective Euler characteristic $\chi^P(Y, \E)$ in the Grothendieck group $\rK_0(\Z[G])$ of finitely generated projective
$\Z[G]$-modules as follows (see \cite{ChinburgTameAnnals}). Consider the \v{C}ech complex $C^\bullet(\{U_i\}, \E)$ obtained from $\E$ and the cover $\{U_i\}$;
one can show that $C^\bullet(\{U_i\}, \E)$ is a ``perfect" complex of $\Z[G]$-modules, \emph{i.e.} that there is a bounded complex $(P^\bullet)$ of 
finitely generated projective $\Z[G]$-modules $P^j$ and a $\Z[G]$-map of complexes 
$
P^\bullet \to C^\bullet(\{U_i\}, \E)
$
 which induces an isomorphism on cohomology groups. Then we define 
 $$
 \chi^P(Y, \E)=\sum\nolimits_j (-1)^j[P^j]
 $$
 where $[P^j]$ stands for the class of the module $P^j$ in the Grothendieck group $\rK_0(\Z[G])$; this is independent of the choice of 
 the cover $\{U_i\}$ and of the complex $P^\bullet$.  Recall that by Swan \cite{SwanAnnals}
 all finitely generated projective $\Z[G]$-modules are locally free. This gives a rank homomorphism ${\rm rank}: \rK_0(\Z[G])\to \Z$
 whose kernel $\rK^{\rm red}_0(\Z[G])$ can be identified with the class group ${\rm Cl}(\Z[G])$ of finitely generated
 locally free $\Z[G]$-modules studied by Fr\"ohlich.
 
 If $G$ is abelian, we can consider $\E$ as a vector bundle over the scheme $ Y\times G^*$ with $G^*=\Spec(\Z[G])$
 the Cartier dual; the class group ${\rm Cl}(\Z[G])$ can be identified with the Picard group ${\rm Pic}(G^*)$.
 In this case, versions of the Riemann-Roch theorem for $Y\times G^*\to G^*$
 (such as the Deligne-Riemann-Roch theorem of \cite{DeligneDet}) can be used to calculate the element $\chi^P(Y, \E)-\chi^P(Y,\O_Y[G]^n)$
 in ${\rm Cl}(\Z[G])={\rm Pic}(G^*)$. This basic observation together with the theory of cubic structures
 eventually leads to a satisfactory theory in this case, especially in the crucial case when 
 the bundle $\E$ is obtained from a tame cover $X\to Y$ (\cite{PaCubeInvent}, \cite{CPTAnnals}; see also
 below). 
 When $G$ is not abelian, the above do not apply. A new method is developed in  \cite{PappasAdams}. However,  we will see that
 the adelic point of view also gives a   framework
 for developing a sufficiently fine  theory that  can be used to calculate 
 the classes $\chi^P(Y, \E)$.

Indeed, it is our point of view here that the bundle $\E$ can also be described by adelic transition matrices as follows, where ``adelic" is meant in the sense of 
 the higher dimensional adeles of Beilinson and Parshin. Recall that a  (non-degenerate) Parshin $m$-chain of $Y$ is an ordered $m$-tuple
 $\eta=(\eta_{i_1}, \ldots, \eta_{i_m})$ of points of $Y$ with $ i_1<\cdots <i_m$,
such that $\eta_{i_k}$ lies on the Zariski closure of the previous point $\eta_{i_{k-1}}$
 and with the codimension of the closure of $\eta_i$ in $Y$  equal to $i$.
Since $Y$ is of dimension $2$, we   have $m=1$, $2$ or $3$. For every such Parshin chain $\eta$, one can define the ``multicompletion"
$\hat\O_{Y,\eta}=\hat\O_{Y, \eta_{i_1}\ldots \eta_{i_{m}}}$ by successively taking   localizations and completions of $\O_Y$
starting from $\eta_{i_{m}}$  (see Proposition \ref{prop:multi}). For example, if $\eta$ is 
a $1$-chain and $\eta$ is a single point, $\hat\O_{Y, \eta}$ is the completion of the local ring of $Y$ at $\eta$. 
In particular, for the generic point $\eta_0$ of $Y$ we have $\hat\O_{Y, \eta_0}=K(Y)$, the function field of $Y$. 
If $\eta=(\eta_0,\eta_1,\eta_2)$
is a $3$-chain, then $\hat\O_{Y, \eta_0\eta_1\eta_2}$ is a finite direct sum of two-dimensional local fields. 
For each point $\xi$
of $Y$,  we can pick a $\hat\O_{Y, \xi}[G]$-basis $e_\xi=\{e_\xi^h\}_{h=1}^n$ of the completed stalk   $\hat\E_\xi$. If $(\eta_0,\eta_1, \eta_1)$ is a Parshin triple,  
and $0\leq i<j\leq 2$, then we can compare bases at $\eta_i$ and $\eta_j$ and write
\begin{equation*}
e_{\eta_i}=\lambda_{\eta_i\eta_j}\cdot e_{\eta_j}, \quad \hbox{\rm with}\ \ \lambda_{\eta_i\eta_j}\in \GL_n(\hat\O_{Y, \eta_i\eta_j}[G]).
\end{equation*}
The matrices $\lambda_{\eta_i\eta_j}$ are   ``adelic transition matrices" for the bundle $\E$. 
We say that $\E$ has elementary structure 
if we can choose bases   as above such that the corresponding transition matrices $\lambda_{\eta_i\eta_j}$, regarded in the infinite general
linear group $\GL (\hat\O_{Y, \eta_i\eta_j}[G])$, belong to the commutator subgroup $\rE(\hat\O_{Y, \eta_i\eta_j}[G])$
generated by elementary matrices.

By an ``adelic Riemann-Roch theorem" for $\E$,  we mean a formula that allows us to calculate 
the Euler characteristic $\chi^P(Y, \E)$ starting from the adelic transition matrices $\{\lambda_{\eta_i\eta_j}\}$ and which involves 
suitable ``adelic characteristic classes" of $\E$. 

Our main result gives
 an adelic Riemann-Roch theorem for 
bundles $\E$ that have elementary structure, under some technical assumptions on $Y$ and $G$.
(We give a more general result in the Appendix when $G$ is trivial.)
In particular, for this we will assume that the group algebra $\Q[G]$ splits in the sense that
we can write
\begin{equation}\label{splitgroupalgebraIntro}
\Q[G] = \prod\nolimits_i{{\rm Mat}}_{m_{i}\times m_i}( Z_{i}),
\end{equation}
where each $Z_{i}$ is a {\sl commutative} finite field extension of $\Q$, \emph{i.e.} a number field.
However, a number of the results in the paper are true for arbitrary finite groups $G$.
Also, in addition to our standing hypotheses on $Y$, we assume:
\medskip

\noindent {\sl (H) \ All the irreducible components of the fibers of the morphism
$Y\to \Spec(\Z)$ are smooth (therefore also reduced) and furthermore, the fibers at primes 
that divide the order of the group $G$ are irreducible.}

\medskip

To describe the Riemann-Roch theorem we need to explain  several important ingredients: 

We first have the adelic Chow groups ${\rm CH}^i_{\mathbb A}(Y[G])$ for $i=1$, $2$. 
We   define 
 \begin{equation*}\label{ch2def}
{\rm CH}^2_{\mathbb A}(Y[G]):=\frac{ \prod\nolimits_{(\eta_0,\eta_1, \eta_2)}'{\rm K}_2(\hat \O_{ Y, \eta_0\eta_1 \eta_2}[G])
\cdot \prod\nolimits_{(\zeta ,\xi)}'{\rm K}_2(\hat \O_{Y, \zeta \xi}[G])^\flat }{ \prod\nolimits_{(\zeta ,\xi)}'{\rm K}_2(\hat \O_{Y, \zeta \xi}[G])^\flat}
 \end{equation*}
as a  quotient of a suitably restricted (adelic) products  of ${\rm K}_2$-groups of  multicompletions, where
the indices range over all Parshin $3$-chains and all $2$-chains respectively.  (See \S \ref{s:Kadeles}, \ref{def:ECg} for 
details.) 
Similarly, we set
\begin{equation*}\label{ch1}
{\rm CH}^1_{\mathbb A}(Y[G]):=\frac{\prod\nolimits_{(\eta_0,\eta_1)}'{\rm K}_1(\hat \O_{Y, \eta_0\eta_1}[G])}{\rK_1(K(Y)[G])\cdot \prod\nolimits_{\eta_1}{\rm K}_1(\hat \O_{Y,\eta_1}[G])^\flat}.
 \end{equation*}
These definitions are interesting even when $G$ is the trivial group. If $G=\{1\}$,
 ${\rm CH}^1_{\mathbb A}(Y[G])\cong {\rm Pic}(Y)$. 
 In the case of the trivial group and when $Y$ is a projective smooth surface over a field  
this second adelic Chow group agrees with the one  considered by Osipov in \cite{OsipovAdelic}.
  Osipov shows  that, in this geometric
 non-equivariant case, his second adelic Chow group agrees with the classical Chow group ${\rm CH}^2(Y)$
 of codimension $2$ cycles up to rational equivalence.
On the other hand, recall that by Fr\"ohlich's classical results  we have a canonical isomorphism
$$
{\rm Cl}(\Z[G])\cong \frac{\prod\nolimits_{p}' {\rm K}_1(\Q_p[G])}{({\rm K}_1(\Q[G])\prod\nolimits_p{\rm K}_1(\Z_p[G]))^\flat}.
$$
This isomorphism allows us to identify
the class group ${\rm Cl}(\Z[G])$  with the first adelic Chow group ${\rm CH}^1_{\mathbb A}(\Spec(\Z)[G])$ of $\Spec(\Z)$.

The second ingredient of our Riemann-Roch theorem is  a pushdown (Gysin)
homomorphism along $f: Y\to \Spec(\Z)$ 
\[
f_*: {\rm CH}^2_{\mathbb A}(Y[G])\to {\rm CH}^1_{\mathbb A}(\Spec(\Z)[G])={\rm Cl}(\Z[G]).
\]
This is constructed by assembing homomorphisms 
 $$
 f_{*\eta_0\eta_1\eta_2}: \rK_2(\hat\O_{Y,\eta_0\eta_1\eta_2}[G])\to \rK_1(\Q_p[G]),
 $$
 where $p$ is the characteristic of the closed point $\eta_2$, which are obtained using
 either the classical tame symbol or Kato's residue symbol. Showing that these homomorphisms 
 produce a pushdown $f_*$ between the adelic Chow groups is a subtle affair that among other ingredients 
 involves using various reciprocity 
 laws. For the most part, the needed reciprocity laws can be shown via reduction to the case $G=\{1\}$ by using 
 the splitting (\ref{splitgroupalgebraIntro}) together with Morita equivalence. Then they 
  follow by work of Parshin, Kato or more recently Liu \cite{LiuKato}, see also Osipov-Zhu
  \cite{OsipovZhu}. However,  the hardest part of the argument 
 is  proving that the denominator in the definition of ${\rm CH}^2_{\mathbb A}(Y[G])$ maps to the denominator 
 in the Fr\"ohlich description of ${\rm Cl}(\Z[G])$. This involves checking that certain symbols are ``determinants",
 \emph{i.e.} that they belong to ${\rm Det}(\Z_p[G])$. This is a genuine integral group ring problem 
 and its proof uses in a crucial manner the central extension
 (\ref{Hintro}) which we will describe below (see also Remark \ref{NotMoritaYouIdiot}). By the way, extending the results of this paper to
 the case that the group algebra $\Q[G]$ does not split is a very interesting but highly non-trivial problem
 that would involve a  study of the groups $\rK_1(D)$, $\rK_2(D)$, for  certain division algebras $D$.
For example, we expect that this involves new corresponding reciprocity laws. 
 
Finally, the third ingredients are the adelic Chern classes $c_1(\E)$ and $c_2(\E)$ of $\E$.
The first Chern class $c_1(\E)$ is defined for an arbitrary $\O_Y[G]$-bundle $\E$:
It is given as the class  of $\prod_{(\eta_0, \eta_1)} {\rm Det}(\lambda_{\eta_0\eta_1})$ in 
 ${\rm CH}^1_{\mathbb A}(Y[G])$ where $\lambda_{\eta_0\eta_1}$ are   adelic transition matrices as 
 above and ${\rm Det}(\lambda)$   stands for the class of a matrix 
$\lambda\in \GL_n(\hat\O_{Y, \eta_0\eta_1}[G])$ in $\rK_1(\hat\O_{Y, \eta_0\eta_1}[G])$.
The second Chern class $c_2(\E)$ is only defined when $\E$ has an elementary structure.  Recall the Steinberg extension
  \begin{equation*}
  1\to \rK_2(\hat\O_{Y, \eta}[G])\to {\rm St}(\hat\O_{Y, \eta}[G])\to \rE(\hat\O_{Y, \eta}[G])\to 1
\end{equation*}
with $\rE(\hat\O_{Y, \eta}[G])$  the elementary subgroup of the infinite general linear group
  $\GL(\hat\O_{Y, \eta}[G])$.
To construct the second Chern class, we choose  lifts $\tilde\lambda_{\eta_i\eta_j}$ of the transition matrices
$\lambda_{\eta_i\eta_j}$ to the Steinberg group and consider 
$$
z(\tilde\lambda)_{(\eta_0, \eta_1, \eta_2)}:=\tilde\lambda_{\eta_0\eta_2}\cdot (\tilde\lambda_{\eta_0\eta_1})^{-1}\cdot (\tilde \lambda_{\eta_1\eta_2})^{-1}
$$
as an element in $\rK_2(\hat\O_{Y, \eta_0\eta_1\eta_2}[G])$. By choosing  the lifts $\tilde\lambda_{\eta_i\eta_j}$ carefully, 
we can guarantee
that the Steinberg cocycle $z(\tilde\lambda):=(z(\tilde\lambda)_{(\eta_0,\eta_1,\eta_2)})_{(\eta_0,\eta_1,\eta_2)}$ is ``adelic", \emph{i.e.} lies in the numerator of the right hand side in the
definition of ${\rm CH}^2_{\mathbb A}(Y[G])$ (See Proposition \ref{propae}).  
Then the Chern class $c_2(\E)$ is given as the class of the element $z(\tilde\lambda)$ in ${\rm CH}^2_{\mathbb A}(Y[G])$. (A similar construction was given by Budylin \cite{BudChern}.)

We are now ready to state our main result.

\begin{theorem}\label{ARRintro}
Assume that $\Q[G]$ splits and that $Y\to \Spec(\Z)$ is a regular arithmetic surface that 
satisfies (H). Then, if $\E$ is an $\O_Y[G]$-bundle of rank $n$ with elementary structure, we
have
\[
\chi^P(Y, \E)-\chi^P(Y, \O_Y[G]^n)=-f_*(c_2(\E)).
\]
\end{theorem}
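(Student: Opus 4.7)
The plan is to follow the two-step strategy indicated by the titles of Sections~8--9: first reduce the statement to $Y=\PP^1_\Z$, and then verify the formula there directly. A useful preliminary observation is that the elementary-structure hypothesis forces $c_1(\E)=0$ in ${\rm CH}^1_{\mathbb A}(Y[G])$, since all adelic transition matrices $\lambda_{\eta_0\eta_1}$ lie in $\rE(\hat\O_{Y,\eta_0\eta_1}[G])\subset \ker(\Det)$; thus $c_2(\E)$ really is the only Chern class appearing on the right-hand side, and the content of the theorem is that $f_*$ of this single class equals the projective equivariant Euler characteristic difference.

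For the reduction step, I would choose a finite flat morphism $\pi\colon Y\to\PP^1_\Z$. Such a $\pi$ exists: a projective embedding of $Y$ followed by a sufficiently generic linear projection yields a finite morphism to $\PP^1_\Z$, which is flat because $\PP^1_\Z$ is regular of dimension two and $Y$ is Cohen--Macaulay. Since $\pi$ is affine, $\chi^P(Y,\E)=\chi^P(\PP^1_\Z,\pi_*\E)$ and similarly for $\O_Y[G]^n$. Writing $f'\colon\PP^1_\Z\to\Spec(\Z)$, the plan is then to show the projection formula
\[
f_*(c_2(\E)) = f'_*(c_2(\pi_*\E)),
\]
and to check that both $\pi_*\E$ and $\pi_*\O_Y[G]^n$ are $\O_{\PP^1_\Z}[G]$-bundles with elementary structure, so that the theorem for $(Y,\E)$ follows from the theorem for $(\PP^1_\Z,\pi_*\E)$ and $(\PP^1_\Z,\pi_*\O_Y[G]^n)$ by subtraction. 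Hypothesis (H), in particular irreducibility of the fibers above primes dividing $|G|$, is used here to guarantee that the norm/trace operations implicit in $\pi_*$ do not introduce terms outside $\rE$ or outside $\Det$.

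For the case of $\PP^1_\Z$, I would exploit the very explicit adelic presentation. The horizontal divisors are the closed points of $\PP^1_\Q$, the vertical divisors are the primes of $\Z$, and the closed points of $\PP^1_\Z$ can be enumerated via their residue fields. An $\O_{\PP^1_\Z}[G]$-bundle $\F$ of rank $n$ with elementary structure admits a Steinberg cocycle $z(\tilde\lambda)$ whose components lie in the $\rK_2$ of the completed two-dimensional local rings with $\Z[G]$ coefficients. Applying the splitting (\ref{splitgroupalgebraIntro}) together with Morita equivalence factor by factor, each component is reduced to a $\rK_2$ of the form treated in the Kapranov--Vasserot work, and their local Riemann--Roch formula identifies the local contribution of each closed point of $\PP^1_\Z$ to $\chi^P(\PP^1_\Z,\F)-\chi^P(\PP^1_\Z,\O_{\PP^1_\Z}[G]^n)$ with the tame/residue symbol of $z(\tilde\lambda)$ at that point. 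Summation over closed points, combined with the Parshin--Kato--Liu reciprocity laws used in the construction of $f_*$, then produces exactly $-f'_*(c_2(\F))$, with the sign arising from the sign convention in the Steinberg cocycle $\tilde\lambda_{\eta_0\eta_2}\cdot(\tilde\lambda_{\eta_0\eta_1})^{-1}\cdot(\tilde\lambda_{\eta_1\eta_2})^{-1}$.

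The main obstacle is the projection-formula step, that is, showing $f_*(c_2(\E))=f'_*(c_2(\pi_*\E))$ while simultaneously guaranteeing that $\pi_*\E$ still carries elementary structure. At the adelic level $\pi_*$ acts on Steinberg lifts by a norm construction that must be compatible with the central extension controlling $\Det(\Z_p[G])$ emphasized in the introduction, and compatible as well with the reciprocity laws on which $f_*$ rests; this is where the integrality (``determinant'') results for symbols and hypothesis (H) do the real work. Once this compatibility is in place and the $\PP^1_\Z$ case is checked by the local Kapranov--Vasserot calculation outlined above, the theorem follows from the reduction and the additivity of $\chi^P$ and $f_*\circ c_2$ under subtraction of $\O_Y[G]^n$ and $\pi_*\O_Y[G]^n$.
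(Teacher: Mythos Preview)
Your overall two-step strategy matches the paper's, but there is a genuine gap in the reduction step, and the $\PP^1$ step is too optimistic about what Morita equivalence can do.

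\textbf{The reduction step.} You assert (and later flag as an obstacle) that $\pi_*\E$ carries an elementary structure. In fact it need not: the transition matrices for $\pi_*\E$ as an $\O_{\PP^1}[G]$-bundle factor as $\lambda_{W',\eta}\cdot r_j(\lambda_\eta)$, where $r_j(\lambda_\eta)$ is elementary (coming from the elementary structure on $\E$) but $\lambda_{W',\eta}$ is the base-change matrix for $\pi_*\O_Y$ and has no reason to be elementary. The paper's fix is not to force $\pi_*\E$ itself to be elementary, but to pass to $\pi_*\E\oplus\V$ with $\V={\cal H}om_{\O_{\PP^1}}(\pi_*\O_Y[G]^n,\O_{\PP^1})$: the transition matrix of the direct sum then contains the block $\begin{pmatrix}\lambda_{W',\eta}&0\\0&(\lambda_{W',\eta}^t)^{-1}\end{pmatrix}$, which \emph{is} elementary because $\lambda_{W',\eta}$ has commutative entries and $\rSK_1$ of the relevant rings vanishes. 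Correspondingly the projection formula is not $c_2(\pi_*\E)=\pi_*c_2(\E)$ but rather
\[
c_2(\pi_*\E\oplus\V)=\pi_*(c_2(\E))+c_2(\V'\oplus\V),
\]
with $\V'=\pi_*\O_Y[G]^n$, and one must separately verify $f'_*(c_2(\V'\oplus\V))=0$ (done by applying the $\PP^1$ theorem to the induced bundle $\V'\oplus\V$, whose Euler characteristic is free). Your version of the projection formula, and the subtraction argument built on it, does not go through as stated.

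\textbf{The $\PP^1$ step.} Reducing ``factor by factor'' via Morita and then invoking Kapranov--Vasserot cannot work for the Euler-characteristic side: $\chi^P(\PP^1,\F)$ lives in $\rK_0(\Z[G])$, and at primes $p\mid |G|$ the ring $\Z_p[G]$ does not split, so the integral class is not recovered from its rational pieces. The paper instead proceeds globally: it uses an equivariant Horrocks theorem to write $\F$ via a single transition matrix $g\in\GL_m(\Z[t,t^{-1}][G])$, lifts $g$ to the central extension $\Hh$ over $\Q$ and over each $\Z_p$, and reads off $\bar\chi^P$ as the Fr\"ohlich idele $(\tilde g_\Q\cdot\tilde g_p^{-1})_p$. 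The identification of this idele with $-f_*c_2(\F)$ then comes from the comparison $\partial=\tau^{-1}$ (and its $p$-adic analogue $\hat\partial^{-1}=\mathrm{Res}$), together with a careful construction of elementary bases on $\PP^1$ using the $\rSK_1$ results for $p$-adic group rings; the local Riemann--Roch input is used, but only after this global packaging. Your outline skips the step that ties the integral Euler characteristic to the $\Hh$-extension, which is where the work lies.
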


Let us remark here that if $\E$ has an elementary structure, then $c_1(\E)$ is trivial; this then
explains the shape of the identity above. Indeed, in this case of relative dimension $1$, this agrees with the shape of
the classical Grothendieck-Riemann-Roch formula for vector bundles of rank $n$ with trivial determinant
(see for example \cite{DeligneDet}).   

Interesting examples of bundles $\E$ for which one can apply the Riemann-Roch formula
are provided as follows. Suppose that $q:X\to Y$ is a finite flat $G$-cover of the arithmetic surface
$Y$; one can see that if the ramification of $q$ is {\sl tame} and $\F$ is a $G$-equivariant bundle on $X$, then $\E=q_*(\F)$ gives a $\O_Y[G]$-bundle
on $Y$. Then $\chi^P(Y, \E)$ is equal to the equivariant projective Euler characteristic $\chi^P(X, \F)$
studied in \cite{ChinburgTameAnnals}, \cite{CPTAnnals}, and other articles. 
The ``cubic method" of \cite{CPTAnnals} provides 
a very effective way
of calculating such Euler characteristics but  with the crucial limitation that $G$ is abelian.
Here we are allowing more general finite groups and so Theorem \ref{ARRintro} adds 
to the tools currently available for the calculation of such Euler characteristics. 
In particular, it can complement the method developed in \cite{PappasAdams}.
For example, in \S \ref{exampleperf} we give conditions  on the group $G$, such that 
if $q$ is a $G$-torsor the $\O_Y[G]$-bundle $\E=q_*\O_X$ has always elementary structure and our result applies.
In particular, we show that this is the case when  $G$ is an alternating group $A_n$ with $n\geq 5$, or
 ${\rm PSL}(2, {\Bbb F}_p)$ with $p$ odd prime, or $\SL(2, {\Bbb F}_{2^n})$, $n>2$.
We would like to return to such applications in a future paper.

We will now give some more details about our techniques and discuss the proof of the Riemann-Roch theorem. 

Important input is provided by certain central extensions which are arithmetic versions of  a
standard construction  in the theory of loop 
groups and infinite dimensional Kac-Moody Lie algebras. Suppose that $R$ is a commutative ring and consider 
the formal power series ring $R{\lps t\rps}$ and the Laurent power series ring $R\llps t\lrps=R{\lps t\rps}[t^{-1}]$. We define a central extension 
\begin{equation}\label{Hintro}
1\to \rK_1(R[G])\to \Hh(R\llps t\lrps[G])\to \GL'(R\llps t\lrps[G])\to 1
\end{equation}
where $\GL'(R\llps t\lrps[G])$ is a subgroup of the infinite general linear group $\GL(R\llps t\lrps[G])$ that contains the commutator $\rE(R\llps t\lrps[G])$. This central extension is provided via the choice of a determinant theory
on $R{\lps t\rps}[G]$-lattices in $R\llps t\lrps[G]^n$ for $n\geq 1$. 
This notion has been introduced by Drinfeld and Kapranov. Set $L_0=R{\lps t\rps}[G]^n$. 
Recall that a $R{\lps t\rps}[G]$-lattice $L$ in $R\llps t\lrps[G]^n$
is a projective $R{\lps t\rps}[G]$-submodule of $R\llps t\lrps[G]^n$ such that $t^NL_0\subset L\subset t^{-N}L_0$
for some $N>0$. For us, a determinant theory is a suitable functor from a category of $R{\lps t\rps}[G]$-lattices
to the virtual category $V(R[G])$ of projective finitely generated $R[G]$-modules. We can construct a determinant theory as follows: 
First construct an $\O_{\PP^1_R}[G]$-bundle $\E(L)$ over the projective line $\PP^1_R$
by gluing the (trivial) bundles corresponding to the modules $R[t^{-1}][G]^n$ over ${\mathbb A}^1_R=\Spec(R[t^{-1}])$ 
and $L$ over 
${\mathbb A}^1_R=\Spec(R{\lps t\rps})$ using the identification $L\otimes_{R{\lps t\rps}}R\llps t\lrps\cong R[t^{-1}][G]^n\otimes_{R[t^{-1}]} R\llps t\lrps$ provided by the inclusion $L\subset R\llps t\lrps[G]^n$.
Now we can consider the determinant of the cohomology complex in the derived category
\[
\delta(L):={\rm det}({\rm R}\Gamma(\PP^1_R, \E(L))
\] 
as an object in the virtual category $V(R[G])$. The association $L\mapsto \delta(L)$ gives a determinant theory.
For $g\in \GL'_n(R\llps t\lrps[G])$, we consider $L=L_0\cdot g^{-1}$, so that $\E(L)$ has $g$ as a transition matrix along a formal neighborhood of $t=0$.  The central extension $\Hh_n(R\llps t\lrps[G])$   is a group with elements
pairs   $(g, \phi_g)$ with $g\in \GL'_n(R\llps t\lrps[G])$ and $\phi_g$ an isomorphism between $\delta(L_0)$ and $\delta(L_0\cdot g^{-1})$. (From the very definition of $\GL'_n(R\llps t\lrps[G])$ there exists such an isomorphism. See \S \ref{3d} for  details.)  Now consider
the direct limit as $n$ goes to infinity to obtain (\ref{Hintro}).

By the universality of the Steinberg extension we obtain a map of central extensions
\begin{equation}
\label{StoHmapInto}
\begin{matrix}
1&\to &\rK_2(R\llps t\lrps[G])&\to &{\rm St}(R\llps t\lrps[G])&\to &\rE(R\llps t\lrps[G])&\to &1\\
&&\partial\downarrow &&\partial\downarrow &&  \downarrow &\\
1&\to& \rK_1(R[G])&\to &\Hh(R\llps t\lrps[G])&\to &\GL'(R\llps t\lrps[G])&\to &1.
\end{matrix}
\end{equation}
A first incarnation of the Riemann-Roch theorem in this case is the fact that
$\partial$ can be calculated using the tame symbol, in fact, $\partial$  is equal to the inverse
of the tame symbol when $R$ is a field and $G$ is trivial (see Proposition \ref{tameprop}). In fact, this statement is often regarded
as a ``local" Riemann-Roch formula, see \cite{KapranovLocalRR}. 

We can use this to obtain an adelic
Riemann-Roch formula for bundles over $\PP^1=\PP^1_\Z$ as follows. First we show, by using an equivariant 
  version of an argument of Horrocks, that each  $\O_{\PP^1}[G]$-bundle $\E$ of rank $n$ over $\PP^1 $ which is trivial along the section $(1:1)$
  in homogeneous coordinates
  can be obtained by gluing
  trivial bundles over $\Spec(\Z[t])$ and $\Spec(\Z[t^{-1}])$ via a transition matrix $g\in \GL_n(\Z[t, t^{-1}][G])$.
If the bundle has, in addition, degree $0$, then the matrix $g$ regarded in $\GL_n(\Q[t, t^{-1}][G])$ and in $\GL_n(\Z_p[t, t^{-1}][G])$,
 for each prime $p$, lies in the subgroups $\GL'_n(\Q[t, t^{-1}][G])$ and $\GL'_n(\Z_p[t, t^{-1}][G])$ respectively. By definition,
 this means that the base changes $\delta(\E)_\Q$ and $\delta(\E)_{\Z_p}$ of the determinant of cohomology $\delta(\E)={\rm det}({\rm R}\Gamma(\PP^1, \E))$ are 
 isomorphic, as elements in the virtual categories $V(\Q[G])$ and $V(\Z_p[G])$, to the free rank $n$ elements $[\Q[G]^n]$ and 
 $[\Z_p[G]^n]$; suppose that $ \alpha_\Q$, $ \alpha_p $ are choices of corresponding isomorphisms. 
 The pairs $(g, \alpha_\Q)$ and $(g, \alpha_p)$ are then elements of $\Hh_n(\Q\llps t\lrps[G])$ and $\Hh_n(\Z_p\llps t\lrps[G])$; these elements lift $g$ considered in 
$\GL'_n(\Q[t, t^{-1}][G])$ and $\GL'_n(\Z_p[t, t^{-1}][G])$ respectively. 
 Both $\alpha_\Q$ and $\alpha_p$ induce isomorphisms between $\delta(\E)_{\Q_p}$ and $[\Q_p[G]]^n$;
 by comparing them we obtain an element $\alpha_p^{-1}\cdot \alpha_\Q$  of the automorphism
 group of $[\Q_p[G]^n]$, \emph{i.e.} an element of $\rK_1(\Q_p[G])$.
 The class $\chi^P(\PP^1, \E)-\chi^P(\PP^1, \O_{\PP^1}[G]^n)$ coincides with the class of $\delta(\E)$ in the class group $\Cl(\Z[G])$;
 by the above, this 
  can now be obtained as the class 
 of the element $\prod_p \alpha_p^{-1}\cdot \alpha_\Q\in \prod_p'\rK_1(\Q_p[G])$.  The local Riemann-Roch formula that relates the 
 central extensions above via the tame symbol will now eventually lead  to a proof of our main theorem for
 $\PP^1 $ but this still requires a fair amount of work. Indeed, first, we need to show that the bundles we are considering have,
 after suitable changes of basis, elementary  transition matrices
 and therefore also a well-defined second Chern class $c_2(\E)$. We also need to explain how  
 to express a Steinberg cocycle  that can be used to calculate $c_2(\E)$ in terms of the original transition 
 matrix $g$; notice that $g$ itself might not be elementary. 
 
 The notion of elementary structure is, as it turns out, quite subtle. 
 Observe that the transition matrix $\lambda_{\eta_i\eta_j}$
 is elementary when the class $[\lambda_{\eta_i\eta_j}]$ in $\rK_1(\hat\O_{Y, \eta_i\eta_j}[G])$ is trivial. 
Therefore, examining when adelic transition matrices
 are elementary involves the consideration of $\rK_1$-groups of group rings with coefficients 
in certain $p$-adically complete rings, as are some of the multicompletions 
considered above. For this we need to use results of Oliver on $\rSK_1$-groups
of group rings (e.g. \cite{OliverBAMS, OliverLNM}) and the extension of these results in \cite{CPTSK1b}.
In particular, we can see that our notion of elementary structure is appropriately restrictive;
for example, our considerations show that any $\O_{\PP^1 }[G]$-bundle which is trivial along $(1:1)$
and has zero degree has an elementary structure.
Considering these multicompletions also necessitates that we develop certain ``$p$-adically completed" variations of the central extension (\ref{Hintro}); for example, we need such extensions for group rings with coefficients in the $p$-adic completion $\Z_p{\ldb t\rdb}=\varprojlim_n \Z/p^n\llps t\lrps$
of $\Z_p\llps t\lrps$ or in the two-dimensional local field $\Q_p{\ldb t\rdb}=\Q_p\otimes_{\Z_p}\Z_p{\ldb t\rdb}$.

The above gives the rough idea of the proof of the Riemann-Roch theorem for $Y=\PP^1 $. To obtain the main result for an $\O_Y[G]$-bundle $\E$ on a more general (regular) arithmetic surface $Y\to \Spec(\Z)$ 
we argue as follows: By work of B. Green, there exists a finite flat morphism $\pi: Y\to \PP^1 $;  we use pushforward by $\pi$ to
reduce the proof to the case of $\PP^1 $. The fact that, as we assume, $\E$ has elementary structure does not imply that
this is also the case for $\pi_*(\E)$; this complicates the argument.
However, we can still find a simple bundle $\mathcal V$ with induced $G$-action so that the direct sum $\pi_*(\E)\oplus {\mathcal V}$
is  an $\O_{\PP^1 }[G]$-bundle with elementary structure on $\PP^1 $. We now explicitly relate 
Steinberg cocycles that compute the second Chern class of $\E$ with corresponding  Steinberg cocycles  that compute the second Chern class of   $\pi_*(\E)\oplus {\mathcal V}$ on $\PP^1 $ and there is a resulting identity (Proposition  
\ref{prop:state}) that
relates the second Chern classes of $\E$ and  of $\pi_*(\E)\oplus {\mathcal V}$.
This identity can be viewed as an adelic Riemann-Roch formula for the finite flat morphism $\pi$. These considerations  allow us to reduce the general case to the case $Y=\PP^1 $ which is handled  as explained above.

Our  definitions of the second adelic Chern class and of the Gysin map were initially inspired by the groundbreaking work of Parshin in \cite{ParshinCrelle}, \cite{Parshin2} and of Osipov in \cite{OsipovAdelic}. 
The reader can also find similar or related constructions in the work of 
H\"ubl-Yekutieli \cite{YekuChern}, Morrow \cite{MorrowTrace} and Budylin \cite{BudChern}. 
Let us remark here that
although our main interest in this paper is 
to the case of bundles for a group ring, our techniques 
can also provide interesting new results 
when the group $G$ is trivial and  even in the context of these references.
Indeed, the current paper also contributes to the theme of 
refined Riemann-Roch type theorems; examples of such theorems are 
Deligne's functorial Riemann-Roch theorem for relative curves 
\cite{DeligneDet}, or the second author's integral Grothendieck-Riemann-Roch theorem
\cite{PaIGRR}. 
For example, we can consider general vector bundles $\E$ 
over an arithmetic surface $Y\to \Spec(R)$ where $R$ is a
Dedekind ring with finite residue fields, as is the ring of integers
of a number field. We then show an adelic Riemann-Roch theorem for $f: Y\to \Spec(R)$ and $\E$
(see Theorem \ref{genARR})
by factoring $f$ as a composition of a finite flat morphism $\pi: Y\to \PP^1_R$ with 
the projection $h: \PP^1_R\to \Spec(R)$ and proving as above 
Riemann-Roch identities for $\pi$ and $h$. We can think of this as an alternative to   Grothendieck's strategy of 
proving Riemann-Roch by
factoring $f$ as a composition of a closed immersion followed by a projective bundle.
This is done in the Appendix of the paper.

We will now briefly describe the structure of the paper. In \S 1,  we explain
the theory of higher dimensional adeles of Beilinson and Parshin and
give examples of the corresponding multicompletions for the case
of arithmetic surfaces. In \S 2, we give the definitions of the adelic Chow
groups. The constructions of the central extensions (\ref{Hintro}) and of
its $p$-adically complete variants are given in \S 3. In the same paragraph, we also
show
that the corresponding maps $\partial$ (resp. $\hat\partial$ in the $p$-adic variant)
in (\ref{StoHmapInto}) are given via the inverse of the tame symbol (resp. of Kato's residue symbol).
In \S 4, we define the pushdown maps $f_{*\eta_0\eta_1\eta_2}$ and show that they induce a Gysin map
$f_*$ between the adelic Chow groups as above. In \S 5, we explain the formalism of 
adelic transition matrices and give the definition of the first adelic Chern class.
The notion of elementary structure and  the definition of the second adelic Chern class 
for bundles with elementary structure is given in \S 6. In \S 7 we state the main theorem 
and in \S 8 we explain the reduction of the proof to the case of bundles over $\PP^1 $
by working with pushdown along a finite flat morphism $Y\to \PP^1 $.
Finally, the proof of the adelic Riemann-Roch identity for bundles over $\PP^1 $ occupies \S 9.
In the Appendix, \S 10, we consider the case that the group $G$ is trivial;
then we can obtain a stronger result.
\bigskip

\noindent{\sl Acknowledgement:} The second author would like to thank X. Zhu for a useful conversation.
 
\bigskip

\noindent{\sc Notations:}  
\begin{itemize}
\item If $R$ is a ring, we denote by $R\lps t\rps$ the ring of formal
power series in the variable $t$ with coefficients in $R$ and by $R\llps t\lrps=R\lps t\rps [t^{-1}]$ the ring of formal Laurent power series. 

\item Suppose $p$ is a prime such that $pR$ is a proper ideal of
$R$. We denote by $\hat R$ the $p$-adic completion $\varprojlim_n R/(p)^n$ of $R$. 
We will denote by $\hat R\ldb t\rdb$ the $p$-adic completion $\varprojlim_n(R/(p)^n\llps t\lrps) $ of the Laurent power series $R\llps t\lrps$
and by $\hat R\langle\!\langle  t\rangle\!\rangle$ the $p$-adic completion $\varprojlim_n R/(p)^n[ t] $ of the polynomial ring $R[t]$.

\item Let $R$ be the ring of integers in a finite extension $F$ of the $p$-adic field $\Q_p$. We set 
\[
\ \ \ \ \ \ \ \ \ F\ldb t\rdb:=F\otimes_R R\ldb t\rdb=\left\{\sum_{i} a_i t^i\ |\ a_i\in F,\  \lim_{i\to -\infty} v(a_i)=+\infty, \ v(a_i)>>-\infty\right\}
\]
where $v$ denotes the $p$-adic valuation. Then $F\ldb t\rdb$ is the fraction field 
of the $p$-adically complete dvr $R\ldb t\rdb$. We also set $F\{ t\}:=F\otimes_R R\langle\!\langle  t\rangle\!\rangle$.
The ring 
$F\{ t\}$ is often referred to as the free Tate algebra in the variable $t$ over $F$.

\end{itemize}

\section{Beilinson-Parshin adeles on a surface}\label{section1}

\setcounter{equation}{0}

\subsection{Parshin tuples and multicompletions.}\label{multicomplete}

 In this section we will let $Y$ be an irreducible separated Noetherian scheme of dimension $d$. We will recall the theory of adeles for $Y$ developed by Parshin and Beilinson; 
see \cite{ParshinCrelle}, \cite{Parshin2}, \cite{BeilinsonAdeles}, \cite{Huber}, 
 \cite{YekuAsterisque} and the useful survey \cite{MorrowSurvey} for more
detailed accounts. 

Following \cite{BeilinsonAdeles}, let $P(Y)$ be the set of points of $Y$.  If $\eta,\eta' \in P(Y)$ we will say that $\eta \ge \eta'$ if $\eta'$ is a point on the closure  $\overline{\eta}$ of $\eta$.   Let $S(Y)$ be the simplicial set associated
to $P(Y)$ and this order relation.  Thus the $n$-simplex $S(Y)_n$ is the set of all Parshin $n+1$-tuples
$(\eta(0),\ldots,\eta(n))$ of points on $Y$, these being ordered sequences of $n+1$ points in $P(Y)$ such that $\eta(0) \ge \eta(1) \ge \cdots \ge \eta(n)$.  We will call such an $n+1$-tuple degenerate if $\eta(i) = \eta(i+1)$ for some $i$;  otherwise it is non-degenerate.
We will use the convention that a subscript on a point indicates its codimension on $Y$.  Thus
$\eta_0$ is the generic point.  The Parshin $1$-tuples thus have the form $(\eta_i)$ for some $0 \le i \le d$, the Parshin
$2$-tuples have the form $(\eta_i,\eta_j)$ for some $0 \le i \le j \le d$, 
and so on. 

 Suppose $K_n$ is a subset of $S(Y)_n$ and that $\eta$ is a point of $Y$.  Let $\O_\eta = \O_{Y,\eta}$
 be the local ring of $Y$ at $\eta$, with maximal ideal $m_{\eta} = m_{Y,\eta}$.  Let $j_\eta:\mathrm{Spec}(\O_\eta) \to Y$
 be the natural morphism of schemes.  If $M$ is a module for $\O_\eta$, we also use   $M$ to denote both
 the quasi-coherent sheaf $\mathcal{M}$ on $\mathrm{Spec}(\O_\eta)$ associated to $M$ and
 the quasi-coherent sheaf $(j_{\eta})_*(\mathcal{M})$ on $Y$.  In particular, the support of $M$
 as a sheaf on $Y$ is contained in the closure $\overline{\eta}$ of $\eta$.  Define
\begin{equation}
\label{eq:chump}
{}_{\eta}K_{n-1} = \{(\eta(1), \eta(2),\ldots,\eta(n)) \in S(Y)_{n-1}: (\eta,\eta(1),\ldots,\eta(n)) \in K_n\}.
\end{equation}

\begin{definition}
\label{def:adeldef}
As $n$ and $K_n$ vary, there is a unique family of functors $A(K_n,\bullet)$ from the category of
quasi-coherent $\O_Y$-modules to the category of abelian groups for which the following
is true:
\begin{enumerate}
\item[1.] $A(K_n,\bullet)$ commutes with direct limits.
\item[2.] Suppose $M$ is a coherent $\O_Y$-module.
\begin{enumerate}
\item[a.] If $n = 0$, then 
\begin{equation}
\label{eq:nzero}
A(K_n,M) = A(K_0, M)=\prod_{\eta \in K_0} \lim_{\leftarrow \atop \ell} (M \otimes_{\O_Y} (\O_\eta/m_\eta^\ell)).
\end{equation}
\item[b.] If $n > 0$, then 
\begin{equation}
\label{eq:nbig}
A(K_n,M) = \prod_{\eta \in P(Y)} \lim_{\leftarrow \atop \ell} A({}_{\eta}K_{n-1}, M \otimes_{\O_Y} (\O_\eta/m_\eta^\ell)).
\end{equation}
\end{enumerate}
\end{enumerate}
\end{definition}

A subtlety in this definition is that the sheaf
$M \otimes_{\O_Y} (\O_\eta/m_\eta^\ell)$ appearing on the right side of
(\ref{eq:nbig}) will not in general be coherent.  Thus one must calculate
the value of $A({}_{\eta}K_n,\bullet)$ on the latter sheaf by taking an
inductive limit. 

When $K_n = \{(\eta(0),\ldots,\eta(n))\}$ consists of a single non-degenerate
Parshin chain of length $n+1$ and $M = \O_Y$, we will denote by 
\begin{equation}
\label{eq:OY}
\hat{\O}_{Y,\eta(0)\eta(1) \cdots\eta(n)} =
A(K_n,\O_Y)
\end{equation} the corresponding multicompletion of $\O_Y$.

\subsection{Examples of multicompletions.}

Suppose here that $K_n = \{(\eta(0),\ldots,\eta(n))\}$ consists of a single non-degenerate
Parshin chain of length $n+1$.  Let $\mathrm{Spec}(R)$ be an open affine subset of $Y$ which contains
$\eta(0)$.  Then, for all $i$, $\eta(i)$ corresponds to a prime ideal of $R$. Suppose $\mathfrak{a}$ and $\mathfrak{p}$ are
ideals of $R$, $\mathfrak{p}$ is prime and that 
 $N$ is an $R$-module.  As in \cite[p. 250]{Huber}, let $S_{\mathfrak{p}}^{-1} N$ be the localization
of $N$ at $S_{\mathfrak{p}} = R - \mathfrak{p}$ and define $C_{\mathfrak{a}} N =\varprojlim_{n} N/\mathfrak{a}^n N$.  

The following result is shown by Huber in \cite[Prop. 3.2.1]{Huber}.

\begin{proposition}
\label{prop:multi}Let $M$ be a quasi-coherent $\O_Y$-module, and suppose that the restriction of $M$ to $\mathrm{Spec}(R)$
is the sheaf associated to the $R$-module $N$.  Then $C_{\eta(0)} S_{\eta(0)}^{-1} \ldots C_{\eta(n)} S_{\eta(n)}^{-1} R = B$
is a flat Noetherian $R$-algebra, and there is a natural isomorphism 
\begin{equation}
\label{eq:localdes}
A(K_n,M) \cong  B \otimes_R N.
\end{equation}
If $M$ is coherent, so that $N$ is Noetherian, one has 
\begin{equation}
\label{eq:localcodes}
A(K_n,M) \cong  C_{\eta(0)} S_{\eta(0)}^{-1} \ldots C_{\eta(n)} S_{\eta(n)}^{-1} N.
\end{equation}
\end{proposition}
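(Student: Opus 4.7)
\medskip
\noindent\textbf{Proof proposal.} The plan is to induct on $n$, with base case $n=0$ following directly from (\ref{eq:nzero}). Indeed, if $M$ is coherent with restriction to $\Spec(R)$ corresponding to the finitely generated $R$-module $N$, then the right side of (\ref{eq:nzero}) reads
\[
\varprojlim_\ell \bigl(N \otimes_R R_{\eta(0)}/\eta(0)^\ell R_{\eta(0)}\bigr) = \varprojlim_\ell N_{\eta(0)}/\eta(0)^\ell N_{\eta(0)} = C_{\eta(0)} S_{\eta(0)}^{-1} N,
\]
which is (\ref{eq:localcodes}). The quasi-coherent form (\ref{eq:localdes}) is then recovered by writing an arbitrary quasi-coherent $M$ as a filtered colimit of coherent subsheaves and invoking property 1 of Definition \ref{def:adeldef}, together with the observation that $B = C_{\eta(0)} S_{\eta(0)}^{-1} R$ is flat and Noetherian over $R$ (localization at a prime of a Noetherian ring is Noetherian, and the adic completion of a Noetherian ring is flat and Noetherian).

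For the inductive step, assume the result for chains of length $n$ and consider a single non-degenerate chain $K_n = \{(\eta(0),\ldots,\eta(n))\}$ of length $n+1$. In (\ref{eq:nbig}) the outer product over $\eta \in P(Y)$ collapses to the single index $\eta = \eta(0)$, and we obtain
\[
A(K_n, M) = \varprojlim_\ell A\bigl({}_{\eta(0)}K_{n-1},\; M \otimes_{\O_Y}(\O_{\eta(0)}/m_{\eta(0)}^\ell)\bigr),
\]
where ${}_{\eta(0)}K_{n-1} = \{(\eta(1),\ldots,\eta(n))\}$. The inner sheaf is quasi-coherent but not coherent; its restriction to $\Spec(R)$ corresponds to the $R$-module $S_{\eta(0)}^{-1} N / \eta(0)^\ell S_{\eta(0)}^{-1} N$. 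Expressing this as a filtered colimit of coherent subsheaves and applying property 1 together with the induction hypothesis gives
\[
A\bigl({}_{\eta(0)}K_{n-1}, M \otimes_{\O_Y}(\O_{\eta(0)}/m_{\eta(0)}^\ell)\bigr) \;\cong\; B_{n-1} \otimes_R \bigl(S_{\eta(0)}^{-1} N / \eta(0)^\ell S_{\eta(0)}^{-1} N\bigr),
\]
where $B_{n-1} := C_{\eta(1)} S_{\eta(1)}^{-1} \cdots C_{\eta(n)} S_{\eta(n)}^{-1} R$ is flat and Noetherian over $R$ by the induction.

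To conclude, I take the inverse limit over $\ell$. By the inductive flatness of $B_{n-1}$ over $R$, the displayed expression rewrites as
\[
\varprojlim_\ell \bigl(B_{n-1} \otimes_R S_{\eta(0)}^{-1} N\bigr) / \eta(0)^\ell \bigl(B_{n-1} \otimes_R S_{\eta(0)}^{-1} N\bigr) = C_{\eta(0)} S_{\eta(0)}^{-1}\bigl(B_{n-1} \otimes_R N\bigr).
\]
For $N$ finitely generated this equals $B_n \otimes_R N$ with $B_n := C_{\eta(0)} S_{\eta(0)}^{-1} B_{n-1}$, by the standard fact that adic completion commutes with tensoring against finitely generated modules over a Noetherian base; the general quasi-coherent case is recovered by a further direct-limit argument. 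Noetherianity and flatness of $B_n$ over $R$ propagate from those of $B_{n-1}$, since both localization at a prime and adic completion preserve these properties in the Noetherian setting, closing the induction.

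The main obstacle is the rigorous passage from coherent to quasi-coherent sheaves inside the inverse limit $\varprojlim_\ell$ at each inductive stage: one must exchange this inverse limit with the direct limits used to approximate the non-coherent sheaf $M \otimes_{\O_Y}(\O_{\eta(0)}/m_{\eta(0)}^\ell)$ by coherent subsheaves, and identify the resulting iterated operations with $C_{\eta(0)} S_{\eta(0)}^{-1}$ applied to the tensor product. This is precisely what forces flatness and Noetherianity of each intermediate algebra $B_k$ to be maintained through the induction, since without them neither the commutation of tensor with quotients by $\eta(0)^\ell$ nor the identification of $\varprojlim_\ell$ with honest adic completion would be available.
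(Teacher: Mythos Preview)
The paper does not give its own proof of this proposition; it simply cites Huber \cite[Prop.~3.2.1]{Huber}. Your inductive argument is correct and is essentially the standard proof one would expect to find there: prove the coherent case by induction on $n$, using the recursion (\ref{eq:nbig}) together with the quasi-coherent form of the inductive hypothesis (needed because $M\otimes_{\O_Y}(\O_{\eta(0)}/m_{\eta(0)}^\ell)$ is not coherent), and then deduce the quasi-coherent case at each stage via property~1.

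One small remark: your closing paragraph about the ``main obstacle'' somewhat overstates the delicacy of the limit exchange. Once the quasi-coherent form $A({}_{\eta(0)}K_{n-1},M')\cong B_{n-1}\otimes_R N'$ is available from the inductive hypothesis, you apply it directly to $M'=M\otimes_{\O_Y}(\O_{\eta(0)}/m_{\eta(0)}^\ell)$; the direct-limit manipulation is already packaged inside that isomorphism, and what remains is only the identification $\varprojlim_\ell\bigl((S_{\eta(0)}^{-1}B_{n-1}\otimes_R N)/\eta(0)^\ell\bigr)\cong B_n\otimes_R N$ for finitely generated $N$, which is the usual compatibility of adic completion with finite tensor products over a Noetherian base.
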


We now specialize further to the case in which $M = \O_Y$ as in (\ref{eq:OY}).

\subsubsection{Some Parshin chains of length $1$.}
\label{s:parsh0}

We suppose in this subsection that $n = 0$  and $K_n = K_0 = \{(\eta(0))\}$ for a point $\eta(0) = \eta_i$ of codimension $i$ on $Y$.
Then (\ref{eq:localcodes}) shows that 
$$
\what{\O}_{Y,\eta(0)} = \what{\O}_{Y,\eta_i} = C_{\eta_i} S_{\eta_i}^{-1} R
$$
is the completion of the local ring $\O_{Y,\eta_i}$ at the powers of its maximal ideal.

We now suppose further that $Y$ is irreducible, normal and flat over $\mathbb{Z}$,
and that  $\eta_i = \eta_1$ has
codimension $1$. Then $\hat{\O}_{Y,\eta_1}$ is a complete discrete valuation ring (dvr)
of characteristic $0$ 
with residue field $k(\eta_1)$ given by the function field of the irreducible divisor $\overline{\eta}_1$.
Let $t$ be a uniformizer in $\hat{\O}_{Y,\eta_1}$.  

If $\eta_1$ is horizontal then $k(\eta_1)$
has characteristic $0$ and transcendence degree $\mathrm{dim}(Y) -2$ over $\mathbb{Q}$.  In this case Hensel's Lemma shows 
there is an algebra homomorphism $k(\eta_1) \to \hat{\O}_{Y,\eta_1}$ which is a section of the residue map
$\hat{\O}_{Y,\eta_1} \to k(\eta_1)$ and that $\hat{\O}_{Y,\eta_1}$ is isomorphic to the formal power series ring $k(\eta_1){\lps t\rps}$.

Suppose now that $\eta_1$ is vertical, and let $p$ be the prime of $\Z$ determined by $\eta_1$.  
Recall that if $A \to B$ is a local homomorphism between two local Noetherian
rings such that $B$ is  complete and flat over $A$ and $B/m_B$ is a separable
extension of $A/m_A$, then $B$ is called a Cohen algebra over $A$.
By \cite[Chap. $0_{IV}$, 19.7.2]{EGAIV}, $B$ is determined by its residue field $B/m_B$ if $m_B = Bm_A$.
We have assumed $Y$ is 
flat over $\mathbb{Z}$.  Hence if $pB = m_B$ then  $B = \hat{\O}_{Y,\eta_1}$ is the Cohen algebra over $A = \mathbb{Z}_p$
associated to $k(\eta_1)$.
The statement that $pB = m_B$ is equivalent to the statement that $\overline{\eta}_1$  has multiplicity $1$ in  the fiber of $Y$ over $p$.

\subsubsection{Some Parshin chains of length $2$.}
We suppose in this subsection that $n =1$ and that $Y$ is regular, quasi-projective and flat over $\mathbb{Z}$.  
As a result, all the local rings of $Y$ are excellent. Let $K_n  = \{(\eta(0),\eta(1))\}$
consist of a Parshin chain of length $2$.  
If $\eta(0)$ is the generic point $\eta_0$ of 
$Y$, then $\eta(1)$ may be a point $\eta_i$ of arbitrary codimension $i \ge 1$.  
The functor $C_{\eta_0}$ is the identity functor, so (\ref{eq:localcodes}) shows
$$
\hat{\O}_{Y,\eta(0) \eta(1)} = \hat{O}_{Y,\eta_0 \eta_i} =  (K(Y) - \{0\})^{-1} \hat{\O}_{Y,\eta_i}
$$
where $K(Y)$ is the function field of $Y$.   

The other case in which $n =1$ which will be relevant to us is when,
in addition to the above assumptions, $Y$ is of dimension $2$, $\eta(0)$ is a codimension $1$ point $\eta_1$ 
on $Y$ and $\eta(1)$ is a closed point $\eta_2$ on the closure of $\eta_1$.
The local ring $\O_{Y,\eta_2}$ and its completion $\hat{\O}_{Y,\eta_2}$
are then two-dimensional UFD's.  A local equation $\pi_1 \in \O_{Y,\eta_2}$ for $\eta_1$ factors
in $\hat{\O}_{Y,\eta_2}$ into the product
$\pi_1 = u \prod_{\alpha = 1}^m t_\alpha^{b_\alpha}$
of a unit $u \in \hat{\O}_{Y,\eta_2}^\times$ together with positive integral powers of non-associate irreducibles $t_\alpha \in \hat{\O}_{Y,\eta_2}$. 
These $t_\alpha$ define the analytic branches at $\eta_2$ of the closure of $\eta_1$.
Notice that since $\O_{Y,\eta_2}/(\pi_1)$ is a reduced excellent local ring, the same is true for its completion
which can be identified with $\what{\O}_{Y, \eta_2}/(\pi_1)=\what{\O}_{Y, \eta_2}/(\prod_{\alpha = 1}^m t_\alpha^{b_\alpha})$. This implies that $b_\alpha=1$,  for all $\alpha$, and we have
\begin{equation}
\label{eq:pilocal}
\pi_1 = u \prod_{\alpha = 1}^m t_\alpha .
\end{equation}

Let $B_\alpha$ be the discrete valuation ring which is the
completion of the localization of $\hat{\O}_{Y,\eta_2}$ at the codimension one prime ideal generated by $t_\alpha$.
Let $p > 0$ be the residue characteristic of $\eta_2$.  The residue ring $R_\alpha =  \hat{\O}_{Y,\eta_2}/(t_\alpha)$ is a complete local integral domain of dimension $1$ with finite residue field $k(\eta_2)$ of characteristic $p$.  The fraction field of $R_\alpha$
is the residue field $k(B_\alpha)$ of $B_\alpha$.  We will also use the notation $k(\eta_{1,\alpha})$ for $k(B_\alpha)$
in order to emphasize its dependence on  $\eta_1$.  
The integral closure $R'_\alpha$ of $R_\alpha $ in $k(\eta_{1,\alpha})$
is finite over $R_\alpha$,  Hence $tR'_\alpha \subset R_\alpha$ for some $0 \ne t \in R_\alpha$, so since $R_\alpha / R_\alpha t$ is a
finite ring, a power of 
the radical of $R'_\alpha$ lies in $R_\alpha$.  Thus $R'_\alpha$
is complete with respect to the powers of its radical because $R_\alpha$ is complete.  It follows that $R'_\alpha$ is local 
because it is an integral domain.  We conclude that $R'_\alpha$ is a complete discrete valuation ring with
finite residue field.  Thus $k(B_\alpha) = k(\eta_{1,\alpha})$ is a local field of dimension $1$ with finite
residue field.  We distinguish two cases:
\begin{itemize}
\item{\bf $\eta_1$ is horizontal:}
 Then  
$k(\eta_{1,\alpha})$ is isomorphic to a finite extension of $\mathbb{Q}_p$. By Hensel's Lemma,  $B_\alpha$ is isomorphic   
 to the formal power
series ring $\mathbb{Q}_p(\eta_{1,\alpha})\lps t_\alpha\rps$. 
\smallskip
 
\item {\bf $\eta_1$ is vertical:} 
Then 
$k(\eta_{1,\alpha})$ is the completion of a global function field at a closed point.  Let $t$
be an element of $B_\alpha$ which has image equal to a uniformizer $\overline{t}$ in the discretely valued field $k(\eta_{1,\alpha})$.
Then $k(\eta_{1,\alpha})$ is isomorphic to the Laurent formal power series field $k(\eta_2)\llps\overline{t}\lrps$.  
\end{itemize}

\begin{lemma}
\label{lem:cohen}  
Suppose $\eta_1$ is vertical.    The maximal ideal $B_\alpha t_\alpha$ of $B_\alpha$ equals 
$B_\alpha p$ if and only if   $\eta_1$ occurs with multiplicity $1$ in the fiber of $Y$ over $p$. In this case, $B_\alpha$ is isomorphic to the Cohen ring over $\mathbb{Z}_p$ having residue field 
$k(\eta_2)\llps\overline{t}\lrps$.  This is true, in particular, if the fiber of $Y$ over $p$ is smooth.
\end{lemma}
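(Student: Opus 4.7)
The plan is to translate both parts of the lemma into statements about the $t_\alpha$-adic valuation $v_\alpha$ on the complete discrete valuation ring $B_\alpha$, for which $t_\alpha$ is a uniformizer, and then to appeal to the characterization of Cohen algebras over $\Z_p$ recalled in \S\ref{s:parsh0}.

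First I would compute $v_\alpha(p)$ in terms of the multiplicity $e$ of $\eta_1$ in the fiber $Y_p$. Since $Y$ is regular and $\eta_1$ has codimension one, $\O_{Y,\eta_1}$ is a discrete valuation ring and $e = v_{\eta_1}(p)$. Let $\pi_1 \in \O_{Y,\eta_2}$ be a local equation for $\overline{\eta}_1$ at $\eta_2$, and pick similar local equations $\pi_2, \dots, \pi_s$ for the other vertical components of $Y_p$ passing through $\eta_2$. Because $\O_{Y,\eta_2}$ is a two-dimensional regular local ring and the principal ideal $(p)$ cuts out $Y_p$ near $\eta_2$, one has $(p) = \prod_i (\pi_i)^{e_i}$ with $e_1 = e$, so $p = w\cdot\prod_i \pi_i^{e_i}$ for some unit $w$.

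Next I would pass to $\hat\O_{Y,\eta_2}$ and apply (\ref{eq:pilocal}): $\pi_1 = u\prod_\alpha t_\alpha$ with $u$ a unit. For each $i \ne 1$, the divisors $\overline{\eta}_1$ and $\overline{\eta}_i$ are set-theoretically distinct through $\eta_2$, so their scheme-theoretic intersection is zero-dimensional and $(\pi_1, \pi_i) \subset \O_{Y,\eta_2}$ is primary to the maximal ideal; this property is preserved under flat completion, which prevents $\pi_1$ and $\pi_i$ from sharing a height-one prime factor in $\hat\O_{Y,\eta_2}$. In particular $v_\alpha(\pi_i) = 0$ for $i \ne 1$, while $v_\alpha(\pi_1) = 1$, and therefore $v_\alpha(p) = e$. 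Since $t_\alpha$ is a uniformizer of $B_\alpha$ this gives $B_\alpha p = B_\alpha t_\alpha^{e}$, so $B_\alpha t_\alpha = B_\alpha p$ if and only if $e = 1$.

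For the second statement, assume the equivalent condition $e = 1$; then $p$ is itself a uniformizer of the complete discrete valuation ring $B_\alpha$. The structural map $\Z_p \to B_\alpha$ is local (since $p$ lies in $m_{B_\alpha}$), $B_\alpha$ is flat over $\Z_p$ (being a torsion-free module over a principal ideal domain), and $m_{B_\alpha} = pB_\alpha$. The residue field extension $k(\eta_2)\llps \overline{t}\lrps/\Ff_p$ is automatically separable because $\Ff_p$ is perfect, so every field extension of it is separable. These are precisely the conditions of the definition of a Cohen algebra over $\Z_p$ recalled in \S\ref{s:parsh0}, and the uniqueness statement in \cite[Chap.\ $0_{IV}$, 19.7.2]{EGAIV} identifies $B_\alpha$ with the Cohen ring over $\Z_p$ having residue field $k(\eta_2)\llps \overline{t}\lrps$. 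The final claim is immediate: smoothness of the fiber $Y_p$ forces $Y_p$ to be reduced, so every vertical component, $\eta_1$ in particular, occurs with multiplicity one. The only step I expect to require genuine care is the coprimality argument in $\hat\O_{Y,\eta_2}$ allowing the factors $\pi_i$ for $i \ne 1$ to be dropped, since (\ref{eq:pilocal}) is stated only for $\pi_1$ and one must verify that distinct irreducible divisors through $\eta_2$ remain effectively transversal after completion; the remaining verifications are essentially bookkeeping.
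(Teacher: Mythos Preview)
Your proof is correct and follows essentially the same approach as the paper: factor $p$ in the UFD $\O_{Y,\eta_2}$, show that the irreducible factors $\pi_i$ with $i\ne 1$ have $v_\alpha(\pi_i)=0$ via the $m$-primary property of $(\pi_1,\pi_i)$ (the paper phrases this as $\hat\O_{Y,\eta_2}/(\pi_1,\pi_i)$ being finite while $\hat\O_{Y,\eta_2}/(t_\alpha)$ is infinite), conclude $v_\alpha(p)=e$, and then invoke the Cohen ring characterization. Your verification of the Cohen algebra conditions is slightly more explicit than the paper's, which simply refers back to \S\ref{s:parsh0}.
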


\begin{proof} In $\O_{Y,\eta_2}$ one has a factorization 
\begin{equation}
\label{eq:pfactor}
p = v\cdot \prod_{i = 1}^j \pi_i^{a_i}
\end{equation}
in which $v \in \O_{Y,\eta_2}^\times$ is a unit, $j \ge 1$, $\pi_1$ is our chosen local equation for $\overline{\eta_1}$
and the $\pi_i$ are non-associate irreducibles.  The multiplicity of $\eta_1$ in the fiber of $Y$ is $1$ if
and only if $p$ is a uniformizer in the local ring $\O_{Y,\eta_1} = (\O_{Y,\eta_2} - \O_{Y,\eta_2}\pi_1)^{-1} \O_{Y,\eta_2}$.
This is the case if and only if $a_1 = 1$.  If $2 \le i \le j$ then $\O_{Y,\eta_2}/(\pi_1,\pi_i)$ is a finite discrete
quotient of $\O_{Y,\eta_2}$, so $(\pi_1,\pi_i)$ contains a positive power of the maximal ideal of $\O_{Y,\eta_2}$.
Hence $\hat{\O}_{Y,\eta_2}/\hat{\O}_{Y,\eta_2}(\pi_1,\pi_i)$ is finite, so (\ref{eq:pilocal}) implies $\pi_i$
has valuation $0$ in $B_\alpha$ when $2 \le i \le j$ because $\hat{\O}_{Y,\eta_2}/\hat{\O}_{Y,\eta_2}t_\alpha$ is infinite.  Thus 
(\ref{eq:pilocal}) and (\ref{eq:pfactor}) show that $p$ has valuation $a_1 $ with respect to the
discrete valuation of $\hat{\O}_{Y,\eta_2}$ associated to $\alpha$.  Thus $B_\alpha t_\alpha = B_\alpha p$
if and only if $a_1   = 1$, and this proves the first assertion in Lemma \ref{lem:cohen}.  The second
is a consequence of the results about Cohen rings cited in \S \ref{s:parsh0}.  The last assertion is clear
from the first. 
\end{proof}
\smallskip

We can make the isomorphism in  Lemma \ref{lem:cohen} more explicit in the following way.  Define $W(k(\eta_2))$ to be the ring of infinite Witt vectors over the finite residue
field $k(\eta_2)$.  Recall $W(k(\eta_2)){\ldb t\rdb}$ is the ring of all doubly infinite formal power series
$\sum_{n = -\infty}^\infty a_n t^n$
in which $a_n \in W(k(\eta_2))$ and $\lim_{n \to -\infty} a_n = 0$ in the $p$-adic topology on $W(k(\eta_2))$.
Viewing $k(\eta_2)$ as a finite subfield of the residue field $k(\eta_2)\llps\overline{t}\lrps$ of $B_\alpha$,
we can take Teichmuller lifts of elements of $k(\eta_2)$ to $B_\alpha$ via the usual limit process.  This produces a canonical
algebra embedding of $W(k(\eta_2))$ into $B_\alpha$.  There is then a  unique topological ring isomorphism from $W(k(\eta_2)){\ldb t\rdb}$ to $B_\alpha$ which extends this embedding and sends $t$ to itself as an element of $B_\alpha$.

We now return to the more general case in which we assume only that $n = 2$, 
$Y$ is  regular, quasi-projective and flat over $\mathbb{Z}$ of dimension $2$, $\eta(0)$ is a codimension $1$ point $\eta_1$ 
on $Y$ and $\eta(1)$ is a closed point $\eta_2$ on the closure of $\eta_1$.

\begin{lemma}
\label{lem:ex12}
With the above notations, the ring homomorphism $\mu$
 \begin{equation}
 \label{eq:urpy}
 \hat{\O}_{Y,\eta(0) \eta(1)} =   \hat{\O}_{Y,\eta_1 \eta_2} = C_{\eta_1} S_{\eta_1}^{-1} \hat{\O}_{Y,\eta_2} \quad \arrow{\mu}{} \quad \prod_{\alpha = 1}^m B_\alpha  
 \end{equation}
 resulting from (\ref{eq:localcodes})  is an isomorphism.  If $\eta_1$ is horizontal we have
\begin{equation}
\label{eq:horiznice}
\prod_{\alpha = 1}^m B_\alpha  \cong 
\bigoplus _{\alpha =1}^m \mathbf{Q}_{p}( \eta _{1\alpha })\lps t_{\alpha }\rps. 
 \end{equation} 
 Suppose $\eta_1$ is vertical and has multiplicity one in the fiber of $Y$ over $p$. Then  
 \begin{equation}
 \label{eq:vertnice}
 \prod_{\alpha = 1}^m B_\alpha \cong \bigoplus _{\alpha =1}^m W(k(\eta_2))\ldb t_\alpha\rdb.
 \end{equation}
 \end{lemma}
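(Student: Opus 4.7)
The plan is to apply Proposition \ref{prop:multi} to rewrite $\hat{\O}_{Y,\eta_1\eta_2}$ as $C_{\eta_1}S_{\eta_1}^{-1}\hat{\O}_{Y,\eta_2}$, then to determine the spectrum of the localization $S_{\eta_1}^{-1}\hat{\O}_{Y,\eta_2}$ and to identify its $(\pi_1)$-adic completion with $\prod_\alpha B_\alpha$ via Chinese Remainder. The explicit descriptions in the horizontal and vertical cases will then follow from the structure theorems for $B_\alpha$ already recorded in the text.

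First I would identify the primes of the two-dimensional regular local UFD $\hat{\O}_{Y,\eta_2}$ that survive in the localization, namely those whose contraction to $R$ lies in the prime $\eta_1$. Since $\eta_2 \supsetneq \eta_1$ in $R$, the maximal ideal of $\hat{\O}_{Y,\eta_2}$ meets $S_{\eta_1}$ and is killed. A surviving nonzero prime must contract to $\eta_1 R_{\eta_2}$, hence must contain the extended ideal $\pi_1\hat{\O}_{Y,\eta_2}$; by going-down for the faithfully flat extension $\O_{Y,\eta_2} \to \hat{\O}_{Y,\eta_2}$ together with $\dim \hat{\O}_{Y,\eta_2} = 2$, it has height one. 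Invoking (\ref{eq:pilocal}) and unique factorization, such primes are exactly the principal primes $(t_\alpha)$ for $\alpha = 1,\ldots, m$. Thus $S_{\eta_1}^{-1}\hat{\O}_{Y,\eta_2}$ is a one-dimensional semi-local Noetherian domain with maximal ideals precisely the $(t_\alpha)$.

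Next, since $u$ remains a unit after localization, (\ref{eq:pilocal}) gives $\pi_1 \cdot S_{\eta_1}^{-1}\hat{\O}_{Y,\eta_2} = \prod_\alpha (t_\alpha) S_{\eta_1}^{-1}\hat{\O}_{Y,\eta_2}$, and since the $(t_\alpha)$ are pairwise coprime maximal ideals their product equals their intersection, i.e.\ the Jacobson radical. The $\pi_1$-adic completion $C_{\eta_1}S_{\eta_1}^{-1}\hat{\O}_{Y,\eta_2}$ therefore agrees with the completion at the Jacobson radical, and by the Chinese Remainder theorem it splits as the product of the completions at the individual $(t_\alpha)$, which by construction are the $B_\alpha$. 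This establishes that $\mu$ is an isomorphism.

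Finally, each $B_\alpha$ is a complete discrete valuation ring whose residue field $k(\eta_{1,\alpha})$ was identified just before the statement. In the horizontal case $k(\eta_{1,\alpha})$ has characteristic zero (being a finite extension of $\mathbb{Q}_p$), so Hensel's lemma applied to a lift of a uniformizer gives $B_\alpha \cong k(\eta_{1,\alpha})\lps t_\alpha \rps$, yielding (\ref{eq:horiznice}). In the vertical multiplicity-one case, Lemma \ref{lem:cohen} together with the Teichm\"uller-lift description immediately following its proof produces $B_\alpha \cong W(k(\eta_2))\ldb t_\alpha \rdb$, yielding (\ref{eq:vertnice}). The main obstacle is the spectrum analysis of the second paragraph: one must cleanly combine faithful flatness of the completion, the UFD structure, and the branching (\ref{eq:pilocal}) to pin down the surviving primes; everything else is formal assembly of Chinese Remainder with structure theorems already established in the text.
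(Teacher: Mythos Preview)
Your Chinese-Remainder approach is a valid alternative to the paper's proof, which instead establishes the isomorphism by explicitly constructing, for each $\alpha$, an element of $S_{\eta_1}^{-1}\hat{\O}_{Y,\eta_2}$ with prescribed image in $k(B_\alpha)$ and non-unit image in the other $B_\beta$'s, and then appeals to completeness. Your route is more conceptual and avoids that construction entirely.

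There is, however, a genuine error in your spectrum analysis. The claim that every surviving nonzero prime contracts to $\eta_1 R_{\eta_2}$ is false: the contraction is only required to lie \emph{inside} $(\pi_1)$, and it can be $(0)$. Indeed the generic fibre of $\Spec \hat{\O}_{Y,\eta_2} \to \Spec \O_{Y,\eta_2}$ typically has many height-one points --- in the model case $\hat{\O}_{Y,\eta_2} \cong \Z_p\lps x\rps$ over $\O_{Y,\eta_2} = \Z_{(p)}[x]_{(p,x)}$, take $(x-c)$ with $c \in p\Z_p$ transcendental over $\Q$. Such ``transcendental'' height-one primes survive $S_{\eta_1}^{-1}$, so the localization is \emph{not} semi-local and $(\pi_1)$ is \emph{not} its Jacobson radical. (Going-down does not prevent this; the paper itself makes use of exactly these non-global primes later, in the proof of Theorem~\ref{thm20}.) Fortunately this does not damage your argument: what you actually need is only that each $(t_\alpha)$ is \emph{maximal} in $A := S_{\eta_1}^{-1}\hat{\O}_{Y,\eta_2}$, which is true since the unique prime of $\hat{\O}_{Y,\eta_2}$ strictly above $(t_\alpha)$ is the maximal ideal, and that one is killed. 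Distinct maximal ideals are comaximal, so $(t_\alpha)^n + (t_\beta)^n = A$ for $\alpha \ne \beta$, and CRT gives $A/(\pi_1)^n \cong \prod_\alpha A/(t_\alpha)^n$ directly; since $(t_\alpha)$ is maximal one has $A/(t_\alpha)^n = A_{(t_\alpha)}/(t_\alpha)^n = (\hat{\O}_{Y,\eta_2})_{(t_\alpha)}/(t_\alpha)^n$, and the inverse limit is $B_\alpha$. Drop the semi-local and Jacobson-radical claims and the proof stands.
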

 
\begin{proof} Statements (\ref{eq:horiznice}) and (\ref{eq:vertnice}) follow from (\ref{eq:urpy}) and the above
computations of the $B_\alpha$.  To show (\ref{eq:urpy}) it will suffice to prove the following.   
 Fix $\alpha$, and let $\tau_\alpha$ and $ \beta_\alpha$ be elements of $\hat{\O}_{Y,\eta_2}$ such 
that $\beta_\alpha \not \in B_\alpha t_\alpha$. Then $\tau_\alpha/\beta_\alpha$ defines an element $\overline{\tau_\alpha/\beta_\alpha}$ of the residue field $k(B_\alpha)$ of $B_\alpha$.  
It will suffice to show that there is an element of the image of $\mu$ whose component at $B_\alpha$  has  image $\overline{\tau_\alpha/\beta_\alpha}$ 
in $k(B_\alpha)$   and whose component at $B_k$ for $k \ne \alpha$ is a non-unit.  The element 
$$
z = \prod_{k \ne \alpha, k = 1}^m t_k
$$
of $\hat{\O}_{Y,\eta_2}$ has non-zero image $\overline{z}$ in the one-dimensional local ring $R_\alpha = \hat{\O}_{Y,\eta_2}/t_\alpha \hat{\O}_{Y,\eta_2}$.
Since the image of the  ring $\O_{Y,\eta_2}$ in the completion $\hat{\O}_{Y,\eta_2}$ is dense, there is an element $w \in
\O_{Y,\eta_2}$ such that  $w$ and $\beta_\alpha z$ generate the same ideal in $R_\alpha$.  Thus there is an element $u \in \hat{\O}_{Y,\eta_2}$
whose image in $R_\alpha$ is a unit such that $wu$ and $\beta_\alpha z$ have the same image in $R_\alpha$.  This $u$ must be a unit of 
$\hat{\O}_{Y,\eta_2}$.  Now $w$ has non-zero image in $R_\alpha$, so
 $w$ must be an element of $\O_{Y,\eta_1}$ which is not
in the maximal ideal of $\O_{Y,\eta_1}$. Thus $w^{-1} u^{-1} \tau_\alpha  z$ lies in $S_{\eta_1}^{-1}  \hat{\O}_{Y,\eta_2}$
and has image $\overline{\tau_\alpha/\beta_\alpha}$ in $k(B_\alpha) = \mathrm{Frac}(R_\alpha)$.  Now $ w^{-1} u^{-1} \tau_\alpha z \in  \hat{\O}_{Y,\eta_1 \eta_2}$ because of the second equality in (\ref{eq:urpy}), so
we have constructed the desired element. It follows that  (\ref{eq:urpy}) is an isomorphism.
\end{proof}

\subsubsection{Some Parshin chains of length $3$.}  \label{s:parsh1}
 
The last special case we will discuss is when $Y$ is regular and integral of dimension $n = 2$.
Let $K_2 = \{(\eta(0),\eta(1),\eta(2))\} = \{(\eta_0,\eta_1,\eta_2)\}$ with $\eta(0) = \eta_0$ the generic point of $Y$, $\eta(1) = \eta_1$
 a codimension $1$ point and $\eta(2) = \eta_2$ a closed point on the closure of $\eta_1$.
 We find from (\ref{eq:localcodes}) that
 \begin{equation}
 \label{eq:sounds}
 \hat{\O}_{Y,\eta(0) \eta(1)\eta(2) } =  \hat{\O}_{Y,\eta_0 \eta_1 \eta_2 }  = (K(Y) - \{0\})^{-1} \hat{\O}_{Y,\eta_1\eta_2}
 \end{equation}
 where $\hat{\O}_{Y,\eta_1\eta_2}$ is a product of discrete valuation rings $B_\alpha$ of
 the kind described above for the pair $(\eta_1,\eta_2)$. Since a uniformizer in $B_\alpha$ divides
 the image in $B_\alpha$ of an element of $K(Y)$, we find that 
 that   $\hat{\O}_{Y,\eta_0 \eta_1 \eta_2 }$ is the product of the fraction fields
 of the $B_\alpha$.  
 
 \subsubsection{Base extensions}
 
 In this section we suppose that $h: X \to Y$ is a finite flat morphism of regular projective connected flat schemes over $\mathbb{Z}$ of
 dimension $2$.  Let $[X:Y]$ be the degree of $h$.  Then $h$ induces a map of simplicial sets $h:S(X) \to S(Y)$. The following
 result will be used in  \S \ref{s:redP1}.  
 
 \begin{proposition}
 \label{prop:finitefree} For all Parshin chains $\eta$ in $S(Y)$, the homomorphism $\O_Y \to h_* \O_X$ of sheaves of rings gives an isomorphism
 \begin{equation}
 \label{eq:blap}
 \O_X \otimes_{\O_Y} \hat{\O}_{Y,\eta} \to \oplus_{\eta' \in h^{-1}(\eta)} \hat{\O}_{X,\eta'}
 \end{equation}
 of free $\hat{\O}_{Y,\eta}$-modules of rank $[X:Y]$.
 \end{proposition}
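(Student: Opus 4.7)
My plan is to reduce the statement to an explicit ring-theoretic identity over a finite flat extension of Noetherian rings. First, fix an affine open $U=\Spec(R)\subset Y$ containing $\eta(0)$; by the Parshin-chain condition every $\eta(i)$ lies in the closure of $\eta(0)$, hence in $U$, and corresponds to a prime $\mathfrak{p}(i)\subset R$. Since $h$ is finite flat, $h^{-1}(U)=\Spec(A)$ with $A$ a locally free $R$-algebra of rank $[X:Y]$. The localization $A_{\mathfrak{p}(0)}$ is then free of rank $[X:Y]$ over the local ring $R_{\mathfrak{p}(0)}$, and since $\hat{\O}_{Y,\eta}$ is an $R_{\mathfrak{p}(0)}$-algebra via its outermost localization,
\[
\O_X\otimes_{\O_Y}\hat{\O}_{Y,\eta} \;=\; A\otimes_R \hat{\O}_{Y,\eta} \;=\; A_{\mathfrak{p}(0)}\otimes_{R_{\mathfrak{p}(0)}}\hat{\O}_{Y,\eta}
\]
is immediately free of rank $[X:Y]$ over $\hat{\O}_{Y,\eta}$. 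So the only real content of (\ref{eq:blap}) is a canonical identification of this module with $\bigoplus_{\eta'\in h^{-1}(\eta)}\hat{\O}_{X,\eta'}$.

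Proposition \ref{prop:multi} applied to the coherent sheaf $h_*\O_X$ on $Y$ and, separately, to $\O_X$ on the affine scheme $h^{-1}(U)=\Spec(A)\subset X$ gives
\[
A\otimes_R \hat{\O}_{Y,\eta} \;=\; C_{\mathfrak{p}(0)} S_{\mathfrak{p}(0)}^{-1}\cdots C_{\mathfrak{p}(n)} S_{\mathfrak{p}(n)}^{-1} A, \qquad \hat{\O}_{X,\eta'} \;=\; C_{\mathfrak{q}(0)} S_{\mathfrak{q}(0)}^{-1}\cdots C_{\mathfrak{q}(n)} S_{\mathfrak{q}(n)}^{-1} A,
\]
where $\mathfrak{q}(i)\subset A$ denotes the prime of $\eta'(i)$. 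By definition the Parshin chains $\eta'\in h^{-1}(\eta)$ are in bijection with chains $\mathfrak{q}(0)\subset\cdots\subset\mathfrak{q}(n)$ of primes of $A$ satisfying $\mathfrak{q}(i)\cap R=\mathfrak{p}(i)$, a finite set whose non-emptiness is guaranteed by going-down for the flat morphism $h$ and going-up for the integral extension $A/R$. The problem therefore reduces to the ring-theoretic identity
\[
C_{\mathfrak{p}(0)} S_{\mathfrak{p}(0)}^{-1}\cdots C_{\mathfrak{p}(n)} S_{\mathfrak{p}(n)}^{-1} A \;\cong\; \bigoplus_{\mathfrak{q}(\bullet)\text{ over }\mathfrak{p}(\bullet)} C_{\mathfrak{q}(0)} S_{\mathfrak{q}(0)}^{-1}\cdots C_{\mathfrak{q}(n)} S_{\mathfrak{q}(n)}^{-1} A.
\]

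I would prove this identity by induction on $n$. The base case $n=0$ is the standard decomposition $\hat{A}_{\mathfrak{p}}=\prod_{\mathfrak{q}\mid\mathfrak{p}}\hat{A}_{\mathfrak{q}}$ for the finite extension $R\to A$ and a prime $\mathfrak{p}$ of $R$: since $A/R$ is finite, the quotient $A_{\mathfrak{p}}/\mathfrak{p}^k A_{\mathfrak{p}}$ is Artinian for every $k$, so the $\mathfrak{p}$-adic and Jacobson-radical topologies on the semilocal ring $A_{\mathfrak{p}}$ coincide and the completion decomposes as the product of completions at its maximal ideals. For the inductive step I would peel off the innermost pair $C_{\mathfrak{p}(n)} S_{\mathfrak{p}(n)}^{-1}$, decompose via the base case into a direct sum over $\mathfrak{q}(n)\mid\mathfrak{p}(n)$, and apply the inductive hypothesis on each summand $\hat{A}_{\mathfrak{q}(n)}$, regarded as a finite extension of the complete local ring $\hat{R}_{\mathfrak{p}(n)}$, along the shorter chain $(\mathfrak{p}(0),\ldots,\mathfrak{p}(n-1))$. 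I expect the main obstacle to be verifying that the outer operations $C_{\mathfrak{p}(i)} S_{\mathfrak{p}(i)}^{-1}$ for $i<n$, which are initially defined relative to $R$, split compatibly across the $\mathfrak{q}(n)$-decomposition and remain susceptible to the same Artinian-quotient argument after transfer to each $\hat{A}_{\mathfrak{q}(n)}$; this compatibility should follow from the Parshin containment $\mathfrak{p}(i)\subset \mathfrak{p}(n)$, which forces $\mathfrak{p}(i)\hat{R}_{\mathfrak{p}(n)}$ to sit inside the maximal ideal of $\hat{R}_{\mathfrak{p}(n)}$ and therefore to be handled by the base-case decomposition stage by stage.
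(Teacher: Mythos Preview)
Your reduction to Huber's formula and the base case $n=0$ are both correct, and your identification of the obstacle is accurate. But the resolution you offer---that $\mathfrak{p}(i)\hat{R}_{\mathfrak{p}(n)}$ sitting inside the maximal ideal lets the base-case argument run again---does not address the real difficulty. After completing at $\mathfrak{q}(n)$, to invoke the inductive hypothesis for the extension $\hat{R}_{\mathfrak{p}(n)}\to\hat{A}_{\mathfrak{q}(n)}$ you would need a Parshin chain of \emph{primes} of $\hat{R}_{\mathfrak{p}(n)}$, and the extended ideals $\mathfrak{p}(i)\hat{R}_{\mathfrak{p}(n)}$ are in general not prime: a local equation for $\eta_{i}$ may factor in the completion at $\eta_n$ (these are the analytic branches). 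One could instead sum over chains of primes of $\hat{R}_{\mathfrak{p}(n)}$ lying above the $\mathfrak{p}(i)$, but then the inductive hypothesis yields a decomposition indexed by chains of primes of $\hat{A}_{\mathfrak{q}(n)}$, and matching those back to chains $\mathfrak{q}(0)\subset\cdots\subset\mathfrak{q}(n-1)\subset\mathfrak{q}(n)$ of primes of $A$---so as to recover the target $\hat{\O}_{X,\eta'}$---is exactly the nontrivial step left unargued.

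The paper takes a different reduction: it peels off the \emph{outermost} operation. When $\eta(0)=\eta_0$ is the generic point, $\hat{\O}_{Y,(\eta_0,\eta(1),\ldots,\eta(n))}=K(Y)\otimes_{\O_Y}\hat{\O}_{Y,(\eta(1),\ldots,\eta(n))}$, so the isomorphism follows at once from the shorter chain. In dimension~$2$ this reduces everything to the single case $\eta=(\eta_1,\eta_2)$, which the paper handles directly via the branch decomposition $\hat{\O}_{Y,\eta_1\eta_2}=\prod_\alpha B_\alpha$ of Lemma~\ref{lem:ex12}: it remains to show $\O_X\otimes_{\O_Y}B_\alpha\cong\bigoplus_{\alpha'}B'_{\alpha'}$ for each branch $\alpha$, and the paper does this by an explicit element construction using norms and valuation theory on the \'etale algebra $\bigoplus_{\eta'_2}\mathrm{Frac}(\hat{\O}_{X,\eta'_2})$ over $\mathrm{Frac}(\hat{\O}_{Y,\eta_2})$. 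That concrete argument is essentially what your innermost-first induction would still owe.
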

 
 \begin{proof}  Suppose first that $\eta = \eta(0)$ consists of a single point of $Y$.
 Then $\hat{\O}_{Y,\eta}$ is just the completion $\hat{\O}_{Y,\eta(0)}$ of $Y$ at $\eta(0)$,
 so (\ref{eq:blap}) is clear from the fact that $h: X \to Y$ is finite and flat of degree $[X:Y]$.
 
 Suppose next that (\ref{eq:blap}) holds for some $\eta = (\eta(0),\ldots,\eta(n))$ and that $\eta(0)$ is 
 not the generic point $\eta_Y$ of $Y$.  We now show (\ref{eq:blap}) holds when $\eta$ is replaced by
 $\eta^* = (\eta_Y,\eta(0),\ldots,\eta(n))$, where $h^{-1}(\eta^*) = \{(\eta_X,\eta'): \eta' \in h^{-1}(\eta)\}$.  By Proposition \ref{prop:multi}, 
 $$\hat{\O}_{Y,\eta^*} = K(Y) \otimes_{\O_Y} \hat{\O}_{Y,\eta}$$ when $K(Y) = \O_{Y,\eta_Y}$ is the function field of $Y$.
 Thus since we assumed (\ref{eq:blap})  is an isomorphism, 
  \begin{eqnarray}
 \O_X \otimes_{\O_Y} \hat{\O}_{Y,\eta^*} &=& K(Y) \otimes_{\O_Y} (\O_X \otimes_{\O_Y} \hat{\O}_{Y,\eta})\nonumber\\
 & =& \oplus_{\eta' \in h^{-1}(\eta)} \left ( K(X) \otimes_{\O_X} \hat{\O}_{X,\eta'}\right )\\
&=&\oplus_{\eta^{*'} \in h^{-1}(\eta^*)}  \hat{\O}_{X,\eta{*'}}\nonumber
 \end{eqnarray}
which proves (\ref{eq:blap}) for $\eta^*$.   

To complete the proof it will now be enough to consider the case in
 which $\eta = (\eta_1,\eta_2)$ for some codimension $i$ points $\eta_i$
 such that $\eta_2$ lies on the closure of $\eta_1$.  By (\ref{eq:urpy}), we have an isomorphism
 \begin{equation}
 \label{eq:urpy2}
 \hat{\O}_{Y,\eta} =    \prod_{\alpha} B_\alpha  
 \end{equation}
 where $\alpha$ runs over the irreducible factors in $\hat{\O}_{Y,\eta_2}$ of a  local equation
 $\pi_1 $ for $\eta_1$ in $\O_{Y,\eta_2}$ (as in \ref{eq:pilocal}), and $B_\alpha$ is the dvr which is the completion of the local ring of $\hat{\O}_{Y,\eta}$ at the valuation associated to $\alpha$.  

We obtain the Parshin chains $\eta' = (\eta'_1,\eta'_2)$ in $h^{-1}(\eta)$ by first taking
the points $\eta'_2 \in h^{-1}(\eta_2)$ and by then considering the factorization
of $\pi_1$ in the local ring $\O_{X,\eta'_2} \supset \O_{Y,\eta_2}$ in order to find
the $\eta'_1$ lying over $\eta_1$ which contain $\eta'_2$ in their closure.  Here 
 \begin{equation}
 \label{eq:summer}
 \O_X \otimes_{\O_Y} \hat{\O}_{Y,\eta_2} = \oplus_{\eta'_2 \in h^{-1}(\eta_2)} \hat{\O}_{X,\eta'_2}.
 \end{equation}
 
 For each irreducible factor $\alpha$ of $\pi_1$ in $\hat{\O}_{Y,\eta_2}$ we consider
 the factorization of $\alpha$ into a product of irreducibles in $\hat{\O}_{X,\eta'_2}$ 
for $\eta'_2 \in h^{-1}(\eta_2)$.  These irreducibles give via (\ref{eq:urpy}) with $Y$
replaced by $X$ the dvr summands of each ring $\hat{\O}_{X,\eta'}$ as $\eta' = (\eta'_1,\eta'_2)$
runs over the elements of $h^{-1}(\eta)$.  We see from this that the natural ring homomorphism
$$
\O_X \otimes_{\O_Y} \hat{\O}_{Y,\eta} = \O_X \otimes_{\O_Y} ( \prod_{\alpha} B_\alpha ) \to \oplus_{\eta' \in h^{-1}(\eta)} \hat{\O}_{X,\eta'}
$$
is the direct sum over $\alpha$ of the homomorphisms
\begin{equation}
\label{eq:valext}
\mu_\alpha: \O_X \otimes_{\O_Y} B_\alpha \to \oplus_{\alpha'} B'_{\alpha'}
\end{equation}
where $\alpha'$ ranges over the irreducible factors of $\alpha$ in $\hat{\O}_{X,\eta'_2}$ as $\eta'_2$
ranges over the elements of $h^{-1}(\eta_2)$, and where $B'_{\alpha'}$ is the completion of
$\hat{\O}_{X,\eta'_2}$ with respect to the discrete valuation associated to $\alpha'$.  To complete the proof of  
Proposition \ref{prop:finitefree}
it will suffice to show that (\ref{eq:valext}) is an isomorphism.

From (\ref{eq:summer}) and the fact that $X$ is flat and finite over $Y$ we conclude that
the sum
\begin{equation}
\label{eq:sumfrac}
\oplus_{\eta'_2 \in h^{-1}(\eta_2)} \mathrm{Frac}(\hat{\O}_{X,\eta'_2})
\end{equation}
of the fraction fields of the summands on the right side of (\ref{eq:summer}) is
an \'etale algebra of dimension $[X:Y]$ over the characteristic $0$ field
$\mathrm{Frac}(\hat{\O}_{Y,\eta_2})$. The ring $B_\alpha$ is the completion of
 the discrete valuation ring of $\mathrm{Frac}(\hat{\O}_{Y,\eta_2})$ associated to $\alpha$.
The rings $B'_{\alpha'}$ on the right side of (\ref{eq:valext}) are the completions
of the discrete valuation rings of the summands in (\ref{eq:sumfrac}) at extensions
of the valuation associated to $\alpha$. Thus by the theory of discrete valuations
in finite separable extensions of fields, we see that  the right hand side of (\ref{eq:valext})
is a free $B_\alpha$-module of rank $[X:Y]$.  The left hand side of   (\ref{eq:valext}) is
a $B_\alpha$-module which is generated by less than or equal to $[X:Y]$ elements since
$\O_X$ is a locally free $\O_Y$-module of rank $[X:Y]$. Thus to show that (\ref{eq:valext})
is an isomorphism 
it will suffice to show that if $w$ is an element of the residue field $k(\alpha')$ of a summand
$B'_{\alpha'}$ on the right side of (\ref{eq:valext}), then there is an element of 
the image of $\mu_{\alpha}$ whose component at $\alpha'$ is congruent to $w$
mod the maximal ideal of $B'_{\alpha'}$ and whose component in any other
summand $B'_{\alpha''}$ appearing on the right in (\ref{eq:valext}) is in the maximal ideal of $B'_{\alpha''}$.

We know that there is a closed point $\eta'_2$ lying over $\eta_2$ such that
${\alpha'}$ is an irreducible factor of $\pi_1$ in $\hat{\O}_{X,\eta'_2}$ and $B'_{\alpha'}$
is the completion of the localization of $\hat{\O}_{X,\eta'_2}$ at the discrete
valuation associated to $\alpha'$. Thus there are elements $t$, $s \in \hat{\O}_{X,\eta'_2}$
such that $s \not \in \hat{\O}_{X,\eta'_2} \cdot {\alpha'}$ and $w \equiv t/s$ in
$k(\alpha')$.  By multiplying both $t$ and $s$ by the product of a set of representatives
for the irreducible factors $\alpha''$ of $\pi_1$ in $\hat\O_{X, \eta_2'}$ which are 
not associate to $\alpha'$, we may assume that $t$ has image in the maximal ideal of $B'_{\alpha''}$
for all such $\alpha''$.

The factorization of $s$ in the UFD $\hat{\O}_{X,\eta'_2}$ 
does not involve ${\alpha'}$, but it might involve some other irreducibles ${\alpha''}$
which are irreducible factors of $\pi_1$.   However, if ${\alpha''}$ is such an irreducible,
then ${\alpha''} + {\alpha'}$ is congruent to ${\alpha''}\mod \hat{\O}_{X,\eta'_2} \cdot {\alpha'}$
but not congruent to $0\mod \hat{\O}_{X,\eta'_2} \cdot {\alpha''}$.  We can therefore
replace each appearance of an irreducible of the form ${\alpha''}$ in the factorization
of $s$ by ${\alpha''} + {\alpha'}$ so as to be able to assume that $s \not \in \hat{\O}_{X,\eta'_2} \cdot {\alpha''}$
for all irreducible factors ${\alpha''}$ of $\pi_1 $ in $\hat{\O}_{X,\eta'_2}$ (including $\alpha'' = \alpha'$).
Since these $\alpha''$ define all the discrete valuations of $\hat{\O}_{X,\eta'_2}$ which
lie over the discrete valuation of $\hat{\O}_{Y,\eta_2}$ associated to $\alpha$, we
conclude that
$$
g = \mathrm{Norm}_{\hat{\O}_{X,\eta'_2}/\hat{\O}_{Y,\eta_2}}(s)
$$
is an element of $\hat{\O}_{Y,\eta_2}$ which does not lie in $\hat{\O}_{Y,\eta_2} \cdot t_{\alpha}$.
When we view $g$ as an element of $\hat{\O}_{X,\eta'_2}$, it does not lie in $\hat{\O}_{X,\eta'_2}\cdot {\alpha'}$
and it is a multiple of $s$.  Thus $w \equiv t/s \equiv t'/g$ in $k(\alpha')$
where $t' = t (g/s) \in \hat{\O}_{X,\eta'_2}$.  In view of the isomorphism (\ref{eq:summer}), 
we can now find an element $q$ of $\O_X \otimes_{\O_Y} \hat{\O}_{Y,\eta_2} $ whose
image in $\hat{\O}_{X,\eta'_2}$ 
is equal to $t'$ and whose components in $\hat{\O}_{X,\eta''_2}$ is $0$ if $\eta''_2 \ne \eta'_2$ lies over $\eta_2$.  It
follows that the image of $q/g$ under the map $\mu_\alpha$ in (\ref{eq:valext})
has the prescribed image $w$ in the residue field $k(\alpha')$ of the summand
corresponding to $\alpha'$ and image in the maximal ideal 
 in all the other summands.  This
completes the proof. 
 \end{proof}

\subsection{Adeles and cosimplicial structure}
\label{s:adco}

The construction of the adeles associated to the structure sheaf $\O_{Y}$ does not play a major role in this paper.  However, 
we include this
subsection since it will pave the way for the crucial construction of the $\rK_{2}$-adeles associated to $Y$.

Recall that 
$S(Y)$ is the simplicial set associated
to the set $P(Y)$ of all point of $Y$ and the order relation defined by $\eta \ge \eta'$ if $\eta'$ is a point on the closure  $\overline{\eta}$ of $\eta$.  
The $n$-simplex $S(Y)_n$ is the set of all Parshin $n+1$-tuples
$(\eta(0),\ldots,\eta(n))$ of points on $Y$, these being $n+1$-tuples such that  $\eta(0) \ge \eta(1) \ge \cdots \ge \eta(n)$. 
We define the $n$-dimensional adele group of $Y$ to be 
$$
\mathbb{A}_{Y}^{\prime }( n) = A(S(Y)_n,\O_Y)
$$
in the notation of Definition \ref{def:adeldef}.  From this definition we see that there is a natural inclusion
\begin{equation}
\label{eq:includeadel}
\mathbb{A}_{Y}^{\prime }( n) \to \mathbb{A}_{Y}( n) =\prod \what{\mathcal{O}}_{Y,\eta _{I}}
\end{equation}
where the product extends over all Parshin $n+1$-tuples $\eta_I:=\{\eta
_{i_{0}},\ldots,\eta _{i_{n}}\}$ on $Y$.

Suppose  $I = (i_0,\ldots, i_n )$  is an ordered subset of $J=( j_{0},\ldots, j_{m})$.
From Proposition \ref{prop:multi} we have a natural map 
$$
\tau _{I}^{J}:
\what{\mathcal{O}}_{Y,\eta _{I}}\rightarrow \what{\mathcal{O}}_{Y,\eta
_{J}}.
$$ The maps $\tau _{I}^{J}$ may be used to endow the various
multicompletions of $Y$ with a cosimplicial structure.

If we now specify that $I= ( j_{0},\ldots,\widehat{j}_{i_{k}},\ldots, j_{m})$
(so that $n+1=m$), then we define the coboundary map
\begin{equation*}
\mathbb{A}_{Y}(m-1) \overset{\partial _{m-1}}{\longrightarrow }%
\mathbb{A}_{Y}( m)
\end{equation*}%
by stipulating that for $a\in \what{\mathcal{O}}_{Y,\eta _{I}}$, $\partial _{m-1}(a)_J = ( -1 )^{k} \tau_{I}^{J}(a)$. This then
gives us a complex  
\begin{equation*}
\mathbb{A}_{Y}^{\bullet }:\quad \mathbb{A}_{Y}( 0) \overset{%
\partial _{0}}{\longrightarrow }\mathbb{A}_{Y}( 1) \overset{\partial
_{1}}{\longrightarrow } \cdots \overset{\partial
_{d- 1}}{\longrightarrow } \mathbb{A}_{Y}( d) 
\end{equation*}%
when $d = \mathrm{dim}(Y)$.  
There are degeneracy maps induced by mapping the Parshin cycle $( \eta
_{i_{0}},\ldots,\eta_{i_{m-1}})$ of length $m$ to the Parshin cycle
$( \eta _{i_{0}},\ldots,\eta _{i_{k}},\eta _{i_{k}},\ldots,\eta _{i_{m-1}})$
 of length $m+1$. By \cite[\S 2]{Huber}, the inclusion (\ref{eq:includeadel}) gives a complex 
\begin{equation}
\label{eq:realadele}
\mathbb{A}_{Y}^{\prime \bullet }:\quad \mathbb{A}_{Y}^{\prime} ( 0) \overset{%
\partial _{0}}{\longrightarrow }\mathbb{A}_{Y}^{\prime}( 1) \overset{\partial
_{1}}{\longrightarrow } \cdots \overset{\partial
_{d- 1}}{\longrightarrow } \mathbb{A}_{Y}^{\prime}( d) 
\end{equation}
which we will call the adelic complex of $Y$ and which has degeneracy maps
defined in the above way.   

When there is no confusion, we will
also use the symbols $\mathbb{A}_{Y}^{ \bullet }$ and $\mathbb{A}_{Y}^{\prime \bullet }$
to denote the complexes defined as above but using only non-degenerate Parshin  cycles $\left( \eta _{i_{0}},\ldots ,\eta
_{i_{m-1}}\right)$ in which the $\eta _{i_{j}}$ are all distinct. 
Omitting such degenerate cycles does not effect the cohomology of the
complexes we consider -- see the remark after (1) on page 179 of \cite{ParshinCrelle}.

\subsubsection{}
We conclude this section by considering the case in which $Y$ is
a regular integral scheme of dimension $2$.  We will recall from \cite{ParshinCrelle}
the local conditions on elements $\mathbb{A}_{Y}(2)$ 
which are necessary and sufficient for these elements to lie in $\mathbb{A}_{Y}'(2)$.
This motivates the definition of $\rK_2$-adeles to be given in the next section.

Recall that $ K(Y)$ denotes the function field of $%
Y$, so that $K(Y)$ may be identified with the two sheaves of rings on a point $%
\mathcal{O}_{Y,\eta _{0}}$ and $\hat{\mathcal{O}}_{Y,\eta _{0}}$. We
start by considering a Parshin triple $( \eta _{0},\eta _{1},\eta
_{2})$ on $Y$ and we recall that $\hat{\mathcal{O}}_{Y,\eta
_{0}\eta _{2}}=K(Y)\cdot\hat{\mathcal{O}}_{Y,\eta _{2}}$. We write $\hat{\mathcal{O}}_{Y,\eta _{2}}[\eta_{1}^{-1}]$
 for the subring of  elements in $\hat{\mathcal{O}}_{Y,\eta_{0}\eta _{2}}$
 which are regular off the curve $\overline{\eta}_{1}$ and denote by $v_{\eta_1}$ the valuation of $K(Y)$
 that corresponds to $\overline{\eta}_1$.

We let $v_{\eta _{1}\eta_2}$ denote a discrete valuation on $\hat{\mathcal{O}}_{Y,\eta _{1}\eta _{2}}$  
corresponding to one of the components (branches) as in Lemma \ref{lem:ex12} and let $\mathfrak{p}_{\eta _{1}\eta_2}$ denote the corresponding prime ideal of $\hat{\mathcal{O}}_{Y,\eta _{1}\eta _{2}}$. 
We then identify $\mathbb{A}_{Y}^{\prime }( 2) $
with the restricted direct product 
\begin{equation*}
\mathbb{A}_{Y}^{\prime }( 2) =\sideset{}{^\prime}\prod_{(\eta_0,\eta_1,\eta_2)} \hat{\mathcal{O}}_{Y,\eta _{0}\eta _{1}\eta _{2}}
 \subset 
\mathbb{A}_{Y}(2) = \prod_{(\eta_0,\eta_1,\eta_2)}  \hat{\mathcal{O}}_{Y,\eta _{0}\eta _{1}\eta _{2}}
\end{equation*}
consisting of all elements of  $\mathbb{A}_{Y}(2)$ whose terms   $( f_{\eta_0\eta _{1}\eta _{2}})$ with 
$$
f_{\eta_0\eta _{1}\eta
_{2}}\in \hat{\mathcal{O}}_{Y,\eta _{0}\eta _{1}\eta _{2}} = \mathrm{Frac}(\hat{\mathcal{O}}_{Y,\eta
_{1}\eta _{2}})
$$
 satisfy the following two properties (cf. page 179
in \cite{ParshinCrelle}):
\begin{enumerate}
\item[P1.] ({Adelic Property 1})
There exists a divisor $D$ on $Y$ such that for each codimension one
point $\eta _{1}$ on $Y$, each $\eta_2\leq\eta_1$ (and each branch of $\bar\eta_1$ at $\eta_2$)
we have
\begin{equation*} 
v_{\eta _{1}\eta_2}  (f_{\eta_0\eta _{1}\eta _{2}} ) \geq v_{\eta _{1}} (
D ); 
\end{equation*}
\item[P2.] ({Adelic Property 2})
Suppose that $\eta_1$ is a codimension one point on $Y$. Then for any positive integer $k$, for all but a finite number of $\eta_{2}$ on  $\overline{\eta}_{1}$, we have
\begin{equation*}
f_{\eta_0\eta _{1}\eta _{2}}\in \hat{\mathcal{O}}_{Y,\eta _{2}}[\eta_{1}^{-1}]+
\mathfrak{p}_{\eta _{1}\eta_2}^{k}.   
\end{equation*}
\end{enumerate}

\bigskip
\bigskip

\section{Equivariant adelic Chow groups}\label{equivchow}

\setcounter{equation}{0}

\subsection{Generalities.}\label{equivchow-gen}

For $\ell\geq 0$ and for a ring $S$ we let $\Kr_{\ell}(S)$
denote the $\ell$-th $\rK$-group of the ring $S$. For a two-sided ideal ${\mathcal A}$
of $S$ we set $\overline{S}=S/\mathcal A$ for the quotient 
ring. Recall from  \cite[Theorem 6.2]{MilnorK} that we have the long
exact sequence 
\begin{equation}
\label{eq:Kseq}
\begin{split}
\rK_{2} ( S,{\mathcal A} )  \rightarrow \rK_{2} ( S ) &\rightarrow
\rK_{2} ( \overline{S} ) \rightarrow \rK_{1} ( S, {\mathcal A} )
\rightarrow \rK_{1}(S)\rightarrow   \\
&\rightarrow \rK_{1} ( \overline{S}) \rightarrow \rK_{0}(
S, {\mathcal A} ) \rightarrow \rK_{0} ( S ) \rightarrow \rK_{0} (
\overline{S} ).
\end{split}
\end{equation}%
Recall that $\rK_{1}( S, {\mathcal A}) $ may be described as the quotient
group
\begin{equation}
\rK_{1} ( S, {\mathcal A} ) =\frac{{\rm GL} ( S, {\mathcal A}) }{\rE ( S, {\mathcal A}) }
\end{equation}%
where ${\rm GL}( S, {\mathcal A})$ is the subgroup of elements in the full
general linear group ${\rm GL} ( S )$ which are congruent to the
identity $\mod\, {\mathcal A}$ and $\rE ( S, {\mathcal A} )$ is the smallest normal subgroup
of ${\rm GL}  (S)$ containing the elementary matrices $e_{ij} (
a )$ for all $a\in {\mathcal A}$. (See for instance page 93 in \cite{RosenbergBook}.)

\subsection{$\rK_{\ell}$-adeles of arithmetic surfaces}
\label{s:Kadeles}

We suppose in this section that $Y$ is an irreducible  regular flat projective scheme over $\Z$ and that $\ell$ is either $1$ or $2$. 
We now make the following important definitions (cf. Definition 10 on page 719 of \cite{OsipovAdelic}):

\begin{definition}\label{defnonrestrictedproducts}
\begin{enumerate}
\item[a)]   We define \begin{equation*}
 {\rK}_{\ell}( \mathbb{A}_{Y,012}[ G] )
=\prod_{\eta_0\eta_1\eta_2} {\rK}_{\ell} ( \hat{\mathcal{O}}_{Y,\eta _{0}\eta
_{1}\eta _{2}}[G])
\end{equation*}%
where the product is over all non-degenerate Parshin triples.
\smallskip
 \item[b)] For $0 \le i < j \le 2$ we define  $ {\rK}_{\ell}( \mathbb{A}_{Y,ij}[G])  = 
\prod_{\eta _{i}\eta_j} {\rK}_{\ell} ( \hat{\mathcal{O}}%
_{Y,\eta _{i}\eta _{j}} [ G]  ) $
where the product is over all non-degenerate Parshin pairs consisting of a 
codimension $i$ point $\eta_i$ and a codimension $j$ point $\eta_j<\eta_i$.
\smallskip
 \item[b)] For $0 \le i  \le 2$ we define  $ {\rK}_{\ell}( \mathbb{A}_{Y,i}[G])  = 
\prod_{\eta _{i}} {\rK}_{\ell} ( \hat{\mathcal{O}}%
_{Y,\eta _{i}} [ G]  ) $
where the product is over all points $\eta_i$ of
codimension $i$.
\end{enumerate}
\end{definition}

\begin{definition}\label{defrestrictedproducts}
\begin{enumerate}
\item[a)]   We define $ {\rK}_{\ell}^{\prime
}( \mathbb{A}_{Y,012}[ G] ) $ to be the restricted
product
\begin{equation*}
 {\rK}_{\ell}^{\prime }( \mathbb{A}_{Y,012}[ G] )
=\prod{}'\, {\rK}_{\ell} ( \hat{\mathcal{O}}_{Y,\eta _{0}\eta
_{1}\eta _{2}}[G])
\end{equation*}%
consisting of elements $(\kappa_{\eta_0\eta _{1}\eta _{2}})$ as $(\eta_0,\eta_1,\eta_2)$
ranges over all non-degenerate Parshin triples for which  $\kappa _{\eta_0\eta _{1}\eta _{2}}\in {\rK}_{\ell} (\hat{\mathcal{O}}%
_{Y,\eta _{0}\eta _{1}\eta _{2}}[G])$ satisfies  the
following two properties:
\smallskip
 
\begin{enumerate}

\item[(PK1)] Almost all $\eta _{1}$ have the property that
$
\kappa _{\eta_0\eta _{1}\eta _{2}}\in  {\rK}_{\ell}  ( \hat{%
\mathcal{O}}_{Y,\eta _{1}\eta _{2}} [ G ]  )^\flat
$
for all $\eta _{2}<  {\eta }_{1}$, where $ {\rK}_{\ell}( \hat{\mathcal{O}}_{Y,\eta _{1}\eta _{2}} [G] )^{\flat }$ denotes the image of ${\rK}_{\ell}( \hat{%
\mathcal{O}}_{Y,\eta _{1}\eta _{2}} [ G ] )$ in ${\rK}_{\ell} ( \hat{\mathcal{O}}_{Y,\eta _{0}\eta _{1}\eta _{2}} [ G] )$.

\item[(PK2)]
Given $\eta _{1}$ and a positive integer $k$ then for all but a finite
number of closed points $\eta _{2}$ on $\overline{\eta }_{1}$
\begin{equation*}
\kappa _{\eta_0\eta _{1}\eta _{2}}\in  {\rK}_{\ell} ( \hat{%
\mathcal{O}}_{Y,\eta _{1}\eta _{2}}[G] ,\mathfrak{p}_{\eta
_{1}\eta_2}^{k} )^{\flat} \cdot {\rK}_{\ell} ( \hat{\mathcal{O}}%
_{Y,\eta _{2}} [ \eta _{1}^{-1} ]  [ G ]  )^{\flat }
\end{equation*}
where $\hat{\mathcal{O}}_{Y,\eta _{2}}[ \eta _{1}^{-1}]$
denotes the subring of elements in $\hat{\mathcal{O}}_{Y,\eta_{0}\eta_{2}}$ which are regular off the curve $\overline{\eta }%
_{1}$.
\end{enumerate}
(Note that these properties parallel the restricted direct product conditions (P1) and (P2) at the end of \S \ref{s:adco}.  )
 \smallskip
 \smallskip
 
\item[b1)] We define  ${\rK}_{\ell}^{\prime
}( \mathbb{A}_{Y,01}[G]) $ to be the subgroup of
elements $(\kappa _{\eta _{0}\eta _{1}})\in
\prod_{\eta _{1}}{\rK}_{\ell} ( \hat{\mathcal{O}}%
_{Y,\eta _{0}\eta _{1}} [ G]  ) $ with the property that $ 
\kappa _{\eta _{0}\eta _{1}}\in {\rK}_{\ell} ( \hat{\O}_{Y,\eta
_{1}} [ G ]  )^\flat$ for almost all $\eta _{1}$.

\item[b2)]  We define  ${\rK}_{\ell}^{\prime } (
\mathbb{A}_{Y,12} [ G ]  )  =\prod_{\eta _{2}}{\rK}_{\ell} ( \hat{%
\mathcal{O}}_{Y,\eta _{1}\eta _{2}} [ G])$, \emph{i.e.} we impose
no restriction.

\item[b3)] We define ${\rK}_{\ell}' ( \mathbb{A}_{Y,02} [ G
 ]  )$ to be the subgroup of  $\prod_{\eta _{2}}{\rK}_{\ell} ( \widehat{%
\mathcal{O}}_{Y,\eta _{0}\eta _{2}} [ G])$ consisting of $x=(x_{\eta_0\eta_2})_{\eta_2}$
 with the following property: There is a divisor $D\subset Y$ (that could depend on $x$)
 such that: For all $\eta_2$,  $x_{\eta_0\eta_2}$ is in $\rK_\ell(\hat \O_{Y,  \eta_2}[D^{-1}][G])^\flat$
 where $\hat\O_{Y,\eta_2}[D^{-1} ]$ is the subring of  $\hat{\mathcal{O}}_{Y,\eta _{0}\eta _{2}}$
consisting of elements which are regular off $D$.
\smallskip

\item[c)] We define $ {\rK}_\ell^{\prime}(\mathbb{A}_{Y,i} [ G ]  ) =
\prod_{\eta_i} {\rK}_\ell(\hat{\O}_{Y,\eta_i}[G])$, \emph{i.e.} we impose no restriction.
\end{enumerate}
\end{definition}

\begin{remark}\label{ko2}
{\rm The group $ {\rK}_{\ell}' ( \mathbb{A}_{Y,ij} [ G ]  )$ maps diagonally to  $
 \prod {\rK}_{\ell} ( \hat{\mathcal{O}}_{Y,\eta _{0}\eta
_{1}\eta _{2}}[G])$.
We can see that the image $
 {\rK}_{\ell}' ( \mathbb{A}_{Y,02} [ G ]  )^\flat$ is actually a subgroup of the restricted product
$ {\rK}_{\ell}' ( \mathbb{A}_{Y,012} [ G ]  )$. This is not necessarily true
for the images of $
 {\rK}_{\ell}' ( \mathbb{A}_{Y,01} [ G ]  )$ and $
 {\rK}_{\ell}' ( \mathbb{A}_{Y,12} [ G ]  )$.}
 \end{remark}

\subsection{The adelic Chow groups }
\label{def:ECg}

\begin{definition}
\label{def:ith}
For $\ell\in \{1,2\}$, the $\ell$-th equivariant adelic Chow group is defined to be
\begin{equation}\label{23ch1}
{\rm CH}^1_{\Bbb A}(Y[G])=\frac{ {\rK}_{1}^{\prime } ( \mathbb{A}%
_{Y,01} [ G ] )  }{  \prod_{0\leq i\leq 1}   {\rK}_{1} ( \mathbb{A}_{Y, i}[ G ]  )^\flat  },
\end{equation}
\begin{equation}\label{24ch2}
{\rm CH}^2_{\Bbb A}(Y[G])=\frac{ {\rK}_{2}^{\prime } ( \mathbb{A}%
_{Y,012} [ G ] )\cdot \prod_{0\leq i<j\leq 2}  {\rK}_2^\prime(\mathbb{A}_{Y, ij}[G])^\flat }{\prod_{0\leq i<j\leq 2} {\rK}%
_{2}^{\prime } ( \mathbb{A}_{Y,ij} [ G ]  )^\flat }
\end{equation}
where again, the superscript $\flat$ denotes the image 
of the corresponding group in the unrestricted product $\prod_{\eta_0\eta_1} {\rK}_{1} ( \hat{\mathcal{O}}_{Y,\eta _{0}\eta
_{1} }[G])$, resp. 
$\prod_{\eta_0\eta_1\eta_2} {\rK}_{2} ( \hat{\mathcal{O}}_{Y,\eta _{0}\eta
_{1}\eta _{2}}[G])$.
\end{definition}

 In (\ref{23ch1}), both numerator and denominator are
subgroups of $\prod_{\eta_0\eta_1}{\rK}_{1} ( \hat{\mathcal{O}}_{Y,\eta _{0}\eta
_{1} }[G])$. In (\ref{24ch2}),  they are both
subgroups of $\prod_{\eta_0\eta_1\eta_2} {\rK}_{2} ( \hat{\mathcal{O}}_{Y,\eta _{0}\eta
_{1}\eta _{2}}[G])$.

\begin{remark}  \label{remarkOsipov}
{\rm   In general, there are several   notions of  $\rK_2$-adeles
that appear in the literature, see for example \cite{MorrowSurvey}, \cite{KerzIdeles}. Here, we essentially follow constructions of Parshin and Osipov. If $G=\{1\}$ and $Y$ is a smooth algebraic surface over a field, we 
 have an isomorphism ${\rm CH}^1_{\Bbb A}(Y)\xrightarrow{\sim} {\rm Pic}(Y)$.
 Similarly, there is a natural isomorphism ${\rm CH}^2(Y)\xrightarrow{\sim }{\rm CH}^2_{\Bbb A}(Y) $,
 where ${\rm CH}^2(Y)$ denotes the classical Chow group of algebraic cycles of codimension $2$ on $Y$
 up to rational equivalence. This second isomorphism is obtained using the Gersten resolution
 by arguments as in \cite{OsipovAdelic}. Since we are not going to use this, we only sketch the proof:  For $G=\{1\}$, we can see that the diagonal inclusion maps ${\rK}_{2}' ( \mathbb{A}_{Y,01}  )$ into ${\rK}_{2}' ( \mathbb{A}_{Y,012}  )$ and that 
 in fact ${\rK}_{2}' ( \mathbb{A}_{Y,012}  )\cap {\rK}_{2} ( \mathbb{A}_{Y,01}  )^\flat={\rK}_{2}' ( \mathbb{A}_{Y,01}  )^\flat$. To see the inclusion, we can represent elements of $\rK_2(\hat\O_{\eta_0\eta_1})$
 by Milnor symbols $\{ f, g\}$ with $f$, $g\in \hat\O_{\eta_0\eta_1}^\times$ and use this to quickly show
 that the adelic condition (PK2) is satisfied for all $(\kappa_{\eta_0\eta_1})\in {\rK}_{2}( \mathbb{A}_{Y,01}  )$. Now we can check that the adelic condition (PK1) for  the image of $(\kappa_{\eta_0\eta_1})$ in $
 {\rK}_{2}  ( \mathbb{A}%
_{Y,012}  )$
 is equivalent to $(\kappa_{\eta_0\eta_1})\in  {\rK}_{2}( \mathbb{A}_{Y,01}  )$ belonging to ${\rK}_{2}' ( \mathbb{A}_{Y,01}  )$; these two facts imply the statement.  Similarly, by the definitions, we  have ${\rK}_{2}' ( \mathbb{A}_{Y,02}  )^\flat\subset {\rK}_{2}' ( \mathbb{A}_{Y,012}  )$, while we can also check ${\rK}_{2}' ( \mathbb{A}_{Y,012}  )\cap {\rK}_{2} ( \mathbb{A}_{Y,02}  )^\flat={\rK}_{2}' ( \mathbb{A}_{Y,02}  )^\flat$.  This gives an isomorphism
 $$
 {\rm CH}^2_{\Bbb A}(Y) \cong \frac{ {\rK}_{2}^{\prime } ( \mathbb{A}%
_{Y,012}  ) }{ {\rK}_{2}^{\prime } ( \mathbb{A}_{Y,01}   )^\flat\cdot   {\rK}_{2}' ( \mathbb{A}_{Y,02}  )^\flat
\cdot ({\rK}_{2}^{\prime } ( \mathbb{A}_{Y,12})^\flat \cap {\rK}_{2}^{\prime } ( \mathbb{A}%
_{Y,012}  )    ) }.
$$ 
The quotient on the right hand side is, by definition, Osipov's second adelic Chow group.
This together with \cite[Theorem 3]{OsipovAdelic} (in which the main ingredient is the Gersten resolution) 
gives the isomorphism
${\rm CH}^2(Y)\xrightarrow{\sim }{\rm CH}^2_{\Bbb A}(Y) $.
}
\end{remark}

\begin{remark}
{\rm Our definition of the groups ${\rm CH}^\ell_{\Bbb A}(Y[G])$ is designed so
they can be used to compute Euler characteristics of bundles from (complete) local trivializations 
via Chern classes and the Riemann-Roch theorem.
Another reasonable approach  
would be to define the $\ell$-th equivariant Chow group of $Y$ to be 
${\rm CH}^\ell(Y[G]):={\rm H}^\ell_{\rm cd}(Y, {\cal K}_\ell[G])$ where the cohomology group  
is for the completely decomposed (Nisnevich) topology and ${\cal K}_\ell[G]$ is the Nisnevich sheaf associated to the presheaf $U\mapsto \rK_\ell(\O(U)[G])$. However, we do not know how to define directly
a push down map or how to prove a Riemann-Roch formula that would involve Chern classes in these groups. 
One might speculate that, when $Y$ is regular and flat projective over $\Z$,
and $\ell=1, 2$, we have  natural  isomorphisms ${\rm H}^\ell_{\rm cd}(Y, {\cal K}_\ell[G])\xrightarrow{\sim }{\rm CH}^\ell_{\Bbb A}(Y[G]) $.  
Such an isomorphism would provide a more intrinsic interpretation 
of the adelic equivariant Chow groups ${\rm CH}^\ell_{\Bbb A}(Y[G])$.

}
\end{remark}

\subsubsection{ }\label{Frodescription}

Here we recall Fr\"{o}hlich's adelic description of the class group of a
group ring; for details see \cite{MJTclassgroups} and \cite{FrohlichBook}. We define $\Cl ( \Z [
G ]  )$ to be the kernel of the extension of scalars map $\ker
 ( \rK_{0} ( \Z[ G ]  ) \rightarrow \rK_{0} (
\Q [ G ]  )  )$. By \cite{SwanAnnals}, this coincides with the subgroup $\rK_0^{\rm red}(\Z[G])$
of $\rK_0(\Z[G])$ generated by elements of the form $[M]-{\rm rank}(M)\cdot [\Z[G]]$. Then from Ch.~I Sect.~3 in \cite{MJTclassgroups}
and Ch.~II Sect.~1 in \cite{FrohlichBook} we know that,   there is a natural isomorphism
\begin{equation}\label{fro2.5}
\Cl ( \Z[G] ) \cong \frac{ \prod^\prime_p\rK_{1} (
\Q_{p}[G])) }{\rK_{1} ( \Q
 [ G ]  )^{\flat } \prod_p \rK_{1} ( \Z_{p} [ G]  )^{\flat } }.
\end{equation}
Here: $\rK_{1} ( \Z_{p}[G])^{\flat}$ denotes
the image of $\rK_{1} ( \Z_{p}[G])$ in $\rK_{1} ( \Q_{p}[G])$; the restricted product
$ \prod^\prime_p \rK_{1} ( \Q_{p}[G])$ in the
numerator consists of elements almost all of whose terms lie in 
the subgroup $\rK_{1} ( \Z_{p}[G])^{\flat}$; and 
$\rK_{1} ( \Q[G])^{\flat }$ denotes the image
of $\rK_{1} ( \Q[G])$ in $ \prod^\prime \rK_{1} ( \Q_{p} [ G ]  )$.
Now notice that we can interpret the right hand side 
of (\ref{fro2.5}) as ${\rm CH}^1_{\mathbb{A}}({\Spec(\Z)[G]})$ (see Definition \ref{def:ith}).
Hence, we obtain an isomorphism
$\Cl ( \Z [ G ]  )\cong {\rm CH}^1_{\mathbb{A}}({\Spec(\Z)[G]})$. 
See also \S \ref{ChernFrohlich}.

\subsection{${\rm SK}_1$ of $p$-adic group rings.}\label{s:SK_1}

\subsubsection{ }\label{ss:2.2.1}

Throughout this subsection $R$ will always denote a commutative ring which is an integral domain
with field of fractions $N$. We define the group ${\rm SK}_{1}(R) $ to be the kernel of the 
group homomorphism  ${\rm Det}: \Kr_{1}(R) \rightarrow
\rK_{1} ( N)= N  ^\times $ induced by ring extension. We recall from
\cite[45.12, p. 142]{CurtisReiner} that if $R$ is in addition local, then $\rSK_{1}(R) =\{1\}$.

\begin{lemma}\label{sk_1lemma1}
For any field $N$  and for an indeterminate $t$ we
have $$\rSK_{1}(N[ t,t^{-1}])=\{1\}.$$
\end{lemma}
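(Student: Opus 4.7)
The plan is to compute $\rK_{1}(N[t,t^{-1}])$ explicitly, observe that every class is represented by a unit of $N[t,t^{-1}]$, and then note injectivity of the composition with the inclusion $(N[t,t^{-1}])^\times \hookrightarrow N(t)^\times$. The cleanest route is via the Bass--Heller--Swan fundamental theorem, though an equally short argument works through Smith normal form over the PID $N[t,t^{-1}]$.

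Since $N$ is a field, it is a regular Noetherian ring, and Bass--Heller--Swan provides a canonical split short exact sequence
\begin{equation*}
0 \longrightarrow \rK_{1}(N) \longrightarrow \rK_{1}(N[t,t^{-1}]) \longrightarrow \rK_{0}(N) \longrightarrow 0,
\end{equation*}
giving $\rK_{1}(N[t,t^{-1}]) \cong N^\times \oplus \mathbb{Z}$. Here the $N^\times$ summand arises from the natural map $\rK_{1}(N) \to \rK_{1}(N[t,t^{-1}])$ and the $\mathbb{Z}$ summand is generated by the class $[t]$ of the $1 \times 1$ matrix $(t)$. Consequently every element of $\rK_{1}(N[t,t^{-1}])$ has the form $[a]+n[t]$ for unique $a \in N^\times$ and $n \in \mathbb{Z}$, and is the $\rK_{1}$-class of the unit $at^{n} \in (N[t,t^{-1}])^\times$.

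Under the determinant map $\rK_{1}(N[t,t^{-1}]) \to \rK_{1}(N(t)) = N(t)^\times$ induced by the inclusion $N[t,t^{-1}] \hookrightarrow N(t)$, the element $[a]+n[t]$ is carried to $at^{n} \in N(t)^\times$, so this map factors as the natural inclusion $(N[t,t^{-1}])^\times \hookrightarrow N(t)^\times$. The latter is visibly injective: applying the $t$-adic valuation on $N(t)$ to a relation $at^{n}=1$ forces $n=0$ and then $a=1$. This gives $\rSK_{1}(N[t,t^{-1}]) = \{1\}$. There is no serious obstacle here; the only mild subtlety is the appeal to Bass--Heller--Swan, which if desired can be replaced by the direct observation that $N[t,t^{-1}]$ is a localization of the Euclidean domain $N[t]$ and hence a PID, so that Smith normal form combined with Whitehead's lemma already shows $\rK_{1}(N[t,t^{-1}])$ is generated by units, and the same conclusion follows.
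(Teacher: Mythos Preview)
Your proof is correct and follows essentially the same route as the paper: the paper defers to \S\ref{9b2}, where the Bass--Heller--Swan decomposition is invoked to identify $\rK_1(N[t,t^{-1}])$ with $N^\times \oplus \Z$ (generated by constants and $[t]$), from which injectivity of $\mathrm{Det}$ into $N(t)^\times$ is immediate. Your alternative remark via the PID structure of $N[t,t^{-1}]$ is a nice self-contained shortcut that avoids appealing to Bass--Heller--Swan, though the paper does not take that route.
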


\begin{proof}  In \S \ref{9b2}   we show that Det is injective on $\Kr_{1}(N[ t,t^{-1}])$.  
\end{proof}
\smallskip

\begin{proposition}\label{sk_1vanish}    Suppose that $R$ is a regular local Noetherian ring. Let $f_1, f_2,\ldots , f_n$ be a sequence of irreducible elements of $R$ such that,
 for all $i=1, \ldots, n$, 
 $R_{f_1\cdots f_{i-1}}/(f_i)$ is regular and satisfies $\rK_0(R_{f_1\cdots f_{i-1}}/(f_i))\simeq \Z $
 given by rank. Then 
 $\rK_0(R_{f_1\cdots f_n})\simeq  \Z$ and $\rSK_1(R_{f_1\cdots f_n})=\{1\}$.
\end{proposition}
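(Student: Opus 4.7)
The approach is induction on $n$, writing $R_i := R_{f_1\cdots f_i}$ so $R_0 = R$. The base case $n=0$ is immediate: $R$ is regular local Noetherian, hence by Auslander--Buchsbaum a UFD in which every finitely generated projective is free, giving $\rK_0(R)\simeq\Z$ by rank, while $\rSK_1(R)=\{1\}$ is the fact from \S\ref{ss:2.2.1} cited from \cite[45.12]{CurtisReiner}. For the inductive step, observe that each $R_i$ is a regular Noetherian UFD (as a localization of the UFD $R$) with the same fraction field $N=\mathrm{Frac}(R)$; irreducibility of $f_i$ in the UFD $R_{i-1}$ makes $f_i$ prime, so the $f_i$-adic valuation $v_{f_i}\colon N^\times\to\Z$ is defined and $R_{i-1,(f_i)}$ is a DVR with residue field $k(f_i)$.

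Given the inductive hypothesis for $R_{i-1}$, I would apply Quillen's localization long exact sequence to the closed immersion $\Spec(R_{i-1}/(f_i))\hookrightarrow\Spec(R_{i-1})$ with open complement $\Spec(R_i)$. Since all three rings are regular Noetherian, $G$-theory coincides with $K$-theory and the sequence reads
\begin{equation*}
\rK_1(R_{i-1})\xrightarrow{j^*}\rK_1(R_i)\xrightarrow{\partial}\rK_0(R_{i-1}/(f_i))\xrightarrow{i_*}\rK_0(R_{i-1})\to\rK_0(R_i)\to 0.
\end{equation*}
The hypothesis identifies $\rK_0(R_{i-1}/(f_i))\simeq\Z$ on the generator $[R_{i-1}/(f_i)]$; the length-one projective resolution $0\to R_{i-1}\xrightarrow{f_i}R_{i-1}\to R_{i-1}/(f_i)\to 0$ then gives $i_*[R_{i-1}/(f_i)]=[R_{i-1}]-[R_{i-1}]=0$, so $i_*=0$. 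Hence $\rK_0(R_{i-1})\xrightarrow{\sim}\rK_0(R_i)$, and induction yields $\rK_0(R_i)\simeq\Z$ by rank, handling the $\rK_0$ part.

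For the $\rSK_1$ statement, the plan is to factor Quillen's boundary as $\partial=v_{f_i}\circ\Det$. To this end I would invoke functoriality of the localization sequence under the flat base change $R_{i-1}\to R_{i-1,(f_i)}$: the induced square
\begin{equation*}
\begin{matrix}
\rK_1(R_i)&\xrightarrow{\partial}&\rK_0(R_{i-1}/(f_i))\\
\Det\,\downarrow&&\downarrow\\
N^\times&\xrightarrow{v_{f_i}}&\rK_0(k(f_i))
\end{matrix}
\end{equation*}
commutes, the bottom row being the standard DVR boundary (i.e.\ the valuation) and the right vertical the identity $\Z\to\Z$ under the rank identifications. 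If now $x\in\rSK_1(R_i)$, so $\Det(x)=1$ in $N^\times$, then $\partial(x)=0$, and by exactness $x=j^*(y)$ for some $y\in\rK_1(R_{i-1})$; naturality of $\Det$ gives $\Det(y)=\Det(x)=1$, so $y\in\rSK_1(R_{i-1})=\{1\}$ by induction, whence $x=1$.

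The main obstacle is justifying the displayed commutative square, i.e.\ the identification of $\partial$ with the $f_i$-adic valuation on determinants. This is a standard compatibility of Quillen's localization boundary with flat base change, combined with the computation of the boundary at a DVR, but one must check that the rank identification $\rK_0(R_{i-1}/(f_i))\simeq\Z$ is preserved by pullback to $\rK_0(k(f_i))\simeq\Z$—which is precisely what the hypothesis on $\rK_0(R_{i-1}/(f_i))$ guarantees.
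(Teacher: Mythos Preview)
Your proof is correct and follows essentially the same route as the paper: induction on $n$, the localization sequence for $R_{i-1}\to R_i$ with $\phi=i_*=0$ from the Koszul resolution, and then identifying the boundary on $\rK_1$ with the $f_i$-adic valuation on determinants. The paper phrases the final step slightly differently---it compares the localization sequence directly with the short exact sequence $R_g^\times\to (R_{gf})^\times=R_g^\times\times f^{\Z}\to\Z\to 0$ to conclude $\rK_1(R_{gf})\cong R_{gf}^\times$---but this comes down to the same boundary computation you justify via flat base change to the DVR $R_{i-1,(f_i)}$.
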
 
\begin{proof}
Apply induction on $n$. Since $R$ is local, $\rK_0(R)=\Z$, $\rSK_1(R)=\{1\}$, which is  the case $n=0$. Set $g=f_1\cdots f_{n-1}$ and $f=f_n$. We have $R_{f_1\cdots f_n}=(R_g)_f$
and by the induction hypothesis, $\rK_0(R_g)=\Z$ and $\rSK_1(R_g)=\{1\}$, hence $\rK_1(R_g)=R_g^\times$.
Also, $((R_g)_f)^\times=R_g^\times\times f^\Z$. Consider the localization exact sequence
\[
\to \rK_1(R_g)\to \rK_1(R_{gf})\to {\rm G}_0(R_g/(f))\xrightarrow{\phi} \rK_0(R_g)\to \rK_0(R_{gf})\to 0.
\]
Since $R_g$ is regular and multiplication by $f$ is injective on $R_g$, 
the class of $R_g/(f)$ is trivial in $\rK_0(R_g)={\rm G}_0(R_g)=\Z$. Therefore
$\phi$ is the zero map and $\rK_0(R_{gf})\simeq \Z$. By our assumption, 
we have $ {\rm G}_0(R_g/(f))\cong \rK_0(R_g/(f))\simeq \Z$, the isomorphism given by 
the rank. 
By comparing the above exact sequence with 
$$
R_g^\times \to ((R_g)_f)^\times=R_g^\times\times f^\Z \to \Z\to 0
$$
we now obtain that $\rK_1(R_{gf})\cong R_{gf}^\times$ which gives $\rSK_1(R_{f_1\cdots f_n})=\{1\}$.
\end{proof}

\begin{corollary} \label{cor:Blochcor}
Suppose that $R$ is  a regular local Noetherian ring
of Krull dimension $2$. If $S$ is a multiplicative closed
subset of $R-\{0\}$ that contains at least one irreducible element
$f_1$ with $R/(f_1)$ regular, then $\rSK_1(R_S)=\{1\}$.
In particular, this applies to $R=\hat\O_{Y, \eta_2}$ and $S=\O_{Y,\eta_2}-\{0\}$,
resp. $S=\{f^n | n\geq 1\}$, where $f$ is the local equation in $\O_{Y, \eta_2}$ 
of a divisor of $Y$ with an irreducible component which is regular at $\eta_2$.
Then $R_S=\hat\O_{Y,\eta_0\eta_2}$, resp. $R_S=\hat\O_{Y, \eta_2}[D^{-1}]$,
and hence $\rSK_1(\hat\O_{Y,\eta_0\eta_2})=\{1\}$, $\rSK_1(\hat\O_{Y, \eta_2}[D^{-1}])=\{1\}$.
\end{corollary}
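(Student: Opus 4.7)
The plan is to reduce the corollary to Proposition \ref{sk_1vanish} via a filtered colimit argument exploiting that $R$ is a UFD. Since $R$ is regular local Noetherian of Krull dimension $2$, the Auslander--Buchsbaum theorem yields that $R$ is a unique factorization domain; thus every element of $R-\{0\}$ is a unit times a finite product of irreducibles, and the localization $R_S$ depends only on the multiplicative saturation of $S$ (the multiplicative set generated by the irreducibles dividing elements of $S$). Consequently $R_S$ is a filtered colimit of localizations of the form $R_{f_1 f_2\cdots f_n}$, where $\{f_1,\dots,f_n\}$ ranges over finite sets of pairwise non-associate irreducibles containing the distinguished $f_1$ (either literally, in the first application, or as an irreducible divisor of an element of $S$, in the second). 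Since $\rK_1$, and hence $\rSK_1$, commutes with filtered colimits of rings, it suffices to prove $\rSK_1(R_{f_1\cdots f_n})=\{1\}$ for each such tuple.

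For this I would apply Proposition \ref{sk_1vanish} to the ordered sequence $f_1,f_2,\dots,f_n$, which requires verifying that each intermediate quotient $R_{f_1\cdots f_{i-1}}/(f_i)$ is regular with $\rK_0\cong\Z$. For $i=1$ this is the hypothesis of the corollary: $R/(f_1)$ is regular of dimension $1$, hence a discrete valuation ring, whose $\rK_0$ is $\Z$. For $i\ge 2$, the ring $R/(f_i)$ is a $1$-dimensional local domain with only two prime ideals, namely $(0)$ and its maximal ideal; the images $\bar f_1,\dots,\bar f_{i-1}$ are non-zero (as $f_j\notin(f_i)$ for distinct primes $f_j,f_i$) and lie in the maximal ideal (as each $f_j$ is a non-unit). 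Inverting them therefore forces the maximal ideal to become the unit ideal, so $R_{f_1\cdots f_{i-1}}/(f_i)$ is the fraction field of $R/(f_i)$: a field, trivially regular with $\rK_0\cong\Z$. Proposition \ref{sk_1vanish} then yields $\rSK_1(R_{f_1\cdots f_n})=\{1\}$, and passing to the colimit gives $\rSK_1(R_S)=\{1\}$.

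It remains to produce the distinguished $f_1$ in each ``in particular'' case, where $R=\hat\O_{Y,\eta_2}$ is $2$-dimensional regular local (since $Y$ is regular of dimension $2$ at $\eta_2$). For $S=\O_{Y,\eta_2}-\{0\}$, any regular parameter of $\O_{Y,\eta_2}$ gives an element $f_1\in S$ which is part of a regular system of parameters of $\hat\O_{Y,\eta_2}$; it is therefore prime in $R$, and $R/(f_1)$ is a DVR. The identification $R_S=\hat\O_{Y,\eta_0\eta_2}$ is then Proposition \ref{prop:multi}. For $S=\{f^n\mid n\ge 1\}$, the hypothesis that the divisor cut out by $f$ has an irreducible component regular at $\eta_2$ furnishes an irreducible factor $f_1$ of $f$ in $\hat\O_{Y,\eta_2}$ with $R/(f_1)$ regular; writing $f=u\cdot f_1\cdots f_m$ for the factorization into pairwise non-associate irreducibles, one has $R_S=R_f=R_{f_1\cdots f_m}$ and the preceding argument applies. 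The genuine content lies in Proposition \ref{sk_1vanish}; the main technical point added here is the observation that the intermediate quotients $R_{f_1\cdots f_{i-1}}/(f_i)$ collapse to fields, without which one could not verify the $\rK_0$ hypothesis of that proposition for arbitrary additional irreducibles $f_2,\dots,f_n$.
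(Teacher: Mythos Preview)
Your proof is correct and follows essentially the same approach as the paper: reduce via a filtered colimit to localizations $R_{f_1\cdots f_n}$ with the $f_i$ pairwise non-associate irreducibles, then verify the hypotheses of Proposition \ref{sk_1vanish}. The only cosmetic difference is that the paper argues that for $i\ge 2$ the ring $R_{f_1\cdots f_{i-1}}$ is a one-dimensional UFD in which $(f_i)$ is prime (hence maximal), whereas you compute the same quotient as $(R/(f_i))[\bar f_1^{-1},\ldots,\bar f_{i-1}^{-1}]$ and observe it is the fraction field of the one-dimensional local domain $R/(f_i)$; both routes give a field and hence the required regularity and $\rK_0\cong\Z$.
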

\begin{proof}
By taking direct limits we see that it is enough to show that $\rSK_1(R_f)=\{1\}$,
where $f=f_1 f_2\cdots f_n$ with $f_1$ as above and $f_i$ 
irreducible. We can assume that no two distinct $f_i$'s are associates. Then the assumptions
of the proposition are satisfied. Indeed, the localizations $R_{f_1\cdots f_{i-1}}$ are all UFD's of Krull dimension $1$
and the ideals $(f_i)\subset R_{f_1\cdots f_{i-1}}$ are prime. 
\end{proof}

\subsubsection{ }\label{ss:2.2.1b}   From here and on, we suppose that  $N$ has characteristic zero.  
Let $N^{c}$ be a
chosen algebraic closure of $N$.
We now consider the case of group rings and we again denote by $\mathrm{Det}$ the
map
\begin{equation}
\mathrm{Det}:\Kr_{1}(R[G])\rightarrow \Kr_{1} (
N^{c} [ G ] ) =\oplus_\chi (N^{c})^\times
\end{equation}%
where the direct sum extends over the irreducible $N^{c}$-valued characters $\chi$
of $G$. We write $\rSK_{1}(R[G])=\ker  ( \mathrm{Det} )$, so that we have the exact sequence
\begin{equation}
1\rightarrow \rSK_{1}(R[G])\rightarrow \Kr_{1}(R[G]) \rightarrow \mathrm{Det} ( \rK_{1}(R[G])\rightarrow 1.
\end{equation}
We also define ${\rm SL}( R[ G]) $ 
to be the kernel of the composite homomorphism
\begin{equation}\label{eq70}
{\rm SL}( R[ G]) =\ker \left(\GL( R[ G]) \rightarrow \rK_{1}( R[ G] )\xrightarrow{{\rm Det}} 
\rK_{1}( N^{c}[ G] )\right).  
\end{equation}
Clearly $\rE( R[ G] ) \subset {\rm SL}( R[ G])$ and we have the equality $\rE( R[ G] )
={\rm SL}( R[ G])$ precisely when $\rSK_{1}(R[ G])  =(1)$. 
Recall that if $R$ is the ring of integers of a $p$-adic field, then $\rSK_{1} ( R[G]) $ is completely described in Oliver's
papers, see \cite{OliverBAMS}.

 \subsubsection{}\label{approximation} Suppose now in addition that $R$
is  a dvr with maximal ideal $\mathfrak{p}$ and uniformizer $\pi$.  Let
$\hat{N}$, resp. $\hat{R}$, denote the $\mathfrak{p}$-adic completion
of the fraction field $N$, resp. $R$. We denote by $\SL(\hat R[G], {\mathfrak p}^m)$
the subgroup of $\SL(\hat R[G])$ consisting of matrices which are congruent to the identity modulo ${\mathfrak p}^m$. 

Recall that we say that the group algebra $N[G]$ splits if we can write
\begin{equation}\label{splitgroupalgebra}
N[G] = \prod\nolimits_i{{\rm M}}_{m_{i}}( Z_{i}),
\end{equation}
where each $Z_{i}$ is a {\sl commutative} finite field extension of $N$.

\begin{lemma}\label{lemmaApprox}
Assume $N[G]$ splits as above. For $m\geq 0$ we have 
\begin{itemize}
\item[(a)]
 $\SL( \hat N[G]) =\SL( N[G])
  \cdot \SL( \hat R[G],\mathfrak{p}^{m})$;

\item[(b)]
  $\SL( \hat R[G]) =\SL( R[G])
  \cdot \SL( \hat R[G],\mathfrak{p}^{m})$.
\end{itemize}
\end{lemma}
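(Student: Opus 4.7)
The plan is to prove (a) using the splitting (\ref{splitgroupalgebra}), Morita equivalence, and an elementary-matrix approximation based on weak approximation for the fields $Z_i$; then (b) will follow from (a) by a short intersection argument. Base changing (\ref{splitgroupalgebra}) to $\hat N$ gives $\hat N[G] = \prod_i \mathrm{M}_{m_i}(\hat Z_i)$ with $\hat Z_i = \hat N \otimes_N Z_i$, and since $Z_i/N$ is finite separable, $\hat Z_i$ splits as a finite product $\prod_{w \mid \mathfrak p} Z_{i,w}$ of finite extensions of $\hat N$ in which $Z_i$ sits diagonally as a dense subring (weak approximation). Morita equivalence, compatible with the Dieudonn\'e determinant defining $\SL$ via (\ref{eq70}), reduces (a) to showing that for each $i$, each $n \ge 2$, and each $m \ge 0$,
\[
\SL_n(\hat Z_i) \;=\; \SL_n(Z_i)\cdot \SL_n(\hat R_i,\, J^m),
\]
where $\hat R_i \subset \hat Z_i$ is the product of the valuation rings of the $Z_{i,w}$ and $J \subset \hat R_i$ is the product of their maximal ideals.

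To prove this reduction, fix $i$ and drop the subscript. Given $g = (g_w)_w \in \prod_w \SL_n(Z_w)$, use that each $Z_w$ is a field to write (by Gauss elimination, since $n \ge 2$) each $g_w$, and hence $g$ itself, as a finite product $\prod_{k=1}^r e_{a_k b_k}(x^{(k)})$ of elementary matrices with $x^{(k)} \in \hat Z$. By weak approximation pick $y^{(k)} \in Z$ with $x^{(k)} - y^{(k)} \in J^\ell$ at every place above $\mathfrak p$, for a large integer $\ell$ to be chosen. Set $g^{(0)} = \prod_k e_{a_k b_k}(y^{(k)}) \in \SL_n(Z)$ and $h = (g^{(0)})^{-1} g$. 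A direct telescoping expansion in $M_n(\hat Z)$ shows that $g - g^{(0)}$ lies in $J^{\ell - C} M_n(\hat R)$ for a constant $C$ depending only on $g$ (bounding the denominators and the length of the elementary factorization); choosing $\ell$ sufficiently large in terms of $m$ and $C$ then forces $h \in \SL_n(\hat R, J^m)$. Reassembling over $i$ and undoing Morita yields the factorization in (a).

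For (b), take $g \in \SL(\hat R[G]) \subset \SL(\hat N[G])$ and apply (a) to write $g = g_0 g_1$ with $g_0 \in \SL(N[G])$ and $g_1 \in \SL(\hat R[G], \mathfrak p^m)$. Both $g_0 = g g_1^{-1}$ and $g_0^{-1} = g_1 g^{-1}$ have entries in $\hat R[G]$ since $g$ and $g_1^{\pm 1}$ do, and they have entries in $N[G]$ since $g_0 \in \GL(N[G])$. Since $R$ is a dvr with $N \cap \hat R = R$ inside $\hat N$, one has $N[G] \cap \hat R[G] = R[G]$, whence $g_0 \in \GL(R[G])$ and so $g_0 \in \SL(R[G])$, as required.

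The hard part is the bookkeeping in (a): one must control the denominators of the approximating elementary matrices so that the leftover factor $h$ lies in the \emph{prescribed} deep congruence subgroup $\SL(\hat R[G], \mathfrak p^m)$ and not merely in $\SL(\hat R[G])$. Both the denominators of the $y^{(k)}$ (bounded in terms of those of the $x^{(k)}$) and the finite index of $\hat R[G]$ in the maximal order $\prod_{i,w} \mathrm{M}_{m_i}(\hat R_{i,w})$ arising from the splitting contribute to $C$, and one must verify that $C$ can be chosen independently of $\ell$ so that increasing $\ell$ actually produces the required approximation.
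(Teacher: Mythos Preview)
Your proof is correct, but it takes a genuinely different route to part (a) than the paper does.

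The paper works directly at the level of $\hat N[G]$ rather than reducing componentwise via Morita. Given $\hat x \in \SL_n(\hat N[G])$, it first approximates in $\GL_n$ (not $\SL_n$): pick any $y \in \GL_n(N[G])$ with $\hat x y^{-1} = 1 + \pi^a \lambda$, $\lambda \in M_n(\hat R[G])$, using only the density $\GL_n(\hat N[G]) = \GL_n(N[G]) \cdot \GL_n(\hat R[G], \mathfrak p^a)$. The determinant defect of $y$ is then corrected by extracting, in each Wedderburn component $M_{n_i}(Z_i)$, a diagonal matrix $\delta_i = \mathrm{diag}(\xi_i,1,\ldots,1)$ with $\xi_i = \det(y_i)$. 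Since $\Det(\hat x) = 1$, one has $\xi_i^{-1} = \det(1 + \pi^a \lambda_i) \in 1 + \pi^a \hat\O_{Z_i}$, and because $\xi_i \in Z_i$ this forces $\xi_i \in 1 + \pi^a \O_{Z_i}$. Setting $\delta = \prod_i \delta_i$, one writes $\hat x = (y\delta^{-1}) \cdot (\delta(1+\pi^a\lambda))$ with $y\delta^{-1} \in \SL_n(N[G])$ and $\delta(1+\pi^a\lambda) \in 1 + \pi^a \hat{\mathfrak M}_{R,G} \subset 1 + \pi^m \hat R[G]$ for $a = m + r$. The decomposition $1 + \pi^a\lambda = \prod_i e'_i \delta'_i d'_i$ with $e'_i, d'_i \in \rE(\O_{Z_i}, \mathfrak p^a)$, which the paper invokes from \cite{CPTDet}, is what makes $\delta$ absorb cleanly into the second factor.

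Your approach instead uses the elementary factorization $g = \prod_k e_{a_k b_k}(x^{(k)})$ over the field components, so that the approximant $g^{(0)} = \prod_k e_{a_k b_k}(y^{(k)})$ lands automatically in $\SL_n(Z_i)$ with no determinant correction needed. This is more elementary (no appeal to the lemma from \cite{CPTDet}) but, as you correctly identify, forces you to track the valuations of the $y^{(k)}$ and the index of $\hat R[G]$ in the maximal order through a telescoping estimate. That bookkeeping is genuine but routine: since $x^{(k)} - y^{(k)} \in J^\ell$ with $\ell$ large forces $v_w(y^{(k)}) \ge \min(v_w(x^{(k)}),\ell) = v_w(x^{(k)})$, the prefix and suffix products in the telescope have entry-valuations bounded below independently of $\ell$, so your constant $C$ is indeed independent of $\ell$. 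One stylistic point: your sentence ``Morita equivalence reduces (a) to showing $\SL_n(\hat Z_i) = \SL_n(Z_i)\cdot \SL_n(\hat R_i, J^m)$'' is not literally a reduction, since $\prod_i \SL_{nm_i}(\hat R_i, J_i^m)$ is larger than $\SL_n(\hat R[G], \mathfrak p^m)$; what you actually do (and acknowledge in your final paragraph) is prove the componentwise statement at level $J^{m+r}$ and then use $\pi^r \hat{\mathfrak M}_{R,G} \subset \hat R[G]$ to reassemble. It would be cleaner to say this up front.

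Your argument for (b) via $N[G] \cap \hat R[G] = R[G]$ is exactly what the paper has in mind when it says ``(b) follows easily from (a).''
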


\begin{proof} We prove (a), and note that (b) follows easily from (a). 
We let $\hat{\mathfrak{M}}_{R, G}$ denote a maximal $\hat{R}$-order
in $\hat{N}[G]$.
Clearly we can take $\hat{\mathfrak{M}}_{R,G}=\hat R[G]$
 unless the residue characteristic of $R$ divides the order of   $G$.
Under our assumption on $N[G]$ above we can take  
\begin{equation*}
 \hat{\mathfrak{M}}_{R,G}=\sideset{}{_i}\prod{\rm M}_{m_{i}}%
( \hat{\O}_{Z_{i}}) .
\end{equation*}
Write also ${\rm M}_{n}( \hat{N}[G]) = \prod_i{\rm M}_{n_{i}}(
\hat{Z}_{i})$.  We choose $r$ such that $\pi ^{r}\hat{\mathfrak{M}}_{R,G}\subset \hat{R}[G]$ and we set $a=r+m$. Note that, as $N$ is dense in $\hat{N}$, we know that
for any non-negative integer $a$ we have the equality
\begin{equation*}
\GL_{n}(\hat{N}[G]  ) =\GL_{n}( N[G] ) \cdot \GL( \hat{R}[G] , \mathfrak{p}^{a}) .
\end{equation*}%
Let $\hat{x}\in \SL_{n}(\hat{N}[G]  )$ and choose $y\in
\GL_{n}(N [ G ] )$ close to $\hat{x}$, so that $\hat{x}%
y^{-1}=1+\pi ^{a}\lambda $ with $\lambda \in {\rm M}_{n}(\hat{R}[G])$. Then
\begin{equation*}
\mathrm{Det}( 1+\pi^{a}\lambda ) =\mathrm{Det}( y)
^{-1}\in \mathrm{Det}( \GL_{n}( N[G]) )
\cap \mathrm{Det}( 1+\pi ^{a}{\rm M}_{n}( \hat R[G]
) ) .
\end{equation*}
We write $1+\pi ^{a}\lambda =\prod 1+\pi ^{a}\lambda _{i}$ with $\lambda
_{i}\in {\rm M}_{n_{i}}( \hat{\O}_{Z_{i}})$. As $N[G]$
is semi-local we can write $y=\prod\nolimits_{i}y_{i}=\prod 
\nolimits_{i}e_{i}\delta _{i}d_{i}$ where the $e_{i} $ and $d_{i}$ lie in
the group of elementary matrices $\rE( Z_{i})  $, and where $\delta
_{i}$ is diagonal matrix with all non-leading terms 1; so that the leading
diagonal term $\xi _{i}\ $must have $\det ( y_{i}) =\xi_{i}\in
Z_{i}^{\times }$. By Lemma 2.2.b in \cite{CPTDet} we have a similar decomposition
\begin{equation*}
1+\pi ^{a}\lambda =\prod\nolimits_{i}1+\pi ^{a}\lambda
_{i}=\prod\nolimits_{i}e_{i}^{\prime }\delta _{i}^{\prime }d_{i}^{\prime }
\end{equation*}%
where the $e_{i}^{\prime }$ and $d_{i}^{\prime }$ lie in the group of
elementary matrices $\rE( \O_{Z_{i}}, \mathfrak{p}^{a})$, and where $%
\delta _{i}^{\prime }$ is diagonal with all non-leading terms 1; so that the
leading diagonal term must be $\xi _{i}^{\prime }$ with
\begin{equation*}
\det ( y_{i}^{-1}) =\det ( 1+\pi ^{a}\lambda _{i})
=\xi _{i}^{\prime }\in 1+\pi ^{a}\hat{\O}_{Z_{i}}.
\end{equation*}%
Thus we have shown that
\begin{equation*}
\xi _{i}^{-1}=\det ( y_{i}^{-1}) =\det ( 1+\pi ^{a}\lambda
_{i}) =\xi _{i}^{\prime }\in Z_{i}\cap ( 1+\pi ^{a}\hat{\O}%
_{Z_{i}}) =1+\pi^{a}\O_{Z_{i}}.
\end{equation*}%
We set $\delta
=\prod\nolimits_{i}\delta _{i}$; we can then write 
\begin{equation*}
\widehat{x}=(y\delta^{-1} )\cdot  \delta (1+\pi^a\lambda  ) \in \SL_{n}( N[ G ] ) \cdot \SL_{n}( \hat{R}[G], \mathfrak{p}^m)  
\end{equation*}%
since $y\delta^{-1} \in \SL_{n}( N[G] ) $ and $\delta
(1+\pi^a\lambda  ) \in 1+\pi ^{a}\hat{\mathfrak{M}}_{R,G}\subset 1+\pi ^{m}%
\hat{R} [ G ]$.
\end{proof}

\subsubsection{} \label{ss:SK_1paper} 

In this paragraph, we recall some results from \cite{CPTSK1b} (see the introduction of loc. cit.).

\begin{theorem}\label{SK1theorem1} Suppose that $R$ is a Noetherian domain with fraction field
of characteristic zero. Assume that 
the natural map $R\rightarrow\varprojlim_n R/p^{n}R$ is an isomorphism, so that $R$ is $p$-adically
complete. Then for any integer $k\geq 2$, ${\rK}_1(R[G], (p)^k)$ is a
subgroup of $\rK_1(R[G])$ and we have $$ {\rK}_{1}(R[G],(p)^{k}) \cap  {\rSK}_{1}(R[G])=\{1\}.$$
\end{theorem}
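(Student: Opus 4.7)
The plan is to invoke the $p$-adic logarithm, following the method Oliver developed for group rings over rings of integers in $p$-adic local fields and extended in \cite{CPTSK1b} to the $p$-adically complete Noetherian setting. For $k\geq 2$ and any prime $p$, both series
\[
\log(1+x)=\sum_{n\geq 1}(-1)^{n+1}\frac{x^n}{n},\qquad \exp(y)=\sum_{n\geq 0}\frac{y^n}{n!}
\]
converge $p$-adically when $x,y$ lie in $p^k{\rm M}_N(R[G])$, giving mutually inverse bijections between $\GL(R[G],(p)^k)$ and the additive subgroup $p^k{\rm M}(R[G])$; the hypothesis that $R$ is $p$-adically complete is essential for these limits to exist in $R[G]$. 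The key tool is to post-compose with the Hattori--Stallings trace $\Tr:{\rm M}(R[G])\to R[G]/[R[G],R[G]]$: this composition intertwines the determinant $\Det$ on the multiplicative side with the sum-over-characters map on the additive side.

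First I would establish that $\rK_1(R[G],(p)^k)$ really embeds in $\rK_1(R[G])$. This amounts to proving the inclusion $\GL(R[G],(p)^k)\cap\rE(R[G])\subset\rE(R[G],(p)^k)$. The log-exp correspondence together with the Baker--Campbell--Hausdorff formula (all of whose terms converge in the present $p$-adic setting because $k\geq 2$) translates a product of elementary matrices in $\GL(R[G])$ that happens to be congruent to $I$ modulo $(p)^k$ into a product of elementary matrices sitting in $\rE(R[G],(p)^k)$.

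For the main statement, suppose $u$ lies in $\rK_1(R[G],(p)^k)\cap\rSK_1(R[G])$ and set $y=\log u\in p^k{\rm M}(R[G])$. The condition $\Det(u)=1$ gives, for every irreducible $\overline{\Q}_p$-character $\chi$ of $G$, the equality $\chi(\Tr(y))=\log\chi(\Det(u))=0$. Thus $\Tr(y)$ annihilates every character of $G$, hence lies in the $\Q_p$-span of the commutator subspace $[R[G],R[G]]$ inside $R[G]\otimes\Q_p$. The crucial integrality step is to upgrade this rational statement to $\Tr(y)\in[R[G],R[G]]$ already in $R[G]$; granted this, $y$ is itself a sum of commutators of elements of $p^k{\rm M}(R[G])$, and exponentiating exhibits $u$ as a product of commutators in $\GL(R[G],(p)^k)$, so $u\in\rE(R[G],(p)^k)$ and $[u]=1$.

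The main obstacle is the integrality refinement just mentioned. Rationally the inclusion is forced by character theory, but to conclude it integrally requires a careful study of the image of $\Tr\circ\log$ and its comparison with $\log\circ\Det$, which uses Witt-vector and Frobenius-type operators on the center of $R[G]$ together with obstructions living in Tate cohomology; this is the heart of Oliver's approach to $\rSK_1$ and of its extension in \cite{CPTSK1b}. What makes the present case genuinely harder than Oliver's is that $R/pR$ is only assumed Noetherian rather than, say, a perfect field, so the Frobenius-lifting and trace arguments must be adapted to produce the required integrality without relying on perfectness of the residue ring.
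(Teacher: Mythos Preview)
The paper does not prove this theorem in the text: the proof is the single sentence ``This follows from \cite{CPTSK1b} Theorems 1.3 and 1.4.'' Your proposal therefore goes well beyond what the paper does, attempting to sketch the actual argument of \cite{CPTSK1b}. The overall strategy you describe---$p$-adic logarithm, Hattori--Stallings trace, comparison with $\log\circ\Det$ via characters, with the integrality refinement identified as the hard step and deferred to \cite{CPTSK1b}---is indeed the Oliver-style framework that paper extends.

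However, your sketch has a genuine gap at the final deduction. Suppose granted that $\Tr(y)\in[R[G],R[G]]$. You then assert that $y$ is a sum of commutators of elements \emph{of $p^k{\rm M}(R[G])$}, and that exponentiating exhibits $u$ as a product of group commutators in $\GL(R[G],(p)^k)$. Neither step is correct as stated. Vanishing of the Hattori--Stallings trace of $y$ only gives $y=\sum[a_i,b_i]$ with $a_i,b_i\in{\rm M}(R[G])$, not with both factors in $p^k{\rm M}(R[G])$; and even granting that, $\exp\bigl(\sum[a_i,b_i]\bigr)$ is not the product $\prod[\exp a_i,\exp b_i]$, since $\exp$ is not a homomorphism and the BCH correction terms do not vanish. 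What is actually needed is that $\Tr\circ\log$ descends to an \emph{injective} map on $\rK_1(R[G],(p)^k)$, and this injectivity is precisely the non-trivial content one is trying to prove---it is established in \cite{CPTSK1b} via the integral-logarithm machinery (Frobenius-type operators, comparison with Witt vectors and Tate cohomology), not by an elementary commutator count. Your BCH argument for the first claim, that $\rK_1(R[G],(p)^k)$ embeds in $\rK_1(R[G])$, has a parallel defect: an element of $\rE(R[G])$ that happens to lie in the congruence subgroup $\GL(R[G],(p)^k)$ is not automatically a product of elementary generators with entries in $(p)^k$, and BCH on its own does not supply the missing relation.
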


\begin{proof} 
This follows from \cite{CPTSK1b} Theorems 1.3 and 1.4.
\end{proof}

\begin{corollary}\label{SK1density}
Let $R$ be a discrete valuation ring of mixed characteristic with fraction field $N$
and denote by $\hat R$ its $p$-adic completion. 
Assume that $N[G]$ splits as in (\ref{splitgroupalgebra}). 
Then the natural map
$\rSK_1(R[G])\to  {\rSK}_1(\hat R[G])$
is surjective.
\end{corollary}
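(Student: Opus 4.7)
The plan is to combine the approximation Lemma \ref{lemmaApprox}(b) with the vanishing of $\rSK_1$ in high $p$-adic congruence subgroups given by Theorem \ref{SK1theorem1}. Let $\pi$ be a uniformizer of $R$ (which is also a uniformizer of $\hat R$), and let $e \ge 1$ denote the absolute ramification index, so that $(p) = (\pi^e)$ in both $R$ and $\hat R$; in particular $\mathfrak{p}^{ek} = (p^k)$ in $\hat R$.

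Given $\hat x \in \rSK_1(\hat R[G]) \subseteq \rK_1(\hat R[G])$, I would first choose a representative $\hat X \in \SL(\hat R[G])$; this is possible because $\rSK_1(\hat R[G])$ is by definition the kernel of Det on $\rK_1(\hat R[G])$, so any class in $\rSK_1$ is represented by a matrix on which Det vanishes, i.e.\ by a matrix in $\SL(\hat R[G])$. Applying Lemma \ref{lemmaApprox}(b) with $m = 2e$ (the splitting hypothesis on $N[G]$ is in force), I may write $\hat X = y \cdot z$ with $y \in \SL(R[G])$ and $z \in \SL(\hat R[G],\mathfrak{p}^{2e})$.

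The class $[y] \in \rK_1(R[G])$ lies in $\rSK_1(R[G])$ since $y \in \SL(R[G])$, and likewise $[z] \in \rSK_1(\hat R[G])$. Because $\mathfrak{p}^{2e} \subseteq (p^2)$, the matrix $z$ sits inside $\GL(\hat R[G],(p)^2)$, so its class belongs to $\rK_1(\hat R[G],(p)^2)$. Now I would invoke Theorem \ref{SK1theorem1} with $k=2$ applied to $\hat R$, which is $p$-adically complete by construction, Noetherian as a complete dvr, and has characteristic zero fraction field; the theorem yields $\rK_1(\hat R[G],(p)^2) \cap \rSK_1(\hat R[G]) = \{1\}$, so $[z] = 1$ in $\rK_1(\hat R[G])$. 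Hence $\hat x = [\hat X] = [y]$ is the image of $[y] \in \rSK_1(R[G])$ under the natural map, proving surjectivity.

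The step that needs the most care is ensuring the compatibility of the two ingredients: that the approximation Lemma \ref{lemmaApprox}(b) is available (this requires only the splitting assumption on $N[G]$ and the dvr hypothesis, both of which hold), and that the choice $m = 2e$ places the correction term $z$ into the congruence subgroup to which Theorem \ref{SK1theorem1} applies. All other ingredients — representability of $\rSK_1$-classes by $\SL$-matrices, the passage from $\SL(\hat R[G],\mathfrak{p}^{2e})$ to $\GL(\hat R[G],(p^2))$, and the verification of the hypotheses on $\hat R$ — are essentially bookkeeping, so the proof should be very short.
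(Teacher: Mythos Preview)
Your proof is correct and follows essentially the same approach as the paper: approximate an $\SL(\hat R[G])$-representative of $\hat x$ by an element of $\SL(R[G])$ via Lemma~\ref{lemmaApprox}(b), then use Theorem~\ref{SK1theorem1} to kill the congruence error term. The paper phrases the second step as the injectivity of $\rSK_1(\hat R[G]) \to \rSK_1(\hat R/(p)^m[G])$ rather than working directly with the relative $\rK_1$-group, but the content is the same.
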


\begin{proof} Define $\rSK_1(R/ (p)^m[G])$ to be the image of $\rSK_1(R[G])$
in $\rK_1(R/(p)^m[G])$. Theorem  \ref{SK1theorem1} implies  that, for $m\geq 2$, the map
\begin{equation*}
 {\rSK}_{1}( \hat R[G] ) \rightarrow
\rSK_{1}( \hat R/(p)^m[G])  
\end{equation*}%
is injective and hence an isomorphism. The result now follows from
 Lemma \ref{lemmaApprox}  (b).
\end{proof}
\medskip

In \cite{CPTSK1b}, we obtain more precise results about  $ {\rSK}_{1} $ when we assume that, among other additional 
 hypotheses, our coefficient rings afford a lift of Frobenius. We are going to use the following
corollaries of the main result of  \cite{CPTSK1b}. 
Here we will assume that $W=W(k)$ is the ring of integers in a 
 finite unramified extension of $\Q_p$ with residue field $k$.

\begin{corollary}\label{SK1cor1} 
  The inclusion $W\subset W{\lps t\rps}$ induces an isomorphism $$  {\rSK}_{1}(W[G])\xrightarrow{\sim} 
 {\rSK}_{1}( W{\lps t\rps}[G]). $$
\end{corollary}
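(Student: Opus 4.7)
The plan is to combine the section $W \hookrightarrow W\lps t\rps \xrightarrow{t\mapsto 0} W$ with the $p$-adic control given by Theorem~\ref{SK1theorem1} and then invoke the Frobenius-lift results of \cite{CPTSK1b} for the mod-$p^k$ step. First, I would observe that the inclusion admits the retraction $W\lps t\rps\to W$, $t\mapsto 0$, which in turn induces a retraction on $\rSK_1$. Consequently $\rSK_1(W[G])\to\rSK_1(W\lps t\rps[G])$ is automatically split injective, and the corollary reduces to showing that every element $x\in\rSK_1(W\lps t\rps[G])$ whose image in $\rSK_1(W[G])$ under the retraction is trivial must itself be trivial.

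Next, I would check that $W\lps t\rps$ is $p$-adically complete: a $p$-adically Cauchy sequence of power series in $W\lps t\rps$ converges coefficient by coefficient, each coefficient landing in the complete ring $W$. Therefore Theorem~\ref{SK1theorem1} applies, and for every $k\geq 2$
$$
\rK_1(W\lps t\rps[G],(p)^k)\cap \rSK_1(W\lps t\rps[G])=\{1\}.
$$
This says that the reduction map $\rSK_1(W\lps t\rps[G])\to\rSK_1((W/p^k)\lps t\rps[G])$ is injective, so an $x$ in the kernel of the $t\mapsto 0$ retraction is controlled by its image in $\rSK_1((W/p^k)\lps t\rps[G])$. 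Combined with the analogous retraction over $W/p^k$, it suffices to prove that the inclusion $(W/p^k)[G]\hookrightarrow (W/p^k)\lps t\rps[G]$ induces an isomorphism on $\rSK_1$ for every $k\geq 2$.

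The main obstacle is exactly this mod-$p^k$ comparison, and for it I would invoke the main results of \cite{CPTSK1b} cited in \S\ref{ss:SK_1paper}. The ring $W\lps t\rps$ carries the canonical lift of Frobenius $F$ extending the Frobenius on $W$ by $F(t)=t^p$, so the hypotheses of \cite{CPTSK1b} concerning rings admitting a lift of Frobenius are satisfied. In this setting, loc.\ cit.\ describes $\rSK_1$ of the $p$-adic group ring by a logarithmic/Frobenius-invariant formula, and the relevant subquotient is manifestly insensitive to adjoining $t$ because $F(t)=t^p$ forces $t$-adic contributions into arbitrarily deep $t$-adic filtration levels where they are absorbed by the logarithmic denominator. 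Comparing the descriptions for $W$ and $W\lps t\rps$ modulo $p^k$ then shows the retraction is injective on $\rSK_1$, and reversing the chain of reductions yields the corollary.
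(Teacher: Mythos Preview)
The paper gives no proof of this corollary: it is simply recorded in \S\ref{ss:SK_1paper} as one of the results quoted from \cite{CPTSK1b}. Your proposal likewise defers the essential step to \cite{CPTSK1b}, so in substance the two approaches coincide.

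The additional scaffolding you supply is correct but does not reduce the difficulty. The split injectivity via the retraction $t\mapsto 0$ is immediate and fine. The reduction modulo $p^k$ via Theorem~\ref{SK1theorem1} is also valid, but note that $\rSK_1$ over $W/p^k$ is only defined in the paper as the \emph{image} of $\rSK_1$ from characteristic zero (cf.\ the proof of Corollary~\ref{SK1density}); with that convention, your reformulated ``mod-$p^k$ comparison'' is essentially the original statement again, not a simpler one. Your closing heuristic about $F(t)=t^p$ pushing $t$-adic terms to arbitrary depth is reasonable intuition, but it is not a proof: the genuine computation of $\rSK_1$ for such $p$-adically complete group rings via the Frobenius-lift logarithmic method is precisely the main theorem of \cite{CPTSK1b}, and that is where all the content resides, both in the paper and in your proposal.
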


\begin{corollary}\label{SK1cor2}

a) The  inclusion $W \langle \langle t^{-1} \rangle  \rangle \subset
W{\ldb t\rdb}$ induces an isomorphism 
$$
{\rSK}_{1} ( W\langle\! \langle t^{-1} \rangle\!  \rangle [ G ]  )\xrightarrow{\sim} {\rSK}_{1} ( W{\ldb t\rdb} [ G ]  ).
$$

b) The inclusion $W{\lps t\rps}\subset W{\ldb t\rdb}$
induces an injection 
$
{\rSK}_{1} ( W\lps t \rps [ G ]  )\hookrightarrow {\rSK}_{1} (W{\ldb t\rdb} [ G ]  ).
$
\end{corollary}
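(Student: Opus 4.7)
The plan is to derive both statements by invoking the main results of [CPTSK1b] after reducing modulo powers of $p$, in the same spirit as the proof of Corollary \ref{SK1density}. Each of $W\lps t\rps$, $W\langle\!\langle t^{-1}\rangle\!\rangle$, and $W\ldb t\rdb$ is a $p$-adically complete Noetherian integral domain of characteristic zero, so Theorem \ref{SK1theorem1} applied to each such $R$ gives, for every $m\geq 2$, an injection $\rSK_1(R[G])\hookrightarrow \rSK_1(R/(p)^m[G])$; a Mittag-Leffler argument then identifies $\rSK_1(R[G])$ with $\varprojlim_m \rSK_1(R/(p)^m[G])$. Under these identifications, (a) and (b) reduce to the corresponding statements for the inclusions $W/p^m[t^{-1}][G]\hookrightarrow W/p^m\llps t\lrps[G]$ and $W/p^m\lps t\rps[G]\hookrightarrow W/p^m\llps t\lrps[G]$, uniformly in $m\geq 2$.

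For part (b), injectivity modulo $p^m$ would follow by combining Corollary \ref{SK1cor1} with the Bass--Heller--Swan-type localization sequence for $\rK_1$ at the multiplicative set $\{t^n\}\subset W/p^m\lps t\rps[G]$: since the inclusion $W\hookrightarrow W\lps t\rps$ already induces an isomorphism on $\rSK_1$ of group rings, a class in $\rSK_1(W/p^m\lps t\rps[G])$ is determined by its image in $\rSK_1(W/p^m[G])$, and the splitting of the localization sequence (associated to the $t$-adic valuation) recovers this latter class from its image in $\rSK_1(W/p^m\llps t\lrps[G])$, giving the desired injectivity.

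The surjectivity required for (a) is the main obstacle and is precisely where the technical heart of [CPTSK1b] enters. Given a class represented by $M\in\SL(W/p^m\llps t\lrps[G])$, one would reduce, by multiplying on left and right by elementary matrices and by matrices in $\SL(W/p^m\lps t\rps[G],(t))$ (which lie in the commutator subgroup because the ideal $(t)$ is $t$-adically complete in $W/p^m\lps t\rps$), to a matrix with entries in $W/p^m[t^{-1}][G]$. The delicate point, handled by [CPTSK1b], is that this clearing-denominators procedure must be performed compatibly with the Det map, so that the resulting matrix actually represents a class in $\rSK_1$ of the polynomial subring rather than merely in $\rK_1$; concretely, the hardest step is controlling the interaction between the polar part (in $W/p^m[t^{-1}]$) and the holomorphic part (in $tW/p^m\lps t\rps$) of a Laurent series representative without leaving $\rSK_1$, which is exactly the structural information about group ring $\rSK_1$ extracted from the main result of [CPTSK1b].
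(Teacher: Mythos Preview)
The paper does not give a proof of this corollary; it is simply recorded (along with Corollary~\ref{SK1cor1}) as a consequence of the main result of \cite{CPTSK1b}. That main result, as invoked later in the proof of Proposition~\ref{perfectProp}, gives an explicit identification
\[
\rSK_1(R[G]) \;\simeq\; \rSK_1(\Z_p[G]) \otimes_{\Z} \bigl(R/(1-F)\,R\bigr)
\]
for $p$-adically complete rings $R$ carrying a Frobenius lift $F$. With this formula, both parts of the corollary reduce to a direct comparison of the Frobenius coinvariants of $W\langle\!\langle t^{-1}\rangle\!\rangle$, $W\lps t\rps$, and $W\ldb t\rdb$, which is an elementary computation with power series.

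Your proposal attempts instead to bypass the structural formula by reducing modulo $p^m$ and arguing directly. There are two difficulties. First, the reduction is close to circular: the paper defines $\rSK_1(R/(p)^m[G])$ as the image of $\rSK_1(R[G])$, so the ``finite-level'' statement you reduce to is the same statement you started with, and no intrinsic definition of $\rSK_1$ for the non-reduced rings $W/p^m$, $(W/p^m)\lps t\rps$, $(W/p^m)\llps t\lrps$ is available here. Second, and more concretely, your argument for part~(b) invokes a ``splitting of the localization sequence associated to the $t$-adic valuation'' to produce a map from $\rK_1((W/p^m)\llps t\lrps[G])$ back to $\rK_1((W/p^m)[G])$. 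No such map exists: the boundary in the localization sequence for inverting $t$ lands in $\rK_0$, not $\rK_1$, and there is no ring retraction from $(W/p^m)\llps t\lrps$ or $W\ldb t\rdb$ to $W/p^m$ or $W$, since $t$ must go to a unit and evaluation at any unit diverges on the positive-degree tail of a Laurent series. The Bass--Heller--Swan splitting you have in mind applies to $A[t,t^{-1}]$, not to $A\llps t\lrps$. So the key step ``recovers this latter class from its image in $\rSK_1(W/p^m\llps t\lrps[G])$'' does not go through, and part~(b), like part~(a), genuinely requires the structural input from \cite{CPTSK1b}.
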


\subsubsection{ }\label{vanishing}

Let us also record:

\begin{lemma}\label{sk_1lemma2} Suppose that $\Q_p[G]$ splits. Then we have:

a) $ {\rSK}_{1} (\Q\otimes_{\Z_p}\Z_{p} \langle\!
 \langle t^{-1} \rangle\!  \rangle[G])=\{1\}$, 

b)  If also $p$ does not divide the order of $G$, we have $ {\rSK}_{1} ( \Z_{p} \langle\!  \langle
t^{-1} \rangle\!  \rangle[G]  ) =\{1\}$.
\end{lemma}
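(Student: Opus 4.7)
The strategy is to use the splitting assumption on $\Q_p[G]$ together with Morita equivalence to reduce each statement to the vanishing of $\rSK_1$ for appropriate commutative arithmetic Tate algebras.

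For part (a), the splitting $\Q_p[G] = \prod_i \mathrm{Mat}_{m_i}(Z_i)$ from (\ref{splitgroupalgebra}) gives, after tensoring over $\Q_p$ with $\Q\otimes_{\Z_p}\Z_p\langle\!\langle t^{-1}\rangle\!\rangle = \Q_p\{t^{-1}\}$, a decomposition
\[
\Q_p\{t^{-1}\}[G] \;=\; \prod_i \mathrm{Mat}_{m_i}\bigl(Z_i\{t^{-1}\}\bigr).
\]
By Morita equivalence applied componentwise, $\rK_1$ of the left-hand side is $\prod_i \rK_1(Z_i\{t^{-1}\})$, and the Det map to $\rK_1$ of an algebraic closure of $\mathrm{Frac}(\Q_p\{t^{-1}\})$ decomposes correspondingly. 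It therefore suffices to show $\rSK_1(Z_i\{t^{-1}\}) = \{1\}$ in the commutative sense. By Tate's classical theorem the one-variable Tate algebra $Z_i\{t^{-1}\}$ over the complete discretely valued field $Z_i$ is a principal ideal domain; for any PID $B$ we have $\rK_1(B) = B^\times$, which injects into $\mathrm{Frac}(B)^\times$ and hence into any algebraic closure, giving the desired vanishing.

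For part (b), the hypothesis $p\nmid \#G$ ensures that $\Z_p[G]$ is a maximal $\Z_p$-order, and the splitting lifts integrally to $\Z_p[G] = \prod_i \mathrm{Mat}_{m_i}(\O_{Z_i})$. Base-changing to $\Z_p\langle\!\langle t^{-1}\rangle\!\rangle$ and applying Morita equivalence once more reduces the problem to showing $\rSK_1(A_i) = \{1\}$ for each commutative arithmetic Tate algebra $A_i := \O_{Z_i}\langle\!\langle t^{-1}\rangle\!\rangle$, these being $p$-adically complete Noetherian regular domains of Krull dimension $2$. The residue ring $A_i/p = k_i[t^{-1}]$ (with $k_i = \O_{Z_i}/p$ finite) and the generic fiber $A_i[1/p] = Z_i\{t^{-1}\}$ are both PIDs, the second by the argument of part (a). Theorem \ref{SK1theorem1} applied with $G$ trivial yields $\rK_1(A_i,p^k)\cap \rSK_1(A_i) = \{1\}$ for $k\ge 2$; combining this with a direct elementary-matrix reduction over the PID fibers $A_i/p$ and $A_i[1/p]$ that lifts a representing matrix of a class in $\rSK_1(A_i)$ into $1 + p^2\mathrm{Mat}_n(A_i)$ then gives the conclusion.

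The principal obstacle is part (b): while part (a) follows essentially formally from the PID property of the one-variable Tate algebra, part (b) requires a genuine $p$-adic approximation step, and the auxiliary vanishing of $\rSK_1$ modulo $p^2$ is nontrivial because $A_i/p^2$ is a polynomial ring over the non-regular Artinian ring $\O_{Z_i}/p^2$, so the Bass fundamental theorem does not apply directly; one must instead exploit the PID structure of the residue and generic fibers together with the completeness statement in Theorem \ref{SK1theorem1}.
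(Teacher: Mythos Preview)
Part (a) is correct and essentially matches the paper: both reduce via Morita equivalence to the commutative one-variable Tate algebra $Z_i\{t^{-1}\}$ over a $p$-adic field, and conclude because this ring is a PID (the paper phrases the last step as a special case of a theorem of Gruson).

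For part (b) you take a different route from the paper. The paper again applies Morita and then invokes a density argument of Gruson \cite[IV, Prop.~4]{Gruson}: for $B = R\langle\!\langle t^{-1}\rangle\!\rangle$ with $R$ the ring of integers in a finite extension of $\Q_p$, Gruson's argument shows that the natural map $\rSK_1(R[t^{-1}]) \to \rSK_1(B)$ is surjective, and the source vanishes since $R[t^{-1}]$ is regular with $\rK_0 \cong \Z$. Your strategy via Theorem~\ref{SK1theorem1} with $G=\{1\}$ can also be completed, but the reduction you flag as the ``principal obstacle'' is not actually carried out in your proposal, and the PID property of $A_i[1/p]$ plays no role in it. It goes as follows: from $M = I + pN \in \SL_n(A_i)$ (reached because $A_i/p = k_i[t^{-1}]$ is Euclidean), left multiplication by the $e_{ij}(-pN_{ij})$ kills the off-diagonal entries of $N$ modulo $p$ without disturbing the others, giving $M \equiv I + pD \pmod{p^2}$ with $D$ diagonal; expanding the determinant shows $\det M \equiv 1 + p\,\mathrm{tr}(D) \pmod{p^2}$, so $\det M = 1$ forces $\mathrm{tr}(D) \equiv 0 \pmod p$; then the Whitehead relation $\mathrm{diag}(u,u^{-1}) \in \rE_2$ lets you sweep $D$ into a single diagonal entry, which is $\equiv 0 \pmod p$. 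Thus $M$ is $\rE_n(A_i)$-equivalent to a matrix in $1 + p^2\mathrm{Mat}_n(A_i)$, and Theorem~\ref{SK1theorem1} finishes.

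The paper's route via Gruson is shorter and self-contained at the level of classical references; yours trades that external input for reliance on Theorem~\ref{SK1theorem1}, whose proof lies in the companion paper \cite{CPTSK1b}.
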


\begin{proof} Using Morita equivalence, we see that it is enough to show  that ${\rSK}_1(   R\langle\!\langle t^{-1}\rangle\!\rangle)=\{1\}$ and
${\rSK}_1( N\otimes_{R} R\langle\!\langle t^{-1}\rangle\!\rangle)={\rSK}_1(N\{t^{-1}\})=\{1\}$,
where $R$ are the integers in a finite extension  $N$ of $\Q_p$ and $N\{t^{-1}\}$ is the Tate algebra. 
This first statement follows from \cite[proof of IV, Prop. 4]{Gruson} applied to $A=R$, $B=R \langle\!  \langle
t^{-1} \rangle\! \rangle$: Indeed, Gruson's argument  implies that the natural map $\rSK_1(R[t^{-1}])\to \rSK_1(B)$
is surjective and the result follows since $\rSK_1(R[t^{-1}])=\{1\}$. The proof of ${\rSK}_1(N\{t^{-1}\})=\{1\}$
is similar. In fact, this is  a special case of  \cite[Theorem 1]{Gruson}.
  \end{proof}
\bigskip
\bigskip

\section{Lattices, determinant functors and determinant theories}\label{sLAT}

\setcounter{equation}{0}

In what follows, $R$ is a commutative Noetherian ring, $A$ is  a commutative Noetherian flat $R$-algebra
and $t$ a non-zero divisor in $A$ such that $A/tA$ is finitely generated
and flat over $R$. We also consider $A_t=A[t^{-1}]$. 
In the main examples we have in mind, $A=R[t]$, or $A=R{\lps t\rps}$.
Also   all modules over a group ring such as $A[G]$ are left modules.

\subsection{Some lemmas}\label{3aaa}

We start with:

\begin{lemma}\label{proj1}
Suppose that $S$ is a local Noetherian commutative ring with $1$
and residue field $k$ of characteristic $p$. Suppose that $P$ is $p$-Sylow
subgroup of $G$.  If $p=0$,   take $P=\{1\}$.  Let $M$ be a finitely generated  $S[G]$-module.
Then $M$ is $S[G]$-projective if and only if the $S[P]$-module $M$
obtained by restriction of operators from $G$ to $P$ is $S[P]$-projective.
\end{lemma}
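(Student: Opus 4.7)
The plan is to follow the classical D.~G.~Higman argument (higher Maschke), the only new input being that since $S$ is local with residue field of characteristic $p$, the integer $[G:P]$ is a unit in $S$.

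First, the ``only if'' direction is routine. Choosing a set $T$ of left coset representatives of $P$ in $G$ exhibits $S[G]=\bigoplus_{g\in T}gS[P]$ as a free $S[P]$-module. Hence restriction along $S[P]\hookrightarrow S[G]$ preserves projectivity, and any $S[G]$-projective $M$ is $S[P]$-projective.

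For the converse, I would use the counit/unit for the induction--restriction adjunction between $S[P]$ and $S[G]$. The counit
\[
\epsilon : S[G]\otimes_{S[P]} M \longrightarrow M,\qquad g\otimes m \longmapsto gm,
\]
is a surjective $S[G]$-linear map. The key point is that $[G:P]$ is coprime to $p$ by the Sylow hypothesis (or trivially when $p=0$), so its image in $k=S/m_S$ is nonzero, hence $[G:P]$ is a unit in the local ring $S$. One can then define the averaging map
\[
\sigma : M \longrightarrow S[G]\otimes_{S[P]} M,\qquad \sigma(m)=[G:P]^{-1}\sum_{gP\in G/P} g\otimes g^{-1}m,
\]
which is well-defined (independent of the choice of coset representative, since replacing $g$ by $gh$ with $h\in P$ leaves $g\otimes g^{-1}m$ unchanged) and $S[G]$-linear (a direct check using $G$-equivariance). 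Moreover $\epsilon\circ\sigma=\mathrm{id}_M$, so $M$ is an $S[G]$-direct summand of the induced module $S[G]\otimes_{S[P]} M$.

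Now assume $M$ is $S[P]$-projective, so $M$ is an $S[P]$-direct summand of some $S[P]^n$. Then $S[G]\otimes_{S[P]} M$ is an $S[G]$-direct summand of $S[G]\otimes_{S[P]} S[P]^n=S[G]^n$, hence $S[G]$-projective, and $M$ is an $S[G]$-direct summand of an $S[G]$-projective, therefore $S[G]$-projective itself. This completes the proof.

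There is no real obstacle: the only thing to verify carefully is that $[G:P]$ is a unit in $S$, which is immediate from $S$ being local with residue characteristic $p$ and $\gcd([G:P],p)=1$. Everything else is formal manipulation of the standard induction/restriction adjunction.
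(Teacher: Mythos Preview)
Your proof is correct and takes essentially the same approach as the paper: both arguments use that $[G:P]$ is a unit in $S$ to exhibit $M$ as an $S[G]$-direct summand of $S[G]\otimes_{S[P]}\mathrm{Res}_{G\to P}(M)$, and then conclude from projectivity of the restriction. The paper phrases the splitting slightly more conceptually---observing that $S$ (trivial $G$-action) is a summand of $S[G/P]$ and then tensoring with $M$---whereas you write down the averaging section $\sigma$ of the counit $\epsilon$ explicitly; these amount to the same thing.
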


\begin{proof}
Observe that since $[G:P]$
is invertible in $S$, the $S[G]$-module $S[G/P]$ admits the $S[G]$-module $S$
with trivial $G$-action as a direct summand.
By Frobenius reciprocity
$$
S[G/P]\otimes_S M\simeq S[G]\otimes_{S[P]} ({\rm Res}_{G\to P}(M)).
$$
Hence, $M$ is a direct summand of $S[G]\otimes_{S[P]} ({\rm Res}_{G\to P}(M))$.
The result follows from this.  
\end{proof}

\begin{lemma}\label{proj2}
Suppose that $S$ is a local Noetherian commutative ring
with residue field $k$ of characteristic $p$. Suppose that $G$
is a $p$-group. ($G=\{1\}$, if $p=0$.)
Let $M$ be a finitely
generated $S[G]$-module.
Let $J $ be the Jacobson radical
of $S[G]$. Then
the following are equivalent:

a) $M$ is $S[G]$-free,

b) $M$ is $S[G]$-projective,

c) $M$ is $S[G]$-flat,

d)  ${\rm Tor}^{S [G]}_1(S [G]/J, M )=(0)$.
\end{lemma}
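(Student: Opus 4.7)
The strategy is to prove the chain \textbf{(a)} $\Rightarrow$ \textbf{(b)} $\Rightarrow$ \textbf{(c)} $\Rightarrow$ \textbf{(d)} $\Rightarrow$ \textbf{(a)}, with only the last implication requiring work. The first three are standard: free implies projective, projective implies flat, and flat implies $\Tor_1^{S[G]}(N,M)=0$ for every right module $N$.

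For \textbf{(d)} $\Rightarrow$ \textbf{(a)}, my first task is to identify the quotient $S[G]/J$. I would argue that $S[G]$ is a (not necessarily commutative) semilocal ring and that $J=\mathfrak m_S\cdot S[G] + I(G)\cdot S[G]$, where $\mathfrak m_S$ is the maximal ideal of $S$ and $I(G)$ the augmentation ideal. Indeed, since $S[G]$ is finite over $S$, every simple $S[G]$-module $V$ is finitely generated over $S$, so by Nakayama for $S$ one must have $\mathfrak m_S V=0$; hence simple $S[G]$-modules are exactly simple $k[G]$-modules. Because $G$ is a $p$-group and $\mathrm{char}(k)=p$, the only such simple module is the trivial module $k$, so $\mathfrak m_S \cdot S[G]\subset J$ and $J/\mathfrak m_S S[G]$ is identified with the Jacobson radical $I(G)$ of $k[G]$. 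In particular $S[G]/J\cong k$.

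Next, assuming $\Tor^{S[G]}_1(S[G]/J,M)=0$, I would pick elements $m_1,\dots,m_n\in M$ whose images form a $k$-basis of the finite-dimensional $k$-vector space $M/JM=S[G]/J\otimes_{S[G]} M$, and consider the map
\[
\phi\colon S[G]^n\to M, \qquad e_i\mapsto m_i.
\]
Since $J$ is the Jacobson radical and $M$ is finitely generated, Nakayama implies $\phi$ is surjective. Let $K=\ker\phi$. Because $S$ is Noetherian and $G$ is finite, $S[G]$ is Noetherian, so $K$ is finitely generated. Applying $S[G]/J\otimes_{S[G]}(-)$ to the short exact sequence $0\to K\to S[G]^n\to M\to 0$ gives
\[
\Tor^{S[G]}_1(S[G]/J,M)\to K/JK\to k^n\to M/JM\to 0.
\]
By hypothesis the leftmost term vanishes, and by choice of the $m_i$ the right-hand map $k^n\to M/JM$ is an isomorphism. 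Hence $K/JK=0$, so Nakayama applied to the finitely generated module $K$ yields $K=0$. Thus $\phi$ is an isomorphism and $M\cong S[G]^n$ is free.

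The only subtle step is the identification $S[G]/J=k$; once this is in hand, the rest is a standard Nakayama/projective-cover argument. This identification is where the hypotheses that $S$ is local and $G$ is a $p$-group (with $p=\mathrm{char}(k)$) are essential — without them $k[G]$ would be semisimple but not local, and $S[G]/J$ could be a product of matrix algebras, making the lifting of bases via $M/JM$ more delicate and spoiling the implication \textbf{(d)} $\Rightarrow$ \textbf{(a)}.
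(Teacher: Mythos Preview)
Your proof is correct and follows essentially the same approach as the paper: both identify $S[G]/J\cong k$ (you give more detail here), lift a $k$-basis of $M/JM$ to a map $S[G]^n\to M$, use Nakayama for surjectivity, and then tensor the kernel sequence with $S[G]/J$ and apply Nakayama again to conclude freeness.
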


\begin{proof}
Notice that since $S$ is Noetherian, $S[G]$ is also Noetherian.
Clearly (a) implies (b), (b) implies (c), (c) implies (d).
It remains to show that (d) implies (a).
Recall, $G$ is a $p$-group. In this case, $S[G]/J=k$.
Suppose that $\phi: k^n\xrightarrow{\sim} M/JM$.
Lift $\phi$ to an $S[G]$-module homomorphism
$$
0\to K\to  S[G]^n\xrightarrow{\Phi}  M\to 0
$$
with $K$ the (finitely generated) kernel. By the non-commutative version of Nakayama's
lemma, $\Phi$ is surjective.
By tensoring the exact sequence above with
$S[G]/J\otimes_{S[G]}\ -$ we obtain (using (d))
an exact sequence of $S[G]/J$-modules
$$
0\to K/JK\to (S[G]/J)^n\xrightarrow{\phi} M/JM\to 0.
$$
But $\phi$ is an isomorphism so $K/JK=(0)$.
Another application of Nakayama's lemma now gives $K=(0)$
and so $M$ is actually free.
\end{proof}

\subsection{Lattices} \label{latticessect}Suppose $M_0$ is a finitely generated projective $R[G]$-module.
We set $\M=M_0\otimes_{R}A_t$ and $L_0=M_0\otimes_{R}A$.

\begin{definition}
A finitely generated projective $A[G]$-submodule $L$ of $\M=M_0\otimes_{R}A_t$ with $\sum_{n\leq 0}L\cdot t^n=\M$, will be called an ``$A[G]$-lattice",
or simply a ``lattice". \end{definition}

Notice that for a lattice $L$ there  is $n\geq 0$ such that
$
t^nL_0\subset L\subset t^{-n}L_0
$
and we have a canonical $A_t[G]$-isomorphism
$
L\otimes_{A}A_t=\M.
$

\begin{proposition}\label{lattice}
Suppose that $L\subset \M=M_0\otimes_{R}A_t$ is a finitely generated $A[G]$-submodule of $\M$. Then $L$ is  a lattice
if and only if the following condition is satisfied:
There is $n\geq 0$, such that
$
t^nL_0\subset L\subset t^{-n}L_0
$,
and the quotients $L/t^nL_0$, $t^{-n}L_0/L$ are both $R[G]$-projective.
 \end{proposition}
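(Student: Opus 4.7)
The plan is to analyze the situation through three short exact sequences of $A[G]$-modules:
\[
0 \to t^nL_0 \to L \to P_1 \to 0,\quad 0 \to L \to t^{-n}L_0 \to P_2 \to 0,\quad 0 \to P_1 \to M \to P_2 \to 0,
\]
where $P_1 = L/t^nL_0$, $P_2 = t^{-n}L_0/L$, and the middle piece $M = t^{-n}L_0/t^nL_0 \cong M_0\otimes_R (A/t^{2n}A)$ (up to a shift by $t^n$). A preliminary observation is that $A/tA$ is finitely generated and flat, hence projective, over the Noetherian ring $R$; iterating the short exact sequences $0\to t^i A/t^{i+1}A\to A/t^{i+1}A\to A/t^iA\to 0$ then shows $A/t^{2n}A$ is $R$-projective, so $M$ is $R[G]$-projective. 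Consequently, in the third SES above, $P_1$ and $P_2$ are $R[G]$-projective simultaneously, and then this SES splits, realising each as an $R[G]$-direct summand of $M$.

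For the direction $(\Rightarrow)$, the sandwich property is routine: finite generation of $L$ and $L_0$ over $A$ together with $\mathcal{M}=\bigcup_N t^{-N}L_0$ and $\sum_i Lt^i=\mathcal M$ give $t^M L_0\subset L\subset t^{-N}L_0$ for suitable $N,M$. To show $P_1$ is $R[G]$-projective I would localize $R$ at a maximal ideal of residue characteristic $p$, invoke Lemma \ref{proj1} to replace $G$ by its $p$-Sylow subgroup $H$, and then apply Lemma \ref{proj2} to reduce projectivity of $P_1$ to $\mathrm{Tor}_1^{R[H]}(k,P_1)=0$, where $k$ is the residue field. The long exact sequence of Tor for the first SES, together with $R[H]$-flatness of $L$ and $t^nL_0$ (both are $A[G]$-projective, and $A[G]$ is $R$-flat), reduces the vanishing to injectivity of the natural map $k\otimes_{R[H]} t^nL_0 \to k\otimes_{R[H]} L$, which follows from $L$ being $A[H]$-projective of the same generic $A$-rank as $L_0$.

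For $(\Leftarrow)$, finite generation of $L$ over $A[G]$ is immediate from $L\subset t^{-n}L_0$ and Noetherianity of $A[G]$, and $\sum Lt^i=\mathcal M$ follows from $t^n L_0\subset L$. To show $L$ is $A[G]$-projective, apply $\mathrm{Ext}^*_{A[G]}(-,N)$ to $0\to L\to t^{-n}L_0\to P_2\to 0$: since $t^{-n}L_0$ is $A[G]$-projective one obtains $\mathrm{Ext}^1_{A[G]}(L,N)\cong \mathrm{Ext}^2_{A[G]}(P_2,N)$, reducing the claim to $\mathrm{pd}_{A[G]}(P_2)\le 1$. I would then construct a length-one $A[G]$-projective resolution of $P_2$: since $P_2$ is $R[G]$-projective, the module $F:=A\otimes_R P_2$ is $A[G]$-projective (as a summand of $A\otimes_R R[G]^k = A[G]^k$), and the natural action map $\mu\colon F\to P_2$ is surjective, its kernel being identified, in the motivating cases $A=R[t]$ or $A=R\lps t\rps$, with another copy of $F$ via multiplication by $\delta=t\otimes 1 - 1\otimes t\in A\otimes_R A$.

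The principal obstacle is identifying $\ker(\mu)$ as $A[G]$-projective — equivalently, establishing $\mathrm{pd}_{A[G]}(P_2)\le 1$ — in the full generality of the statement. This is transparent when the diagonal ideal of $A\otimes_R A$ is principal with non-zero-divisor generator (as for $A=R[t]$ where it is $(t_1-t_2)$, and analogously for the formal power series case), but the general Noetherian setting requires supplementing the explicit resolution with a local argument at primes of $A$ paralleling the Tor calculation used in the forward direction.
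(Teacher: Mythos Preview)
Your forward direction has a real gap: the claim that, from $M$ projective, ``$P_1$ and $P_2$ are $R[G]$-projective simultaneously'' is false in the direction you need. If $P_2$ is projective the sequence splits and $P_1$ is a summand of $M$, but $P_1$ projective does \emph{not} force $P_2$ projective (e.g.\ $0\to S\xrightarrow{\pi}S\to S/\pi\to 0$ over any local ring $S$ with $\pi$ in the maximal ideal). Since you only argue $\mathrm{Tor}_1(k,P_1)=0$ directly, $P_2$ is left unhandled. The fix is either to run the identical Tor argument on the second SES, or---as the paper does---to first observe that \emph{both} $P_1$ and $P_2$ have $R[G]$-projective dimension $\le 1$ (from the first two SESs and $R[G]$-flatness of $L$, $L_0$), and then use the dimension shift $\mathrm{Ext}^i(P_1,-)\cong\mathrm{Ext}^{i+1}(P_2,-)$ coming from the third SES with projective middle: if $\mathrm{pd}(P_1)=1$ then $\mathrm{pd}(P_2)\ge 2$, a contradiction, so $P_1$ is projective; swapping the roles of $L$ and $L_0$ (note $t^nL\subset L_0\subset t^{-n}L$, and $L_0/t^nL\cong P_2$, $t^{-n}L/L_0\cong P_1$) then gives $P_2$. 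Your injectivity claim for $k\otimes_{R[H]} t^nL_0\to k\otimes_{R[H]} L$ is correct but underexplained: the point is that the map becomes an isomorphism after inverting $t$, and that $t$ remains a non-zero-divisor in $A/\mathfrak m A$ (this uses the $R$-flatness of $A/tA$), so injectivity survives the reduction.

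For the backward direction, your characteristic-sequence resolution $0\to A\otimes_R P_2\xrightarrow{t\otimes 1-1\otimes t} A\otimes_R P_2\to P_2\to 0$ is a nice idea and does give $\mathrm{pd}_{A[G]}(P_2)\le 1$ whenever it is exact; exactness holds for $A=R[t]$ unconditionally and for $A=R\lps t\rps$ because $t$ acts nilpotently on $P_2$. For general $A$ this is precisely the gap you flag. The paper takes a different and more robust route: localize so that $A$ and $R$ are local with $t$ in the maximal ideal of $A$ (else $L=L_0$), reduce $G$ to a $p$-group, and show directly that $L/tL$ is $R[G]$-free, whence a basis lift plus Nakayama gives $L$ free over $A[G]$. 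The freeness of $L/tL$ is obtained from the $t$-torsion of $t^{-n}L_0/L$: after enlarging $n$ so that $L\subset t\cdot t^{-n}L_0$, tensoring $0\to L\to t^{-n}L_0\to t^{-n}L_0/L\to 0$ by $A/tA$ gives $L/tL\cong T(t^{-n}L_0/L)$, and the SES $0\to T(t^{-n}L_0/L)\to t^{-n}L_0/L\xrightarrow{t} t^{1-n}L_0/L\to 0$ has free outer terms by hypothesis (the right term is the kernel of the surjection $t^{-n}L_0/L\to t^{-n}L_0/t^{1-n}L_0$ between free modules). This avoids any hypothesis on the diagonal ideal of $A\otimes_R A$ and works in the stated generality.
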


\begin{proof}
First assume that $L$ is $A[G]$-projective.
Consider the exact sequence
\begin{equation}\label{exL0}
0\to  t^nL_0\to L \to L/t^nL_0\to 0.
\end{equation}
To show that $L/t^nL_0$, $t^{-n}L_0/L$,  are $R[G]$-projective it is enough to
reduce to the case that $R$ is local Noetherian with residue field of characteristic $p$
and by Lemma \ref{proj1}   suppose that $G$ is a $p$-group.
 Set $\bar L=L/t^nL_0$.
Since $A$ is flat over $R$, the $R[G]$-modules $L$ and $t^nL_0$ are also flat.
The exact sequence (\ref{exL0}) implies that $\bar L$ has $R[G]$-Tor dimension $\leq 1$.
Since $\bar L$ is finitely generated over $R[G]$, Lemma \ref{proj2} above
and a standard argument shows that $\bar L$ has $R[G]$-projective dimension $\leq 1$.
The same is true for the $R[G]$-module $t^{-n}L/L_0$. Assume that the  $R[G]$-projective dimension of
$t^{-n}L_0/L$ is $1$. The exact sequence
$$
0\to \bar L\to t^{-n}L_0/t^nL_0\to t^{-n}L_0/L\to 0
$$
would then give that the projective dimension of
$t^{-n}L/L_0$ is $>1$, a contradiction. Hence, $\bar L$
is $R[G]$-projective. The same argument now shows that $t^{-n}L_0/L_0$
is also $R[G]$-projective. We conclude that $L/t^nL_0$, $t^{-n}L_0/L$ are both $R[G]$-projective.

Conversely, assume
$$
t^nL_0\subset L\subset t^{-n}L_0
$$
and that the quotients $L/t^nL_0$, $t^{-n}L_0/L$ are both $R[G]$-projective.
We will show that $L$ is $A[G]$-projective.

We can assume that $A$ and $R$ are local and that $t$ is in the unique maximal ideal of $A$.
(Otherwise, $t$ is invertible and we get $L=L_0$ in the
corresponding localization.) In addition, by Lemma \ref{proj1},
we can suppose that $G$ is a $p$-group where $p$ is the characteristic of the residue field
of $A$. We first claim that it is enough to show that, under our assumptions,
$L/tL$ is $R[G]$-free. Indeed, we will first show that
if $L/tL$ is $R[G]$-free, then $L$ is $A[G]$-free. Consider a map $F\to L$ from a free $A[G]$-module which lifts
$F/tF\xrightarrow{\sim} L/tL$. By Nakayama's lemma,
$F\to L$ is surjective; let $K$ be its the kernel.
Now notice that since $L\subset \M$, $L$ is $t$-torsion free
and so
$$
0\to K/tK\to F/tF\to L/tL\to 0
$$
is exact. Hence, $K/tK=(0)$. Since $t$ is in the maximal ideal of the local ring $A$, by Nakayama's
lemma again, $K=(0)$. It now remains to show that $\bar L:=L/tL$ is $R[G]$-free.
For simplicity, set $L_n=t^{-n}L_0$ which is $A[G]$-free. By our assumption and Lemma \ref{proj2},
$L_n/L$ is $R[G]$-free. By enlarging $n$ if needed, we can assume that $L\subset tL_n$.
Now tensor the exact $A$-sequence
$$
0\to L\to L_n\to L_n/L\to 0
$$
with $-\otimes_A{A/tA}$. Since $t$ is not a zero-divisor
in $A$, we obtain
$$
0\to T(L_n/L)\to L/tL\to L_n/tL_n\to (L_n/L)/t(L_n/L)\to 0
$$
where $T(L_n/L):=\{x\in L_n/L\ |\ t\cdot x=0\}$ is an $R[G]$-module.
Since $L\subset tL_n$, the map $L_n/tL_n\to  (L_n/L)/t(L_n/L)$ is an isomorphism.
Hence,
 $$
 T(L_n/L)\simeq L/tL.
 $$
Notice that we have an exact sequence of $R[G]$-modules
$$
0\to T(L_n/L)\to L_n/L\xrightarrow {t} t(L_n/L)\to 0.
$$
Since $L\subset tL_n$, $t(L_n/L)=tL_n/L$. The module $tL_n/L$
is the kernel of the surjective map $L_n/L\to L_n/tL_n$ between $R[G]$-free
modules and so it is $R[G]$-free. Hence, $T(L_n/L)$ is also $R[G]$-free.
Therefore, $L/tL$ is also $R[G]$-free.
 \end{proof}

\begin{corollary}\label{L1L2}
If $L_1\subset L_2$ are two $A[G]$-lattices, then $L_1/L_2$ is a finitely generated
projective $R[G]$-module.
\end{corollary}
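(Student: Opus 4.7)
The plan is to reduce the statement to Proposition \ref{lattice} applied to each of $L_1$ and $L_2$ separately, and then to use a splitting argument to extract $L_2/L_1$ as a direct summand of an $R[G]$-projective module. (I read the conclusion as saying $L_2/L_1$, since $L_1\subset L_2$.)

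First, because $L_1$ and $L_2$ are both lattices sitting in $\M$, and because each condition $t^nL_0\subset L_i\subset t^{-n}L_0$ is preserved when $n$ is enlarged, one can choose a single $n\geq 0$ with
$$t^nL_0\ \subset\ L_1\ \subset\ L_2\ \subset\ t^{-n}L_0.$$
By Proposition \ref{lattice} applied to $L_1$, the quotient $t^{-n}L_0/L_1$ is $R[G]$-projective, and applied to $L_2$, the quotient $t^{-n}L_0/L_2$ is $R[G]$-projective as well.

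Next I would write the short exact sequence of $R[G]$-modules
$$0\ \longrightarrow\ L_2/L_1\ \longrightarrow\ t^{-n}L_0/L_1\ \longrightarrow\ t^{-n}L_0/L_2\ \longrightarrow\ 0.$$
Since the quotient $t^{-n}L_0/L_2$ on the right is $R[G]$-projective, this sequence splits, realizing $L_2/L_1$ as a direct summand of the $R[G]$-projective module $t^{-n}L_0/L_1$. In particular $L_2/L_1$ is itself $R[G]$-projective.

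It remains to verify finite generation over $R[G]$. Here I would use that $A/tA$ is finitely generated over $R$, so by an induction on $k$ using the exact sequences $0\to t^kA/t^{k+1}A\to A/t^{k+1}A\to A/tA\to 0$ (and the fact that multiplication by $t$ identifies $A/t^kA$ with $tA/t^{k+1}A$), the quotient $A/t^{2n}A$ is finitely generated over $R$. Thus $t^{-n}L_0/t^nL_0$, which is a module over $A[G]/t^{2n}A[G]$ finitely generated over $A[G]$, is finitely generated over $R[G]$; the quotient $t^{-n}L_0/L_1$ inherits this property, and so does its direct summand $L_2/L_1$. The main subtlety, and the only nontrivial input, is Proposition \ref{lattice}; once that is invoked for both lattices, the rest is formal.
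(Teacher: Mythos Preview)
Your proof is correct and follows exactly the same approach as the paper: choose a common $n$, invoke Proposition~\ref{lattice} to get projectivity of $t^{-n}L_0/L_1$ and $t^{-n}L_0/L_2$, and use the short exact sequence $0\to L_2/L_1\to t^{-n}L_0/L_1\to t^{-n}L_0/L_2\to 0$ to conclude. The paper's proof is terser (it omits the explicit splitting and the finite-generation verification), but the argument is the same; your reading of the conclusion as $L_2/L_1$ rather than $L_1/L_2$ is also correct.
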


\begin{proof}
There is $n\geq 0$ such that
$
t^nL_0\subset L_1\subset L_2\subset t^{-n}L_0.
$
This gives an exact sequence
$$
0\to L_2/L_1\to t^{-n}L_0/L_1\to t^{-n}L_0/L_2\to 0
$$
with middle and right terms $R[G]$-projective. It follows
that $L_2/L_1$ is $R[G]$-projective.
\end{proof}

\subsubsection{}

Notice that if $\gamma$ is an $A_t[G]$-isomorphism of the  $A_t[G]$-module
$\M=M_0\otimes_R A_t$, then the image $\gamma(L_0)\subset \M$
is an $A[G]$-lattice. In particular, if $M_0=R[G]^n$ and
$\gamma$ is given by right multiplication by the element $g\in \GL_n(A_t[G])$, \emph{i.e.} by
$\gamma(m):= m\cdot g$, then $L_0\cdot g\simeq A[G]^n$ is an $A[G]$-lattice.

\subsection{Determinants}

We continue to assume that $R$ is a Noetherian commutative
ring.
Recall the definition of the virtual category $V(R[G])$
of finitely generated projective (left) $R[G]$-modules from \cite{DeligneDet} (see also \cite{BurnsFlachDoc}).
This is a commutative Picard category (\emph{i.e.} a symmetric
monoidal category in which all arrows are invertible
and all objects have inverses).
Any finitely generated projective $R[G]$-module
 $P$ gives an object in $V(R[G])$, which we will denote by $[P]$. 
 The inverse of $[P]$ is denoted by $-[P]$. As in \cite{DeligneDet} we will denote the monoidal structure
additively.
The set of isomorphism classes
of objects in $V(R[G])$ is a group which is identified with $\rK_0(R[G])$;
the group of automorphisms of the zero object $[0]$
is identified with $\rK_1(R[G])$. If $R=K$ is a field and
$G=\{1\}$, $V(R[G])=V(K)$ can be identified with the Picard category
${\rm Pic}^\Z_K$
of ``$\Z$-graded $K$-lines". Recall that the   objects
of ${\rm Pic}^\Z_K$ are pairs
$(L, n)$ of a $K$-line $L$ and an integer $n$
and the monoidal structure is given by
$$
(L, n)+(M, m)=(L\otimes_K M, n+m).
$$
The identification above is then given by sending $P$ to $(\det(P), {\rm rank}(P))$.

Consider the (full) subcategory ${\rm D}^p(R[G])$
of the derived
category ${\rm D}(R[G])$ of the homotopy category of complexes of
$R[G]$-modules whose objects are perfect complexes.
Recall that there is a  ``determinant" functor
$$
\det: {\rm D}^p(R[G])\to V(R[G])
$$
which takes the value $[P]$ on complexes $P[0]: \cdots \to 0\to P\to 0\to \cdots $
consisting of a finitely generated projective $R[G]$-module
placed in  degree $0$. The functor $\det$ satisfies  an additivity property
for ``true" exact triangles, and other properties which are listed in \cite{BurnsFlachDoc}.
To simplify our notations, we will sometimes write $[P^\bullet]$ instead of $\det(P^\bullet)$
for the virtual object in $ V(R[G])$ associated to the perfect complex $P^\bullet$.

\subsubsection{}\label{3c1}

By definition (cf. \cite{DrinfeldInfDimBun}, \S 5), a ``determinant theory" on $\M$ is a rule that associates to any $A[G]$-lattice $L$  as above,
an object $\delta(L)$ of $V(R[G])$ and to each pair $L_1\subset L_2$ of $A[G]$-lattices
an arrow in $V(R[G])$
\begin{equation}\label{detcomp}
\delta_{L_1,L_2}: \delta(L_1)+[L_2/L_1]\xrightarrow{\  }\delta(L_2)
\end{equation}
(with $[L_2/L_1]$ well-defined by Corollary \ref{L1L2}), such that:

If $L_1\subset L_2\subset L_3$, the obvious diagram
\begin{equation}
\begin{matrix}
 \delta(L_1)+[L_2/L_1]+[L_3/L_2]&\xrightarrow{ }&\delta(L_2)+[L_3/L_2]\\
 \downarrow &&\downarrow\\
 \delta(L_2)+[L_3/L_2]&\xrightarrow{} & \delta(L_3)
\end{matrix}
\end{equation}
obtained using $\delta_{L_1,L_2}$, $\delta_{L_2, L_3}$, $\delta_{L_1, L_3}$
commutes, and the diagonal morphism is obtained by combining
$\delta_{L_1, L_3}$ with the arrow $[L_2/L_1]+[L_3/L_2] \to [L_3/L_1]$
given by the exact sequence $0\to L_2/L_1\to L_3/L_1\to L_3/L_2\to 0$.

(In fact, we will often also find that our construction
satisfies additional compatibilities for suitable base changes
$R\to R'$ as in \cite{DrinfeldInfDimBun}, \S 5.)

We can see that the set of determinant theories is a torsor over the
commutative Picard category $V(R[G])$; in particular, if $\delta$, $\delta'$ are two
determinant theories, then there is an object $Q$ of $V(R[G])$ and arrows
\begin{equation}\label{compare}
\delta'(L)\xrightarrow{ \ }\delta(L)+Q\
\end{equation}
for each lattice $L$ which are functorial (for inclusion
of lattices).

Consider the group ${\rm Aut}(\M)$ of
$A_t[G]$-linear isomorphisms of $\M$.
If $L$ is an $A[G]$-lattice, so is its image $\g L$
under $\g$.
Notice that, for each pair of lattices $L_1\subset L_2$,
an element
$\g\in {\rm Aut}(\M) $ induces an arrow
$$
[L_2/L_1]\to [\g L_2/\g L_1]
$$
given by an actual $R[G]$-module isomorphism.
Hence, we can see that we can ``twist $\delta$ by $\g$"
to form a new determinant theory
given by $L\mapsto \delta(\g L)$.
By the above, the object
$$
\V_\g=\V_\g(L)=\delta(\g L)-\delta(L)
$$
does not depend on $L$.
This is meant in the sense that for any two
lattices $L\subset L'$ there is
a well-defined arrow
\begin{equation}\label{indep}
\V_\g(L)\to \V_\g(L')
\end{equation}
which respects compositions for
chains of inclusions.

\subsubsection{} \label{3c2} Now take $A=R[t]$. Suppose that $L$ is an $A[G]$-lattice in $\M=M_0\otimes_RR[t, t^{-1}]$.
To that, we can associate 
 a coherent locally projective $\O_{\PP^1_R}[G]$-module
$\E(L)$ on $\PP^1_R$ obtained by gluing the sheaves on
  ${\Bbb A}^1_\infty = \mathrm{Spec}(R[t^{-1}])$ and ${\Bbb A}^1_0 = \mathrm{Spec}(R[t])$
that  correspond  to $M_0\otimes_\Z R[t^{-1}]$ and $L$ respectively, along
 the identification
 $$
 L\otimes_{R[t]}R[t, t^{-1}]=\M=(M_0 \otimes_\Z R[t^{-1}])\otimes_{R[t^{-1}]}R[t, t^{-1}].
 $$
Now suppose that $\gamma$ is an $A_t[G]$-isomorphism of $M_0\otimes_R A_t$;
this gives the $A[G]$-lattice $L=\gamma(L_0)$, $L_0=M_0\otimes_R R[t]$.
 By Theorem \ref{thm:Horrocks}, when $R$ is a Dedekind ring with finite residue fields, all coherent 
 locally free $\O_{\PP^1_R}[G]$-modules
 can be obtained as $\E(L)=\E(\gamma(L_0))$ for a suitable  $M_0$ and  $\gamma$ as above.

 \subsubsection{}  \label{3c3} Denote by $R\Gamma(\PP^1_R, \E(L))$ the   complex
in the derived category ${\rm D}(R[G])$  that calculates the cohomology of $\E(L)$ over $\PP^1_R$.
This is quasi-isomorphic
to the  \v{C}ech complex
 \begin{equation}
 C^\bullet(L): (M_0 \otimes_\Z R[t^{-1}])\oplus L \xrightarrow{ \ } M_0\otimes_\Z R[t, t^{-1}]
 \end{equation}
 The standard argument shows that  $R\Gamma(\PP^1_R, \E(L))$ is ``perfect", \emph{i.e.} is in ${\rm D}^p(R[G])$.
 Hence,  we can set
$$
\delta(L):=\det(R\Gamma(\PP^1_R, \E(L)) \in V(R[G]).
$$
This  gives a determinant theory as above.
Indeed, an inclusion $i: L_1\hookrightarrow L_2$ of $A[G]$-lattices, gives
a corresponding homomorphism
of sheaves $i: \E(L_1)\to \E(L_2)$ and of \v{C}ech complexes
$i: C^\bullet(L_1)\to C^\bullet(L_2)$. Notice that there is a short exact sequence
of complexes
$$
0\to C^\bullet(L_1)\xrightarrow{i} C^\bullet(L_2)\to (L_2/L_1)[0]\to 0
$$
of $R[G]$-modules. Using this, we obtain a true triangle  in ${\rm D}^p(R[G])$
$$
R\Gamma(\PP^1_R, \E(L_1))\to R\Gamma(\PP^1_R, \E(L_2))\to (L_2/L_1)[0]\to R\Gamma(\PP^1_R, \E(L_1))[1].
$$
This induces the isomorphism
\begin{equation*}
\delta_{L_1,L_2}: \delta(L_1)+[L_2/L_1]\xrightarrow{\  }\delta(L_2).
\end{equation*}
as required. We can now see that the required properties of $\delta$
follow from the corresponding properties of $\det$.

\subsection{A central extension}\label{3d}

Consider the group ${\rm Aut}(\M)$ of
$R[G][t, t^{-1}]$-linear isomorphisms of $\M=M_0\otimes_R R[t, t^{-1}]$.
Following ideas in \cite{DrinfeldInfDimBun} or \cite{BeilinsonBlochEsnault} we construct the ``canonical" $V(R[G])$-extension ${\rm Aut}(\M)^\vee$ of ${\rm Aut}(\M) $
(in the sense of \cite[A2]{BeilinsonBlochEsnault}) associated to the determinant theory $\delta$.
Explicitly, ${\rm Aut}(\M)^\vee:={\rm Aut}_\delta(\M)$ is given as follows:

(i) To every
$\gamma: \M\to \M$ in ${\rm Aut}(\M)$ we associate
the object
$$
\V_\gamma=\delta(\gamma L_0)-\delta(L_0)
$$
of $V(R[G])$;

(ii)  To every pair
of elements $\gamma$, $\gamma'$ in ${\rm Aut}(\M)$,
we associate a ``composition" isomorphism
$$
c_{\gamma,\gamma'}: \V_{\gamma}+\V_{\gamma'}\xrightarrow {}\V_{\gamma \cdot\gamma'}
$$
which is given as follows:

By (\ref{indep}) applied to $L_0$ and $\g'L_0$, we have an arrow
$$
\V_\g+\V_{\g'}\to (\delta(\g\g' L_0)-\delta(\g'L_0))+(\delta(\g' L_0)-\delta(L_0)).
$$
This composed with the contraction
$$
 (\delta(\g\g' L_0)-\delta(\g'L_0))+(\delta(\g' L_0)-\delta(L_0))\to  \delta(\g\g' L_0)-\delta(L_0)=\V_{\g\g'}
 $$
defines $c_{\g, \g'}$.

We can see that the  arrows $c_{\g, \g'}$ satisfy
 associativity,
\emph{i.e.} that the obvious diagrams
\begin{equation*}
\begin{matrix}
(\V_\g+\V_{\g'})+\V_{\g''}&\xrightarrow{ \ \ \ \ \ \ \ \ \ \ \ \ \ \ }&\V_{\g\g'}+\V_{\g''}\\
\downarrow &&\downarrow \\
\V_\g+(\V_{\g'}+\V_{\g''})&\to \ \V_\g+\V_{\g'\g''}\ \to &\V_{\g\g'\g''}\\
\end{matrix}
\end{equation*}
formed using the $c$'s and the associativity constraint in $V(R[G])$ are commutative.

Finally, we can see, using (\ref{compare}), that
the $V(R[G])$-extension ${\rm Aut}(\M)^\vee:={\rm Aut}_\delta(\M)$ is independent
up to isomorphism (in the sense of \cite[A3]{BeilinsonBlochEsnault})
of the choice of determinant theory $\delta$.

\subsubsection{}\label{split}
Notice that if $\gamma$ belongs to the subgroup ${\rm Aut}(L_0)={\rm Aut}(M_0\otimes_RR[t])\subset {\rm Aut}(\M)$,
or to the subgroup ${\rm Aut}(M_0\otimes_RR[t^{-1}])\subset {\rm Aut}(\M)$,
 we have   $\E(\gamma L_0)=\E(L_0)$ as $\O_{\PP^1_R}[G]$-sheaves on $\PP^1_R$
 and hence  $\delta(\gamma L_0)=\delta(L_0)$; this gives a canonical arrow $[0]\to \V_\gamma$
and the central extension ${\rm Aut}(\M)^\flat$ splits over ${\rm Aut}(L_0)$ and also over
  ${\rm Aut}(M_0\otimes_RR[t^{-1}])$.

\subsubsection{}\label{332}

Taking   isomorphism classes $\gamma\mapsto [\V_\gamma]$
gives a group homomorphism
\begin{equation}
\chi: {\rm Aut}(\M)\to \rK_0(R[G]).
\end{equation}
Denote by ${\rm Aut}'(\M)$ the kernel of $\chi$.
Now for each $\gamma: \M\to \M$   in
${\rm Aut}'(\M)$,  choose an arrow
$\phi_\gamma: \V_1=[0]\xrightarrow{\sim} \V_\gamma$
in $V(R[G])$. If $\g$, $\g'$ are in ${\rm Aut}'(\M)$,
using the trivializations $\phi_\gamma$, $\phi_{\gamma'}$,
$\phi_{\gamma\gamma'}$ allows us to identify
the compositions $c_{\gamma,\gamma'}$
with  elements of  $\rK_1(R[G])$. We can check that
the associativity amounts to the fact that
$$
c: {\rm Aut}'(\M)\times {\rm Aut}'(\M)\to \rK_1(R[G]);\quad (\g, \g')\mapsto c_{\g, \g'},
$$
is a $2$-cocycle. There is a corresponding
 central extension
\begin{equation}\label{cextmaster}
1\to \rK_1(R[G])\to \Hh_\delta(\M)\to {\rm Aut}'(\M)\to 1
\end{equation}
which can be described more explicitly as follows:
\begin{equation}\label{isodescript}
\Hh_\delta(\M)=\{(\gamma, \phi_\gamma)\ |\ \gamma\in {\rm Aut}'(\M),
\phi_\gamma: \V_1=[0]\xrightarrow{ } \V_\gamma\}
\end{equation}
with multiplication defined using the cocycle $c$ above.
Again, up to isomorphism, the central extension
$\Hh_\delta(\M)$ is independent of the choice
of determinant theory. By \S \ref{split},
we see that the central extension $\Hh_\delta(\M)$ splits
over ${\rm Aut}(L_0)$ and also over
  ${\rm Aut}(M_0\otimes_RR[t^{-1}])$.
  (They are obviously both subgroups of ${\rm Aut}'(\M)$.)

\subsubsection{}
\label{s:categorical}

Now take $A=R[t]$, so that $A_t=R[t, t^{-1}]$ and take $M_0=R[G]^n$, $\M=A_t[G]$.
Using the isomorphism
$$
\GL_n(A_t[G])\xrightarrow{} {\rm Aut}(\M);\quad g\mapsto (m\mapsto m\cdot g^{-1})
$$
we pull-back ${\rm Aut}(\M)^\vee$ to a categorical $V(R[G])$-extension 
$\GL_n(A_t[G])^\vee$ of $\GL_n(A_t[G])$.
This in turn extends to a categorical extension $\GL(A_t[G])^\vee$
of the infinite linear group $\GL(A_t[G])=\varinjlim_{n} \GL_n(A_t[G])$.
Notice that the commutator subgroup $\rE(A_t[G])\subset \GL(A_t[G])$
is contained in $\varinjlim_n \GL_n'(A_t[G])$; this allows us to assemble
the extensions obtained as above from  (\ref{cextmaster})  for $n>>0$ and give a central extension
\begin{equation}\label{cextmaster2}
1\to \rK_1(R[G])\to \Hh(A_t[G])\to \rE(A_t[G])\to 1.
\end{equation}
Since $\rE(A_t[G])$ is a perfect group and the Steinberg
extension ${\rm St}(A_t[G])$ is its universal central extension (see \cite[Chapter 4.2]{RosenbergBook} or  
\cite[Section 5]{MilnorK}),
there is a (unique) group homomorphism
\begin{equation}\label{boundary}
\partial: \rK_2(A_t[G])\to \rK_1(R[G])
\end{equation}
that fits in a (unique) commutative diagram
\begin{equation}
\label{eq:StoHmap}
\begin{matrix}
1&\to &\rK_2(A_t[G])&\to &{\rm St}(A_t[G])&\to &\rE(A_t[G])&\to &1\\
&&\partial\downarrow &&\partial\downarrow &&  \downarrow &\\
1&\to& \rK_1(R[G])&\to &\Hh(A_t[G])&\to &\rE(A_t[G])&\to &1
\end{matrix}
\end{equation} with the right vertical map the identity.
Observe here that by \S \ref{split}, the extension
(\ref{cextmaster2}) splits over $\rE(A[G])$. Hence,
the homomorphism $\partial$ is trivial on the image of $\rK_2(A[G])$ in
$\rK_2(A_t[G])$, \emph{i.e.} the composition
\begin{equation}\label{vanish}
 \rK_2(A[G]))\to \rK_2(A_t[G])\xrightarrow{\partial} \rK_1(R[G])
\end{equation}
is trivial.

\begin{remark}\label{rem7}  {\rm  Notice that there is a $1$-$1$ correspondence
between $R{\lps t\rps}[G]$-lattices in $R\llps t\lrps[G]^n$
and $R[t][G]$-lattices in $R[t,t^{-1}][G]^n$;
indeed, by Proposition \ref{lattice},
both these sets are in $1$-$1$ correspondence
with the union over $n\geq 0$ of all $R[t][G]$-submodules
of $t^{-n}R[t][G]/t^nR[t][G]\simeq R[t][G]/(t^{2n})$ which are
$R[G]$-projective. Hence, our determinant theory for $R[t,t^{-1}][G]^n$
also gives a determinant theory for $R\llps t\lrps[G]^n$. Then the above results
also apply to $A=R{\lps t\rps}$.
The corresponding central extensions  (\ref{cextmaster}) are compatible
in the sense that the central extension
for $R\llps t\lrps[G]^n$ pulls back to the one for $R[t, t^{-1}][G]^n$
under $\GL_n'(R[t,t^{-1}][G])\hookrightarrow \GL_n'(R\llps t\lrps[G])$.
In particular, the same argument gives a boundary $\partial: \rK_2(R\llps t\lrps[G])\to \rK_1(R[G])$ that satisfies (\ref{vanish})
as above. }
 \end{remark}

\begin{remark}  {\rm The homomorphism $\partial$ is a refined version of the 
inverse of the tame symbol.
(See below.)
In a previous version of this paper,
a  homomorphism $\rK_2(A_t[G])\to \rK_1(R[G])$ was constructed
 as a boundary map
on a suitable localization sequence for $\rK$-groups
using work of Neeman-Ranicki \cite{NeemanRanI}, \cite{NeemanRanII}.
This should agree with the construction given above
but working out the details of this comparison
is a complicated affair.}
\end{remark}

\subsubsection{} \label{3d4} In   this paragraph, we will consider $R{\lps t\rps}[G]$-lattices but the construction works with $R[t][G]$-lattices too.
Let us fix a determinant theory $\delta$.
Suppose that $L_1$, $L_2$ are two lattices and find $N>>0$ such that $t^NL_0\subset L_1, L_2$.
Then we can see that a choice of an isomorphism $a: \delta(L_1)\xrightarrow{\sim} \delta(L_2)$ amounts to an isomorphism
$$
[a]: [L_1/t^NL_0]\xrightarrow{\sim} [L_2/t^NL_0]\ .
$$
Indeed, $a$ is the unique isomorphism for which the diagram
\begin{equation}\label{bra}
\begin{matrix}
\delta(t^NL_0)+[L_1/t^NL_0]&\xrightarrow{\delta_{L_1,t^NL_0}}& \delta(L_1)\\
{\rm id}+[a] \downarrow\ \ \  &&\downarrow a\\
\delta(t^NL_0)+[L_2/t^NL_0]&\xrightarrow{\delta_{L_2,t^NL_0}}& \delta(L_2)
\end{matrix}
\end{equation}
commutes.
The central extension 
\begin{equation}\label{anotherext??}
1\to \rK_1(R[G])\to \Hh_\delta(R\llps t\lrps[G]^n)\to \GL'_n(R\llps t\lrps[G])\to 1
\end{equation}
can now also be described as follows.  Recall
$$
\Hh_\delta(R\llps t\lrps[G]^n)=\{(g, \phi_g)\ |\ g\in \GL'_n(R\llps t\lrps[G]), \ \phi_g: \delta(L_0)\xrightarrow{\sim} \delta(L_0\cdot g^{-1}) \}\ .
$$
We define an operation on $\Hh_\delta(R\llps t\lrps[G]^n)$ by
$$
(g, \phi_g)\star (h, \phi_h)=(g\cdot h,  \phi_h^{g}\circ \phi_{g})
$$
where $\phi_h^{g}$ can be defined as follows: For $N>>0$, $[\phi_h^{g}]$ is given by the composition
$$
[L_0\cdot g^{-1}/t^NL_0]\xrightarrow{\ \cdot g } [L_0/t^NL_0\cdot g]\xrightarrow{[\phi_h]} [L_0\cdot h^{-1}/t^NL_0\cdot  g]
\xrightarrow{\ \cdot g^{-1}} [L_0\cdot h^{-1}g^{-1} /t^NL_0]
$$
where $[\phi_h]$ is induced by $\phi_h$ as above and $r(g)=\cdot g$, $r(g^{-1})=\cdot g^{-1}$ are given by right multiplication.
We can see that the corresponding $\phi^g_h: \delta(L_0\cdot g^{-1})\to \delta(L_0\cdot (gh)^{-1})$ is independent of the choice of $N$; we will abuse notation and write
$$
\phi^g_h=r(g^{-1})\circ \phi_h\circ r(g)\ .
$$
We can now see that $\star$ defines a group structure on $\Hh_\delta(R\llps t\lrps[G]^n)$.
The inverse of $(g, \phi_g)$ is given by $(g^{-1}, \psi_g)$
with
$$
\psi_g=\phi_g^{-g^{-1}}=r(g)\circ \phi^{-1}_g\circ r(g^{-1})
$$
(with the same abuse of notation as before).

\subsubsection{}\label{tame}

Here we explain how the homomorphism $\partial: \rK_2(A_t[G])\to \rK_1(R[G])$
of the previous paragraph
can often be calculated using the classical tame symbol.

Recall that for a field $E$ we know by Matsumoto's theorem that
$\rK_2(E)$ is generated by symbols $\{a,b\}=a\cup b $ for
$a,b\in E^{\times }$. Suppose now that $E$ supports a valuation $v$, with
valuation ring $\O$, maximal ideal ${\mathfrak m}$ and with residue field $k=\O/
{\mathfrak m}$. Then the tame symbol 
$$
\tau : \rK_2(E)\rightarrow
\rK_1(k)=k^{\times }
$$
 is defined by the rule that
\begin{equation}\label{explicitTame}
\tau (\{a,b\}) =( -1) ^{v(a)v(b)
}\frac{a^{v(b)}}{b^{v(a) }}\bmod {\mathfrak m}.
\end{equation}

\begin{proposition}\label{tameprop}
Assume   that $R=F$ is a field and
$A=F{\lps t\rps}$, so that $A_t=F\llps t\lrps$ is the field of Laurent power series,
and that we take $G=\{1\}$. Then
$$
\partial : \rK_2(F\llps t\lrps)\to \rK_1(F)=F^*
$$
constructed above is equal to the inverse of
the tame symbol, \emph{i.e.} $\partial=\tau^{-1}$.  
\end{proposition}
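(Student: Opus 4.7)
By Matsumoto's theorem $\rK_2(F\llps t\lrps)$ is generated by Milnor symbols $\{a,b\}$, so it suffices to check $\partial(\{a,b\}) = \tau(\{a,b\})^{-1}$ on symbols with $a,b \in F\llps t\lrps^\times$. My first step is to observe that both maps vanish on the image of $\rK_2(F\lps t\rps) \to \rK_2(F\llps t\lrps)$: for $\partial$ this is the analogue of (\ref{vanish}) with $A = F\lps t\rps$ observed in Remark \ref{rem7}, which itself follows from the canonical splitting of $\Hh_\delta(F\llps t\lrps)$ over $\GL(F\lps t\rps)$ recorded in \S\ref{split} (any $\gamma \in \GL(F\lps t\rps)$ preserves the lattice $L_0 = F\lps t\rps^n$, making $\delta(\gamma L_0) = \delta(L_0)$ tautologically); for the tame symbol, the explicit formula (\ref{explicitTame}) gives $\tau(\{u,v\}) = 1$ whenever $v(u)=v(v)=0$.

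The second step is to invoke the Quillen (Bass--Tate) localisation exact sequence attached to the inclusion $F\lps t\rps \hookrightarrow F\llps t\lrps$,
\[
\rK_2(F\lps t\rps) \longrightarrow \rK_2(F\llps t\lrps) \xrightarrow{\tau} \rK_1(F) \longrightarrow \rK_1(F\lps t\rps),
\]
in which the first boundary is (up to sign) the tame symbol and the last arrow is the injection $F^\times \hookrightarrow F\lps t\rps^\times$. This yields a short exact sequence with cokernel $F^\times$; by Step 1 both $\partial$ and $\tau^{-1}$ factor through this quotient, inducing endomorphisms of $F^\times$. The endomorphism coming from $\tau^{-1}$ is tautologically the identity, so the proof reduces to showing the endomorphism induced by $\partial$ is also the identity. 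Since $\tau(\{c,t\}) = c$ for $c \in F^\times$, the classes of the symbols $\{c,t\}$ form a distinguished generating set of the quotient, and the claim reduces to the single family of identities $\partial(\{c,t\}) = c^{-1}$ for all $c \in F^\times$.

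The last and hardest step is this direct computation inside $\Hh_\delta$. I would use the standard Steinberg representative $\{c,t\}_{\rm St} = h(c)\, h(t)\, h(ct)^{-1}$ in $2 \times 2$ matrices, where $h(u) = \mathrm{diag}(u, u^{-1})$. Since $h(c) \in \GL_2(F\lps t\rps)$ admits the canonical lift $(h(c), \mathrm{id})$ provided by the splitting of \S\ref{split}, and since $\{c,t\}_{\rm St}$ has trivial image in $\GL_2$, the scalar $\partial(\{c,t\}) \in \ker(\Hh_\delta \to \GL) = F^\times$ is independent of any chosen trivialisations $\phi_{h(t)}$, $\phi_{h(ct)}$ of $\delta(L_0 h(t)^{-1})$ and $\delta(L_0 h(ct)^{-1})$. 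Because $L_0 h(t)^{-1} = t^{-1}F\lps t\rps \oplus tF\lps t\rps = L_0 h(ct)^{-1}$ as $F\lps t\rps$-submodules of $F\llps t\lrps^2$, one may take $\phi_{h(t)} = \phi_{h(ct)}$, reducing the problem to evaluating the scalar by which the conjugation $r(h(c)^{-1}) \circ (-) \circ r(h(c))$ in the $\star$-product of \S\ref{3d4} acts on $\delta(L_0 h(t)^{-1})$. Using the explicit description $\E(L_0 h(t)^{-1}) \cong \mathcal{O}_{\PP^1_F}(1) \oplus \mathcal{O}_{\PP^1_F}(-1)$ on $\PP^1_F$, with $H^1 = 0$ and $H^0 = H^0(\mathcal{O}_{\PP^1_F}(1))$ a two-dimensional $F$-vector space on which right-multiplication by $h(c)$ acts via a scalar determined by $c$, one computes the surviving scalar explicitly. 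The main obstacle is the careful sign-and-normalisation bookkeeping --- balancing the trivial action of $r(h(c))$ on $\delta(L_0) = \det H^0(\mathcal{O}_{\PP^1_F}^2)$ against its action on the cohomology of $\mathcal{O}_{\PP^1_F}(1)\oplus\mathcal{O}_{\PP^1_F}(-1)$ and keeping track of the appropriate identifications via inclusions $tL_0 \subset L_0, L_0h(t)^{-1}$ to use the rule (\ref{bra}) --- needed to pin this scalar down as $c^{-1}$; once that identification is settled, combining with the factorisation of paragraph two gives $\partial = \tau^{-1}$.
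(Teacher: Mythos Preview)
Your Steps 1--2 are clean and correct, and they give a more direct route than the paper's: the paper instead recognises $\Hh(E^m)$ as the canonical Kac--Moody central extension of the loop group $\SL_m(F\llps t\lrps)$ by quoting Garland~\cite[(12.24)]{Garland} and Moore~\cite[Lemma~8.4]{Moore}, and then compares with the tame-symbol push-out only on the diagonal torus, following an argument of Kapranov~\cite{KapranovLocalRR}.

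Step~3, however, has a real gap. The assertion that the scalar is ``independent of any chosen trivialisations $\phi_{h(t)},\phi_{h(ct)}$'' is false for the Matsumoto representative $h(c)\,h(t)\,h(ct)^{-1}$: the product $(h(c),\mathrm{id})\star(h(t),\phi_t)\star(h(ct),\phi_{ct})^{-1}$ in $\Hh$ changes by $\alpha\beta^{-1}$ when $\phi_t\mapsto\alpha\phi_t$ and $\phi_{ct}\mapsto\beta\phi_{ct}$, so one must know the \emph{actual} images $\partial(h(t)_{\St})$ and $\partial(h(ct)_{\St})$ under the unique homomorphism $\partial:\St\to\Hh$, and there is no reason their second components coincide. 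In fact they do not: if you carry out your computation with the choice $\phi_{h(t)}=\phi_{h(ct)}$, the action of $r(h(c)^{-1})$ on $\delta(L')$ (equivalently on $\det H^0(\O_{\PP^1_F}(1)\oplus\O_{\PP^1_F}(-1))$, which is the determinant of a two-dimensional space sitting entirely in the first summand) is multiplication by $c^{-2}$, not $c^{-1}$. The missing factor of $c$ is exactly the discrepancy between the true $\psi_t$ and $\psi_{ct}$.

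The repair is to use the \emph{commutator} representative $\{c,t\}=[\tilde d,\tilde e]$ with $d=\mathrm{diag}(c,c^{-1},1)\in\SL_3(F)$ and $e=\mathrm{diag}(t,1,t^{-1})$, as in \cite[\S 8]{MilnorK}. Then $\partial(\tilde d)=(d,\mathrm{id})$ by your perfectness argument, and $\partial(\{c,t\})=[(d,\mathrm{id}),(e,\psi_e)]$ is genuinely independent of the unknown $\psi_e$ because $\psi_e$ appears in a commutator with a central factor. The analogous determinant count (summand dimensions $N{+}1,N,N{-}1$ with $d^{-1}$ acting by $c^{-1},c,1$) now gives $c^{-1}$ on the nose. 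This commutator computation is exactly the device the paper itself uses later, in the proof of Lemma~\ref{lemma14}.
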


\begin{proof} See \cite{KapranovLocalRR} for a very similar statement. We set $E=F\llps t\lrps$, $\O=F{\lps t\rps}$ and suppose $m\geq 3$. Using the universality of the Steinberg central extension of the perfect group ${\rm SL}_m(E)$ we see that the conclusion is equivalent to the following statement:  
Our  central extension 
\begin{equation}\label{ext3}
1\to F^*\to \Hh(E^m)\to {\rm SL}_m(E)\to 1
\end{equation}
above is isomorphic to the central extension $\widetilde\Hh(E^m)$ obtained  by pushing out 
  the Steinberg extension by $\tau^{-1}: \rK_2(E)\to F^*$. It follows from \cite[Theorem (12.24)]{Garland}
and \cite[Lemma 8.4]{Moore} (this last reference shows that a 
``Steinberg $2$-cocycle" is determined by its restriction to 
the subgroup ${\rm diag}(a,1,\ldots ,1, a^{-1})$ of the  diagonal torus of 
${\rm SL}_m(E)$ that corresponds to the long root) that the extension 
$\widetilde\Hh(E^m)$ is isomorphic to the canonical central $F^*$-extension of the loop group 
${\rm SL}_m(F\llps t\lrps)$ which appears in the theory of affine Kac-Moody algebras. 
It is well-known (cf. \cite{ACKac}, \cite{FaltingsLoops}) that this ``Kac-Moody extension" is the central extension $\widetilde\Hh(E^m)$ given by pairs $(g, \alpha)$ with 
$g\in {\rm SL}_m(E)$ and $\alpha$ a generator of the 
(graded) line bundle $\langle g\cdot \O^m\, |\, \O^m\rangle$,
where for a $\O$-lattice $L\subset E^m$, we set
$$
\langle L\, |\, \O^m\rangle:=\det (L/t^N\O^m)\otimes \det (\O^m/t^N\O^m)^{-1}\  
$$
for $t^N\O^m\subset L$.  The group law is given by $(g, \alpha )\cdot (h, \beta)=(gh, g(\beta)\otimes\alpha )$ (compare to the previous paragraph).  
Our central extension $\Hh(E^m)$ is obtained
in a similar manner using $\langle  \O^m\cdot g^{-1}\, |\, \O^m\rangle$.
Both extensions  $\Hh(E^m)$ and $\widetilde\Hh(E^m)$ split over ${\rm SL}_m(F{\lps t\rps})$.
By the argument of \cite[Proof of (5.4.5)]{KapranovLocalRR} we see that
it is enough to show that the two extensions  agree on the diagonal  maximal torus $T(F\llps t\lrps)$
of ${\rm SL}_m(F\llps t\lrps)$. This fact now follows easily 
from $\tau(a^{-1}, b^{-1})=\tau(a, b)$ and the above.
\end{proof}
 
 \subsubsection{}\label{representations} Here we suppose that
 $ \rho : R[G]\to {\rm M}_{d}(F)$ is a  representation over $F$
 where $F$ is a field of characteristic zero.
 Base-changing by $\rho$ gives an exact functor $M\mapsto \rho (M)={\rm M}_{d}(F)\otimes_{\rho , R[G]}M$
from finitely generated projective $R[G]$-modules
to ${\rm M}_{d}(F)$-modules. Let $e$ be an indecomposable idempotent of ${\rm M}_{d}(F)$.
Multiplying by $e$ gives an equivalence of categories between ${\rm M}_{d}(F)$-modules
and $F$-vector spaces. This gives an equivalence of Picard categories
$V({\rm M}_{d}(F))\xrightarrow{\sim} V(F)={\rm Pic}^\Z(F)$.
Sending $M$ to $e \cdot \rho (M)$ gives an exact functor
to $F$-vector spaces which induces an additive functor
$$
\rho : V(R[G])\to V(F).
$$
This induces on automorphisms of the
identity object,  the  determinant  (norm)
 $$
 N(\rho): \rK_1(R[G])\to F^*.
 $$

Recall that we take $A=R[t]$, or $A=R{\lps t\rps}$.
For simplicity, let us discuss $A=R{\lps t\rps}$.
Notice that if $L$ is an $A[G]$-lattice in $A_t[G]^n$,
then $e \cdot ({\rm M}_{d}(F)\otimes_{\rho , R[G]}L))$ is an
$F{\lps t\rps}$-lattice in $F\llps t\lrps^{nd}$.
Our construction for $G=\{1\}$, gives
a determinant theory $\delta_F$ for $F{\lps t\rps}$-lattices
in $F\llps t\lrps^{nd}$. We can see that for each
$A[G]$-lattice $L$ we have
canonical isomorphisms
$$
\rho  ( \delta(L))\to \delta_{F}(e\cdot ({\rm M}_{d}(F)\otimes_{\rho , R[G]}L))
$$
in $V(F)$. We obtain a commutative diagram of central extensions
\begin{equation}\label{commext}
\begin{matrix}
1&\to & \rK_1(R[G])&\to &\Hh(R\llps t\lrps)[G]^n)&\to &\GL_n'(R\llps t\lrps[G])&\to &1&\ \ \ \ \ \ \\
&&\hbox{\footnotesize {$N(\rho)$}}\downarrow \ \ \    &&\downarrow & &  \downarrow\hbox{\footnotesize{$\rho$}} &&\ \ \ \ \ \ \\
1&\to &F^*&\to &\Hh(F\llps t\lrps^{nd})&\to &\GL'_{nd}(F\llps t\lrps)&\to &1&\ \ \ \ \ \
\end{matrix}
\end{equation}
where  the bottom row can also be identified with   the extension of \cite{ACKac} as in the proof
of Proposition \ref{tameprop}.

\subsection{Some $p$-adic limits}\label{3.5}

In this section, we assume that $R$ is a commutative ring
in which $p$ is a non-unit and set $R_m=R/p^mR$. We suppose that $R_m$ is, for each $m$, a finite ring.
Recall $\hat R=\varprojlim_mR_m$ is the $p$-adic completion of $R$.
Set
\begin{equation*}
\widehat{\rK}_{i} ( R[G]) =\varprojlim_m\rK_{i}(R_{m}[G]) \ \ \text{for }i=0, 1, 2.
\end{equation*}%
 Observe that
\begin{equation}
\widehat{\rK}_{i}(\hat R[G])
=\varprojlim_m\rK_{i}(\hat{R}_{m}[ G])
=\varprojlim_m\rK_{i} ( R_{m}[G]) =\widehat{\rK}
_{i}(R[G]).
\end{equation}
Notice that the natural maps $\rK_1(R_{m+1}[G])\to \rK_1(R_m[G])$
are surjective since $p^mR_{m+1}[G]$ is nilpotent in $R_{m+1}[G]$.
Since $R_m$ is finite for each $m$
the natural map
$$
\rK_1(\hat R[ G])\xrightarrow{\sim } \varprojlim_m \rK_1(R_m[G])=\widehat {\rK}_1(R[G])
$$
is an isomorphism (\cite{FukayaKatoConj}).
Similarly, we have  $\rK_0(R_{m+1}[G])\xrightarrow{\sim} \rK_0(R_m[G])$ and so
$$
\rK_0(\hat R[ G])\xrightarrow{\sim } \widehat {\rK}_0(R[G]).
$$
Tensoring $P\mapsto R_m\otimes_{R_{m+1} }P$  induces
an additive functor
\begin{equation}
V(R_{m+1}[G])\xrightarrow{\ r_m\ } V(R_m [G])\ .
\end{equation}

\subsubsection{} \label{3e1}

 Set $A=R{\lps t\rps}$, $M_0=R[G]^n$.
Recall
$
  \hat R{\ldb t\rdb}=\varprojlim_mR_m\llps t\lrps
$.
Set $\GL'_n(\hat R{\ldb t\rdb}[G]):=\varprojlim_m \GL_n'(R_m\llps t\lrps G)$,
where $\GL_n'(R_m\llps t\lrps G)={\rm ker}(\GL_n(R_m\llps t\lrps G)\to \rK_0(R_m[G]))$.
We can see that this limit is a subgroup of $\GL_n(\hat R{\ldb t\rdb}G)$ that contains
$\GL'_n(R\llps t\lrps[G])$.
By applying the above (and Mittag-Leffler) we see that, for each $n$, there is a  central extension
\begin{equation}\label{cextCompl}
1\to \rK_1(\hat R [G]) \to \hat\Hh(\hat R{\ldb t\rdb}[G]^n)\to \GL'_n(\hat R{\ldb t\rdb}G)\to 1
\end{equation}
where $\hat\Hh(\hat R{\ldb t\rdb}[G]^n)=\varprojlim_m\Hh(R_m\llps t\lrps[G]^n)$.
This extension restricts to (\ref{anotherext??}) after
pulling back via the inclusion $\GL'_n(R\llps t\lrps[G])\hookrightarrow \GL'_n(\hat R{\ldb t\rdb}[G])$.
The extensions (\ref{cextCompl}) are compatible for various $n$.  

By restricting   to
the commutator subgroup $\rE(\hat R{\ldb t\rdb}[G])\subset \GL'(\hat R{\ldb t\rdb}[G])$
we obtain a central extension
\begin{equation}\label{cextComplSt}
1\to \rK_1(\hat R [G]) \to \hat\Hh(\hat R{\ldb t\rdb}[G])_E \to \rE(\hat R{\ldb t\rdb}G)\to 1
\end{equation}
and the argument using the universality of the Steinberg extension
now gives
$$
\hat\partial_R: \rK_2(\hat R{\ldb t\rdb}[G])\to \rK_1(\hat R[G]).
$$

\subsubsection{}  When $R=\hat R$ is a $p$-adically complete discrete valuation ring, we can 
also construct the group $\hat\Hh_n(R{\ldb t\rdb}[G])$ as follows:
Given $ g=( g_m)_m \in \GL'_n(R{\ldb t\rdb}G)$ consider the complex of 
$R[G]$-modules
\begin{equation}
\hat C_R(g): (R\langle\!\langle t^{-1}\rangle\!\rangle G)^n\oplus (R{\lps t\rps}G)^n\cdot  g\xrightarrow{\ \ } 
(R{\ldb t\rdb}G)^n.
\end{equation}
Now for each $m$, we have the complex 
\begin{equation}
C_m( g): (R_m[t^{-1}]G)^n\oplus (R_m{\lps t\rps}G)^n\cdot g_m\xrightarrow{\ \ } 
(R_m\llps t\lrps G)^n
\end{equation}
and $C_m( g)=R_m\otimes_{R_{m+1}}C_{m+1}(g)$.
By the above, for each $m$, $C_m( g)$ is represented by a perfect complex $P_m(g)$
of $R_m[G]$-modules. Using a standard argument  (see \cite[Lemma VI.13.13]{MilneEtaBook}), we can   find 
such perfect 
complexes $P_m( g)$ which support quasi-isomorphisms  
$R_{m}\otimes_{R_{m+1}}P_{m+1}( g)\xrightarrow{\sim} P_m( g)$;
then
$$
\hat P( g):=\varprojlim_m P_m( g)
$$
is a perfect complex of $R[G]$-modules that represents  $\hat C_R( g)$.
(In fact, $\hat C_R(g)$ represents the cohomology of a locally free $\O_{\mathbb{P}^1}[G]$-module
over $\mathbb{P}^1_R$ obtained by patching as in \cite{HarbaterPatchBranch}.)
Therefore, $[\hat C( g))]=[\hat P( g)]$ makes sense in $V(R[ G])$.
For  $\GL'_n(R{\ldb t\rdb})=\varprojlim_m\GL_n'(R_m\llps t\lrps G)$  we can now
see that 
$$
 \hat\Hh( R{\ldb t\rdb}[G]^n)\simeq \{( g, \phi_g)\ |\ g\in \GL'_n(R{\ldb t\rdb}G), \phi_g: [R[G]]=
[\hat C_R(1)]\xrightarrow{\sim} [\hat C_R(g^{-1})]\}\ .
$$
By arguing as in \S \ref{split}, we can now see that the extension 
(\ref{cextCompl}) splits over the subgroup $\GL_n(R\langle\!\langle t^{-1}\rangle\!\rangle[G])$.

\subsubsection{} 
\label{3.5.2}

Suppose here that $R=\hat R$ is a $p$-adically complete discrete valuation ring with valuation $v$, 
fraction field $F$ of characteristic zero
and finite residue field $k$. We will assume that $F[G]$ is split
\[
F[G]\simeq \prod_i {\rm M}_{m_i}(Z_i).
\]
The ring $R{\ldb t\rdb}$ is also a $p$-adically complete discrete valuation ring with residue field $k\llps t\lrps$.
Recall $F{\ldb t\rdb}$ is the fraction field of
$R{\ldb t\rdb}$. Here we explain how we can also construct a central extension of
$\rE_n(F{\ldb t\rdb}[G])$ by
$$
\overline \rK_1(F[G])=\varprojlim_m\rK_1(F[G])/{\rm Im}(\rK_1(R[G], (p^m)).
$$
Notice here that under our assumptions we have
\begin{equation}
\rK_1(F[G])\xrightarrow{\sim}\overline \rK_1(F[G]).
\end{equation}
and we can identify $\rK_1(F[G])$ and $\overline \rK_1(F[G])$.

In what follows, we denote by $\GL^*_n(F{\ldb t\rdb}[G])$ the subgroup
of  $g\in \GL_n(F{\ldb t\rdb}[G])$ with constant  ${\rm Det}$, \emph{i.e.}    ${\rm Det}(g)\in \prod_i Z_i^\times$. 

\begin{lemma}\label{lemmaApprox*}
a) If $g\in  \GL_n(F{\ldb t\rdb}[G])$ and $m>>0$, we can find $g_m\in \GL_n(F\otimes_RR\llps t\lrps[G])$,
$u_m\in  \GL_n(R{\ldb t\rdb}[G], (p)^m)$, such that $g=u_mg_m$.

b) If in addition $g \in \GL^*_n(F{\ldb t\rdb}[G])$
then, for $m>>0$, we can write $g=u_m g_m $ 
with $g_m\in \GL^*_n(F\otimes_R R\llps t\lrps[G])\subset \GL_n'(F\llps t\lrps[G])$, $u_m\in \GL^*_n(R{\ldb t\rdb}[G], (p)^m)$. 
\end{lemma}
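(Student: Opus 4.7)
The plan is to adapt the proof of Lemma~\ref{lemmaApprox}(a), with the $p$-adically complete dvr $R{\ldb t\rdb}$ (resp.\ its fraction field $F{\ldb t\rdb}$, resp.\ the subring $F\otimes_R R\llps t\lrps=R\llps t\lrps[1/p]$) playing the roles there taken by $\hat R$, $\hat N$, $N$. The key enabling structure is the Morita decomposition coming from the splitting of $F[G]$: writing $R_i$ for the ring of integers in $Z_i$, the tensor product $R_i\otimes_R R{\ldb t\rdb}$ is identified with the $\pi_i$-adically complete dvr $R_i{\ldb t\rdb}$ (with residue field $k_i\llps t\lrps$ and fraction field $Z_i\otimes_F F{\ldb t\rdb}$), so that
\[
\hat{\mathfrak{M}}_{R,G}:=\prod\nolimits_i{\rm M}_{m_i}(R_i{\ldb t\rdb})
\]
is a maximal $R{\ldb t\rdb}$-order in $F{\ldb t\rdb}[G]$. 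Choose $r\geq 0$ with $p^r\hat{\mathfrak{M}}_{R,G}\subset R{\ldb t\rdb}[G]$ and, for a target precision $m$, set $a=r+m$.

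For part~(a), the goal is to find $g_m\in\GL_n(F\otimes_R R\llps t\lrps[G])$ such that $u_m:=g\,g_m^{-1}$ lies in $1+p^a{\rm M}_n(\hat{\mathfrak{M}}_{R,G})$; once this is done, the inclusion $p^a\hat{\mathfrak{M}}_{R,G}\subset p^{a-r}R{\ldb t\rdb}[G]$ places $u_m$ in $\GL_n(R{\ldb t\rdb}[G],(p)^m)$, and $g=u_mg_m$ is as required. To construct $g_m$ I would work one Morita factor at a time: in each $\GL_{nm_i}(Z_i\otimes_F F{\ldb t\rdb})$ apply a Smith/Cartan factorization $g_i=A_iD_iB_i$ over the dvr $R_i{\ldb t\rdb}$ with $A_i,B_i\in\GL_{nm_i}(R_i{\ldb t\rdb})$ and $D_i$ a diagonal of powers of $\pi_i$, and then approximate the compact factors $A_i,B_i$ entrywise modulo a high power of $p$ by $A_i',B_i'\in \GL_{nm_i}(R_i\llps t\lrps[1/p])$, using the density of $R_i\llps t\lrps$ in $R_i{\ldb t\rdb}$ and the fact that the reduction mod $\pi_i$ of such an approximant is a unit in $k_i\llps t\lrps$. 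Setting $g_{m,i}:=A_i'D_iB_i'$ and recombining via Morita produces a candidate $g_m$, and the error $g\,g_m^{-1}-1$ is rewritten, after conjugation by the diagonal factors $D_i$ and by $B_i'$, as a product of terms each in $1+p^{a'}{\rm M}_{nm_i}(R_i{\ldb t\rdb})$ for arbitrarily large $a'$; a bounded loss of precision controlled by the exponents of the $D_i$ is absorbed into the hypothesis $m\gg 0$.

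For part~(b) I would start from the decomposition of~(a) and apply a determinant correction. Writing $\det(g)=c\in\prod_i Z_i^\times$ and $\alpha:=\det(u_m)\in\prod_i\bigl(1+p^m R_i{\ldb t\rdb}\bigr)^\times$, one has $\det(g_m)=c/\alpha\in\prod_i(R_i\otimes_R R\llps t\lrps[1/p])^\times$, so that $\alpha$ itself lies in this same group of units. Let $E$ be the block-diagonal matrix with $E_i:=\mathrm{diag}(1,\dots,1,\alpha_i)$ in the $i$-th Morita factor; then $E\in\GL_n(F\otimes_R R\llps t\lrps[G])$ with $\det(E)=\alpha$. Setting $g_m':=g_m\cdot E$ and $u_m':=u_m\cdot E^{-1}$ gives $\det(g_m')=c$ and $\det(u_m')=1$; since $E^{-1}-1\in p^m{\rm M}_n(\hat{\mathfrak{M}}_{R,G})$, a short calculation shows $u_m'\in\GL_n^*(R{\ldb t\rdb}[G],(p)^{m-r})$, and inflating the initial choice of $m$ yields the required precision.

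The hard part, and the reason for the Morita-plus-Smith device, is the approximation step in~(a): since $F\otimes_R R\llps t\lrps$ is not a field and the inclusion $F\otimes_R R\llps t\lrps\hookrightarrow F{\ldb t\rdb}$ does not in general send units to units, a naive entrywise $p$-adic approximation of $g$ need not be invertible over $F\otimes_R R\llps t\lrps[G]$. The Smith/Cartan decomposition is introduced precisely to quarantine all $p$-denominators into the explicit diagonal factor $D_i$ (whose entries $\pi_i^\bullet$ are automatically units in $R_i\llps t\lrps[1/p]$), leaving only the compact factors $A_i,B_i\in\GL_{nm_i}(R_i{\ldb t\rdb})$ to be approximated; for these the residue-field argument ensures that the approximants remain units over $R_i\llps t\lrps[1/p]$.
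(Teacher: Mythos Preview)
Your proposal is correct and follows essentially the same route as the paper: reduce via the maximal order and Morita equivalence to the case $G=\{1\}$, use the Cartan (Bruhat, Smith) decomposition of $\GL_n$ over the complete dvr $R{\ldb t\rdb}$ to isolate the $p$-denominators in a diagonal factor, and then approximate the two compact factors using the $p$-adic density of $R\llps t\lrps$ in $R{\ldb t\rdb}$. The paper is terser (it simply invokes the Bruhat decomposition and density to conclude that $\GL_n(F\otimes_R R\llps t\lrps)$ is dense in $\GL_n(F{\ldb t\rdb})$, then refers back to Lemma~\ref{lemmaApprox} for~(b)), but your more explicit error analysis and determinant correction are exactly what underlies those sentences.
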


\begin{proof}
Let $\mathfrak M=\prod_i{\rm M}_{m_i}(\O_{Z_i})$ be a maximal order in $F[G]=\prod_i {\rm M}_{m_i}(Z_i)$;
Since we have $\GL_n({\mathfrak M}{\ldb t\rdb}, (p)^a)\subset \GL_n(R{\ldb t\rdb}[G], (p)^m)$
for $a>>m$ (see the  proof of Lemma \ref{lemmaApprox}) we can reduce, by Morita equivalence, 
the proof of (a) to the case $G=\{1\}$. Now use the Bruhat decomposition
\[
\GL_n(F{\ldb t\rdb})=\bigcup_{(s_1,\ldots, s_n)\in \Z^n} \GL_n(R{\ldb t\rdb})\cdot {\rm diag}(\pi^{s_1},\ldots , \pi^{s_n})\cdot 
 \GL_n(R{\ldb t\rdb})
\]
and the fact that $\GL_n(R\llps t\lrps)$ is dense in $\GL_n(R{\ldb t\rdb})$ to deduce that $\GL_n(F\otimes_R R\llps t\lrps)$
is dense in $\GL_n(F{\ldb t\rdb})$; part (a) then follows. Part (b) then follows by part (a) and an argument as 
in the  proof of Lemma \ref{lemmaApprox}. 
\end{proof}

Starting with $g \in \GL^*_n(F{\ldb t\rdb}[G])$ write $g=u_m g_m $ as in Lemma \ref{lemmaApprox*} (b) 
and consider $\V_m:=\V_{g_m^{-1}}=\delta(L_0\cdot g^{-1}_m)-\delta(L_0)$   in $V(F[G])$.
Consider the Picard category $V_m(F[G])$ which has the same objects
as $V(F[G])$ and morphisms equivalence classes of arrows in $V(F[G])$, where $a, a': A\to B$
are equivalent if $a'\cdot a^{-1}\in {\rm Im}(\rK_1(R[G], (p^m)))\subset \rK_1(F[G])$.
Similarly, we can define $V_m(R[G])$.
There are additive functors $q_m: V_{m+1}(F[G])\to V_m(F[G])$.
The object $\V_m$ can be made, up to unique isomorphism in $V_m(F[G])$,
to be independent of the choice of $g_m$; Suppose another choice $g'_m$
gives $\V'_m$. Now  the central extension structure gives a canonical arrow
$$
\V_m\to \V'_m+\V_u
$$
for some $u=g_m'g_m^{-1}\in \GL^*_n(R{\ldb t\rdb}[G], (p^m))$.
We can think of $\V_u$ as an object of $V(R[G])$;
it supports a unique trivialization in $V_m(R[G])$
and this provides us with a fixed choice of a trivialization of $\V_u$
in $V_m(F[G])$. Hence, we have given an arrow
$\V_m\to \V'_m$ in $V_m(F[G])$ between any two   choices $\V_m$, $\V_m'$.
These arrows satisfy the obvious composition
condition when we are dealing with three choices $\V_m$,
$\V'_m$, $\V''_m$.

We can now consider  the group of pairs
$$
 \Hh_n(F{\ldb t\rdb}[G])_m= \{(g, \phi_m)\ |\ g\in \rE_n(F{\ldb t\rdb}[G]),  \ \phi_m:   [0]\to \V_m \hbox{\rm \ in\ } V_m(F[G])\}
$$
with  group law as in \S \ref{332}.
This gives   central extensions
\begin{equation}\label{extm}
1\to   \rK_1(F[G])/{\rm Im}(\rK_1(R[G], (p^m)))\to \Hh_n(F{\ldb t\rdb}[G])_m\to \rE_n(F{\ldb t\rdb}[G]) \to 1.
\end{equation}
The inverse limit of these, consisting of  $\{(g, (\phi_m)_m)\}$
 such that  $q_m(\phi_{m+1})=\phi_m$, for all $m$,
 gives the desired central extension
\begin{equation}\label{exthat}
1\to  \rK_1(F[G])=\overline{ \rK}_1(F[G])\to \hat\Hh_n(F{\ldb t\rdb}[G])\to \rE_n(F{\ldb t\rdb}[G]) \to 1.
\end{equation}
This provides us with
\begin{equation}
\hat\partial_F: \rK_2(F{\ldb t\rdb}[G])\to  \rK_1(F[G]).
\end{equation}
 Our constructions show that the diagram
\begin{equation}
\begin{matrix}
\rK_2(R{\ldb t\rdb}[G])&\xrightarrow{\hat\partial_{R, m}}  & {\rK}_1(R[G])/{\rm Im}(\rK_1(R[G], (p^m)))\\
\downarrow && \downarrow\\
\rK_2(F{\ldb t\rdb}[G])&\xrightarrow{\hat\partial_{F, m}}  &   {\rK}_1(F[G])/{\rm Im}(\rK_1(R[G], (p^m))).\\
\end{matrix}
\end{equation} commutes. Here   the vertical arrows are given by base change, and $\hat\partial_{R, m}$
is obtained from $\hat\partial_R$ of the previous paragraph.
 Therefore, $\hat\partial_{F, m}$ vanishes on ${\rm Im}(\rK_2(R{\ldb t\rdb}[G], (p^m)))$.

\subsubsection{} \label{3e3} Suppose $g\in \GL_n(F{\ldb t\rdb}[G])$ and consider the complex
\begin{equation*}
\hat C_F(g): (F\{t^{-1}\}[G])^n\oplus (R{\lps t\rps}\otimes_R F[G])^n\cdot g\rightarrow (F{\ldb t\rdb}[G])^n
\end{equation*}
of $F[G]$-modules. If $g\in \GL_n(R{\ldb t\rdb}[G])$, then $\hat C_F(g)=F\otimes_R \hat C_R(g)$
with $\hat C_R(g)$ the perfect complex of $R[G]$-modules considered in \S \ref{3e1}.
In general, we have

\begin{proposition}\label{rigidperfect}
For $g\in \GL_n(F{\ldb t\rdb}[G])$,  $\hat C_F(g)$ is a perfect complex of $F[G]$-modules.
\end{proposition}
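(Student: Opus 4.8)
The plan is to reduce the general case to the case $g \in \GL_n(R{\ldb t\rdb}[G])$, which is already handled in \S\ref{3e1}, by writing $g$ as a product of a matrix over $R{\ldb t\rdb}[G]$ and a ``controllable'' matrix over $F\otimes_R R\llps t\lrps[G]$, and then exploiting the fact that changing the lattice $L_0\cdot g$ inside $(F{\ldb t\rdb}[G])^n$ by a lattice commensurable with it alters $\hat C_F(g)$ only by a bounded perfect complex. Concretely, first I would invoke Lemma \ref{lemmaApprox*}(a): for $m$ large we may write $g = u_m g_m$ with $g_m \in \GL_n(F\otimes_R R\llps t\lrps[G])$ and $u_m \in \GL_n(R{\ldb t\rdb}[G], (p)^m)$. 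Since $u_m$ lies in $\GL_n(R{\ldb t\rdb}[G])$, the lattices $L_0 \cdot g = L_0\cdot u_m g_m$ and $L_0 \cdot g_m$ inside $(F{\ldb t\rdb}[G])^n$ differ only by the action of an element of $\GL_n(R{\ldb t\rdb}[G])$; more precisely $L_0\cdot g_m$ and $(R{\ldb t\rdb}[G])^n\cdot g_m$ are commensurable $R{\ldb t\rdb}[G]$-lattices in $(F{\ldb t\rdb}[G])^n$, and the quotient of the \v{C}ech complexes $\hat C_F(g)$ and $\hat C_F(g_m)$ is (up to quasi-isomorphism) a finite complex built out of the $R[G]$-projective quotient modules of such lattices (this is the analogue, over $R{\ldb t\rdb}$, of Corollary \ref{L1L2}). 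Being a bounded complex of finitely generated projective $R[G]$-modules, base-changed to $F[G]$, it is perfect over $F[G]$; hence $\hat C_F(g)$ is perfect over $F[G]$ if and only if $\hat C_F(g_m)$ is.

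So it remains to treat $g \in \GL_n(F\otimes_R R\llps t\lrps[G])$. Here I would argue that $\hat C_F(g)$ is obtained from a complex over $R\llps t\lrps$ by inverting $p$ and then $p$-adically completing in the relevant direction, in the precise sense that $\hat C_F(g)$ computes the cohomology over $\PP^1_R$ of the locally free $\O_{\PP^1_R}[G]$-module $\E(L)$ attached to the $R\llps t\lrps$-lattice $L$ obtained by clearing denominators in $L_0\cdot g$, tensored with $F$ and completed. Since $g$ lies in $\GL_n$ over $F\otimes_R R\llps t\lrps[G]$, after multiplying by a power of $t$ and a power of $p$ we obtain an honest $R\llps t\lrps[G]$-lattice $L' \subset (R\llps t\lrps[G])^n \subset (F{\ldb t\rdb}[G])^n$, and $\hat C_F(g)$ differs from $F\otimes_R\hat C_R$ of the corresponding integral matrix by a shift and a bounded $F[G]$-projective complex (again as in Corollary \ref{L1L2}), reducing us to the integral statement of \S\ref{3e1} after inverting $p$; perfectness is preserved under $-\otimes_R F$.

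The step I expect to be the main obstacle is making rigorous the claim that passing between $\hat C_F(g)$ for commensurable lattices changes the complex only by a \emph{perfect} (bounded projective) complex over $F[G]$, \emph{and} controlling what happens at the $t = \infty$ chart: the module $F\{t^{-1}\}[G]$ is the free Tate algebra, not a power-series ring, so the gluing that produces a bona fide locally free sheaf on $\PP^1_R$ (via the patching of \cite{HarbaterPatchBranch} as in \S\ref{3e1}) must be invoked with some care to see that its cohomology is computed by $\hat C_F(g)$ and is a perfect complex. Once that identification is in place --- essentially a $p$-adic/Tate-algebra version of the \v{C}ech computation on $\PP^1$ --- the conclusion follows, since the cohomology of a coherent locally free sheaf on the proper scheme $\PP^1_{\hat R}$ with $F$ adjoined is represented by a perfect complex of $F[G]$-modules.
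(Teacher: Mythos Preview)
There is a genuine gap in your reduction step. When you write $g = u_m g_m$ with $u_m \in \GL_n(R\ldb t\rdb[G], (p)^m)$ and $g_m \in \GL_n(A_0[G])$ (where $A_0 = F \otimes_R R\llps t\lrps$), you want to compare $\hat C_F(g)$ with $\hat C_F(g_m)$. But the complex $\hat C_F(g)$ depends on the $A_+[G]$-submodule $L_0 \cdot g \subset (F\ldb t\rdb[G])^n$, where $A_+ = F \otimes_R R\lps t\rps$ and $L_0=(A_+[G])^n$, and the matrix $u_m$ does \emph{not} preserve $L_0$: elements of $R\ldb t\rdb$ can have infinitely many negative powers of $t$ (with coefficients tending to $0$ $p$-adically), whereas elements of $A_+$ have none. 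So $L_0 \cdot u_m$ is not an $A_+$-lattice commensurable with $L_0$ in any sense to which Corollary~\ref{L1L2} applies; that corollary is about lattices squeezed between $t^N L_0$ and $t^{-N}L_0$, which fails here. Your statement that ``$L_0\cdot g_m$ and $(R\ldb t\rdb[G])^n\cdot g_m$ are commensurable $R\ldb t\rdb[G]$-lattices'' is also not correct: $L_0$ is not even an $R\ldb t\rdb[G]$-module (it allows $\pi^{-1}$ but not infinitely many $t^{-1}$), and neither of these two modules contains the other.

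The paper supplies exactly the analytic input you are missing: the ultrametric Cartan Lemma. After reducing to $G=\{1\}$ by Morita equivalence, one combines density of $G_0=\GL_n(A_0)$ with the Cartan factorization to write $g = g_0\cdot g_+\cdot g_-$ with $g_\pm \in \GL_n(A_\pm)$ and $A_- = F\{t^{-1}\}$. The key point is that right multiplication by $g_-^{-1}$ is an \emph{isomorphism} $\hat C_F(g)\simeq \hat C_F(g_0 g_+)$, because $g_-$ preserves both $(A_-)^n$ and $(F\ldb t\rdb)^n$; and $g_+$ preserves $L_0$. This reduces to $g\in \GL_n(A_0)$. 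A further lattice argument then shows every coset in $\GL_n(A_+)\backslash \GL_n(A_0)$ has a representative in $\GL_n(F[t,t^{-1}])$, after which perfectness follows by direct comparison with $\hat C_F(1)\simeq F^n$. Your step~2 runs into a related problem: multiplying $g\in \GL_n(A_0)$ by a power of $\pi$ does not land you in $\GL_n(R\llps t\lrps)$, since the inverse matrix then acquires denominators, so you cannot pass to the integral complex $\hat C_R$ that way.
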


\begin{proof}
By Morita equivalence, it is enough to consider the case $G=\{1\}$.
For simplicity, set $A_+=R{\lps t\rps}\otimes_R F$, $A_-=F\{t^{-1}\}$, $A_0=R\llps t\lrps\otimes_RF$.
These are  $F$-subalgebras of the field $F{\ldb t\rdb}$. We have, $F{\ldb t\rdb}=A_-+A_+$, the algebras
$A_+$, $A_-$ are complete,  $A_0$  is dense in $F{\ldb t\rdb}$. Set $G_\pm=\GL_n(A_\pm)$
and $G_0=\GL_n(A_0)$. By Lemma \ref{lemmaApprox*} (a) and its proof, 
$G_0$ is dense in $\GL_n(F{\ldb t\rdb})$. Notice that we can write $a\in F{\ldb t\rdb}$ as a sum $a=a_-+a_+$ with $a_\pm \in A_\pm$ and $|a_+|$, $|a_-|\leq |a|$, where $|\ .|$ denotes the $p$-adic absolute value
on $F{\ldb t\rdb}$. It now follows by the ultrametric version of Cartan's Lemma (see for example \cite[III 6.3]{FvdPut}
III 6.3, or  \cite{HaranVol})
that we can write $g=g_0\cdot g_+\cdot g_-$.
Then $g=h_0\cdot g_-$
with $h_0=g_0\cdot g_+\in \GL_n(A_0)$ since $A_+\subset A_0$. 
We have $\hat C_F(g)=\hat C_F(h_0)$; this reduces 
to showing the proposition for $g\in \GL_n(A_0)$.
Now also observe that if $g'= g_+\cdot g$ with $g_+\in \GL_n(A_+)$, then 
$\hat C_F(g')\simeq \hat C_F(g)$; this allows us to 
restrict attention to the cosets $\GL_n(A_+)\backslash \GL_n(A_0)$;
we can see that these parametrize free rank $n$ $A_+$-submodules $Q$ of $A_0^n$ 
with $t^NA^n_+\subset Q\subset t^{-N}A^n_+$ for some $N$. Since $t^{-N}A_+/t^NA_+\simeq t^{-N}F[t]/t^NF[t]$,
the usual argument shows that each coset $ \GL_n(A_+)\cdot g$ has a representative
$h$ in $\GL_n(F[t, t^{-1}])$. Hence, $\hat C_F(g)$
is isomorphic to the $p$-adic completion of 
$$
F[t^{-1}]^n\oplus A_+^n\cdot h\to A_0^n
$$
and this is perfect: Indeed, we can compare this to the complex for $h=1$
which is quasi-isomorphic to $F^n$ and hence is perfect; since the quotient of any two $A_+$ lattices
is a finite $F$-vector space the result follows. 
\end{proof}
\smallskip

We can now see that $\hat C_F(g)$ gives the cohomology of a 
rank $n$-vector bundle over $\mathbb{P}^1_F$; this is the bundle obtained
by glueing by the element $h\in \GL_n(F[t, t^{-1}])$ in the
above proof. Now we can construct
\begin{equation}
1\to \rK_1(F[G])\to \hat\Hh_n(F{\ldb t\rdb}[G])\to \GL_n^*(F{\ldb t\rdb}[G])\to 1
\end{equation}
by setting 
\begin{equation}
\hat\Hh_n(F{\ldb t\rdb}[G]):=\{(g, \phi_g)\ |\ g\in \GL_n^*(F{\ldb t\rdb}[G]), \phi_g: [F[G]]\xrightarrow{\sim} [\hat C_F(g^{-1})]\}.
\end{equation}
Now we can check that the restriction of this extension over $E(F{\ldb t\rdb}[G])$ is isomorphic to the extension (\ref{exthat})
constructed in the previous paragraph.
We can see from the above that this extension splits over the subgroup $\GL_n(F\{t^{-1}\}[G])$ and
over $\GL_n(F\otimes_RR{\lps t\rps}[G])$.

\subsection{Kato's Residue map}\label{3.4.3}

 Recall that for $q\geq 1$ and for
a field $N$, $\rK_{q}( N) $ is generated by symbols $\{
a_{1},\ldots ,a_{q}\} $ with  $a_{i}\in N^{\times }$. Suppose that $N$ supports a normalized additive discrete valuation $v$.
Let $\rK_{\ast}(N) $ denote the graded ring $\oplus _{q\geq 0}\rK_{q}(N)$. For $n\geq 1$, 
we let ${\rm U}^{n}\rK_{\ast }( N) =\oplus
_{q\geq 0}{\rm U}^{n}\rK_{q}( N) $ denote the graded $\rK_{\ast }(
N) $-ideal generated by elements $a\in N^{\times }=\rK_{1}(
N) $ with $v( a-1) \geq n$. We then define
\begin{equation*}
\ti{\rK}_{q}( N) =\lim_{\leftarrow n}\frac{\rK_{q}(
N) }{{\rm U}^{n}\rK_{q}( N) }.
\end{equation*}

From \cite[Lemma 1]{KatoRes} we quote: 

\begin{lemma}\label{lemma12}
 Let $q\geq 0$, $n\geq 1$. If $\hat{N}$ denotes the completion of $N$ with
respect to the valuation $v$, then the natural map $N\rightarrow \hat{N}$
induces isomorphisms:
\begin{equation*}
\frac{\rK_{q}( N) }{{\rm U}^{n}\rK_{q}( N) }\xrightarrow{\sim} \frac{\rK_{q}( \hat{N}) }{{\rm U}^{n}\rK_{q}( \hat{N}) },\qquad
\ti{\rK}_{q}( N) \xrightarrow{\sim} \ti{\rK}_{q}( \hat{N}) .
\end{equation*}
 \end{lemma}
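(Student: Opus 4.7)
The plan is to prove the first isomorphism for each fixed $n$ by a density argument, and then deduce the second by passing to the inverse limit over $n$. Both sides of the first map are well-defined because the inclusion $N \hookrightarrow \hat N$ sends $\{a \in N^\times : v(a-1) \ge n\}$ into $\{\hat a \in \hat N^\times : v(\hat a-1) \ge n\}$, so the induced map on quotients is well-defined.

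For surjectivity, I would use density of $N^\times$ in $\hat N^\times$. Given a Milnor symbol $\{b_1, \ldots, b_q\}$ in $\rK_q(\hat N)$, choose $a_i \in N^\times$ with $v(b_i/a_i - 1) \ge n$. Writing $b_i = a_i u_i$ with $u_i$ a principal unit of level $\ge n$, multilinearity gives
\[
\{b_1, \ldots, b_q\} = \{a_1, \ldots, a_q\} + \sum (\text{symbols having at least one entry } u_j),
\]
and the second term lies in $U^n \rK_q(\hat N)$ by the very definition of the latter.

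For injectivity, the plan is to exhibit a presentation of $\rK_q(F)/U^n \rK_q(F)$ (for $F = N$ or $\hat N$) in which both generators and relations depend only on the quotient group $F^\times / (1 + \mathfrak{m}_F^n)$ together with the reductions of the Steinberg relations $\{a, 1-a\} = 0$. The natural map $N^\times / (1 + \mathfrak{m}_N^n) \to \hat N^\times / (1 + \mathfrak{m}_{\hat N}^n)$ is bijective: surjectivity is density, and injectivity is the equality $(1 + \mathfrak{m}_{\hat N}^n) \cap N^\times = 1 + \mathfrak{m}_N^n$. The less formal point is lifting a Steinberg relation from $\hat N$ to $N$ modulo $U^n$: given $\tilde a \in \hat N \setminus \{0,1\}$, I would approximate $\tilde a$ by $a \in N$ closely enough (in the $v$-adic sense) that both $a/\tilde a$ and $(1-a)/(1-\tilde a)$ lie in $1 + \mathfrak{m}_{\hat N}^n$. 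Since the valuations $v(\tilde a)$ and $v(1-\tilde a)$ are fixed, this is achievable by density; then $a, 1-a \in N^\times$ and the relation $\{a, 1-a\} = 0$ in $\rK_2(N)$ reduces to $\{\tilde a, 1-\tilde a\} = 0$ modulo $U^n$. Consequently the two presentations agree, giving a two-sided inverse of the natural map and completing the first isomorphism. The second isomorphism $\tilde{\rK}_q(N) \xrightarrow{\sim} \tilde{\rK}_q(\hat N)$ then follows by taking the inverse limit over $n$.

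The main obstacle will be the careful approximation needed to transfer the Steinberg relation. One has to handle separately the case $v(\tilde a) = 0$, where $v(1 - \tilde a)$ may be arbitrarily large and the approximation must be arranged in terms of $1-\tilde a$ rather than $\tilde a$, and the case $v(\tilde a) \ne 0$, where the ultrametric property of $v$ forces $v(1-\tilde a) = \min(0, v(\tilde a))$. In both situations the non-archimedean triangle inequality allows simultaneous satisfaction of the two closeness conditions, but this verification is the real content behind the otherwise formal density argument.
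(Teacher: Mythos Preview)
The paper does not give its own proof of this lemma; it simply quotes the statement from \cite[Lemma 1]{KatoRes}. Your sketch is essentially the standard argument and is correct: the key point is that $\rK_q^M(F)/U^n\rK_q^M(F)$ is the quotient of $T^q(F^\times/(1+\mathfrak{m}_F^n))$ by the image of the Steinberg ideal, and both the tensor power and the image of the Steinberg ideal depend only on data that is unchanged under completion. Your case analysis for the approximation of Steinberg pairs $(\tilde a, 1-\tilde a)$ by pairs $(a,1-a)$ with $a\in N$ is the heart of the matter and is handled correctly; note only that you should make explicit that $\rK_q$ here is Milnor $\rK$-theory (as in Kato's paper), so that the symbol presentation is available in all degrees.
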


\subsubsection{}
We consider the case where $F$ is a finite extension of the $p$-adic field $\Q_{p}$ with valuation ring $R$ and valuation ideal  $P=\pi R$.  Let $N$ denote the field of fractions of the ring of power series $R{\lps t\rps} $ for an indeterminate $t$, endowed with the
discrete valuation associated to the height one prime ideal $\pi R{\lps t\rps}$. Then the completion $\hat{N}$ is the field $ 
F{\ldb t\rdb}$ described previously. In   \cite[Sec. 1]{KatoRes} Kato defines a map 
\begin{equation}
\mathrm{Res} :   \ti{\rK}_{2}( F{\ldb t\rdb}) \rightarrow
\ti{\rK}_{1}( F) =F^{\times}.
\end{equation}
 By composing with the
natural map $\rK_{2}( F{\ldb t\rdb}) \rightarrow \widetilde{\rK}
_{2}( F{\ldb t\rdb}) $ we obtain a map which we also call $\mathrm{
Res}$:  $\rK_{2}( F{\ldb t\rdb}) \rightarrow F^{\times }$.

From \cite[Theorem 1]{KatoRes} we know that $\mathrm{Res}$ is continuous with
respect to the topology given by the subgroups $\{ {\rm U}^{n}\rK_{2}( N) \} $.
In
fact $\mathrm{Res}( {\rm U}^{n}\rK_{2}( F{\ldb t\rdb}) ) \subset
1+P^{n}$. We also have 
\begin{equation*}
\mathrm{Res}(\{ a,t\}) =a, \quad \hbox{\rm for $a\in F^{\times }$},
\end{equation*}%
and $\mathrm{Res}$ annihilates the image of $\rK_{2}(F\otimes_R  R[[ t]]) $ in $\rK_{2}( F{\ldb t\rdb})$. 
Observe that $\partial_F$ on the image of $\rK_2 (F\otimes_R R \llps t\lrps)$ in $\rK_2 (F \llps t\lrps)$ is the negative of the tame symbol by Proposition \ref{tameprop}.
 We now explain why  $\rm Res$ on the image of $\rK_2 (F\otimes_R R \llps t\lrps)$ in 
$\rK_2 (F {\ldb t\rdb})$ agrees with the tame symbol: From the localization sequence (see \cite[p. 294]{RosenbergBook}), noting that an arbitrary non-zero element 
$x\in \rK_2(F\otimes_RR\llps t\lrps)$  can be written as $x = t^n y$ with  $y\in F\otimes_RR{\lps t\rps}$, we have an exact sequence
$$
0\to {\rm Im}(\rK_2(F\otimes_RR{\lps t\rps}))\to \rK_2(F\otimes_RR\llps t\lrps)\to \rK_1(\mathcal{C})
$$
 where $\mathcal{C}$ is the category of perfect $F\otimes_RR\llps t\lrps)$-complexes whose homology is killed by a power of $t$.  In the usual way we 
get  $\rK_1(\mathcal{C})\cong \rK_1(F)$ and the above sequence is split by mapping $a\mapsto a\cup t$.
 Therefore we get a decomposition
 $\rK_2(F\otimes_RR\llps t\lrps)  =  {\rm Im}(\rK_2(F\otimes_RR{\lps t\rps}))\oplus \rK_1(F)$
 and we can see that ${\rm Res}$ on $\rK_2(F\otimes_RR\llps t\lrps)$ is the tame symbol.
 
We conclude 
that $\mathrm{Res}$ is compatible with the tame symbol in the sense that we
have a commutative diagram
\begin{equation}\label{eq49}
\begin{array}{ccc}
\rK_{2}( F( ( t) )) & \xrightarrow{\tau_F=\partial_F^{-1}} &
\rK_{1}( F) \\
\uparrow &  & \ \ \ \uparrow {\rm id} \\
\rK_{2}( F\otimes_R R(( t))) & \xrightarrow{\ \ \ \ } &
\rK_{1}( F) \\
\downarrow &  & \downarrow \\
\rK_{2}( F{\ldb t\rdb}) & \xrightarrow{\rm Res}& \widehat{\rK}_{1}( F).
\end{array}
\end{equation}
induced by the natural inclusions.  

\subsubsection{Compatibility of $\hat\partial^{-1}$ with Res}\label{3.4.3b} 
Combining Proposition \ref{tameprop} with (\ref{eq49}) above we will show:

\begin{proposition}\label{prop13}
The maps $\hat\partial^{-1}_F $ and $\mathrm{Res}$ agree on $\rK_{2}(
F{\ldb t\rdb})$.
\end{proposition}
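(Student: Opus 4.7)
The plan is to verify that $\hat\partial_F^{-1}$ satisfies the three characterizing properties of Kato's residue map, and then invoke Kato's uniqueness theorem: (i) $\hat\partial_F^{-1}\{a,t\} = a$ for $a \in F^\times$; (ii) $\hat\partial_F^{-1}$ annihilates the image of $\rK_2(F\otimes_R R\lps t\rps)$ in $\rK_2(F\ldb t\rdb)$; and (iii) $\hat\partial_F^{-1}$ is continuous with respect to the $U^n$-filtration topology. Given these properties, the density statement $\rK_2(N)/U^n \xrightarrow{\sim} \rK_2(F\ldb t\rdb)/U^n$ from Lemma \ref{lemma12}, together with the analogous properties of $\mathrm{Res}$ recorded in \S\ref{3.4.3} following Kato, will force $\hat\partial_F^{-1} = \mathrm{Res}$.

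For property (i), I would observe that $\{a,t\}$ with $a \in F^\times$ lies in the image of $\rK_2(F\otimes_R R\llps t\lrps)$ in $\rK_2(F\ldb t\rdb)$. For $g \in \GL_n^*(F\otimes_R R\llps t\lrps)$ the construction of the central extension (\ref{exthat}) in \S\ref{3.5.2} can be carried out with the constant approximation $g_m = g$, so that the restriction of $\hat\partial_F$ to this subgroup is compatible with the map $\partial_F$ coming from the extension (\ref{cextmaster2}) of \S\ref{3d} via the inclusion $F\otimes_R R\llps t\lrps \hookrightarrow F\llps t\lrps$. Proposition \ref{tameprop} then identifies $\partial_F^{-1}$ with the tame symbol $\tau_F$, and a direct computation (using $v_t(a) = 0$ and $v_t(t) = 1$) gives $\tau_F\{a,t\} = a$.

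For property (ii), I would use the splitting of the central extension (\ref{exthat}) over $\GL_n(F\otimes_R R\lps t\rps[G])$ (with $G = \{1\}$ here) noted at the end of \S\ref{3e3}. By the universality of the Steinberg extension as a central extension of the perfect group $\rE(F\otimes_R R\lps t\rps)$, the canonical lift $\St(F\otimes_R R\lps t\rps) \to \hat\Hh_n(F\ldb t\rdb)$ must agree with the lift obtained by first projecting to $\rE(F\otimes_R R\lps t\rps)$ and then applying the splitting; the latter is trivial on the kernel $\rK_2(F\otimes_R R\lps t\rps)$, hence so is $\hat\partial_F$ on its image in $\rK_2(F\ldb t\rdb)$.

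The main obstacle is property (iii). I plan to argue: a typical generator of $U^n \rK_2(F\ldb t\rdb)$ is a symbol $\{a,b\}$ with $a \equiv 1 \bmod \pi^n R\ldb t\rdb$ and $b \in F\ldb t\rdb^\times$. Writing $b = \pi^s b'$ with $b' \in R\ldb t\rdb^\times$, bilinearity gives $\{a,b\} = s\{a,\pi\} + \{a,b'\}$, and both summands lie in the image of $\rK_2(R\ldb t\rdb, (p)^m)$ for $m = \lfloor n/e \rfloor$, where $e$ is the absolute ramification index of $F$, because $a \equiv 1 \bmod p^m$ already follows from $a \equiv 1 \bmod \pi^n$ and the reduction of either symbol modulo $p^m$ becomes trivial. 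The vanishing of $\hat\partial_{F,m}$ on $\mathrm{Im}(\rK_2(R\ldb t\rdb,(p)^m))$ established at the end of \S\ref{3.5.2} then forces $\hat\partial_F\{a,b\} \in \mathrm{Im}(\rK_1(R,(p)^m)) = 1+P^{em} \subset 1+P^{n-e+1}$. Letting $n \to \infty$ delivers the required continuity, and combining this with (i), (ii), and Lemma \ref{lemma12} completes the identification $\hat\partial_F^{-1} = \mathrm{Res}$.
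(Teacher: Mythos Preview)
Your overall strategy is essentially the same as the paper's: use compatibility with the tame symbol on the dense subring $F\otimes_R R\llps t\lrps$ and a continuity/approximation argument. Properties (i) and (ii) are fine, and indeed the paper establishes the analogous facts via Proposition~\ref{tameprop} and diagram~(\ref{eq49}).

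The gap is in your argument for continuity (iii). You write $b=\pi^s b'$ with $b'\in R\ldb t\rdb^\times$ and then claim that $\{a,\pi\}$, with $a\equiv 1\bmod \pi^n$, lies in the image of $\rK_2(R\ldb t\rdb,(p)^m)$. But $\pi$ is \emph{not} a unit in $R\ldb t\rdb$ (it is a uniformizer), so the Steinberg symbol $\{a,\pi\}$ cannot be formed in $\rK_2(R\ldb t\rdb)$ at all; your justification ``the reduction modulo $p^m$ becomes trivial'' therefore does not apply. The vanishing of $\hat\partial_{F,m}$ at the end of \S\ref{3.5.2} is only for classes that come from $\rK_2(R\ldb t\rdb,(p)^m)$, and $\{a,\pi\}$ is not one of them. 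This is precisely the point the paper isolates as Lemma~\ref{lemma14}, where the identity $\hat\partial(\pi\cup(1+\pi r))=1$ is proved by an explicit commutator computation inside the central extension $\hat\Hh(F\ldb t\rdb)$, using the splitting over ${\rm SL}(F\lps t\rps)$ to take $\phi_d=1$ for $d=\mathrm{diag}(\pi,\pi^{-1},1)$ and then checking directly that $\phi_{e_m}^{-1}\phi_{e_m}^d\equiv 1\bmod p^m$. That calculation is the crux of the proof and cannot be bypassed by the relative-$\rK_2$ argument you sketch.

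A secondary point: you appeal to a ``Kato uniqueness theorem'' for maps satisfying (i)--(iii), but no such statement is cited in the paper, and properties (i) and (ii) alone do not obviously pin down the map on all of $\rK_2(F\otimes_R R\llps t\lrps)$. The paper avoids this by proving directly (via the localization splitting for $\rK_2(F\otimes_R R\llps t\lrps)$, see the discussion before~(\ref{eq49})) that both $\hat\partial_F^{-1}$ and $\mathrm{Res}$ restrict to the tame symbol on that subring; once that is known, the approximation $\hat x_i=x_i^{(n)}(1+\pi^n r_i^{(n)})^{-1}$ together with Lemma~\ref{lemma14} finishes the proof.
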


\begin{proof} Let $\hat x\in \rK_{2}( F{\ldb t\rdb})$. We omit the subscript $F$. We shall show
that for arbitrary $n>0$ we have
\begin{equation}\label{eq50}
\bar\partial ( \hat{x}) \equiv \mathrm{Res}( \hat{x})^{-1} {\rm \ mod\ }\pi^{n}R
\end{equation}
so that we can conclude
\begin{equation}\label{eq51}
\partial ( \hat{x}) =\mathrm{Res}( \hat{x})^{-1} .
\end{equation}
By Matsumoto's theorem, it will suffice to take $\hat{x}=\{\hat x_1, \hat x_2\}=\hat{x}
_{1}\cup \hat{x}_{2}$ with $\hat{x}_{i}\in F{\ldb t\rdb}^{\times }.$
Since $F{\ldb t\rdb}$ is the field of fractions of the discrete valuation ring $R{\ldb t\rdb}$
with valuation ideal $\pi R{\ldb t\rdb}$ we can write each
element $\hat{x}_{i}$ in the form
\begin{equation*}
\hat{x}_{i}=\pi ^{M_{i}}\hat{s}_{i}\ \text{with\ }\hat{s}_{i}\in
R{\ldb t\rdb}^{\times }.
\end{equation*}
Since $F\otimes_R R( ( t)) $ is dense in $F{\ldb t\rdb}$,
for any $n>0$, we  can find $x_{i}^{( n) }\in ( F\otimes_RR( ( t) ) ) ^{\times }$ and $r_{i}^{(
n) }\in R{\ldb t\rdb}$ so that
\begin{equation*}
\pi ^{M_{i}}\hat{s}_{i}=\hat{x}_{i}=x_{i}^{( n) }(
1+\pi^{n}r_{i}^{( n) }) ^{-1}.
\end{equation*}
By Proposition \ref{tameprop} and the above we know that both $\hat\partial^{-1}_F $ and $\mathrm{Res}$ 
restrict to the tame symbol on $\rK_{2}( F\otimes_R R((t) ) ) $; so, in order to prove (\ref{eq50}), it will suffice to
show for $n>0$ that
\begin{equation}\label{eq52}
\hat\partial ( \hat{x}) \equiv \partial ( x_{1}^{(
n) }\cup x_{2}^{( n) }) {\rm \ mod\ }\pi^{n}   
\end{equation}
\begin{equation}\label{eq53}
\mathrm{Res}( \hat{x}) \equiv \mathrm{Res}(
x_{1}^{( n) }\cup x_{2}^{( n)}) {\rm \ mod\ }\pi^{n};
\end{equation}%
indeed, then we can then conclude that the two right-hand terms in (\ref{eq52}) and
(\ref{eq53}) are equal and this will then give (\ref{eq50}).

We have the equalities in $\rK_{2}( F{\ldb t\rdb})$:
\begin{eqnarray*}
x_{1}^{( n) }\cup x_{2}^{( n) } &=&\hat{x}
_{1}( 1+\pi^{n}r_{1}^{( n) }) \cup \hat{x}_{2}(
1+\pi^{n}r_{2}^{( n) }) \\
&=&[\hat{x}_{1}\cup \hat{x}_{2}]\cdot [( 1+\pi^{n}r_{1}^{( n)
}) \cup \hat{x}_{2}] \cdot[\hat{x}_{1}\cup (
1+\pi^{n}r_{2}^{( n) })]\cdot [ ( 1+\pi^{n}r_{1}^{( n)
}) \cup ( 1+\pi^{n}r_{2}^{( n) })].
\end{eqnarray*}
Since the three last terms above
 belong to  ${\rm U}^{n}\rK_{2}(
F{\ldb t\rdb})$,
we conclude that
\begin{equation*}
{\rm Res}(\hat x)\equiv \mathrm{Res}(x_{1} \cup x_{2}  )\equiv \mathrm{Res}(x_{1}^{( n) }\cup x_{2}^{( n) } )  {\rm \ mod\ }\pi^{n}.
\end{equation*}
Now since $( 1+\pi^{n}r_{1}^{( n) }) \cup (
1+\pi^{n}r_{2}^{( n) }) \in {\rm Im}(\rK_{2} (
R{\ldb t\rdb}, \pi^{n}))$, from \S \ref{3.5.2} we know that
\begin{equation*}
\hat\partial ( ( 1+\pi^{n}r_{1}^{( n) }) \cup (
1+\pi^{n}r_{2}^{( n) }) ) \equiv 1\mbox{\rm \ mod\ }\pi^{n}.
\end{equation*}%
Next  we observe that
\begin{eqnarray*}
\hat{x}_{1}\cup ( 1+\pi^{n}r_{2}^{( n) }) &=&\pi
^{M_{1}}\hat{s}_{1}\cup ( 1+\pi^{n}r_{2}^{( n) }) \\
&=&(\pi ^{M_{1}}\cup ( 1+\pi^{n}r_{2}^{( n) })) +(\hat{s
}_{1}\cup ( 1+\pi^{n}r_{2}^{( n) })) .
\end{eqnarray*}
As previously, since $\hat{s
}_{1}\cup ( 1+\pi^{n}r_{2}^{( n) })\in {\rm Im}(\rK_{2}( R{\ldb t\rdb}, \pi^{n}))$, we know
that
\begin{equation*}
\hat\partial ( \hat{s
}_{1}\cup ( 1+\pi^{n}r_{2}^{( n) })) \equiv 1\mbox{\rm \ mod\ }\pi^{n}
\end{equation*}
and so, in order to prove the proposition, it will suffice to show:

\begin{lemma}\label{lemma14}
For $r\in R{\ldb t\rdb}$ we have $\hat\partial ( \pi \cup (
1+\pi r))=1 $.
\end{lemma}

\begin{proof} First we recall the construction of the cup product $\pi \cup u\in
\rK_{2}( F{\ldb t\rdb}) $ for $u\in F{\ldb t\rdb}^{\times }$. As in
Section 8 of \cite{MilnorK}, we form
\begin{equation*}
d=\left(
\begin{array}{ccc}
\pi & 0 & 0 \\
0 & \pi ^{-1} & 0 \\
0 & 0 & 1%
\end{array}%
\right) ,\quad \ e=\left(
\begin{array}{ccc}
u & 0 & 0 \\
0 & 1 & 0 \\
0 & 0 & u^{-1}%
\end{array}%
\right) \in {\rm SL}_{3}( F{\ldb t\rdb})
\end{equation*}%
and we note that these two elements commute in $\SL_{3}(
F{\ldb t\rdb})$; we then choose lifts $\tilde{d}$, $\tilde{e}$
in the Steinberg group of $F{\ldb t\rdb}$ and $\pi \cup u$ is defined to be the
commutator
\begin{equation*}
\pi \cup u=[\, \tilde{d}, \tilde{e}\,] \in \rK_{2}(
F{\ldb t\rdb}) .
\end{equation*}
From (\ref{exthat})  we have the exact sequence
\begin{equation}\label{eq54}
1\rightarrow \rK_{1}( F) \rightarrow  \hat{\Hh}(
F{\ldb t\rdb}) \rightarrow {\rm SL}( F{\ldb t\rdb}) \rightarrow 1
\end{equation} 
and we recall that elements of $ \hat{\Hh}( F{\ldb t\rdb}) $ are
coherent sequences of pairs $( g,\phi _{m}) $ with $g\in
{\rm SL} ( F{\ldb t\rdb})$, where we choose $g_{m}\in {\rm SL} (
F( ( t) ) ) $ with $gg_{m}^{-1}\in
1+\pi^{m}{\rm M}(R{\ldb t\rdb})$ and $\phi _{m}$ an isomorphism for $N>>0$
\begin{equation*}
\phi _{m}:\frac{L_{0}}{t^{N}L_{0}}\xrightarrow{\simeq} \frac{L_{0}g_{m}^{-1}}{t^{N}L_{0}}.
\end{equation*}
\smallskip
Recall that the elements of $\hat{\mathcal H}( F( ( t)) )$ multiply by the  rule
\begin{equation}\label{eq55}
\left( \gamma ,\phi _{\gamma }\right)\star  \left( \delta ,\phi _{\delta }\right)
=( \gamma \delta ,\phi _{\delta }^{\gamma }\circ \phi _{\gamma })
\end{equation}
where, writing $r( \gamma ) $ for right multiplication by $\gamma$,  $\phi _{\delta }^{\gamma }=r( \gamma ^{-1}) \circ \phi
_{\delta }\circ r( \gamma )$; thus, as seen previously in \S \ref{3d4}, we have
$
( \gamma ,\phi _{\gamma }) ^{-1}=( \gamma ^{-1},\phi
_{\gamma }^{-\gamma ^{-1}})
$.

We now set $u=1+\pi r$ for $r\in R{\ldb t\rdb}$; we fix a positive integer $m\geq 1$ and choose $r_{m}\in R( ( t)) $ with 
$r\equiv r_{m}{\rm \ mod\ }\pi ^{-1}\pi^{m}R$; we let $u_{m}=1+\pi r_{m}$ and
define $e_{m}$ to be the matrix
\begin{equation*}
e_{m}=\left(
\begin{array}{ccc}
u_{m} & 0 & 0 \\
0 & 1 & 0 \\
0 & 0 & u_{m}^{-1}%
\end{array}%
\right) .
\end{equation*}%
Then from the above we get
\begin{equation*}
\partial ( \pi \cup u_{m}) =( d,\phi _{d}) \star (
e_{m},\phi _{e_{m}}) \star ( d^{-1},\phi _{d}^{-d^{-1}}) \star (
e_{m}^{-1},\phi _{e_{m}}^{-e_{m}^{-1}})
\end{equation*}%
in $\rK_{1}( F)$.
However, from \S \ref{split} we know that the $\mathcal{H}$-sequence splits
on ${\rm SL}( F [ t] ] )$, and \textit{a fortiori} on ${\rm SL}( F)$, and so  we can take $\phi _{d}=1$.
 This
then gives
\begin{eqnarray*}
\bar\partial ( \pi \cup ( 1+\pi r) ) &=&\varprojlim_m \ ( d,1_{m}) \star ( e_{m},\phi _{e_{m}})\star  (
d^{-1},1_{m}) \star( e_{m}^{-1},\phi _{e_{m}}^{-e_{m}^{-1}}) \\
&=&\varprojlim_m\ ( de_{m},\phi^d_{e_{m}})\star  (
d^{-1}e_{m}^{-1},\phi _{e_{m}}^{-e_{m}^{-1}d^{-1}}) \\
&=&\varprojlim_m\ ( de_{m}d^{-1}e_{m}^{-1},  \phi _{e_{m}}^{-1}\phi
_{e_m}  ^{d}) \\
&=&\varprojlim_m\ ( 1,  \phi _{e_{m}}^{-1}\phi
_{e_m}  ^{d} )
\end{eqnarray*}%
and so it will suffice to show that for $m\geq 1$
\begin{equation}\label{eq56}
\phi _{e_{m}}^{-1}\phi _{e_{m}}^{d}\equiv 1{\rm \ mod\ }p^{m},\quad  \hbox{\rm
i.e that \ \ } \phi _{e_{m}}^{-1}\phi _{e_{m}}^{d}\in \rK_{1}(
R,\pi^{m}) .
\end{equation}

We set $\overline{R}=R/P$. Let $L_{1}( R) =R[ [ t] ] ^3$, $L_{1}( F) =F[ [ t] ] ^3$. For $\gamma \in {\rm SL}_3( F[ [ t]] ) $ we put $L_{\gamma }( F)
=L_{0}( F) \gamma ^{-1} $. Then we note the following:
\smallskip

1)  over $F$, we have the identifications
\begin{equation}\label{eq57}
\frac{L_{1}( F) }{ t^{N} L_{1}( F) }=\frac{L_{d}(
F) }{  t^{N} L_{1}(
F)  },\qquad \frac{L_{e_{m}d}( F) }{
t^{N}L_{1}( F) }=\frac{L_{de_{m}}( F) }{ t^{N} L_{1}( F)}=\frac{%
L_{e_{m}}( F) }{ t^{N}L_{1}( F)  };  
\end{equation}

2) because $e_{m}\equiv 1\mod \pi^{m}$ we have a canonical
isomorphism
\begin{equation*}
\frac{L_{e_{m}}( \overline{R}) }{ t^{N} L_{1}( \overline{R}) }=\frac{%
L_{1}( \overline{R}) }{ t^{N}L_{1}( \overline{R}) }.
\end{equation*}

Let $\{x_{i,j}\}$ for $1\leq i\leq 3$, $ 0\leq j<N$, be the natural $R$-basis
of $L_{1}( R) /  t^{N}L_{1}( R)$; let $\{\overline{x}_{i,j}\}$
for $1\leq i\leq 3$, $ 0\leq j<N$, be the images of the basis elements $%
\{x_{i,j}\}$ in $L_{1}( \overline{R}) /  t^{N}L_{1}( \overline{R}) $;
then $\{y_{i,j}=x_{ij}e_{m}^{-1}\}$ are elements in $L_{e_{m}}(
R) /  t^{N}L_{1}(R) $.  We consider the $R$-linear map $\phi:L_{1}( R) / t^{N}L_{1}( R)  \rightarrow L_{e_{m}}(
R) / t^{N}L_{1}(
R)$ given by $\phi ( x_{ij}) =y_{ij}$. By construction we have the commutative diagram:
\begin{equation}\label{eq58}
\begin{array}{ccc}
L_{1}( R) / t^{N}L_{1}( R) & \overset{\phi }{\rightarrow } &
L_{e_{m}}( R) / t^{N} L_{1}( R) \\
\downarrow &  & \downarrow \\
L_{1}( \overline{R}) / t^{N} L_{1}( \overline{R})& = & L_{1}(
\overline{R}) / t^{N}L_{1}(\overline{R}) .
\end{array}
\end{equation}%
Since there is an $\overline{R}$-isomorphism between $L_{1}( \overline{R})/ t^{N}L_{1}( \overline{R})$ and $L_{e_{m}}( \overline{R})/t^{N} L_{1}( \overline{R})$ they have the same $\overline{R}$-rank.  Therefore, by Nakayama's lemma,
$\phi$ is an isomorphism.

To conclude we evaluate $\phi ^{-1}\circ \phi ^{d}( x_{ij})$.
First observe that $
x_{ij}d=\pi ^{\varepsilon _{i}}x_{ij}$, where $\pi ^{\varepsilon
_{i}}=\pi ,\pi ^{-1},1$ if $i=1,2,3$. Therefore,
\begin{equation*}
y_{ij}d^{-1}=x_{ij}e_{m}^{-1}d^{-1}=x_{ij}d^{-1}e_{m}^{-1}=\pi
^{-\varepsilon _{i}}x_{ij}e_{m}^{-1}=\pi ^{-\varepsilon _{i}}y_{ij}.
\end{equation*}
We obtain
\begin{eqnarray*}
(\phi ^{-1}\circ \phi ^{d})( x_{ij}) &=&\phi^{-1}\circ  r( d^{-1}) \circ
\phi \circ r( d)  ( x_{ij}) \\
&=& \phi^{-1}\circ  r( d^{-1}) \circ
\phi (\pi^{\epsilon_i}x_{ij}) \\
&=& \phi^{-1}\circ  r( d^{-1}) (
 \pi^{\epsilon_i}y_{ij})\\
&=& \phi^{-1}(\pi^{\epsilon_i}y_{ij}d^{-1})\\
&=&x_{ij}.\ \ 
\end{eqnarray*}
which proves (\ref{eq56}) as desired.
\end{proof}

This now also concludes the proof of Proposition \ref{prop13}.
\end{proof}
\bigskip

\section{Pushdown maps and reciprocity laws.}

\setcounter{equation}{0}

In what follows, we will assume  that the group algebra $\Q[G]=\prod_{i}{\rm M}_{n_{i}}( Z_{i})$ splits as in
(\ref{splitgroupalgebraIntro}). The extension $Z_i/\Q$ is unramified at all 
places that do not divide the order of the group $G$.
By Morita equivalence  we have an isomorphism
\begin{equation*}
\rK_{1}( \Q[ G] ) \xrightarrow{\sim}\prod_{i}\rK_{1}(
Z_{i})  
=\prod_{i}Z_i^{\times }.
\end{equation*}

\subsection{The definition of pushdown}
\label{sec4.1}

Under the above assumptions we give one of the main constructions of this paper.

\subsubsection{} Assume that the projective regular arithmetic surface $Y\to \Spec(\Z)$
satisfies hypothesis (H) of the introduction.  
We fix a Parshin triple $\{\eta _{0},\eta _{1},\eta _{2}\} $ on $Y$. As seen previously in Section
\ref{section1}, $\hat{\O}_{Y,\eta _{1}\eta _{2}}$ is a finite product of discrete
valuation rings
\begin{equation}\label{eq59}
\what{\mathcal{O}}_{Y,\eta _{1}\eta _{2}}= \left\{
\begin{array}{c}
\prod_{\alpha }\Q_{p}( \eta _{1\alpha }) \lps t_{\alpha }\rps, \ \ \hbox{\rm if $\eta _{1}$ is horizontal}
\\ \prod_{\beta }W(k(\eta _{2\beta }))
\{\!\{t_{\beta }\}\!\},\ \hbox{\rm if $\eta _{1}$ is vertical.}
\end{array}
\right.
\end{equation}
and $\what{\mathcal{O}}_{Y,\eta _{0}\eta _{1}\eta _{2}}$ is a finite product of
fields
\begin{equation}\label{eq60}
\what{\mathcal{O}}_{Y,\eta _{0}\eta _{1}\eta _{2}}=\left\{
\begin{array}{c}
\prod\nolimits_{\alpha }\Q_{p}( \eta _{1\alpha }) (\!( t_{\alpha })\!),\ \hbox{\rm if $\eta _{1}$ is horizontal}
\\
\prod\nolimits_{\beta }\Q_{p}( \eta _{2\beta })
\{\!\{t_{\beta }\}\!\},\ \text{if }\eta _{1}\ \text{is\ vertical.}%
\end{array}%
\right.   
\end{equation}%
 
\subsubsection{}\label{ss:4a2} We define the push down 
\begin{equation}\label{eq63}
f_{\ast
\eta _{0}\eta _{1}\eta _{2}}:  {\rK}_{2}( \hat{\mathcal{O}}_{Y,\eta _{0}\eta _{1}\eta
_{2}}[G])\to   \rK_{1}( \Q_{p}[ G])
\end{equation}
as follows:

\begin{enumerate}

\item[i)]

  If $\eta _{1}$ is horizontal, we consider  the  composite
of  
$$
 {\rK}_{2}( \what{\mathcal{O}}_{Y,\eta _{0}\eta _{1}\eta
_{2}}[G]) =\prod\nolimits_{\alpha }\rK_{2}( \Q_{p}(
\eta _{1\alpha }) (\!( t_{\alpha })\!) [ G] )  \xrightarrow{\partial^{-1} }\prod\nolimits_{\alpha
}\rK_{1}( \Q_{p}( \eta _{1\alpha }) [ G]) 
$$
with 
\begin{equation*}
{\rm res}: \prod\nolimits_{\alpha }\rK_{1}( \Q_{p}( \eta _{1\alpha }) [ G]) \xrightarrow{\ \ } \rK_{1}( \Q_{p}[
G] )
\end{equation*}
where $\mathrm{res}$ is the restriction (norm) map given by viewing $
\prod_{\alpha }\Q_{p}( \eta _{1\alpha })[ G] $ as a finite dimensional $\Q_{p} [G] $-algebra.

\item[ii)] If $\eta _{1}$ is vertical,  then $f_{\ast
\eta _{0}\eta _{1}\eta _{2}}$ is the  composite
of  
$$
 {\rK}_{2}( \hat{\mathcal{O}}_{Y,\eta _{0}\eta _{1}\eta
_{2}}[G]) =\prod\nolimits_{\beta } {\rK}_{2}( \Q
_{p}( \eta _{2\beta }) \{\!\{t_{\beta }\}\!\} [G] )
\xrightarrow{\hat\partial^{-1}}\prod\nolimits_{\beta } {\rK}%
_{1}( \Q_{p}( \eta _{2\beta }) [ G])
$$
with
$$
\prod\nolimits_{\beta } {\rK}%
_{1}( \Q_{p}( \eta _{2\beta }) [ G]) \overset{\mathrm{res}}{\rightarrow } \rK_{1}( \Q_{p}[ G])
$$
where $\mathrm{res}$ is again the restriction map as above.
\end{enumerate}

Notice that we are using the {\sl inverses} $\partial^{-1}$, $\hat\partial^{-1}$ which by Proposition \ref{tameprop}
and Proposition \ref{prop13} agree with the tame symbol, resp. Kato's ${\rm Res}$ map.
Let us remark here that $f_{*\eta_0\eta_1\eta_2}$ is independent of the choice of uniformizers $t_\alpha$, $t_\beta$.
In the case that $\eta_1$ is horizontal, this follows from Proposition \ref{tameprop} and   the corresponding independence of the tame symbol, 
and in the case that $\eta_1$ is vertical from Proposition \ref{prop13} and \cite[\S 2]{KatoRes}.

We now consider the restriction of  $f_{\ast
\eta _{0}\eta _{1}\eta _{2}}$ to the image  of ${\rK}_{2}( \widehat{\mathcal{O}}_{Y,\eta
_{1}\eta _{2}}[G])$:

\begin{proposition}\label{thm15}
Let $\eta _{1}$ denote a codimension one point on $Y$, and let $\eta _{2}<\eta_1$
be a closed point of $Y$, with residue characteristic $p$. Then for $x$ in $ {\rK}_{2}( \hat{\mathcal{O}}_{Y,\eta _{1}\eta
_{2}}[G])^\flat$:
\begin{enumerate}
\item[(i)] $f_{\ast \eta _{0}\eta _{1}\eta _{2}}( x)=1$, if $\eta_1$ is horizontal,
\item[(ii)]  $f_{\ast \eta _{0}\eta _{1}\eta _{2}}( x)$ is in $ \rK_{1}( \Z_{p}[G] )^\flat$,
if $\eta _{1}$ is vertical. 
\end{enumerate}
\end{proposition}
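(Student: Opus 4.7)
The plan is to treat the two cases separately, using in each case the product decompositions (\ref{eq59}) and (\ref{eq60}) together with the componentwise factorization $f_{*\eta_0\eta_1\eta_2} = \mathrm{res} \circ (\hat{\partial}^{-1} \text{ or } \partial^{-1})$ from \S \ref{ss:4a2}. Write $x = (x_\bullet)$ with respect to this decomposition. In both cases I want to show that each local factor of $\partial^{-1}(x_\bullet)$ (resp.\ $\hat{\partial}^{-1}(x_\bullet)$) already lies in the appropriate \emph{integral} subgroup before applying the restriction (norm) map, and then conclude by noting that $\mathrm{res}$ preserves integrality.

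For case (i), $\eta_1$ horizontal, each factor of $\hat{\O}_{Y,\eta_1\eta_2}[G]$ has the form $\Q_p(\eta_{1\alpha})\lps t_\alpha \rps[G]$ and sits inside the corresponding factor $\Q_p(\eta_{1\alpha})\llps t_\alpha\lrps[G]$ of $\hat{\O}_{Y,\eta_0\eta_1\eta_2}[G]$. This is exactly the setting of the vanishing statement (\ref{vanish}) with $R = \Q_p(\eta_{1\alpha})$ and $A = R\lps t_\alpha\rps$: the composition $\rK_2(A[G]) \to \rK_2(A_t[G]) \xrightarrow{\partial} \rK_1(R[G])$ is trivial because the central extension (\ref{cextmaster2}) splits over $\GL_n(A[G])$ (see \S \ref{split}). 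Applying this factorwise to $x \in \rK_2(\hat{\O}_{Y,\eta_1\eta_2}[G])^{\flat}$ gives $\partial^{-1}(x) = 1$, so $f_{*\eta_0\eta_1\eta_2}(x) = 1$.

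For case (ii), $\eta_1$ vertical, each factor of $\hat{\O}_{Y,\eta_1\eta_2}[G]$ is $W(k(\eta_{2\beta}))\ldb t_\beta\rdb[G]$ and the corresponding factor of $\hat{\O}_{Y,\eta_0\eta_1\eta_2}[G]$ is $\Q_p(\eta_{2\beta})\ldb t_\beta\rdb[G]$, which is the base change of the former along $R = W(k(\eta_{2\beta})) \hookrightarrow F = \Q_p(\eta_{2\beta})$. Here I invoke the commutative diagram at the end of \S \ref{3.5.2} relating $\hat{\partial}_R$ and $\hat{\partial}_{F,m}$, together with the fact (noted in \S \ref{3.5.2}) that under our splitting hypothesis $\rK_1(F[G]) \xrightarrow{\sim} \overline{\rK}_1(F[G])$, to conclude that $\hat{\partial}_F$ applied to an element in the image of $\rK_2(R\ldb t_\beta\rdb[G])$ lies in the image of $\rK_1(R[G]) \to \rK_1(F[G])$. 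Finally, since $R = W(k(\eta_{2\beta}))$ is a finitely generated free $\Z_p$-module, $R[G]$ is free over $\Z_p[G]$ and the restriction-of-scalars (norm) map sends $\rK_1(R[G])$ into $\rK_1(\Z_p[G])$ compatibly with the restriction on the generic fibers. Hence $f_{*\eta_0\eta_1\eta_2}(x) \in \rK_1(\Z_p[G])^{\flat}$.

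The main obstacle is the vertical case, specifically the rigorous extraction of ``integrality'' of $\hat{\partial}_F$ on the image of $\rK_2(R\ldb t_\beta\rdb[G])$: $\hat{\partial}_F$ is only defined via an inverse limit of mod-$p^m$ versions, and one must chase the commutative diagram of \S \ref{3.5.2} carefully, using that the vertical arrows there are induced by the natural map $\rK_1(R[G]) \to \rK_1(F[G])$, and then pass to the limit using $\rK_1(F[G]) = \overline{\rK}_1(F[G])$. Once this compatibility is in place, the norm map on $\rK_1$ of group rings of unramified extensions transports integrality, and the conclusion follows.
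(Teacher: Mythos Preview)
Your proof is correct and follows essentially the same approach as the paper. In case (i) you invoke the splitting of the $\Hh$-extension over $\GL_n(A[G])$ (equivalently, the vanishing (\ref{vanish})), exactly as the paper does; in case (ii) the paper is slightly terser than you --- it simply writes the composite with $\hat\partial^{-1}$ landing directly in $\prod_\beta \rK_1(W(k(\eta_{2\beta}))[G])$ and then applies ${\rm res}$ --- but this implicitly uses precisely the $\hat\partial_R/\hat\partial_F$ compatibility from the diagram at the end of \S\ref{3.5.2} that you spell out, so your more explicit chase is the same argument.
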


\begin{proof} (i) In the horizontal case $\hat{\mathcal{O}}_{Y,\eta
_{1}\eta _{2}}=\prod_{\alpha }\Q_{p}( \eta
_{1\alpha }) [\![ t_{\alpha }]\!] $, where $\alpha
$ runs over the branches of the completion $\eta _{1}$ in a formal
neighborhood of $\Spec( \hat{\mathcal{O}}_{Y,\eta
_{2}})$. Then this restriction is induced by the composite
\begin{equation*}
{\rK}_{2}( \hat{\mathcal{O}}_{Y,\eta _{1}\eta
_{2}}[G]) = \prod\nolimits_{\alpha }\rK_{2}( \Q_{p}(
\eta _{1\alpha }) [\![ t_{\alpha }]\!] [ G] )  {\xrightarrow{\ \partial^{-1}\ } }\prod\nolimits_{\alpha}\rK_{1}( \Q_{p}( \eta _{1\alpha }) [ G]) {\xrightarrow{\rm res } }\rK_{1}( \Q_{p}[
G] ) ;
\end{equation*}
and so it is trivial since by (\ref{split}) 
the $\mathcal{H}$-sequence for each  $\Q_p(\eta_{1\alpha})((t_{\alpha}))[G])$
splits over  $\Q_p(\eta_{1\alpha})[\![t_{\alpha}]\!][G])$, so that $\partial $ is trivial on each 
factor $\rK_{2}(\Q_p(\eta_{1\alpha})[\![t_{\alpha}]\!][G])$.

(ii) In the vertical case we get  
\begin{eqnarray*}
 {\rK}_{2}( \widehat{\mathcal{O}}_{Y,\eta _{1}\eta
_{2}}[G]) =\prod\nolimits_{\beta } {\rK}_{2}( W(k(\eta _{2\beta })) \{\!\{t_{\beta }\}\!\}[ G] )\xrightarrow{\hat\partial^{-1}}
 \ \ \ \ \ \ \ \ \ \ \ \ \ \ \ \ \\
\ \ \ \ \ \ \ \ \ \ \ \ \ \ \  \xrightarrow{\hat\partial} \prod\nolimits_{\beta } {\rK}
_{1}(  W(k(\eta _{2\beta })) [ G]) \overset{\mathrm{res}}{\rightarrow } {\rK}_{1}(
\Z_{p}[ G] )^\flat \subset \rK_{1}( \Q_{p}[ G] ) . 
\end{eqnarray*}\end{proof}

\begin{remark}\label{NotMoritaYouIdiot}
{\rm Let us observe here that, under the assumption that the group algebra $\Q[G]$ splits,  the push down 
$f_{\ast
\eta _{0}\eta _{1}\eta _{2}}:  {\rK}_{2}( \hat{\mathcal{O}}_{Y,\eta _{0}\eta _{1}\eta
_{2}}[G])\to   \rK_{1}( \Q_{p}[ G])$ can be defined directly 
using the symbols of Kato, Liu \cite{LiuKato}, etc., by using Morita equivalence 
to reduce to the standard case of a trivial group.  
However, it is important to emphasize that 
the vertical case (ii) of  Proposition \ref{thm15}, 
which is absolutely essential for 
our construction of a well defined push down map $
f_{*}: \mathrm{CH}_{\mathbb{A}}^{2} (Y[G]) \rightarrow \Cl( \Z[ G] )
$, cannot be deduced by appealing to Morita equivalence.
Indeed,  when $p$ divides the order of $G$ the group ring $\Z_p[G]$ does not split. Instead, the proof of Proposition \ref{thm15} (ii) uses the full force of the 
construction of the symbol $\hat\partial^{-1}$ via the central extension $\mathcal H$ of \S 3;
the need to show this fact essentially dictates the complicated approach
we take in this paper.
In the classical Fr\"ohlich theory, showing 
that various constructions produce ``determinants", \emph{i.e.} elements of ${\rK}_{1}(
\Z_{p}[ G] )^\flat={\rm Det}(\Z_p[G])$, give the thorniest technical problems. Such problems are often handled either by
proving various congruences or by using logarithmic methods. In our context, we could not make any of these approaches to work. Instead, we found that using the central extension
$\mathcal H$ provides a powerful method for establishing this non-trivial fact.

}
\end{remark}
 
 \subsubsection{} Let $\eta _{1}$ be a horizontal codimension one point on $Y$. Then from \S \ref{s:parsh0}, $\hat{\mathcal{O}}_{Y,\eta _{0}\eta _{1}}= \Q( \eta _{1}) (\!( t)\!)$ is a complete
 discrete valued field with residue field $\Q(\eta_1)$.
We can therefore form the pushdown $f_{\ast \eta _{0}\eta _{1}}$
\begin{equation*}
f_{\ast \eta _{0}\eta _{1}}:\rK_{2}( \hat{\mathcal{O}}_{Y,\eta
_{0}\eta _{1}}[ G] ) = \rK_{2}( 
\Q ( \eta _{1 }) (\!( t )\!)) [ G] ) \xrightarrow{\partial^{-1}} \rK_{1}(
\Q( \eta _{1 }) [ G] ) \xrightarrow{\rm res} \rK_{1}( \Q[ G])
\end{equation*}
where $\rm res$ is obtained as above.  
By the functoriality of $\mathcal{H}$-sequences and of the map $\partial$ we have a commutative diagram:
\begin{equation*}
\begin{matrix}
\rK_{2}( \hat{\mathcal{O}}_{Y,\eta _{0}\eta _{1}}[ G]) & \rightarrow  & \prod_{\eta _{2}}\rK_{2}( \hat{%
\mathcal{O}}_{Y,\eta _{0}\eta _{1}\eta _{2}}[ G] ) \\
&&\\
f_{\ast \eta _{0}\eta _{1}}\downarrow\ \ \ &  & \ \ \ \downarrow \prod f_{\ast \eta
_{0}\eta _{1}\eta _{2}} \\
&&\\
\rK_{1}( \Q[ G] ) & \rightarrow & \rK_{1}(
\Q_{p}[ G] ).\\
\end{matrix}
\end{equation*}
(This also follows from \cite[Cor. 5.5]{LiuKato}.)
Here the product in the upper right-hand term extends over all $\eta_2$, $\eta_2<\eta_1$, with residue characteristic $p$. We have
therefore shown:

\begin{theorem}\label{thm17}
 Let $\eta _{1}$ be a horizontal codimension one point of $Y$
and suppose $x$ is an element in $\rK_{2}( \hat{\mathcal{O}}_{Y,\eta _{0}\eta
_{1}}[G]) $. Then the product
\begin{equation*}
\prod_{\eta _{2}, \eta_2< \eta_1}f_{\ast \eta
_{0}\eta _{1}\eta _{2}}( x) 
\end{equation*}
lies in the diagonal image $   \rK_{1}( \Q[G])^\flat\subset  \prod_{p}\rK_{1}( \Q_{p}[G])$.
\end{theorem}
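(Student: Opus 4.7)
The plan is to deduce the theorem as a direct consequence of the commutative square displayed immediately before its statement, unpacking what this square says across all primes $p$ simultaneously.

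First, I would record the geometric setup: because $\eta_1$ is horizontal, the residue field $k(\eta_1) = \Q(\eta_1)$ is a number field (transcendence degree zero over $\Q$). Its finite places correspond bijectively to the closed points $\eta_2 < \eta_1$, grouped by residue characteristic. For a closed point $\eta_2$ of residue characteristic $p$, the multicompletion $\hat{\mathcal{O}}_{Y,\eta_0\eta_1\eta_2}$ splits as $\prod_\alpha \Q_p(\eta_{1\alpha})\llps t_\alpha\lrps$ over the analytic branches at $\eta_2$. On the other hand, $f_{*\eta_0\eta_1}$ attaches to $x$ a \emph{single} global element of $\rK_1(\Q[G])$, via $\partial^{-1}$ (the inverse tame symbol on $\rK_2(\Q(\eta_1)\llps t\lrps[G])$ by Proposition \ref{tameprop}) followed by the norm down from $\rK_1(\Q(\eta_1)[G])$ to $\rK_1(\Q[G])$.

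Next, for each fixed prime $p$ I would invoke the commutative square displayed above the theorem. It says exactly that the product $\prod_{\eta_2 < \eta_1,\,\mathrm{char}(\eta_2)=p} f_{*\eta_0\eta_1\eta_2}(x)$ coincides with the image of $f_{*\eta_0\eta_1}(x) \in \rK_1(\Q[G])$ in $\rK_1(\Q_p[G])$ under the natural localization. Assembling these equalities over all primes $p$ shows that the total product $\prod_{\eta_2 < \eta_1} f_{*\eta_0\eta_1\eta_2}(x)$ is precisely the diagonal image of $f_{*\eta_0\eta_1}(x)$ in $\prod_p \rK_1(\Q_p[G])$, which is what the theorem asserts.

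The one genuine obstacle is thus verifying the commutativity of the square, which boils down to compatibility of $\partial$ under finite extensions of complete discretely valued fields, matched against the norm/restriction maps. This compatibility follows from the manifest functoriality of the $\mathcal{H}$-extension construction under scalar extension (as in diagram \eqref{commext}), together with Proposition \ref{tameprop} identifying $\partial^{-1}$ with the tame symbol; alternatively, it is the reciprocity formula for tame symbols on two-dimensional local fields recorded in \cite[Cor.\ 5.5]{LiuKato}. All of this is already in place in the previous sections, so once the square is granted, the theorem itself is essentially formal.
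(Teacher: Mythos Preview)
Your proposal is correct and follows essentially the same route as the paper: the theorem is recorded immediately after the commutative square with the words ``We have therefore shown,'' and your argument is exactly the unpacking of that square prime by prime, with the commutativity justified by functoriality of the $\mathcal{H}$-sequence (equivalently \cite[Cor.~5.5]{LiuKato}). One small slip in your exposition: the finite places of $\Q(\eta_1)$ are not in bijection with the closed points $\eta_2<\eta_1$ but with the pairs $(\eta_2,\alpha)$ of a closed point and an analytic branch---you implicitly correct this in the next sentence when you write the multicompletion as $\prod_\alpha \Q_p(\eta_{1\alpha})\llps t_\alpha\lrps$, so it does not affect the argument.
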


\subsection{Reciprocity on a vertical fiber}\label{4.2}

Let $\eta _{1}$ be a vertical codimension one point of $Y$. Suppose   that the
closure $\overline{\eta }_{1}$ lies in the special fiber of $Y $ over a prime number $p$.

\begin{theorem}\label{thm18}
For $x\in  {\rK}_{2}( \hat{\mathcal{O}}_{Y,\eta
_{0}\eta _{1}}[ G] ) $ the product
$
\prod_{\eta _{2}<\eta_{1}}f_{\ast \eta _{0}\eta
_{1}\eta _{2}}( x)$ converges to  $1$ in $\rK_{1}( \Q_{p}[G])$;
here the product extends over all closed points $\eta_2$ on $\overline{\eta }_{1}$
and can be taken in any order.
\end{theorem}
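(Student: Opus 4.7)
The plan is to reduce the statement, via Morita equivalence, to the classical two-dimensional reciprocity law of Kato along a smooth curve sitting inside a regular arithmetic surface.

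First, I would use the splitting hypothesis $\Q[G] = \prod_i {\rm M}_{n_i}(Z_i)$ to obtain a compatible decomposition
\[
\hat{\O}_{Y,\eta_0\eta_1}[G] \;\simeq\; \prod\nolimits_i {\rm M}_{n_i}\bigl(\hat{\O}_{Y,\eta_0\eta_1} \otimes_{\Q} Z_i\bigr),
\]
and analogously for $\hat{\O}_{Y,\eta_0\eta_1\eta_2}[G]$ and for $\Q_p[G]$. Because $f_{*\eta_0\eta_1\eta_2}$ is built componentwise from $\hat\partial^{-1}$ (which Proposition \ref{prop13} identifies with Kato's residue $\mathrm{Res}$) followed by the restriction/norm map, it respects these Morita equivalences and breaks up according to the index $i$ and the places of $Z_i$ above $p$. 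It therefore suffices to prove the statement with $G=\{1\}$ and $x \in \rK_2(\hat{\O}_{Y,\eta_0\eta_1} \otimes_{\Q} Z)$ for a number field $Z$ arising as one of the factors $Z_i$.

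In this commutative setting $\hat{\O}_{Y,\eta_0\eta_1} \otimes_{\Q} Z$ is a finite product of fields (since under hypothesis (H), by Lemma \ref{lem:cohen}, $\hat{\O}_{Y,\eta_0\eta_1}$ is the fraction field of the Cohen ring $\hat{\O}_{Y,\eta_1}$ over $\Z_p$ with residue field $k(\eta_1)$). By Matsumoto's theorem I may write $x$ as a finite combination of Milnor symbols $\{a,b\}$ with $a,b$ units in this product of fields. For a fixed such expression, the elements $a, b$ have zeros and poles on the smooth projective curve $\bar\eta_1$ only at finitely many closed points. Away from that finite set, the image of each symbol under $\rK_2(\hat{\O}_{Y,\eta_0\eta_1}) \to \rK_2(\hat{\O}_{Y,\eta_0\eta_1\eta_2})$ lies in the image of $\rK_2(\hat{\O}_{Y,\eta_1\eta_2})$ described via Lemma \ref{lem:ex12}, and in fact in arbitrarily large unit filtration pieces $\mathrm{U}^n \rK_2$ of each two-dimensional local field factor. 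Combined with the continuity property $\mathrm{Res}(\mathrm{U}^n \rK_2) \subset 1 + P^n$ recalled in \S \ref{3.4.3}, this forces $f_{*\eta_0\eta_1\eta_2}(x)$ into an arbitrarily deep $p$-adic neighborhood of $1$ for all but finitely many $\eta_2$. Hence the infinite product converges, and since the target is abelian it is independent of the ordering.

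Finally, the value of the product equals $1$ by Kato's reciprocity law for $\rK_2$ along a smooth proper curve over a complete discrete valuation ring with finite residue field, applied to the curve $\bar\eta_1$ inside $Y$ with $\hat{\O}_{Y,\eta_1}$ as the base. In the adelic formulation needed here this is precisely the statement proved by Kato and made explicit, in the arithmetic-surface language we are using, by Liu and by Osipov--Zhu.

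The main obstacle, as I see it, is the careful tracking in the second paragraph: one must verify that after a finite decomposition of $x$ into symbols, the resulting $p$-adic approximation statements fit together with both the $\mathrm{U}^n$-filtration on $\rK_2$ and the DVR-product structure of $\hat{\O}_{Y,\eta_1\eta_2}$ from Lemma \ref{lem:ex12}. The Morita reduction is essentially formal, and the final reciprocity itself is a citation once these definitions have been matched.
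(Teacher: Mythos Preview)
Your approach is the paper's: use the splitting of $\Q[G]$ (equivalently of $\Q_p[G]$) and Morita equivalence to reduce to $G=\{1\}$ over a finite extension of $\Q_p$, then invoke Liu's reciprocity \cite[Theorem~5.1]{LiuKato}. The paper in fact stops right there, since Liu's theorem already asserts both the convergence and that the limit is $1$.

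Your middle paragraph, which attempts to prove convergence by hand, is therefore unnecessary, and it is also not correct as written. Elements of $\hat\O_{Y,\eta_0\eta_1}^\times$ do not have zeros and poles on $\bar\eta_1$: this ring is the fraction field of a \emph{complete} DVR with residue field $k(\eta_1)$, not a ring of rational functions on the curve. Concretely, if $a\in\hat\O_{Y,\eta_0\eta_1}^\times$ has nonzero valuation (e.g.\ $a=p$), then its image in $\hat\O_{Y,\eta_0\eta_1\eta_2}$ fails to lie in $\hat\O_{Y,\eta_1\eta_2}^\times$ for \emph{every} closed point $\eta_2$, not just finitely many; and there is likewise no reason for $\{a,b\}$ to fall into a deep $U^n$-piece as $\eta_2$ varies. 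So neither of your two mechanisms for ``almost-all triviality'' works. Drop that paragraph and cite Liu directly, as the paper does.
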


\begin{proof} Since $\Q[G]$ splits, we can also write
$
\Q_{p}[ G] =\prod\nolimits_{i}{\rm M}_{n_{i}}( L_{i}).
$
 We will use the subscript $L$ to denote base change of $\Q$-schemes to the field
$L$. Similarly, we will use the subscript $L$ to denote tensor product
of $\Q$-algebras with $L$ over $\Q$, $A_L=A\otimes_\Q L$.
With this
convention we have a decomposition
\begin{equation*}
\hat{\mathcal{O}}_{Y,\eta _{0}\eta _{1}\eta _{2}}[ G]
=\prod\nolimits_{i}{\rm M}_{n_{i}}( \hat{\mathcal{O}}_{Y,\eta _{0}\eta
_{1}\eta _{2},L_i})
\end{equation*}
and so by Morita equivalence we obtain decompositions
\begin{equation}\label{eq62}
\rK_{2}( \hat{\mathcal{O}}_{Y,\eta _{0}\eta _{1}\eta _{2}}[ G] ) =
\prod\nolimits_{i}\rK_{2}( \hat{\mathcal{O}}_{Y,\eta
_{0}\eta _{1}\eta _{2},L_i}) ,\quad \ \rK_{2}( \hat{\mathcal{O}}
_{Y,\eta _{0}\eta _{1}}[ G] )
=\prod\nolimits_{i}\rK_{2}( \hat{\mathcal{O}}_{Y,\eta _{0}\eta
_{1}, L_i}). 
\end{equation}
Notice that  since $Y$ is smooth over $|G|$,  the base change $Y\otimes_\Z\O_{L_i}$ is
also regular and the morphism $Y\otimes_\Z\O_{L_i}\to \Spec(\O_{L_i})$
is also smooth over $|G|$,  for all $i$. The above shows that we can reduce to showing the vanishing statement to the case 
when $G=\{1\}$ and the base is the ring of integers of a finite extension $L$ of $\Q_p$. 
This result follows directly from \cite[Theorem 5.1]{LiuKato}.
\end{proof}

\smallskip

For future reference, we record the following:

 \begin{corollary}\label{correc}
The composition $  {\rK}_{2} (\Q_p\{t^{-1}\}[G]  )\xrightarrow{ }\rK_2(\Q_p{\ldb t\rdb}[G]) \xrightarrow{\hat\partial^{-1}} \rK_{1}( \Q_{p} [ G ]
 ) $, where the first map is induced by the inclusion $\Q_p\{t^{-1}\}\subset \Q_p{\ldb t\rdb}$, is trivial.
\end{corollary}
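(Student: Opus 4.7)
The plan is to deduce this by essentially the same mechanism used to prove Proposition~\ref{thm15}(i): one exhibits a splitting of the relevant central extension over a suitable subring, and then invokes universality of the Steinberg extension.

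More concretely, I would start from the construction of $\hat\partial$ in \S\ref{3e3}. There, $\hat\partial: \rK_2(\Q_p\{\!\{t\}\!\}[G]) \to \rK_1(\Q_p[G])$ is defined via the Steinberg universality diagram applied to the central extension
\[
1 \to \rK_1(\Q_p[G]) \to \hat\Hh_n(\Q_p\{\!\{t\}\!\}[G]) \to \GL_n^*(\Q_p\{\!\{t\}\!\}[G]) \to 1,
\]
pulled back to $\rE_n(\Q_p\{\!\{t\}\!\}[G])$. The key input, stated explicitly at the end of \S\ref{3e3}, is that this extension splits over the subgroup $\GL_n(\Q_p\{t^{-1}\}[G])$. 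This splitting comes from the fact that for $g \in \GL_n(\Q_p\{t^{-1}\}[G])$, right multiplication by $g^{-1}$ preserves the summand $(F\{t^{-1}\}[G])^n$ in the patching complex $\hat C_F(g^{-1})$, so that $\hat C_F(g^{-1}) = \hat C_F(1)$ canonically and a preferred trivialization $\phi_g: [F[G]] \xrightarrow{\sim} [\hat C_F(g^{-1})]$ exists.

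Restricting to $\rE_n(\Q_p\{t^{-1}\}[G]) \subset \rE_n(\Q_p\{\!\{t\}\!\}[G])$, the central extension therefore becomes split. Since $\rE_n(\Q_p\{t^{-1}\}[G])$ is perfect (for $n \geq 3$) and the Steinberg extension $\mathrm{St}(\Q_p\{t^{-1}\}[G]) \twoheadrightarrow \rE(\Q_p\{t^{-1}\}[G])$ is its universal central extension, the map from $\rK_2(\Q_p\{t^{-1}\}[G])$ induced on kernels by any splitting factors through $\{1\}$. Passing to the limit $n \to \infty$, this shows that the composite
\[
\rK_2(\Q_p\{t^{-1}\}[G]) \to \rK_2(\Q_p\{\!\{t\}\!\}[G]) \xrightarrow{\hat\partial} \rK_1(\Q_p[G])
\]
is trivial, and taking inverses gives the Corollary.

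The only subtle point, which I would check carefully, is that the splitting of $\hat\Hh$ over $\GL_n(\Q_p\{t^{-1}\}[G])$ produced in \S\ref{3e3} is genuinely a \emph{group-theoretic} splitting (i.e.\ the trivializations $\phi_g$ assemble into a homomorphism), rather than just a set-theoretic one. This is exactly parallel to the situation in \S\ref{split} and \S\ref{3e1}, where the same argument yielded the vanishing statement (\ref{vanish}) used in the proof of Proposition~\ref{thm15}(i); the relevant naturality of $\hat C_F$ with respect to composition in $\GL_n(\Q_p\{t^{-1}\}[G])$ is immediate from the construction. Once this compatibility is verified, the rest of the argument is formal.
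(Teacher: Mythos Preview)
Your argument is correct, and it takes a genuinely different route from the paper's proof. You argue directly from the splitting of $\hat\Hh_n(\Q_p\{\!\{t\}\!\}[G])$ over $\GL_n(\Q_p\{t^{-1}\}[G])$ stated at the end of \S\ref{3e3}, combined with the universality of the Steinberg extension of the perfect group $\rE(\Q_p\{t^{-1}\}[G])$; this is exactly parallel to the derivation of (\ref{vanish}) and of Proposition~\ref{thm15}(i). The paper instead deduces the corollary from the reciprocity law on a vertical fiber (Theorem~\ref{thm18}) applied to $\PP^1_\Z$ at the prime $p$: for $x$ coming from $\rK_2(\Q_p\{t^{-1}\}[G])$ one checks that $f_{*\,\eta_0 1_p \eta_2}(x)=1$ for every closed point $\eta_2\neq 2_p$ on the special fiber (since $t^{-1}$ is integral at such points, the map factors through $\rK_2(\Q_p(\eta_2)\lps t_{\eta_2}\rps[G])$ where the symbol vanishes), and then the product formula forces $\hat\partial^{-1}(x)=f_{*\,\eta_0 1_p 2_p}(x)=1$.

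Your approach is shorter and more self-contained; it only uses the construction in \S\ref{3e3} and a formal universality argument. The paper's approach, by contrast, exhibits the corollary as a manifestation of the reciprocity law, which is thematically natural in \S\ref{4.2} and provides an independent check on the splitting claim of \S\ref{3e3} (which is only asserted there without detailed proof). Indeed, the paper itself notes in the proof of Theorem~\ref{proHorr}, step~(3), that the splitting over $\rE(\Q_p\{t^{-1}\}[G])$ ``also follows from Corollary~\ref{correc}'', so the two facts are recognized as equivalent; the paper simply chooses the reciprocity direction for the formal derivation.
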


\begin{proof}  Recall $\Q_p\{t^{-1}\}:=\Q\otimes_\Z \Z_p\langle\!\langle t^{-1} \rangle \!\rangle$ is the free Tate algebra. We  apply Theorem \ref{thm18} above to $\mathbb{P}^1$ over $\Spec(\Z)$ and $\eta_1$ 
the generic point of the special fiber at $p$, which we denote $1_{p}$. Denote by $2_p$ the 
closed point given by $(p, t )$.
For
\begin{equation*}
x\in  {\Kr}_{2} ( \Q_p\{t^{-1}\}[ G]  )^\flat\subset  {\rK}_{2}( \hat{\mathcal{O}}_{
\mathbb{P}^{1},\eta _{0}1_{p}} [ G ]  ) \
\end{equation*}
we have
\begin{equation*}
\prod_{\eta _{2}}f_{\ast \eta _{0}1_{p}\eta _{2}} ( x ) =1
\end{equation*}%
where the product extends over all closed points $\eta _{2}$ on $\overline{1}%
_{p}$. We claim that
\begin{equation}
f_{\ast \eta _{0}1_{p}\eta _{2}} ( x ) =1\text{\ \ for \ }\eta
_{2}\neq 2_{p}.
\end{equation}%
This will then show that $f_{\ast \eta _{0}1_{p}2_{p}} ( x ) =\hat\partial^{-1}(x)=1$.
Suppose that with our usual notation $\hat{\mathcal{O}}_{\mathbb{P}%
^{1},\eta _{2}}=W( k( \eta _{2}) ) \lps
t_{\eta _{2}}\rps$. Then, since $\eta _{2}\neq 2_{p}$,  we
know that the image of $t^{-1}\in \hat{\mathcal{O}}_{\mathbb{P}%
^{1},01_{p}\eta _{2}}$ actually lies in $W( k( \eta _{2})
 \lps t_{\eta _{2}}\rps$; thus the map $\Q\otimes_{\Z}\Z_{p}\langle\!\langle
t^{-1}\rangle\!\rangle \rightarrow \Q_{p}( \eta
_{2}) \{\!\{t_{\eta _{2}}\}\!\}=\hat{\mathcal{O}}_{\mathbb{P}%
^{1},01_{p}\eta _{2}}$ factors through $\Q_{p}(
\eta _{2}) \lps  t_{\eta _{2}}\rps$. The result
then follows since the pushdown map $f_{\ast  {0}1_{p}\eta _{2}}$ is trivial on $\rK_{2} ( \Q_{p}( \eta
_{2}) \lps t_{\eta _{2}}\rps[ G])$. 
\end{proof}

\subsection{Reciprocity for codimension one points through a given closed
point. }\label{sec4.3}

 In this subsection we fix  a closed point $\eta_2$ of $Y$ with residue field $k( \eta
_{2}) $ which we suppose to have characteristic $p$.  
 
\begin{theorem}\label{thm20}
For $x\in \rK_{2}( \hat{\mathcal{O}}_{Y,\eta _{0}\eta _{2}}[G]) $, we have
\begin{equation*}
\prod_{\eta_1, \eta_1>\eta_2 }f_{\ast \eta _{0}\eta
_{1}\eta _{2}}( x) =1 
\end{equation*}
in $\rK_{1}( \Q_{p}[ G])$.
\end{theorem}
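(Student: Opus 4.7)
The plan is to follow the same strategy as in the proof of Theorem \ref{thm18}: reduce to the case $G=\{1\}$ via Morita equivalence and then appeal to the Kato--Parshin reciprocity law around a closed point of a two-dimensional arithmetic local ring.

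First, using the splitting $\Q[G]=\prod_i \mathrm{M}_{n_i}(Z_i)$ and the induced splitting of $\Q_p[G]=\prod_i\mathrm{M}_{n_i}(L_i)$, one obtains Morita decompositions
\[
\rK_2(\hat{\O}_{Y,\eta_0\eta_2}[G])=\prod_i \rK_2(\hat{\O}_{Y,\eta_0\eta_2,L_i}),\quad \rK_2(\hat{\O}_{Y,\eta_0\eta_1\eta_2}[G])=\prod_i \rK_2(\hat{\O}_{Y,\eta_0\eta_1\eta_2,L_i})
\]
parallel to (\ref{eq62}). By hypothesis (H), for each $i$ the base change $Y\otimes_\Z\O_{L_i}$ is again a regular arithmetic surface satisfying the assumptions of our construction. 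Functoriality of the central extension $\mathcal{H}$ (see \S \ref{representations}) and of the Steinberg-universal maps $\partial$, $\hat\partial$ shows that the push-down $f_{*\eta_0\eta_1\eta_2}$ decomposes correspondingly, so it suffices to prove the identity when $G=\{1\}$ and the base ring is $\O_L$ for a finite extension $L/\Q_p$.

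Second, in this reduced setting, Lemma \ref{lem:ex12} identifies the indexing pairs $(\eta_1,\alpha)$, where $\eta_1>\eta_2$ and $\alpha$ runs over the branches of $\overline{\eta}_1$ at $\eta_2$, with the height one primes $\mathfrak{p}_1$ of the complete regular two-dimensional local ring $A=\hat{\O}_{Y,\eta_2}$. Using Propositions \ref{tameprop} and \ref{prop13}, the $\alpha$-component of $f_{*\eta_0\eta_1\eta_2}$ coincides with the composition of the tame symbol (if $\mathfrak{p}_1$ is horizontal, i.e.\ $p\notin \mathfrak{p}_1$) or Kato's residue map $\mathrm{Res}$ (if $\mathfrak{p}_1$ is vertical) with the norm $\mathrm{Nm}_{k(\mathfrak{p}_1)/L}\colon k(\mathfrak{p}_1)^\times\to L^\times$. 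Thus the claim becomes
\[
\prod_{\mathfrak{p}_1\subset A}\mathrm{Nm}_{k(\mathfrak{p}_1)/L}\bigl(\mathrm{res}_{\mathfrak{p}_1}(x)\bigr)=1\quad \text{in } L^\times,
\]
for $x\in \rK_2(\hat{\O}_{Y,\eta_0\eta_2})\subset \rK_2(\mathrm{Frac}(A))$, where $\mathrm{res}_{\mathfrak{p}_1}$ is the tame symbol or Kato's Res, according to the type of $\mathfrak{p}_1$. The finiteness of the product follows from writing $x$ as a finite product of Milnor symbols.

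Third, this identity is exactly the reciprocity law around the closed point $\eta_2$, proved for complete regular two-dimensional local rings by Kato in \cite[\S 2, Prop.\ 1]{KatoRes} and in the arithmetic-surface context by Liu in \cite[Theorem 5.1]{LiuKato} (compare also \cite{OsipovZhu}). The main obstacle, as in the proof of Theorem \ref{thm18}, is the bookkeeping needed to transfer between the formalism of our central extension $\mathcal{H}$ and the residue symbols of Kato and Liu, and to match the Morita reduction with the normalization of $f_{*\eta_0\eta_1\eta_2}$; all of this is secured by Propositions \ref{tameprop}, \ref{prop13} together with the splitting properties of $\mathcal{H}$ over $\mathrm{Aut}(L_0)$ established in \S \ref{split}, once these are in place, Liu's theorem delivers the conclusion directly.
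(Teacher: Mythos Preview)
Your overall strategy (Morita reduction, then invoke Kato--Parshin reciprocity around the closed point) is the same as the paper's, but there is a genuine gap in the second step. You claim that Lemma~\ref{lem:ex12} identifies the pairs $(\eta_1,\alpha)$, as $\eta_1$ ranges over all codimension-one points of $Y$ through $\eta_2$, with \emph{all} height-one primes of $A=\hat{\O}_{Y,\eta_2}$. This is false: Lemma~\ref{lem:ex12} only tells you that for a \emph{fixed} $\eta_1$, the branches $\alpha$ correspond to the irreducible factors of a local equation for $\overline{\eta}_1$ in $\hat{\O}_{Y,\eta_2}$. But the completion $\hat{\O}_{Y,\eta_2}$ has many height-one primes that do not arise as branches of any global divisor on $Y$ (e.g.\ there are infinitely many irreducibles in $W(k)\lps t\rps$ not coming from $\O_{Y,\eta_2}$). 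Kato's reciprocity law (\cite[Proposition~7]{KatoRes}) is a statement about the product over \emph{all} height-one primes of $A$, so you cannot directly conclude that the sub-product over only the globally-defined ones vanishes.

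The paper fills this gap by observing that $\hat{\O}_{Y,\eta_0\eta_2}$ is the localization of $\hat{\O}_{Y,\eta_2}$ at the non-zero elements of $K(Y)$. Hence if $\hat\eta$ is a height-one prime of $\hat{\O}_{Y,\eta_2}$ that is \emph{not} globally defined, then any $x\in\rK_2(\hat{\O}_{Y,\eta_0\eta_2})$ already lies in $\rK_2(\hat{\O}_{Y,\hat\eta\eta_2})^\flat$; since such $\hat\eta$ are necessarily horizontal, Proposition~\ref{thm15}(i) gives $f_{*\eta_0\hat\eta\eta_2}(x)=1$. Thus the non-global terms drop out of Kato's product, and what remains is precisely the product over global $\eta_1>\eta_2$. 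You need to insert this argument. (As a minor point, the relevant reciprocity is \cite[Proposition~7]{KatoRes} or \cite[Theorem~4.2]{LiuKato}; the reference \cite[Theorem~5.1]{LiuKato} is the one used for the vertical-fiber reciprocity of Theorem~\ref{thm18}.)
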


\begin{proof} As in the proof of Theorem \ref{thm18}, we
can use Morita equivalence to reduce to showing that for $x\in {\rK}_{2}(
\hat{\mathcal{O}}_{Y,\eta _{0}\eta _{2},L}) $ we have
\begin{equation*}
\prod_{\eta _{2}\in \overline{\eta }_{1}}f_{\ast \eta _{0}\eta
_{1}\eta _{2}}( x) =1\hbox{\rm \ in\ }\rK_{1}( L) =L^\times
\end{equation*}
where $L$ is a finite extension of $\Q_p$. This essentially follows from \cite[Theorem 4.2]{LiuKato};
however, since \emph{loc. cit.} gives a somewhat different statement we give the proof.
We let $\hat{\eta }$ denote a height one prime ideal of $\hat{%
\mathcal{O}}_{Y,\eta _{2}}\otimes_{\Z_p}\O_L$. Then, as in the previous construction in
Section \ref{sec4.1}, we can form push down maps:
\begin{equation*}
f_{\ast \eta _{0}\hat{\eta }\eta _{2}}:{\rK}_{2} (
\hat{\mathcal{O}}_{Y,\eta _{0}\hat{\eta }\eta _{2},L})
\rightarrow \rK_{1}( L).
\end{equation*}
For $\kappa\in {\rK}_{2} ( \hat{\mathcal{O}}
_{Y,\eta _{0}\eta _{2}, L}) $ we consider the product
\begin{equation*}
\prod_{\hat{\eta }\text{ horizontal}\, |\, \eta _{2}< \hat{\eta } }f_{\ast \eta
_{0}\widehat{\eta }\eta _{2}}( \kappa ) \cdot \prod_ {\hat{\eta }\text{ vertical}\, |\, \eta _{2}<\hat\eta}f_{\ast \eta _{0}\widehat{
\eta }\eta _{2}}( \kappa ).
\end{equation*}
From \cite[Proposition 7]{KatoRes} we know that this product  converges to one in $\rK_{1}(
L)$. In order to complete the proof it therefore remains to show that
if $\hat{\eta }$ does not arise from a codimension one point of $Y\otimes_{\Z_p}\O_L$
(\emph{i.e.} if $\hat{\eta }$ is not globally defined), then $f_{\ast \eta _{0}
\hat{\eta }\eta _{2}}( \kappa ) =1$. For simplicity, we will omit the subscript $L$.
Recall from \S \ref{s:parsh1}
that $\hat{\mathcal{O}}_{Y,\eta _{0}\eta _{2}}$ is obtained by
localizing the complete local ring $\hat{\mathcal{O}}_{Y,\eta _{2}}$
with respect to the multiplicative set of elements $K( Y_L )^{\times }$ of non-zero elements in the function field of $Y $. Thus for such $\hat{\eta }$,
 which do not arise as
codimension one points on $Y$, we deduce that in fact $\kappa \in {\rK}_{2}( \hat{\mathcal{O}}_{Y,\hat{\eta }\eta _{2}})^\flat $ and, as $\hat{\eta }$ is necessarily
horizontal, we know from Proposition \ref{thm15} above that for such $\hat{\eta }$
we have $f_{\ast \eta _{0}\hat{\eta }\eta _{2}} ( \kappa )
=1$.  
\end{proof}

\subsection{Adelic push down}\label{4.4}

Our main aim here is to show that the pushdown maps associated to Parshin
triples induce a map on the adelic restricted product group ${\rK}^\prime_2({\mathbb A}_{Y,012}[G])=
\prod^{\prime }_{(\eta_0,\eta_1,\eta_2)}{\rK}_{2}( \hat\O_{Y,\eta _{0}\eta
_{1}\eta _{2}}[G])\subset \prod_{(\eta_0,\eta_1,\eta_2)}{\rK}_{2}( \hat\O_{Y,\eta _{0}\eta
_{1}\eta _{2}}[G])$  (see Definition \ref{defrestrictedproducts}). To be
more precise, the above considerations show that we have a map on each 
${\rK}_{2}( \hat\O_{Y,\eta _{0}\eta _{1}\eta _{2}}[G])$. 
We wish to show that this extends, in a natural
convergent manner, to a pushdown map
\begin{equation}\label{pushdown}
f_*:=\sideset{}{^\prime}\prod f_{\ast \eta _{0}\eta _{1}\eta
_{2}}:\sideset{}{^\prime}\prod_{(\eta_0,\eta_1,\eta_2)}\ {\rK}_{2} ( \hat\O_{Y,\eta _{0}\eta _{1}\eta _{2}}[G])\rightarrow
\sideset{}{^\prime}\prod_p\rK_{1}( \Q_{p}[ G] ) .
\end{equation}
For this we first need: 

\begin{proposition}\label{prop21} 
Let $\eta _{1}$ denote a vertical codimension one point of $Y$ and let $\eta_{2}<\eta_1$ denote a closed point which is not contained
in the closure of any other vertical codimension $1$ point. If $x$ is in $\rK_{2}( \hat{\mathcal{O}}_{Y,\eta _{2}}[ \eta
_{1}^{-1}] [ G] )^\flat$, then $f_{\ast \eta _{0}\eta
_{1}\eta _{2}}( x) =1$.
\end{proposition}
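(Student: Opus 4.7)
The plan is to use hypothesis (H) together with the assumption on $\eta_2$ to bring $\hat{\mathcal{O}}_{Y,\eta_2}$ into the canonical Cohen form $W(k(\eta_2))\lps y\rps$ with $\pi_1$ an associate of $p$, and then to invoke the splitting of the central extension $\hat{\Hh}$ of \S \ref{3e3} over the ``power-series'' subgroup.

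The first step is to show that in $\mathcal{O}_{Y,\eta_2}$ we have the equality of ideals $(p)=(\pi_1)$, where $p$ is the residue characteristic at $\eta_2$ and $\pi_1$ is a local equation for $\overline{\eta}_1$. By (H) the fiber of $Y\to\Spec(\Z)$ over $p$ is reduced, and by assumption its only component through $\eta_2$ is $\overline{\eta}_1$, so $(p)$ is a radical ideal with unique minimal over-prime $(\pi_1)$. Hence $p=u\pi_1$ for some unit $u\in\mathcal{O}_{Y,\eta_2}^{\times}$; in particular $p\in\mathfrak{m}\setminus\mathfrak{m}^2$. Smoothness of $\overline{\eta}_1$ at $\eta_2$ (again by (H)) further ensures $\overline{\eta}_1$ is analytically irreducible there, so $m=1$ in (\ref{eq:pilocal}). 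Cohen's structure theorem for two-dimensional complete regular local rings of mixed characteristic with absolute ramification one then yields
\[
\hat{\mathcal{O}}_{Y,\eta_2}\cong W(k(\eta_2))\lps y\rps
\]
for a regular parameter $y$, and the descriptions (\ref{eq59}), (\ref{eq60}) collapse to $\hat{\mathcal{O}}_{Y,\eta_1\eta_2}\cong W(k(\eta_2))\ldb y\rdb$ and $\hat{\mathcal{O}}_{Y,\eta_0\eta_1\eta_2}\cong F\ldb y\rdb$, with $F:=\Q_p(\eta_2)=\mathrm{Frac}(W(k(\eta_2)))$. Since $(\pi_1)=(p)$, we get $\hat{\mathcal{O}}_{Y,\eta_2}[\eta_1^{-1}]=\hat{\mathcal{O}}_{Y,\eta_2}[p^{-1}]\cong F\lps y\rps$, and the natural map $\hat{\mathcal{O}}_{Y,\eta_2}[\eta_1^{-1}]\to\hat{\mathcal{O}}_{Y,\eta_0\eta_1\eta_2}$ is the tautological inclusion $F\lps y\rps\hookrightarrow F\ldb y\rdb$.

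Given this identification, the image of $x\in\rK_2(\hat{\mathcal{O}}_{Y,\eta_2}[\eta_1^{-1}][G])^{\flat}$ in $\rK_2(F\ldb y\rdb[G])$ lies in the image of $\rK_2(F\otimes_{W(k(\eta_2))}W(k(\eta_2))\lps y\rps[G])$. By \S \ref{3e3} the central extension
\[
1\to\rK_1(F[G])\to\hat{\Hh}_n(F\ldb y\rdb[G])\to\GL_n^{*}(F\ldb y\rdb[G])\to 1
\]
splits over $\GL_n(F\otimes_{W(k(\eta_2))}W(k(\eta_2))\lps y\rps[G])$; by the universality of the Steinberg extension this forces the induced map $\hat{\partial}:\rK_2(F\ldb y\rdb[G])\to\rK_1(F[G])$, and therefore $\hat{\partial}^{-1}$, to annihilate the image of $\rK_2(F\otimes_{W(k(\eta_2))}W(k(\eta_2))\lps y\rps[G])$. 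Hence $\hat{\partial}^{-1}(x)=1$ in $\rK_1(F[G])$, and applying the restriction map $\rK_1(F[G])\to\rK_1(\Q_p[G])$ from the definition of the pushdown in \S \ref{ss:4a2} yields $f_{*\eta_0\eta_1\eta_2}(x)=1$.

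The principal obstacle is the local normal form in the first step: the identity $(p)=(\pi_1)$ relies crucially on both the reducedness of fiber components (from (H)) and the absence of any other vertical codimension one point through $\eta_2$; without either, the image of $\hat{\mathcal{O}}_{Y,\eta_2}[\eta_1^{-1}]$ in $\hat{\mathcal{O}}_{Y,\eta_0\eta_1\eta_2}$ need not lie in the ``positive-powers'' subring on which $\hat{\partial}$ vanishes. Once this normal form is in place, the remainder is a formal application of the splitting established in \S \ref{3e3}.
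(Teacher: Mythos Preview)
Your proof is correct and takes a genuinely different route from the paper's. The paper argues indirectly via the reciprocity law at the closed point $\eta_2$ (Theorem~\ref{thm20}, which rests on Kato's reciprocity): the product $\prod_{\zeta_1>\eta_2} f_{*\eta_0\zeta_1\eta_2}(x)=1$, and for every $\xi_1\neq\eta_1$ through $\eta_2$ the hypothesis on $\eta_2$ forces $\xi_1$ to be horizontal, so $\hat{\mathcal O}_{Y,\eta_2}[\eta_1^{-1}]\subset\hat{\mathcal O}_{Y,\xi_1\eta_2}$ and Proposition~\ref{thm15}(i) kills those terms; hence the term at $\eta_1$ is $1$ as well. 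Your argument instead identifies $\hat{\mathcal O}_{Y,\eta_2}[\eta_1^{-1}]$ explicitly with $F\otimes_W W\lps y\rps$ (using $(p)=(\pi_1)$ and the Cohen normal form) and then invokes the splitting of $\hat{\Hh}$ over $\GL_n(F\otimes_R R\lps t\rps[G])$ from \S\ref{3e3}, so that $\hat\partial$ vanishes on the relevant image. This is more direct and avoids the reciprocity input entirely; it is exactly parallel to how Proposition~\ref{thm15}(i) handles the horizontal case. The paper's approach, by contrast, never needs to pin down the ring $\hat{\mathcal O}_{Y,\eta_2}[\eta_1^{-1}]$ so precisely.

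One small notational slip: you write $\hat{\mathcal O}_{Y,\eta_2}[p^{-1}]\cong F\lps y\rps$, but $W(k(\eta_2))\lps y\rps[p^{-1}]$ is $F\otimes_W W\lps y\rps$, not the full formal power series ring over $F$ (the denominators are bounded). You use the correct ring two lines later when citing the splitting, so this does not affect the argument.
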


\begin{proof} From Theorem \ref{thm20} we know that
\begin{equation*}
\prod_{\zeta_1, \eta _{2}<\zeta_{1}}f_{\ast \eta _{0}\zeta
_{1}\eta _{2}}( x) =1.
\end{equation*}
 Let $\xi_{1}$ denote   a
codimension $1$ point with $\eta_2<\xi_1$ which is different from $\eta _{1}$. It will suffice
to show that $f_{\ast \eta _{0}\xi _{1}\eta _{2}}(x)=1$. This
 follows from the inclusion $\hat{\mathcal{O}}_{Y,\eta _{2}}[
\eta _{1}^{-1}] \subset \hat{\mathcal{O}}_{Y,\xi _{1}\eta _{2}}$, our assumption, 
and Proposition \ref{thm15}. 
\end{proof}

\begin{proposition}\label{prop22} 
For any  
$
x=(x_{\eta _{0}\eta _{1}\eta _{2}})$ in  $
  {\rK}_{2}^{\prime }( \mathbb{A}_{Y,012}[ G] ) ,
$
the infinite product  of push-downs $\prod f_{\ast \eta _{0}\eta _{1}\eta
_{2}}( x_{\eta _{0}\eta _{1}\eta _{2}}) $ converges to an element
of the restricted product $\prod^{\prime }_p\rK_{1}( \Q_{p}[G]) $.
\end{proposition}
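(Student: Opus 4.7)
The plan is to fix a prime $p$ and show that the product restricted to triples $(\eta_0,\eta_1,\eta_2)$ with $\eta_2$ of residue characteristic $p$ converges $p$-adically in $\rK_1(\Q_p[G])$, and that for all but finitely many $p$ its limit lies in $\rK_1(\Z_p[G])^\flat$. Let $S$ be the finite set of $\eta_1$ for which (PK1) fails. For $\eta_1 \notin S$ horizontal, Proposition \ref{thm15}(i) gives $f_{*\eta_0\eta_1\eta_2}(x_{\eta_0\eta_1\eta_2}) = 1$ for all $\eta_2 < \eta_1$. For $\eta_1 \in S$ horizontal the set of $\eta_2 < \eta_1$ of residue characteristic $p$ is finite, because $\bar{\eta}_1 \to \Spec\Z$ is a finite morphism, so these $\eta_1$ contribute only a finite product at $p$. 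The infinite inner products therefore come entirely from the finitely many vertical $\eta_1$ over $p$, and all the substance of the proof concerns their convergence.

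Fix a vertical $\eta_1$ over $p$. Given $k \geq 1$, (PK2) provides a finite exceptional set $E_k$ outside of which $x_{\eta_0\eta_1\eta_2} = y \cdot z$ with $y \in \rK_2(\hat\O_{Y,\eta_1\eta_2}[G], \mathfrak{p}^k_{\eta_1\eta_2})^\flat$ and $z \in \rK_2(\hat\O_{Y,\eta_2}[\eta_1^{-1}][G])^\flat$. After enlarging $E_k$ by the finite set of intersection points of $\bar\eta_1$ with other vertical divisors, Proposition \ref{prop21} forces $f_{*\eta_0\eta_1\eta_2}(z)=1$. To estimate $f_{*\eta_0\eta_1\eta_2}(y)$, I would use Proposition \ref{prop13} to identify $\hat\partial^{-1}$ with Kato's residue symbol, and then invoke its continuity $\mathrm{Res}(\mathrm{U}^k\rK_2)\subset 1+P^k$ from \cite[Theorem 1]{KatoRes}. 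Hypothesis (H), together with Lemma \ref{lem:cohen}, makes $p$ a uniformizer on each branch $B_\beta$, so $\mathfrak{p}^k_{\eta_1\eta_2}$ is comparable to $(p^k)$, and continuity of the restriction map to $\rK_1(\Q_p[G])$ then places $f_{*\eta_0\eta_1\eta_2}(y)$ inside $1+p^k\Z_p[G]$. Thus for each $k$ all but finitely many factors lie in $1+p^k\Z_p[G]$; the factors tend to $1$ in the $p$-adic topology, and the infinite product converges in $\rK_1(\Z_p[G])^\flat$.

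For the restricted-product statement, observe that $S$ is finite, so only finitely many primes admit either a vertical $\eta_1 \in S$ or a (PK2)-exceptional closed point of some horizontal $\eta_1 \in S$. At every other prime $p$ the vertical contribution is integral by the previous paragraph. For the horizontal $\eta_1 \in S$ contribution at such a generic $p$, Proposition \ref{thm15}(i) gives $f_{*\eta_0\eta_1\eta_2}(y)=1$, while reciprocity (Theorem \ref{thm20}) rewrites $f_{*\eta_0\eta_1\eta_2}(z)$ as the inverse of $\prod_{\eta_1' \neq \eta_1,\, \eta_1' > \eta_2} f_{*\eta_0\eta_1'\eta_2}(z)$; since $z$ is regular off $\bar\eta_1$, it restricts into $\rK_2(\hat\O_{Y,\eta_1'\eta_2}[G])^\flat$ for each competing $\eta_1'$, and Proposition \ref{thm15} then puts every factor into $\rK_1(\Z_p[G])^\flat$. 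The hardest step will be the quantitative comparison between the filtration by $\mathfrak{p}^k_{\eta_1\eta_2}$ on the two-dimensional local ring $\hat\O_{Y,\eta_1\eta_2}$ and the $p$-adic filtration for which Kato's residue map is continuous: this passage from $k$ to $p^k$ depends essentially on hypothesis (H), the Cohen-ring description of each branch $B_\beta$, and a Morita reduction to the trivial-group case where \cite[Theorem 1]{KatoRes} directly applies.
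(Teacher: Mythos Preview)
Your proof is correct and follows the same overall architecture as the paper's: split into horizontal and vertical $\eta_1$, use (PK1) with Proposition~\ref{thm15} to reduce the horizontal contribution at each prime to a finite product, and use (PK2) together with Proposition~\ref{prop21} and the $p$-adic continuity of the residue symbol to establish convergence along each vertical $\eta_1$. The paper phrases the continuity step as the inclusion $f_{\ast\eta_0\eta_1\eta_2}(\rK_2(\hat\O_{Y,\eta_1\eta_2}[G],(p^k))^\flat)\subset\rK_1(\Z_p[G],(p^k))^\flat$, citing its own construction in \S\ref{3.5.2}; you cite Kato's theorem via Proposition~\ref{prop13} and Morita reduction, which is the same content. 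Your observation that hypothesis~(H) forces $\mathfrak p_{\eta_1\eta_2}=(p)$ on each branch (via Lemma~\ref{lem:cohen}) is exactly what makes this identification work, and in fact the two filtrations are equal rather than merely comparable.

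The one genuine difference is in how you establish integrality of the horizontal-$\eta_1$ contribution at almost all primes. The paper argues directly: for almost all $\eta_2$ one has $\hat\O_{Y,\eta_2}[\eta_1^{-1}]\simeq W(k(\eta_2))\lps t\rps[1/g_1]$, and one reads off from the construction of $\partial$ that the pushdown of the $z$-factor lands in $\rK_1(\Z_p[G])^\flat$. You instead invoke the reciprocity of Theorem~\ref{thm20} to rewrite $f_{\ast\eta_0\eta_1\eta_2}(z)$ as a product over competing $\eta_1'\ne\eta_1$ through $\eta_2$, and then Proposition~\ref{thm15} makes each such factor trivial (horizontal $\eta_1'$) or integral (vertical $\eta_1'$, of which there are finitely many). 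This is a perfectly valid alternative---it is essentially the proof of Proposition~\ref{prop21} run with a horizontal rather than a vertical $\eta_1$---and it has the virtue of being more self-contained than the paper's somewhat terse appeal to ``the construction of $\partial$''.
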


\begin{proof}  We must show that the product  $\prod_{(\eta _{1},\eta
_{2})}f_{\ast \eta _{0}\eta _{1}\eta _{2}}( x_{\eta _{0}\eta _{1}\eta
_{2}})$, where we consider  closed points $\eta _{2}$ of residue
characteristic $p$, is $p$-adically convergent in $\rK_{1}( \Q_{p}[ G] ) $, and that for 
almost all $p$, it converges to an
element of $\rK_{1} ( \Z_{p}[ G] )^\flat $.
We write this product as
\begin{equation}\label{eq68}
\prod_{(\eta _{1},\eta _{2})}f_{\ast \eta _{0}\eta _{1}\eta _{2}}(
x_{\eta _{0}\eta _{1}\eta _{2}}) =\prod_{\eta _{1}\text{\
horiz.,\ }\eta _{2}< \eta _{1}}f_{\ast \eta _{0}\eta _{1}\eta _{2}}(
x_{\eta _{0}\eta _{1}\eta _{2}}) \cdot \prod_{\eta _{1}\text{\
vert.,\ }\eta _{2}< {\eta }_{1}}f_{\ast \eta _{0}\eta _{1}\eta
_{2}}( x_{\eta _{0}\eta _{1}\eta _{2}}) .   
\end{equation}
We start by considering the first product. By (PK1) together with Proposition \ref{thm15} only a
finite number of  $\eta _{1}$ will contribute non-trivial terms; moreover
each $\overline{\eta }_{1}$ meets the special fiber $Y_p$ of $Y$ at $p$ at a
finite number of closed points,  and so we see that the first product affords
only a finite number of non-trivial terms for each prime $p$.  Moreover, applying (PK2) (with $k=0$) 
to such an $\eta _{1}$, we get that for almost all $\eta _{2}<\eta_1$
\begin{equation*}
x_{\eta _{0}\eta _{1}\eta _{2}}  \in    {\rK}_{2}( \hat\O_{Y,\eta _{1}\eta _{2}}[ G] )^\flat \cdot  
 {\rK}_{2}( \hat\O_{Y,\eta _{2}}[ \eta _{1}^{-1}] [ G])^\flat  .
 \end{equation*}
By Proposition \ref{thm15}, the first term has pushdown equal to $1$.
Notice that by \S \ref{section1}, for almost all $\eta_2<\eta_1$, $\hat\O_{Y, \eta_2}[\eta_1^{-1}]\simeq 
  W(k(\eta_2)){\lps t\rps}[1/g_1]$ where $g_1$ gives a local equation for $\bar\eta_1$. Using this and the construction of $\partial$ we can now see that for almost all $\eta_2<\eta_1$
 (in characteristic $p$), the second term  also has pushdown that lies in $\rK_{1}( \Z_{p}[G])^\flat$.

To conclude we consider the contributions to the second product for the given
prime number $p$. So we assume that $ {\eta }_{1}\in Y_{p}$ and
suppose  $\eta _{2}< {\eta }_{1}$. Given
a positive integer $k$, by (PK2) we know that the product
\begin{equation*}
\prod_{ \eta_2, \eta _{2}<  {\eta }_{1}}f_{\ast \eta
_{0}\eta _{1}\eta _{2}}( x_{\eta _{0}\eta _{1}\eta _{2}})
\end{equation*}%
may be written as a finite product multiplied by a product of terms pushed down from the groups
$ {\rK}_{2}( \hat{\mathcal{O}}_{Y,\eta _{1}\eta _{2}}[ G] , ( p^{k}))^\flat$ and $ \rK_{2}( \widehat{\mathcal{O}}_{Y,\eta _{2}}[ \eta _{1}^{-1}] [ G])^\flat $. From 
\S \ref{3.5.2} and the construction of the push-down, we know that
\begin{equation*}
f_{\ast \eta _{0}\eta _{1}\eta _{2}} ( {\rK}_{2} (
\hat{\mathcal{O}}_{Y,\eta _{1}\eta _{2}}[ G] ,(p^{k}))^\flat) \subset \rK_{1}( \Z_{p}[ G]
,( p^{k}) )^\flat .
\end{equation*}
By Proposition \ref{prop21} we know that if $\eta _{2}$ is a smooth
point of the reduced special fiber of $Y$ at $p$, then 
$$
f_{\ast \eta _{0}\eta
_{1}\eta _{2}} (\rK_{2}( \widehat{\mathcal{O}}_{Y,\eta _{2}}[ \eta
_{1}^{-1}] [ G] ) ) =1.
$$
 Since there are only a finite
number of non-smooth points on the reduced special fiber, we conclude that the product is
$p$-adically convergent. Moreover, since the special fiber is smooth for
almost all $p$, we have also shown that, for almost all $p$ the
contribution from the second product lies in $\rK_{1} ( \Z_{p}[ G] )^\flat$.
\end{proof}
\smallskip

We also have:

\begin{proposition}\label{convij} 
Consider   the intersection 
$$
\left(\prod_{   0\leq i<j\leq 2} {\rK}'_{2}  ( \mathbb{A}%
_{Y,ij} [ G ]  )^\flat\right) \cap  {\rK} 
_{2}'  ( \mathbb{A}_{Y,012} [ G ] )  \subset  {\rK}_{2}' ( \mathbb{A}_{Y,012} [
G ]   )
$$
 of subgroups in the unrestricted product $ {\rK} 
_{2}  ( \mathbb{A}_{Y,012} [ G ] )  $. If $x$ lies in this intersection, 
then $f_*(x)=\prod_{(\eta_0,\eta_1,\eta_2)}f_{* \eta_0\eta_1\eta_2}(x)$
converges to an element in $ \rK_1(\Q[G])^\flat\cdot \prod_p\rK_1(\Z_p[G])^\flat$.
\end{proposition}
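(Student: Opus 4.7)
The plan is to use the hypothesis to write $x = x_{01}\cdot x_{02}\cdot x_{12}$ with $x_{ij}\in \rK_2'(\mathbb{A}_{Y,ij}[G])^\flat$, and then analyze the pushdown of each factor separately using the three reciprocity laws (Theorems \ref{thm17}, \ref{thm18}, \ref{thm20}) together with Proposition \ref{thm15} and the convergence result Proposition \ref{prop22}.

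First I would dispose of $x_{02}$. By Remark \ref{ko2}, the subgroup $\rK_2'(\mathbb{A}_{Y,02}[G])^\flat$ sits inside the restricted product $\rK_2'(\mathbb{A}_{Y,012}[G])$, so $f_*(x_{02})$ converges by Proposition \ref{prop22}. Grouping the defining product by the closed point $\eta_2$ and applying Theorem \ref{thm20} to each finite inner product $\prod_{\eta_1>\eta_2}f_{*\eta_0\eta_1\eta_2}$ shows $f_*(x_{02}) = 1$.

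Next I would treat $x_{01}$, coming from some $y=(y_{\eta_0\eta_1})\in \rK_2'(\mathbb{A}_{Y,01}[G])$. For the almost-all $\eta_1$ with $y_{\eta_0\eta_1}\in \rK_2(\hat\O_{Y,\eta_1}[G])^\flat$, Proposition \ref{thm15} makes each individual term $f_{*\eta_0\eta_1\eta_2}(y_{\eta_0\eta_1})$ trivial in the horizontal case and an element of $\rK_1(\Z_p[G])^\flat$ in the vertical case. For the finitely many remaining horizontal $\eta_1$, Theorem \ref{thm17} shows that the inner product $\prod_{\eta_2<\eta_1}f_{*\eta_0\eta_1\eta_2}(y_{\eta_0\eta_1})$ lies in the diagonal image of $\rK_1(\Q[G])^\flat$; for every vertical $\eta_1$, Theorem \ref{thm18} makes the inner product equal to $1$, and only finitely many vertical $\eta_1$ sit over any given prime $p$. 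Assembling these together, $f_*(x_{01})$ converges in $\prod{}'_p \rK_1(\Q_p[G])$ to an element of $\rK_1(\Q[G])^\flat$.

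Finally I would analyze $x_{12}$. Since $x$ lies in $\rK_2'(\mathbb{A}_{Y,012}[G])$, Proposition \ref{prop22} gives convergence of $f_*(x)$, and together with the convergence of $f_*(x_{01})$ and $f_*(x_{02})$ established above, the identity $f_*(x_{12}) = f_*(x)\cdot f_*(x_{01})^{-1}\cdot f_*(x_{02})^{-1}$ forces $f_*(x_{12})$ to converge term-by-term as well. Now each factor $f_{*\eta_0\eta_1\eta_2}$ of the $x_{12}$-piece is either trivial (horizontal $\eta_1$, Proposition \ref{thm15}(i)) or lies in $\rK_1(\Z_p[G])^\flat$ (vertical $\eta_1$ over $p$, Proposition \ref{thm15}(ii)); since $\rK_1(\Z_p[G])^\flat$ is $p$-adically closed in $\rK_1(\Q_p[G])$, each $p$-component of the limit $f_*(x_{12})$ lies in $\rK_1(\Z_p[G])^\flat$. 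Combining: $f_*(x) = f_*(x_{01})\cdot f_*(x_{02})\cdot f_*(x_{12}) \in \rK_1(\Q[G])^\flat\cdot 1\cdot \prod_p \rK_1(\Z_p[G])^\flat$.

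The main obstacle is the convergence analysis for $f_*(x_{12})$, which cannot be obtained by applying Proposition \ref{prop22} directly to $x_{12}$: as already signaled in Remark \ref{ko2}, the image of $\rK_2'(\mathbb{A}_{Y,12}[G])$ is not in general contained in the restricted product $\rK_2'(\mathbb{A}_{Y,012}[G])$, so $x_{12}$ need not satisfy conditions (PK1)--(PK2) on its own. The resolution is the indirect argument above, where convergence of three of the four pushdowns forces convergence of the fourth, while the term-by-term bound from Proposition \ref{thm15} controls its image.
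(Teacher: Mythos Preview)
Your proof is correct and follows essentially the same route as the paper's: decompose $x$ into $ij$-pieces, dispose of $x_{02}$ via Theorem~\ref{thm20} and $x_{01}$ via Theorems~\ref{thm17}--\ref{thm18}, then handle $x_{12}$ by the indirect convergence argument combined with Proposition~\ref{thm15}. One small slip: the inner product $\prod_{\eta_1>\eta_2}$ is not finite (there are infinitely many codimension-one points through a closed point), but Theorem~\ref{thm20} asserts precisely that this infinite product converges to~$1$, so the argument stands; the paper makes the finiteness of nontrivial contributions explicit for each prime $p$ via Proposition~\ref{prop21} before regrouping, which is why rearrangement causes no trouble.
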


\begin{proof} Suppose we write $x=a_{01}\cdot a_{12}\cdot a_{02}$ with 
$a_{ij}\in  {\rK}'_{2}  ( \mathbb{A}%
_{Y,ij} [ G ]  )^\flat$. By Proposition \ref{prop22} we know that the product $f_*(x)=\prod_{(\eta_0,\eta_1,\eta_2)}f_{* \eta_0\eta_1\eta_2}(x)$ converges in 
$\prod'_p\rK_1(\Q_p[G])$. Similarly, the product $f_*(a_{01})=\prod_{(\eta_0,\eta_1,\eta_2)}f_{* \eta_0\eta_1\eta_2}(a_{\eta_0\eta_1})$
converges to an element in $\rK_1(\Q[G])^\flat$ by Theorems \ref{thm17} and \ref{thm18} (which assure  us that for 
$a_{01}$ we get contributions only for a finite number of horizontal $\eta_1$). By Remark \ref{ko2}, the element $a_{02}$ 
belongs to the restricted product $ {\rK}_{2}' ( \mathbb{A}_{Y,012} [
G ]   )$ and therefore the product 
$$
f_*(a_{02})=\prod_{(\eta_0,\eta_1,\eta_2)}f_{* \eta_0\eta_1\eta_2}(a_{\eta_0\eta_2}).
$$
converges. In fact, using the definition of $ {\rK}'_{2}  ( \mathbb{A}_{Y,02} [ G ]  )$ and Proposition \ref{prop21}
we see that, in the above product, for a given prime $p$,
 only a finite number of pairs
$\eta_2<\eta_1$ with $\eta_2$ over $p$ can give non-trivial contributions. 
Indeed, these are pairs of two types:  either $\eta_1$ is horizontal and on the support of the divisor $D$ (as per 
definition of ${\rK}'_{2}  ( \mathbb{A}_{Y,02} [ G ]  )$) and $\eta_2$ is an intersection point of $\overline\eta_1$ with the special fiber at $p$;  or  $\eta_1$ is vertical over $p$ and $\eta_2$ is a singular point of the special fiber of $Y$.
 Now by rearranging this product and using  the reciprocity law for codimension one points 
through a closed point (Theorem \ref{thm20}) we can see that it is equal to $1$. Hence
we have $f_*(a_{02})=1$.  We can conclude that the product
$$
f_*(a_{12})=\prod_{(\eta_0,\eta_1,\eta_2)}f_{* \eta_0\eta_1\eta_2}(a_{\eta_1\eta_2})
$$
also converges to an element in $\prod'_p\rK_1(\Q_p[G])$. Since  each individual term
$f_{* \eta_0\eta_1\eta_2}(a_{\eta_1\eta_2})$ is  in $\rK_1(\Z_p[G])^\flat$ (where $p$ is the characteristic of $\eta_2$),
we conclude that $f_*(a_{12})$ converges to an element in $\prod_p\rK_1(\Z_p[G])^\flat$ and the result follows.  
\end{proof}

\subsection{Push down on the equivariant second Chow group.}\label{sec4.5}

 It follows from Proposition \ref{prop22} and Proposition \ref{convij} that $f_*=\prod_{(\eta_0, \eta_1, \eta_2)}f_{*\eta_0\eta_1\eta_2}$ induces a well-defined group homomorphism
\begin{equation*}
\frac{ {\rK}_{2}^{\prime }( \mathbb{A}_{Y,012}[ G])   }{(\prod_{0\leq i<j\leq 2} {\rK}_{2}^{\prime }(
\mathbb{A}_{Y,i j}[ G] )^\flat )\cap  {\rK}_{2}^{\prime }( \mathbb{A}_{Y,012}[ G]) }\xrightarrow { \ \ \ } \frac{
\prod'_p\rK_{1}( \Q_{p}[ G] ) }{
\rK_{1} ( \Q[ G] )^\flat \prod_p \rK_{1} 
 ( \Z_{p}[ G] )^\flat }.
\end{equation*}
We notice that the source of this map is naturally identified with $\mathrm{CH}_{\mathbb{A}}^{2} (Y[G])$
while, by the Fr\"ohlich description \S \ref{Frodescription}, the target is naturally isomorphic to $\Cl( \Z[ G] )$.
Hence, we obtain a group homomorphism
$$
f_{*}: \mathrm{CH}_{\mathbb{A}}^{2} (Y[G]) \rightarrow \Cl( \Z[ G] ) .
$$
 
\bigskip

\section{Transitions matrices and the first Chern class}\label{s:Chern}

\setcounter{equation}{0}

We return to the assumptions and notations of \S \ref{s:Kadeles}.
Suppose now that $\E$ is a $\O_{Y}[G]$-bundle;
that is to say $\E$ is a locally free coherent $\O_{Y}[G]$-module of a given rank, 
which we denote by $n$. For each point $\eta$ of $
Y $ we choose an $\hat\O_{Y,\eta }[G]$-basis $e_{\eta
}= \{ e_{\eta }^{h} \} _{h=1}^{n}$ of the completed stalk $\hat\E_{\eta}=\E_\eta\otimes_{\O_{Y,\eta}}\hat\O_{Y,\eta}$. For a given Parshin triple $
\{ \eta _{0},\eta _{1},\eta _{2}\} $ we then have transition maps
$\lambda _{\eta _{i}\eta _{j}}\in \GL_{n}( \hat{\mathcal{O}}_{Y,\eta
_{i}\eta _{j}}[G]) $ with
\begin{equation}\label{eq77}
( e_{\eta _{i}}^{h})_{h}=\lambda _{\eta _{i}\eta _{j}}\cdot (
e_{\eta _{j}}^{k}) _{k}
\end{equation}
for $0\leq i,j\leq 2$. Note that we have the obvious relation $\lambda _{\eta _{i}\eta
_{k}}=\lambda _{\eta _{i}\eta _{j}}\cdot \lambda _{\eta _{j}\eta _{k}}.\bigskip $

\subsection{Construction of the first Chern class. }\label{sec5.2.1}

\begin{theorem}\label{thm24}
With the above notation:

(a) $\prod_{\eta _{1}}\mathrm{Det} ( \lambda _{\eta _{0}\eta
_{1}} ) $ lies in the restricted product $\rK_1'({\mathbb A}_{Y, 01}[G])\subset \prod_{\eta_1} \rK_1(\hat\O_{Y, \eta_0\eta_1}[G])$; that is to say 
all but a finite number of terms are in $\rK_{1}^{\prime } ( \hat{\mathcal{%
O}}_{Y,\eta _{1}}[G])^\flat$.

(b)  The class of $\prod_{\eta _{1}}\mathrm{Det} ( \lambda
_{\eta _{0}\eta _{1}} ) $ in the first adelic equivariant Chow group $ 
{\rm CH}_{\mathbb{A}}^{1} ( Y[G]) $ is independent of the
choice of bases.
\end{theorem}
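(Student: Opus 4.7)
My plan is to treat the two assertions separately: part (a) is a local question that uses the Zariski local triviality of $\E$, while part (b) is a direct formal calculation showing that each allowed basis change contributes an element of the denominator of $\mathrm{CH}^1_{\mathbb{A}}(Y[G])$.

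For part (a), I will first fix a finite affine open cover $\{U_i=\Spec(A_i)\}$ of $Y$ on which $\E$ trivializes, giving $\E|_{U_i}$ a distinguished $A_i[G]$-basis $e^{(i)}$. For each codimension one point $\eta_1\in U_i$, the localization and completion of $e^{(i)}$ produces an $\hat\O_{Y,\eta_1}[G]$-basis of the completed stalk $\hat\E_{\eta_1}$, which differs from the chosen $e_{\eta_1}$ by some matrix $\sigma^{(i)}_{\eta_1}\in\GL_n(\hat\O_{Y,\eta_1}[G])$. On the generic fiber, the bases $e^{(i)}$ and $e_{\eta_0}$ differ by a single matrix $\tau^{(i)}\in\GL_n(K(Y)[G])$. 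Comparing these identifications yields
\[
\lambda_{\eta_0\eta_1}=\tau^{(i)}\cdot(\sigma^{(i)}_{\eta_1})^{-1}
\]
in $\GL_n(\hat\O_{Y,\eta_0\eta_1}[G])$. The entries of $\tau^{(i)}$ and $(\tau^{(i)})^{-1}$ are elements of $K(Y)$, each regular away from a finite set $D_i$ of codimension one points, and $D=\bigcup_i D_i$ is again finite. For any codimension one $\eta_1\notin D$ contained in some $U_i$ we have $\tau^{(i)}\in\GL_n(\O_{Y,\eta_1}[G])\subset\GL_n(\hat\O_{Y,\eta_1}[G])$, so $\lambda_{\eta_0\eta_1}$ actually lies in the image of $\GL_n(\hat\O_{Y,\eta_1}[G])$ and therefore $\mathrm{Det}(\lambda_{\eta_0\eta_1})\in\rK_1(\hat\O_{Y,\eta_1}[G])^\flat$, establishing the restricted product condition.

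For part (b), I will examine how the product transforms under the two relevant types of basis change; changes of $e_{\eta_2}$ at closed points play no role, since $\lambda_{\eta_0\eta_1}$ does not depend on them. A change $e_{\eta_0}\mapsto A\cdot e_{\eta_0}$ with $A\in\GL_n(K(Y)[G])$ sends $\lambda_{\eta_0\eta_1}$ to $A\cdot\lambda_{\eta_0\eta_1}$ at every component, so the product is multiplied by the diagonal image of $\mathrm{Det}(A)\in\rK_1(K(Y)[G])$, an element of $\rK_1(\mathbb{A}_{Y,0}[G])^\flat$. A family of changes $e_{\eta_1}\mapsto B_{\eta_1}\cdot e_{\eta_1}$ with $B_{\eta_1}\in\GL_n(\hat\O_{Y,\eta_1}[G])$ sends $\lambda_{\eta_0\eta_1}$ to $\lambda_{\eta_0\eta_1}\cdot B_{\eta_1}^{-1}$, so the product is multiplied by the tuple $(\mathrm{Det}(B_{\eta_1})^{-1})_{\eta_1}\in\rK_1(\mathbb{A}_{Y,1}[G])^\flat$. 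Both corrections sit in the subgroup
\[
\rK_1(\mathbb{A}_{Y,0}[G])^\flat\cdot\rK_1(\mathbb{A}_{Y,1}[G])^\flat
\]
that is quotiented out in the definition of $\mathrm{CH}^1_{\mathbb{A}}(Y[G])$, so the class is unchanged.

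The only real work is in part (a): combining the finitely many cover pieces $U_i$ so that the exceptional set of codimension one points is actually finite. Once the formula $\lambda_{\eta_0\eta_1}=\tau^{(i)}(\sigma^{(i)}_{\eta_1})^{-1}$ is in hand, the argument reduces to the elementary fact that a rational function on $Y$ has poles only along a divisor and so is regular at all but finitely many codimension one points. Part (b) is then purely formal, amounting to tracking the multiplicative effect of each kind of basis change on the transition matrices and observing that the resulting correction factor is manifestly in the appropriate $\flat$-subgroup.
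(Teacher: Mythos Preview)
Your proof is correct and follows essentially the same approach as the paper. The only difference is a slight streamlining in part (a): rather than introducing a finite trivializing cover $\{U_i\}$ and auxiliary matrices $\tau^{(i)}$, $\sigma^{(i)}_{\eta_1}$, the paper simply observes that the single generic basis $e_{\eta_0}$ is already an $\O_Y(U)[G]$-basis of $\E$ on some nonempty open $U$, hence an $\hat\O_{Y,\eta_1}[G]$-basis at all but finitely many $\eta_1$, so that $\lambda_{\eta_0\eta_1}\in\GL_n(\hat\O_{Y,\eta_1}[G])$ directly; your decomposition $\lambda_{\eta_0\eta_1}=\tau^{(i)}(\sigma^{(i)}_{\eta_1})^{-1}$ achieves the same thing with one extra layer.
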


Before proving the theorem we use it to make the following
definition:

\begin{definition} The first adelic equivariant Chern class of $\E$ in ${\rm CH}_{\mathbb{A}
}^{1}( Y[ G] ) $, denoted $c_{1}( \E)$, is
the class represented by $\prod_{\eta _{1}}\mathrm{Det}(
\lambda _{\eta _{0}\eta _{1}}) $.
\end{definition}

\begin{proof} (a) The generic basis $\{ e_{\eta_{0}}^{i}\}
_{i=1}^{n}$ is an $\mathcal{O}_{Y}( U)[G]$-basis
of $\E$ for some non-empty open Zariski set in $Y$; this therefore gives an
$\hat{\mathcal{O}}_{Y, \eta _{1}}[G]$-basis of $\E$ for
all but a finite number of codimension one points $\eta _{1}$; and so 
$\lambda _{\eta _{0}\eta _{1}}\in \GL_{n}( \widehat{\mathcal{O}}
_{Y, \eta _{1}}[ G]) $ for all but a finite number of
codimension one points $\eta _{1}$. This   shows   (a).

To prove (b), let $\{ d_{\eta _{i}}^{h}\} _{h=1}^{n}$ denote a
further system of bases for $\E$. Then
\begin{eqnarray*}
 ( d_{\eta _{0}}^{h} ) _{h} &=&\gamma_{\eta _{0}}\cdot ( e_{\eta
_{0}}^{k} ) _{k}\ \ \text{for }\  \gamma_{\eta _{0}}\in \GL_{n} (
\hat{\mathcal{O}}_{Y, \eta _{0}}[G]) \\
( d_{\eta _{1}}^{h}) _{h} &=&\gamma_{\eta _{1}}\cdot ( e_{\eta_{1}}^{k})_k \ \ \text{for }\ \gamma_{\eta_{1}}\in \GL_{n}(\hat{\mathcal{O}}_{Y, \eta _{1}}[G])
\end{eqnarray*}
and so for each codimension one point $\eta _{1}$ of $Y$ we have the
equality
\begin{equation*}
 ( d_{\eta _{0}}^{h}) _{h}=\gamma _{\eta _{0}}\cdot ( e_{\eta
_{0}}^{k}) _{k}=\gamma _{\eta _{0}} \lambda _{\eta _{0}\eta _{1}} \cdot (
e_{\eta _{1}}^{j}) _{j}=\gamma _{\eta _{0}} \lambda _{\eta _{0}\eta
_{1}}\gamma_{\eta _{1}}^{-1}\cdot  ( d_{\eta _{1}}^{h}) _{h};
\end{equation*}
therefore working with the $d$-bases we get the new product of determinants
\begin{equation*}
\prod_{\eta _{1}}\mathrm{Det}( \gamma_{\eta _{0}}\lambda
_{\eta _{0}\eta _{1}}\gamma _{\eta _{1}}^{-1} ) =\mathrm{Det} (
\gamma _{\eta _{0}} ) \prod_{\eta _{1}}\mathrm{Det} (
\lambda _{\eta _{0}\eta _{1}} ) \mathrm{Det} ( \gamma _{\eta
_{1}}^{-1} )
\end{equation*}
and the result follows since
\begin{equation*}
\mathrm{Det}( \gamma _{\eta _{0}}) \in \rK_{1}( \hat{
\mathcal{O}}_{Y, \eta _{0}}[G])^\flat,\quad  \mathrm{Det}( \gamma _{\eta _{1}}) \in \rK_{1}( \hat{\mathcal{O}}_{Y,\eta _{1}}[G]))^\flat .
\end{equation*} 
\end{proof}

\begin{remark}\label{ChernFrohlich}
{\rm   Let $M$ be a locally free finitely generated $\Z[G]$-module which
defines an $\O_{S}[G]$-bundle $\E=\tilde M$ on $S=\Spec(\Z)$. The above
construction gives a Chern class $c_1(\E)$ in 
\begin{equation}
{\rm CH}^1_{\mathbb A}(S[G])=
\frac{ \prod^\prime_p\rK_{1} (
\Q_{p}[G])) }{\rK_{1} ( \Q
 [ G ]  )^{\flat } \prod_p \rK_{1} ( \Z_{p} [ G]  )^{\flat } }.
\end{equation}
In this case, we can see \cite{FrohlichBook} that the map $M\mapsto c_1(\tilde M)$ gives 
an isomorphism ${\rm Cl}(\Z[G])\xrightarrow{\sim} {\rm CH}^1_{\mathbb A}(S[G])$.
In what follows, we  use this isomorphism to identify ${\rm Cl}(\Z[G])$ with ${\rm CH}^1_{\mathbb A}(S[G])$; this is the negative of the   isomorphism given by the classical Fr\"ohlich description of
the class group ${\rm Cl}(\Z[G])$ (cf. \S \ref{Frodescription}).}
\end{remark}

 \section{Elementary structures and the second Chern class }\label{sec5.2.2}
 
 \setcounter{equation}{0}

The definition of the second adelic Chern class is   more involved. 
We first need some prerequisites.

\subsection{The Steinberg extension}\label{sec5.1} 
\hfill

Let $R$ denote a commutative ring
and  $G$   a finite group. Recall from \S \ref{equivchow-gen} that $\GL( R[ G]) $ 
denotes the full general linear group over the group ring $
R[G] $ and $ \rE( R[ G]) $ is the subgroup
of elementary matrices with entries in $R[G]$. Recall from \cite[Chapter 4.2]{RosenbergBook}  and 
\cite[Section 5]{MilnorK} that we have the Steinberg group ${\rm St}(R[G]) $
which sits in the central exact sequence
\begin{equation}\label{eq69}
1\rightarrow \rK_{2}( R[ G] ) \rightarrow {\rm St}( R
[ G] ) \rightarrow \rE( R[ G] )
\rightarrow 1.  
\end{equation}

\subsubsection{} \label{ss:cocycles}

 Suppose $G$ is a central group extension 
$$1 \to A \to G \to H \to 1$$
of a group $H$ by a  group $A$. 
If $c$, $d$ are two elements of $H$, we may
choose lifts (preimages) $\ti c$, $\ti d$, $\widetilde {cd}$, 
of the elements $c$, $d$, $cd$ of $H$ in $G$.
We can then define
$$
z:=\widetilde {cd}\cdot (\ti d)^{-1}\cdot (\ti c)^{-1}\in A.
$$
Although $z$ depends on the choice of lifts of these three elements, we will
sometimes abuse notation and write the element $z$ as $z(c,d)$.
We will refer to these elements as ``cocycles" since if $s: H\to G$ is a set-theoretic section 
of $G\to H$, then the map 
$
z: H\times H\to A
$
given by $z(c,d)=s(cd)\cdot s(d)^{-1}\cdot s(c)^{-1}$ gives a $2$-cocycle with values in $A$.

The following lemma will be applied repeatedly to the Steinberg central exact sequence
(\ref{eq69}) and its variants.

\begin{lemma}\label{cocyclele}
Suppose $b, c, d \in H$.  Fix a choice of a pre-image 
$\ti h \in G$ of each element $h$ in the sequence
$ c, d, b, cd, db, cdb $.
 These choices allow us to define as above the elements
$z( cd,b)$, $ z( c ,d)$, $z( c,db)$,  $z(d,b)$ of $A$ and we have
\begin{equation}
z( cd,b) z( c ,d)  
= z( c,db) z(d,b).
\end{equation}
\end{lemma}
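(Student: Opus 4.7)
The plan is to compute both sides explicitly from the definition of the cocycles and show that each side reduces to the common expression $\widetilde{cdb}\cdot \tilde b^{-1}\cdot \tilde d^{-1}\cdot \tilde c^{-1}$, using only the centrality of $A$ in $G$.

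First I would unfold the left-hand side. By definition,
\[
z(cd,b)\cdot z(c,d) \;=\; \bigl(\widetilde{cdb}\cdot \tilde b^{-1}\cdot \widetilde{cd}^{\,-1}\bigr)\cdot \bigl(\widetilde{cd}\cdot \tilde d^{-1}\cdot \tilde c^{-1}\bigr),
\]
and the two middle factors $\widetilde{cd}^{\,-1}\cdot \widetilde{cd}$ cancel, producing $\widetilde{cdb}\cdot \tilde b^{-1}\cdot \tilde d^{-1}\cdot \tilde c^{-1}$. This step is purely a telescoping of adjacent inverse factors and requires no use of centrality.

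Next I would expand the right-hand side:
\[
z(c,db)\cdot z(d,b) \;=\; \bigl(\widetilde{cdb}\cdot \widetilde{db}^{\,-1}\cdot \tilde c^{-1}\bigr)\cdot \bigl(\widetilde{db}\cdot \tilde b^{-1}\cdot \tilde d^{-1}\bigr).
\]
Here the middle factors $\tilde c^{-1}\cdot \widetilde{db}$ do not cancel, so a direct telescoping fails. The key observation is that $z(d,b)=\widetilde{db}\cdot \tilde b^{-1}\cdot \tilde d^{-1}$ lies in $A$ and is therefore central in $G$. This allows me to move the entire factor $z(d,b)$ past $\tilde c^{-1}$, rewriting the right-hand side as
\[
\widetilde{cdb}\cdot \widetilde{db}^{\,-1}\cdot \bigl(\widetilde{db}\cdot \tilde b^{-1}\cdot \tilde d^{-1}\bigr)\cdot \tilde c^{-1},
\]
after which $\widetilde{db}^{\,-1}\cdot \widetilde{db}$ cancels and one obtains $\widetilde{cdb}\cdot \tilde b^{-1}\cdot \tilde d^{-1}\cdot \tilde c^{-1}$, matching the left-hand side.

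There is no real obstacle here; this is a routine verification that a $2$-cocycle built from chosen set-theoretic lifts satisfies the cocycle identity, and the only substantive input is that $A\subset Z(G)$. The lemma will be invoked later (in \S\ref{sec5.2.2} and beyond) to verify that Steinberg cocycles $z(\tilde\lambda)_{(\eta_0,\eta_1,\eta_2)}$ built from transition matrices behave consistently under triple overlaps, so recording this identity now in the abstract setting of a central extension is the cleanest formulation.
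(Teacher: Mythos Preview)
Your proof is correct and is essentially the same as the paper's: the paper simply cites the two-cocycle relation from Serre's \emph{Local Fields} and notes that centrality of $A$ is what makes it work, while you spell out that verification explicitly. Your computation also implicitly handles the subtlety the paper flags, namely that the chosen lifts of two equal elements in the list need not coincide, since you never assume any compatibility between lifts beyond what is given.
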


\begin{proof}
This is just the two-cocycle relation of \cite[Chapter VII.3, p. 121]{SerreLocalFields} since $A$ is central in $G$.
Notice here that we do not require that the lifts of two elements in the sequence
that are equal are also equal.
\end{proof}

\subsection{Elementary structure}\label{sss: proel}

We continue
with the assumptions and notations of \S \ref{s:Kadeles}.

\begin{definition}\label{defadproel} Suppose $\mathcal{E}$  is a  $\mathcal{O}_{Y}[ G]$-bundle.
An (adelic) elementary structure $\epsilon$ on $\E$ is an equivalence class of choices of $\hat{\mathcal{O}}_{Y,\eta
_{i}}[ G] $-bases $\{e_{\eta _{i}}^{h}\}_{h}$ of $\hat\E_{\eta_i}=\mathcal{E}\otimes_{\O_{Y}}\hat{\mathcal{O}}_{Y,\eta _{i}}$ with the property that
for each Parshin pair $\{\eta _{i},\eta _{j}\}$ on $Y$ the image of the
transition map $\lambda _{\eta _{i}\eta _{j}}$ in ${\rm GL} ( \hat{\O}_{Y,\eta _{i}\eta _{j}}[G]) $   lies in the
subgroup of  elementary matrices $ {\rE} ( \hat{\O}_{Y,\eta _{i}\eta _{j}}[G])$. Here, another set  
of $\what{\mathcal{O}}_{Y,\eta
_{i}}[ G]$-bases  $\{d_{\eta _{i}}^{j}\}_{j}$ of $\E$ is called equivalent to $\{e_{\eta _{i}}^{h}\}_{h}$ if
it is  related to $\{e_{\eta
_{i}}^{h}\}_{h}$ by an elementary base change \emph{i.e.}, when we write $d_{\eta
_{i}}=\mu _{\eta _{i}}e_{\eta _{i}}$, then $\mu _{\eta _{i}}$ is
 elementary in the sense that $\mu _{\eta _{i}}\in  {\rE} (
\hat{\O}_{Y,\eta _{i}}[G]) $.
\end{definition}

\begin{remark}\label{adestable} {\rm a) In what follows, we will omit the adjective ``adelic" and talk simply of  elementary structures.

b) Having an  elementary structure is a stable property, \emph{i.e.} an $\O_Y[G]$-bundle 
$\E$ has  elementary structure
if and only if the bundle $\E\oplus \O_Y[G]^n$ does for some $n\geq 0$.

 c) If $\E$ supports an elementary structure as above, then its first adelic Chern class $c_1(\E)$  is trivial. Indeed,  we then have ${\rm Det}(\lambda_{\eta_0\eta_1})=1$. The converse is not necessarily always true; roughly speaking, the reason is the non-vanishing 
of various $\rSK_1$ terms which are not detected by $c_1(\E)$. It is however, true when $G$ is trivial, see below. (Recall that we are assuming that the arithmetic surface $Y$ is regular.)  In general, a $\O_Y[G]$-bundle $\E$ can support more than one inequivalent elementary structures.

d) If the group $G$ is trivial, then one can easily see that  an $\O_Y$-bundle has an elementary structure,
if and only if the determinant ${\rm det}(\E)$ is a trivial line bundle. In that case, $\E$ supports a unique
elementary structure.}
\end{remark}

\subsubsection{Examples} \label{exampleperf}

We will say that  the group $G$ has  trivial local $\rSK_1$, if we have
 ${\rm SK}_1(\Z_p[G])=\{1\}$, for all primes $p$. In \cite[Prop. 9]{OliverLNM}, Oliver
gives various conditions that imply that a group has  trivial  local $\rSK_1$.
He also shows  that this is the case for 
$G=A_n$, $S_n$, ${\rm SL}(2, {\Bbb F}_p)$, ${\rm PSL}(2, {\Bbb F}_p)$ ($p$ any prime), ${\rm SL}(2, 2^n)$, the dihedral group $D_{2n}$, and the generalized quaternion group $Q_{4n}$. 
We see that torsors for groups $G$ with trivial local $\rSK_1$ often provide with examples
of bundles with elementary structure:

\begin{proposition}\label{perfectProp}
Suppose that $Y$ is regular, $Y\to \Spec(\Z)$ is projective and flat of relative dimension $1$ 
and has smooth fibers at primes that divide the order of $G$. Suppose also  
that $G$ is a group with trivial local $\rSK_1$ 
and which satisfies  hypothesis (\ref{splitgroupalgebraIntro}). Let $m=|G^{\rm ab}|$
be the order of the maximal abelian quotient of $G$. 
If $q: X\to Y$ is a $G$-torsor, then the sheaf $\E=q_*\O_X^{\oplus m}$ of $\O_Y[G]$-modules
on $Y$ is an $\O_Y[G]$-bundle with elementary structure. 
\end{proposition}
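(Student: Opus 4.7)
The plan is to separately verify that $\E = q_*\O_X^{\oplus m}$ is a Zariski-locally free $\O_Y[G]$-module of rank $m$, and that it supports an elementary structure in the sense of Definition~\ref{defadproel}. For the bundle structure, observe that at each point $\eta$ of $Y$ the base-changed torsor $X\times_Y\Spec(\hat\O_{Y,\eta})\to\Spec(\hat\O_{Y,\eta})$ is classified by a continuous homomorphism $\rho_\eta\colon\Gal(k(\eta)^{\rm sep}/k(\eta))\to G$, and the associated \'etale $k(\eta)$-algebra $L_\eta$ is a direct sum of copies of a Galois extension of $k(\eta)$ with group isomorphic to the image of $\rho_\eta$. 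The normal basis theorem, applied to each factor and then to the induction from this image up to $G$, presents $L_\eta$ as a free $k(\eta)[G]$-module of rank one; lifting a generator by Nakayama's lemma applied to the $\hat\O_{Y,\eta}$-flat finite module $\widehat{q_*\O_X}_\eta$ yields $\widehat{q_*\O_X}_\eta\cong\hat\O_{Y,\eta}[G]$, and hence $\hat\E_\eta\cong\hat\O_{Y,\eta}[G]^{\oplus m}$. Zariski local freeness of $q_*\O_X$ over $\O_Y[G]$ for an \'etale $G$-cover then follows from the classical tame-cover theory (as in \cite{ChinburgTameAnnals}), giving the bundle structure of rank $m$.

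Next, fix block-diagonal bases $e_\eta=(e_\eta^{(1)},\dots,e_\eta^{(m)})$ of $\hat\E_\eta$, each $e_\eta^{(k)}$ being a generator of the $k$-th copy of $\widehat{q_*\O_X}_\eta$; the resulting transition matrices then have the form $\Lambda_{\eta_i\eta_j}={\rm diag}(\mu_{\eta_i\eta_j},\dots,\mu_{\eta_i\eta_j})$ for units $\mu_{\eta_i\eta_j}\in(\hat\O_{Y,\eta_i\eta_j}[G])^\times$, so that by the Whitehead lemma the class of $\Lambda_{\eta_i\eta_j}$ in $\rK_1(\hat\O_{Y,\eta_i\eta_j}[G])$ equals $m\cdot[\mu_{\eta_i\eta_j}]$. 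Under the splitting $\Q[G]\cong\prod_i{\rm M}_{m_i}(Z_i)$ and Morita equivalence, each component $\det\rho_i(\mu_{\eta_i\eta_j})$ of ${\rm Det}(\mu_{\eta_i\eta_j})$ is a character-valued unit, and since $\det\rho_i$ factors through the abelianization $G^{\rm ab}$ of order $m$, the associated $\det\rho_i$-twist of the $G$-torsor defines an $m$-torsion class in a Picard-type group of $Y$; summing over $i$ yields $c_1(\E)=m\cdot c_1(q_*\O_X)=0$ in ${\rm CH}^1_{\mathbb A}(Y[G])$. By unwinding the definition of ${\rm CH}^1_{\mathbb A}(Y[G])$ from \S\ref{equivchow}, one may therefore adjust the bases $e_\eta^{(k)}$ by suitable units of $\hat\O_{Y,\eta}[G]^\times$ so as to arrange ${\rm Det}(\Lambda_{\eta_i\eta_j})=1$, that is, $\Lambda_{\eta_i\eta_j}\in\SL(\hat\O_{Y,\eta_i\eta_j}[G])$.

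To upgrade these transitions from $\SL$ to the elementary subgroup $\rE(\hat\O_{Y,\eta_i\eta_j}[G])$ one must show that their classes in $\rSK_1(\hat\O_{Y,\eta_i\eta_j}[G])$ are trivial, and the idea is to propagate the hypothesis $\rSK_1(\Z_p[G])=\{1\}$ through the multicompletions of Lemma~\ref{lem:ex12}. This uses the technical machinery of \S\ref{s:SK_1}: Lemma~\ref{lemmaApprox} and Corollary~\ref{SK1density} in mixed characteristic, Corollaries~\ref{SK1cor1}--\ref{SK1cor2} for formal and restricted power series over Witt rings, and Lemma~\ref{sk_1lemma2} together with Corollary~\ref{cor:Blochcor} for the Tate-algebra and localization cases, at each stage combined with Morita reductions afforded by \eqref{splitgroupalgebraIntro}; the stability of elementary structure (Remark~\ref{adestable}(b)) permits free stabilization along the way. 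The hardest part of the proof will be exactly this last step: for the horizontal Laurent rings $k(\eta_1)\llps t\lrps[G]$ over the number fields $k(\eta_1)$, and for the vertical mixed-characteristic rings $W(k(\eta_2))\ldb t\rdb[G]$ and their fraction fields, one has to verify that the specific $\rSK_1$-classes of the torsor-derived transitions can be absorbed into the local contributions coming from $\rSK_1(\hat\O_{Y,\eta_i}[G])\cdot\rSK_1(\hat\O_{Y,\eta_j}[G])$, which requires a delicate interplay between the Galois data $\rho_\eta$ classifying the torsor at each point and the vanishing statements of \S\ref{s:SK_1}.
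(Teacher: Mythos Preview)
Your outline has the right overall shape, but there are two genuine gaps, one in each of the last two paragraphs.

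\textbf{Getting transitions into $\SL$.} You deduce $c_1(\E)=0$ in ${\rm CH}^1_{\mathbb A}(Y[G])$ and then assert that ``one may therefore adjust the bases\ldots so as to arrange ${\rm Det}(\Lambda_{\eta_i\eta_j})=1$''. This inference is not valid. Vanishing of $c_1$ only says that the idele $({\rm Det}(\lambda_{\eta_0\eta_1}))_{\eta_1}$ lies in $\rK_1(K(Y)[G])\cdot\prod_{\eta_1}\rK_1(\hat\O_{Y,\eta_1}[G])^\flat$; after absorbing these factors into the bases at $\eta_0$ and at each $\eta_1$ you kill ${\rm Det}(\lambda_{\eta_0\eta_1})$, but the transitions $\lambda_{\eta_1\eta_2}$ and $\lambda_{\eta_0\eta_2}$ are still uncontrolled. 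To kill ${\rm Det}(\lambda_{\eta_0\eta_2})$ by changing the basis at $\eta_2$ you would need ${\rm Det}(\lambda_{\eta_0\eta_2})$ to lie in the image of $\rK_1(\hat\O_{Y,\eta_2}[G])$, and nothing you have written forces this integrality. The paper does not argue via $c_1$ at all: it invokes \cite[Theorems~1.2,~1.5]{CPTDet}, which for a $G$-torsor produces explicit normal-basis-type generators whose transition matrices have trivial ${\rm Det}$ at \emph{every} Parshin pair. The exponent $m=|G^{\rm ab}|$ enters there exactly to kill the abelian obstruction to this resolvent computation. Your torsion argument is gesturing at the same phenomenon but does not constitute a proof.

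\textbf{Going from $\SL$ to $\rE$.} Here your strategy is weaker than the paper's and, as you suspect, would be hard to complete. The paper does not track the specific $\rSK_1$-classes of the torsor transitions and absorb them into local contributions; it shows outright that $\rSK_1(\hat\O_{Y,\eta_i\eta_j}[G])=\{1\}$ for every Parshin pair. For the pairs $(\eta_0,\eta_i)$ and for horizontal $(\eta_1,\eta_2)$ this follows from Morita equivalence plus Corollary~\ref{cor:Blochcor} (since $\Q[G]$ splits). The delicate case is the vertical pair $\eta_1=1_p$, $\eta_2<\eta_1$, with $p\mid |G|$, where $\hat\O_{Y,\eta_1\eta_2}\cong W(k(\eta_2))\{\!\{t\}\!\}$. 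The results you cite (Corollaries~\ref{SK1cor1}--\ref{SK1cor2}, Lemma~\ref{sk_1lemma2}) do \emph{not} yield $\rSK_1(W\{\!\{t\}\!\}[G])=\{1\}$ here: Lemma~\ref{sk_1lemma2}(b) requires $p\nmid |G|$, and Corollary~\ref{SK1cor2} only gives an injection from $\rSK_1(W\lps t\rps[G])$. What is needed is the main theorem of \cite{CPTSK1b}, namely the isomorphism
\[
\rSK_1\big(W(k)\{\!\{t\}\!\}[G]\big)\ \cong\ \rSK_1(\Z_p[G])\otimes_\Z \big(W(k)\{\!\{t\}\!\}/(1-F)\big),
\]
which together with the hypothesis $\rSK_1(\Z_p[G])=\{1\}$ gives the vanishing directly. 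Once every $\rSK_1(\hat\O_{Y,\eta_i\eta_j}[G])$ vanishes, $\SL=\rE$ and the bases from the first step already give an elementary structure---no delicate absorption argument is required.
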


\begin{proof}
Using \cite[Theorems 1.2,  1.5]{CPTDet} we see that we can choose local 
 $\hat{\mathcal{O}}_{Y,\eta
_{i}}[ G] $-bases $\{e_{\eta _{i}}^{h}\}_{h}$ of $\hat\E_{\eta_i}$ such that the transition matrices have trivial ${\rm Det}$ and hence belong to
$\SL(\hat\O_{Y, \eta_i\eta_j}[G])$. (This follows 
by an easy extension of the argument in the introduction  of \cite{CPTDet};  there, we discuss the case that $G$ is perfect, \emph{i.e.} when $m=1$.) Assume that $p$ is a prime that divides the order of the group $G$.
Since $Y\to\Spec(\Z)$ is smooth at primes $p$ that divide $|G|$,
we have $\hat\O_{Y, \eta_1\eta_2}\simeq W(k(\eta_2))\{\!\{t\}\!\}$ for  $\eta_1=(p)$. Hence,  the main result 
of \cite{CPTSK1b} gives 
$$
{\rm SK}_1(\hat\O_{Y, \eta_1\eta_2}[G])\simeq {\rm SK}_1(\Z_p[G])\otimes_{\Z} (\hat\O_{Y, \eta_1\eta_2}/(1-F)\,\hat\O_{Y, \eta_1\eta_2})
$$
where $F$ is a Frobenius lift on $\hat\O_{Y, \eta_1\eta_2}$. By our assumption on $G$, we obtain ${\rm SK}_1(\hat\O_{Y, \eta_1\eta_2}[G])=\{1\}$, for $\eta_1=(p)$.
 Using Morita equivalence and (\ref{splitgroupalgebraIntro}), we also see that ${\rm SK}_1(\hat\O_{Y, \eta_1\eta_2}[G])=\{1\}$ at all other $\eta_1$.
Also by Corollary \ref{cor:Blochcor} and Morita equivalence, ${\rm SK}_1(\hat\O_{Y, \eta_0\eta_i}[G])=\{1\}$, for $i=1$, $2$.
Hence,  we have ${\rm E}(\hat\O_{Y, \eta_i\eta_j}[G])=\SL(\hat\O_{Y, \eta_i\eta_j}[G])$  for all Parshin pairs
$\{\eta_i, \eta_j\}$.
Therefore, the  bases $\{e_{\eta _{i}}^{h}\}_{h}$ above
give an elementary structure as desired.
\end{proof}
\smallskip
 
 We can now  apply
Proposition \ref{perfectProp} to torsors for
the perfect groups $G=A_n$, $(n\geq 5)$, ${\rm PSL}(2, {\Bbb F}_p)$ ($p>3$  prime), ${\rm SL}(2, 2^n)$ ($n\geq 2$),
and to $S_n$-torsors with $m=2$. Indeed, by Oliver's work, these groups have trivial local $\rSK_1$. 
Also, all these groups, have all their Schur indices equal to $1$ (see \cite{Turull}, \cite{Feit}, \cite{Janusz}), and hence 
they also satisfy our hypothesis (\ref{splitgroupalgebraIntro}). Then, if $q: X\to Y$ is a $G$-torsor,  $Y$ satisfies (H), and $G=A_n$ ($n\geq 5$), ${\rm PSL}(2, {\Bbb F}_p)$ ($p>3$  prime), or ${\rm SL}(2, 2^n)$ ($n\geq 2$),
Theorem \ref{ARRintro} applies to $\E=q_*\O_X$.  If $G=S_n$, we can apply Theorem \ref{ARRintro} to $\E =q_*(\O_X\oplus\O_X)$. (Note that $G$-torsors $X\to Y$ 
with $Y$ as in Prop. \ref{perfectProp} can be constructed 
 for any group $G$ as in \cite[Appendix]{PaUnr} by using the theorems of Moret-Bailly
and Rumely  on the existence
of integral points.)

 \subsection{The second adelic Chern class.}\label{sec5.2.3}
\hfill
\smallskip

The second adelic Chern class will only be defined for 
bundles with  elementary structure on suitable arithmetic surfaces. 
We suppose assumption (H) on $Y$ is satisfied;  we
also continue to suppose that the group algebra $\Q[G]$ splits.

\subsubsection{}
We start by showing the following properties of  elementary transitions.
If $\lambda_{\eta_i\eta_j}$ is an element of $ {\rE}(\hat{\mathcal{O}}_{Y, \eta_i\eta_j}[ G] ) $
we will denote by $\ti\lambda_{\eta_i\eta_j}$ an element of 
$ {\rm St}( \hat{\mathcal{O}}_{Y,\eta _{i}\eta _{j}}[ G] )$ that projects to $\lambda_{\eta_i\eta_j}$. 
 For simplicity,  we will sometimes omit the subscript $Y$ from the notation.

\begin{proposition}\label{propae}
Suppose that $\E$ is an $\O_Y[G]$-bundle with  elementary structure $\epsilon$ given by the bases 
$\{e_{\eta _{i}}^{h}\}_{h}$
with 
corresponding transition matrices
  $\lambda_{\eta_i\eta_j}$   in $ {\rE}(\widehat\O_{\eta_i\eta_j}[G])$.
 Then  we also have:

\begin{enumerate}

\item[1)] There is an effective divisor $D$ on $Y$ containing all vertical fibers over primes that divide the order of $G$ such that
for every Parshin triple $(\eta_0, \eta_1, \eta_2)$ on $Y$ we have: 

a) $\lambda_{\eta_0\eta_1}$ belongs to   $  {\rE}(\hat\O_{\eta_1}[D^{-1}][G])\subset 
 {\rE}(\hat\O_{\eta_0\eta_1}[G])$,

b) $\lambda_{\eta_0\eta_2}$ belongs to $ \rE(\hat\O_{\eta_2}[D^{-1}][G])\subset  {\rE}(\hat\O_{\eta_0\eta_2}[G])$.
\smallskip

\item[2)] For all lifts $\tilde\lambda_{\eta_i\eta_j}$ of $\lambda_{\eta_i\eta_j}$ with  $\tilde\lambda_{\eta_0\eta_1}$ in $\St(\hat\O_{\eta_1}[D^{-1}][G])$, $\tilde\lambda_{\eta_1\eta_2}$ in ${\rm St}(\what\O_{\eta_1\eta_2}[G])$, and
$\tilde\lambda_{\eta_0\eta_2}$ in $\St(\what\O_{\eta_2}[D^{-1}][G])$, the element
$$
z:=\prod_{(\eta_0,\eta_1,\eta_2)}\tilde\lambda_{\eta_0\eta_2}\cdot (\tilde \lambda_{\eta_1\eta_2})^{-1}\cdot (\tilde \lambda_{\eta_0\eta_1})^{-1}
$$
lies in the group 
\begin{equation*}
 {\rK}_{2}^{\prime } ( \mathbb{A}_{Y,012}[G]) \cdot   {\rK}_{2}( \mathbb{A}_{Y,12}[G])^\flat
\cdot  {\rK}_{2}^{\prime } ( \mathbb{A}_{Y,01}[G])^\flat. 
\end{equation*} 

\end{enumerate}
\end{proposition}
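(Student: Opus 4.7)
The plan is to establish Part 1 by choosing $D$ large enough that transition matrices become automatically elementary outside of $D$ (using determinant triviality combined with $\rSK_1$-vanishing), and then for Part 2 to analyze the Steinberg cocycle $z$ by tracking which subring of $\hat\O_{\eta_0\eta_1\eta_2}[G]$ each lift actually comes from.

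\textbf{Part 1.} Let $U\subset Y$ be an open over which the generic basis $\{e^h_{\eta_0}\}$ trivializes $\E$. Take $D$ to be the union of (i) the codimension-one components of $Y\setminus U$, (ii) every vertical fiber $Y_p$ for primes $p\mid |G|$, and (iii) a finite number of additional horizontal components chosen so that $\lambda_{\eta_0\eta_2}\in\GL_n(\hat\O_{\eta_2}[D^{-1}][G])$ for every closed point $\eta_2$. For $\eta_1\notin D$, the generic basis extends to an $\hat\O_{\eta_1}[G]$-basis of $\hat\E_{\eta_1}$, so $\lambda_{\eta_0\eta_1}\in\GL_n(\hat\O_{\eta_1}[G])$; since $\lambda_{\eta_0\eta_1}\in \rE(\hat\O_{\eta_0\eta_1}[G])$ we have $\mathrm{Det}(\lambda_{\eta_0\eta_1})=1$, hence $\lambda_{\eta_0\eta_1}\in\SL(\hat\O_{\eta_1}[G])$. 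The claim then reduces to $\rSK_1(\hat\O_{\eta_1}[G])=\{1\}$, which follows from the splitting (\ref{splitgroupalgebraIntro}) together with Morita equivalence and the triviality of $\rSK_1$ for (matrix algebras over) complete commutative local rings. The argument for (1b) is parallel, using $\rSK_1(\hat\O_{\eta_2}[G])=\{1\}$ via Corollary \ref{cor:Blochcor}. For $\eta_1\in D$, a local equation of $D$ at $\eta_1$ is a uniformizer of the dvr $\hat\O_{\eta_1}$, so $\hat\O_{\eta_1}[D^{-1}]=\hat\O_{\eta_0\eta_1}$ and the required elementariness is inherited from the hypothesis; the analogous observation handles $\eta_2\in D$.

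\textbf{Part 2 and condition (PK1).} The cocycle relation $\lambda_{\eta_0\eta_2}=\lambda_{\eta_0\eta_1}\lambda_{\eta_1\eta_2}$ in $\rE(\hat\O_{\eta_0\eta_1\eta_2}[G])$ shows that each $z_{(\eta_0,\eta_1,\eta_2)}$ projects to the identity and hence lies in $\rK_2(\hat\O_{\eta_0\eta_1\eta_2}[G])$. To verify adelic membership, I first treat (PK1): for $\eta_1\notin D$ (all but finitely many $\eta_1$) the three subrings $\hat\O_{\eta_1}[D^{-1}]=\hat\O_{\eta_1}$, $\hat\O_{\eta_1\eta_2}$ and $\hat\O_{\eta_2}[D^{-1}]$ all embed canonically into $\hat\O_{\eta_1\eta_2}$: the first via the natural map of multicompletions, and the third because any local equation at $\eta_2$ of a component of $D$ is coprime to $\pi_1$ when $\overline{\eta}_1\not\subset D$, and so is a unit in each branch dvr $B_\alpha$ of $\hat\O_{\eta_1\eta_2}$ in the description of Lemma \ref{lem:ex12}. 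Consequently all three lifts land in $\St(\hat\O_{\eta_1\eta_2}[G])$ and $z_{(\eta_0,\eta_1,\eta_2)}\in\rK_2(\hat\O_{\eta_1\eta_2}[G])^\flat$, verifying (PK1) off $D$. The finitely many exceptional $\eta_1\in D$ contribute to the factor $\rK_2'(\mathbb{A}_{Y,01}[G])^\flat$ (since $\tilde\lambda_{\eta_0\eta_1}\in\St(\hat\O_{\eta_0\eta_1}[G])$ in these cases), and any purely $(\eta_1,\eta_2)$-dependent residue arising from rewriting the lifts in a common ring is absorbed into the unrestricted factor $\rK_2(\mathbb{A}_{Y,12}[G])^\flat$.

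\textbf{Main obstacle: condition (PK2).} The delicate point is to show that, for fixed $\eta_1$ and fixed $k\ge 1$, for all but finitely many $\eta_2<\eta_1$ the cocycle $z_{(\eta_0,\eta_1,\eta_2)}$ decomposes as a product of an element of $\rK_2(\hat\O_{\eta_1\eta_2}[G],\mathfrak{p}_{\eta_1\eta_2}^k)^\flat$ and an element in the image of $\rK_2(\hat\O_{\eta_2}[\eta_1^{-1}][G])$. The natural starting point is that whenever $\overline{\eta}_1$ is one of the components of $D$ we have $\hat\O_{\eta_2}[D^{-1}]\subseteq\hat\O_{\eta_2}[\eta_1^{-1}]$, so $\tilde\lambda_{\eta_0\eta_2}$ already lands in $\St(\hat\O_{\eta_2}[\eta_1^{-1}][G])$; what must then be controlled, uniformly in $\eta_2$, is the discrepancy between this lift and the combined lift $\tilde\lambda_{\eta_0\eta_1}\tilde\lambda_{\eta_1\eta_2}$. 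I expect to decompose this discrepancy using the density of $\O_{Y,\eta_2}$ in $\hat\O_{Y,\eta_2}$, the $p$-adic approximation results of Lemma \ref{lemmaApprox} and \S \ref{3.5.2}, and the central extension structure $\Hh$ of \S \ref{3d}, together with the cocycle calculus of Lemma \ref{cocyclele} to track how changes in the Steinberg lifts propagate. The hard part is ensuring that the high-congruence correction in $\rK_2(\hat\O_{\eta_1\eta_2}[G],\mathfrak{p}_{\eta_1\eta_2}^k)^\flat$ can be taken to depend only on $(\eta_1,\eta_2)$ (so that it is absorbed into $\rK_2(\mathbb{A}_{Y,12}[G])^\flat$) for all but finitely many $\eta_2$ on $\overline{\eta}_1$; the finitely many excluded $\eta_2$ correspond precisely to the singular intersections of components of $D$.
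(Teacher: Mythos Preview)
Your Part 1 is largely fine, but the sentence ``the analogous observation handles $\eta_2\in D$'' is wrong as written: for $\eta_1\in D$ one indeed has $\hat\O_{\eta_1}[D^{-1}]=\hat\O_{\eta_0\eta_1}$, but for $\eta_2\in D$ the ring $\hat\O_{\eta_2}[D^{-1}]$ is strictly smaller than $\hat\O_{\eta_0\eta_2}$, so the hypothesis alone does not give elementariness over $\hat\O_{\eta_2}[D^{-1}][G]$. What works instead (and what the paper does) is: since $D$ contains all fibers over $p\mid |G|$, the order of $G$ is invertible in $\hat\O_{\eta_2}[D^{-1}]$, and one enlarges $D$ so that for every $\eta_2$ on $|D|$ some component of $D$ is regular at $\eta_2$; then Corollary~\ref{cor:Blochcor} plus Morita gives $\rSK_1(\hat\O_{\eta_2}[D^{-1}][G])=\{1\}$, and $\lambda_{\eta_0\eta_2}\in\SL\cap\GL(\hat\O_{\eta_2}[D^{-1}][G])=\rE(\hat\O_{\eta_2}[D^{-1}][G])$.

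The real gap is in Part 2, and it is a missing idea, not just missing details. Your (PK2) paragraph is a plan rather than an argument, and the tools you list (Lemma~\ref{lemmaApprox}, \S\ref{3.5.2}, the extension $\Hh$) do not by themselves produce what is needed; in fact $\Hh$ plays no role here. The obstruction is that for $\eta_1\in D$ your lift $\tilde\lambda_{\eta_0\eta_1}$ lives only in $\St(\hat\O_{\eta_0\eta_1}[G])$, over the \emph{completed} ring, and there is no map from this to $\St(\hat\O_{\eta_2}[\eta_1^{-1}][G])$ for varying $\eta_2$. The paper's key move is to first perform an \emph{elementary base change} at each $\eta_1\in D$: using Lemma~\ref{lemmaApprox} write $\lambda_{0\eta_1}=\nu'\cdot\mu'$ with $\nu'\in\SL(\O_{0\eta_1}[G])$ (uncompleted) and $\mu'\in\SL(\hat\O_{\eta_1}[G])$, then use surjectivity of $\rSK_1(\O_{\eta_1}[G])\to\rSK_1(\hat\O_{\eta_1}[G])$ (Corollary~\ref{SK1density}) to arrange that the new transition $\theta_{0\eta_1}\in\rE(K(Y)[G])$ and the base change $\mu_{\eta_1}\in\rE(\hat\O_{\eta_1}[G])$. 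Now $\theta_{0\eta_1}$ comes from $\rE(\O_Y(V)[G])$ for some affine open $V$, hence maps into $\rE(\hat\O_{\eta_2}[\eta_1^{-1}][G])$ for almost all $\eta_2<\eta_1$. One then checks $\theta_{\eta_1\eta_2}\in\rE(\hat\O_{\eta_2}[G])$ for almost all such $\eta_2$ by a case analysis; the hard case is $\eta_1$ vertical over $p\mid |G|$, where one needs the \emph{injectivity} of $\rSK_1(\hat\O_{\eta_2}[G])\to\rSK_1(\hat\O_{\eta_1\eta_2}[G])$ from Corollary~\ref{SK1cor2}(b). With all three lifts now in $\St(\hat\O_{\eta_2}[\eta_1^{-1}][G])$, one gets $z(\tilde\theta)\in\rK_2'(\mathbb{A}_{Y,012}[G])\cdot\rK_2(\mathbb{A}_{Y,12}[G])^\flat$; finally, the cocycle identity of Lemma~\ref{cocyclele} shows $z(\tilde\lambda)\cdot z(\tilde\theta)^{-1}$ is a product of terms in $\rK_2(\hat\O_{\eta_1\eta_2}[G])$ and $\rK_2(\hat\O_{0\eta_1}[G])$, supplying the missing factor in $\rK_2'(\mathbb{A}_{Y,01}[G])^\flat$.
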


\begin{proof}
Recall $Y$ is integral so there is only one $\eta_0$ which we denote by $0$.  We will first show   (1).
Since $\hat\O_{\eta_0}$ is the function field $K(Y)$ of $Y$,
there is a divisor $D\subset Y$ such that the $e^h_0$ give an $\O_U[G]$-basis of $\E_{|U}$, where $U$
is the open complement of $D$ in $Y$. For simplicity, we will omit the superscript $h$.
By increasing $D$, we may assume that: (i) $D$ contains all
the vertical fibers of $Y$ over primes that divide the order of the group $G$,
and, (ii)  for every closed point $\eta_2$ of $Y$ on the support of $D$, there is an irreducible component of $D$ which is regular at $\eta_2$. (This second condition is needed to apply Corollary  \ref{cor:Blochcor}.)
Now notice that if $\eta_i$, $i=1$, $2$,
lie in $U$, then both $e_0$ and $e_{\eta_i}$ are bases of $\E$ over 
$\what\O_{\eta_i}$ and so the transition $\lambda_{\eta_0\eta_i}$ lies in the intersection $\rE(\hat\O_{\eta_0\eta_i}[G])\cap \GL(\hat\O_{\eta_i}[G])$.
Since the order of the group $G$ is invertible here, we have by Morita equivalence  and Corollary \ref{cor:Blochcor},  $\rE(\hat\O_{\eta_0\eta_i}[G])={\rm SL}(\what\O_{\eta_0\eta_i}[G])$,
$\rE(\what\O_{\eta_i}[G])={\rm SL}(\what\O_{\eta_i}[G])$. Hence, $\lambda_{0\eta_i}$ lies in $\rE(\what\O_{\eta_i}[G])=\rE(\what\O_{\eta_i}[D^{-1}][G])$
as required. Now suppose that $\eta_i$, $i=1$, $2$ is on $D$. If $i=1$, there is nothing to show since $\what\O_{\eta_1}[D^{-1}]=\what\O_{0\eta_1}$.
Consider $\eta_2$ on $D$. Both $e_0$ and $e_{\eta_2}$ are bases of $\E$ over $\what\O_{\eta_2}[D^{-1}]$ and as above 
we have $\lambda_{0\eta_2}\in \rE(\what\O_{0\eta_2}[G])\cap \GL(\what\O_{\eta_2}[D^{-1}][G])$. The order of $G$ is invertible in the rings
$\what\O_{0\eta_2}$, $\what\O_{\eta_2}[D^{-1}]$ and we are assuming that the group algebra
  $\Q[G]$ splits.  Hence, we can apply Morita equivalence and Corollary \ref{cor:Blochcor}  to show that
the group rings $\what\O_{0\eta_2}[G]$, $\what\O_{\eta_2}[D^{-1}][G]$, have trivial $\rSK_1$. Hence, as above, the intersection
$\rE(\what\O_{0\eta_2}[G])\cap \GL(\what\O_{\eta_2}[D^{-1}][G])$
is equal to $\rE(\what\O_{\eta_2}[D^{-1}][G])$ and we have just shown (1).

We will now show (2). First, we will show that we can adjust the bases at codimension $1$ points $\eta_1$ to make sure that
the new transition matrices $\theta_{\eta_i\eta_j}$ have lifts $\ti\theta_{\eta_i\eta_j}$ for which   (2)
is satisfied. We will not change the bases
for $\eta_1$ off $D$. If $\eta_1$ is a component of $D$ we have
$$
\lambda_{0\eta_1}\in \rE(\hat\O_{0\eta_1}[G])=\SL(\hat\O_{0\eta_1}[G]).
$$
By Lemma \ref{lemmaApprox} we have
\begin{equation}\label{hope1}
{\rm SL}(\what\O_{0\eta_1}[G])= {\rm SL}(\O_{0\eta_1}[G])\cdot {\rm SL}(\what\O_{\eta_1}[G]).
\end{equation}
Hence, we can write
$
\lambda_{0\eta_1}=  \nu_{0\eta_1}'\cdot \mu'_{\eta_1}
$
with $\mu'_{\eta_1}\in \SL(\what\O_{\eta_1}[G])$, $\nu'_{0\eta_1}\in \SL(\O_{0\eta_1}[G])$.
Now use that by Corollary \ref{SK1density} the natural map
\begin{equation}\label{hope2}
\rSK_1(\O_{\eta_1}[G])\to  {\rSK}_1(\what\O_{\eta_1}[G])
\end{equation}
is surjective to find a matrix $g\in \SL(\O_{\eta_1}[G])$ such that $[g]=[\mu'_{\eta_1}]$ in 
${\rSK}_1(\what\O_{\eta_1}[G])$. Our new basis at $\eta_1$ is $f_{\eta_1}=\mu_{\eta_1}\cdot e_{\eta_1} $ where
 $\mu_{\eta_1}:=g^{-1}\cdot \mu'_{\eta_1}  $ is in $ {\rE}(\what\O_{\eta_1}[G])$.
Now we can write
$$
\lambda_{0\eta_1}=( \nu_{0\eta_1}'\cdot g)\cdot (g^{-1}\cdot \mu'_{\eta_1} )   = \theta_{0\eta_1}\cdot \mu_{\eta_1}
$$
and observe that $\theta_{0\eta_1}:=\nu_{0\eta_1}'\cdot g$
is  actually in $\SL(\O_{0\eta_1}[G])=\rE(\O_{0\eta_1}[G])=\rE(K(Y)[G])$.
We leave all the other bases $e_{\eta_i}$ unchanged, \emph{i.e.} we set
$f_{0}=e_0$, $f_{\eta_2}=e_{\eta_2}$, and $f_{\eta_1}=e_{\eta_1}$ if $\eta_1$ is not on $D$.
Hence, if $\eta_1$ is on $D$, we have:
\begin{equation}\label{transD}
\theta_{0\eta_1}= \lambda_{0\eta_1}\cdot \mu^{-1}_{\eta_1},\quad \theta_{0\eta_2}= \lambda_{0\eta_2},\quad 
\theta_{\eta_1\eta_2}= \mu_{\eta_1}\cdot \lambda_{ \eta_1\eta_2}.
\end{equation}
On the other hand, if $\eta_1$ is not on $D$,  the transitions do not change:
\begin{equation}
\theta_{0\eta_1}= \lambda_{0\eta_1} ,\quad \theta_{0\eta_2}= \lambda_{0\eta_2},\quad 
\theta_{\eta_1\eta_2}=  \lambda_{ \eta_1\eta_2}.
\end{equation}
The new bases $f_{\eta_i}$ give an equivalent  elementary structure; we can also see that they satisfy (1)
for the same divisor $D$.
We will now explain how to pick lifts $\tilde\theta_{\eta_i\eta_j}$ of $\theta_{\eta_i\eta_j}$ so that the corresponding cocycle $z(\tilde\theta)
=\prod_{(0,\eta_1,\eta_2)}\tilde\theta_{0\eta_2}\cdot (\tilde\theta_{\eta_1\eta_2})^{-1}\cdot (\tilde\theta_{0\eta_1})^{-1} $   lies in $ {\rK}'_2({\mathbb A}_{012}[G])
 \cdot  {\rK}_{2}( \mathbb{A}_{Y,12}[G])^\flat $.  

a) Suppose $\eta_1$ is not on $D$.
Since $\hat\O_{\eta_2}[D^{-1}]\subset \hat\O_{\eta_1\eta_2}$, and $\hat\O_{\eta_1}\subset 
\hat\O_{\eta_1\eta_2}$ we can view all transitions $\theta_{0\eta_1} $,
$\theta_{0\eta_2} $, $\theta_{\eta_1\eta_2} $
as elements of $ {\rE}(\what\O_{\eta_1\eta_2}[G])$;  we pick lifts $\tilde\theta_{\eta_i\eta_j}$ in $ {\St}(\what\O_{\eta_1\eta_2}[G])$
and we can see that at such triples $(0,\eta_1, \eta_2)$ we have
\begin{equation}\label{ztheta1}
z(\tilde\theta)_{(0,\eta_1,\eta_2)} \in  {\rK}_2(\hat\O_{\eta_1\eta_2}[G]).
\end{equation}

b) If $\eta_1$ is on $D$, then by the above the transition $\theta_{0\eta_1}$ is in $\rE(K(Y)[G])$.
Therefore, we can find  a divisor $D'\subset Y$ with $\eta_1$ not in $D'$ such that $V=Y-(D\cup D')$
is affine and $\theta_{0\eta_1}$ comes from an element of  $\rE(\O_Y(V)[G])$. 
We pick a lift $\tilde\theta_{0\eta_1}\in \St(\O_Y(V)[G])$. Notice that it follows that  $f_{\eta_1}=\theta_{0\eta_1}^{-1}\cdot f_0$
is a basis of $\E$ over $V$. Since $f_{\eta_1}$ is also a basis over the completion of the local ring $\what\O_{\eta_1}$ of $Y$ at $\eta_1$,
we conclude that $f_{\eta_1}$ is in fact a basis on a Zariski open $W\subset Y$ that contains both $V$ and $\eta_1$.

Now suppose that $\eta_2<\eta_1$
is away from the finite set of points of $\bar\eta_1$ that do not belong to $W$
 and the
singular points of $D$ on all fibers of $Y$. Then $\O_Y(V)\subset \hat\O_{\eta_2}[D^{-1}]=\what\O_{\eta_2}[\eta_1^{-1}]$
and so both $\theta_{0\eta_1}$ and $\theta_{0\eta_2} $ can be viewed as elements of $\rE(\what\O_{\eta_2}[\eta_1^{-1}][G])$.

Let us consider  $\theta_{\eta_1\eta_2}$ for such $\eta_2<\eta_1$. Since $\eta_2$ is in $W$, both $f_{\eta_1}$ and $f_{\eta_2}$ are bases at $\eta_2$ and therefore
$$
\theta_{\eta_1\eta_2}\in \GL(\what\O_{\eta_2}[G])\cap  {\rE}(\what\O_{\eta_1\eta_2}[G]).
$$

(i) If $\eta_1$ is horizontal, since $\eta_2$ is on $W$, $\eta_2$ does not lie
 on any vertical component of $D$. Therefore,   the order of $G$ is invertible in $\what\O_{\eta_2}$
 and $ {\rSK}_1(\hat\O_{\eta_2}[G])=(1)$ which gives $\SL(\what\O_{\eta_2}[G])=\rE(\what\O_{\eta_2}[G])$. 
 Hence, in this case $\theta_{\eta_1\eta_2}$ is in $\rE(\what\O_{\eta_2}[G])\subset \rE(\what\O_{\eta_2}[\eta_1^{-1}][G])$. 

(ii) We obtain the same conclusion, \emph{i.e.}  $\theta_{\eta_1\eta_2}$ is in $\rE(\what\O_{\eta_2}[G])$, if 
 $\eta_1$ is vertical but is away from the prime divisors of the order  of $G$.

 (iii) Suppose now that $\eta_1$ is a vertical fiber over such a prime divisor $p$ of $\#G$.
We will then check that $\theta_{\eta_1\eta_2}$, which by the above is in $\SL(\what\O_{\eta_2}[G])$, 
is actually in  ${\rE}(\what\O_{\eta_2}[G])$.
For this, it is enough to check
 that the class $[\theta_{\eta_1\eta_2}]$ in $ {\rSK}_1(\what\O_{\eta_2}[G])$
 is trivial. Since $\theta_{\eta_1\eta_2}$ is in  $ {\rE}(\what\O_{\eta_1\eta_2}[G])$,
 the image of $[\theta_{\eta_1\eta_2}]$ under
 \begin{equation}\label{sk1inj}
 {\rSK}_1(\hat\O_{\eta_2}[G])\to  {\rSK}_1(\hat\O_{\eta_1\eta_2}[G])
 \end{equation}
 is trivial. Since  $\hat\O_{\eta_2}\simeq W(k)\lps T\rps$, $\hat\O_{\eta_1\eta_2}\simeq W(k)\{\!\{T\}\!\}$
with $k$ the residue field of $\eta_2$,
by Corollary \ref{SK1cor2} (b), the map (\ref{sk1inj}) is injective. Hence, $[\theta_{\eta_1\eta_2}]=1$,
and therefore $\theta_{\eta_1\eta_2}$ is in ${\rE}(\what\O_{\eta_2}[G])$.

 To recap, we have that for $\eta_1$ on $D$ and for almost all $\eta_2<\eta_1$, 
$$
\theta_{0\eta_2} \in \rE(\hat\O_{\eta_2}[\eta_1^{-1}][G]),\quad \theta_{\eta_1\eta_2}\in {\rE}(\hat\O_{\eta_2}[G])
$$
(with $ \theta_{\eta_1\eta_2}\in {\rE}(\hat\O_{\eta_2}[G])$ if $\eta_1$ is horizontal or vertical away from $\#G$).

For these (almost all) $\eta_2<\eta_1$, pick lifts   $\tilde\theta_{0\eta_2} \in \St(\what\O_{\eta_2}[\eta_1^{-1}][G])$,
and $\tilde\theta_{\eta_1\eta_2}\in  {\St}(\what\O_{\eta_2}[G])$, in addition to
our lift $\tilde\theta_{0\eta_1}\in \St(\O_Y(V)[G])$. All these lifts map  to elements of $\St(\what\O_{\eta_2}[\eta_1^{-1}][G])$.
 Indeed, when $\eta_1$ is vertical over $p|\#G$,  ${\rE}(\hat\O_{\eta_2}[G])\subset \SL(\what\O_{\eta_2}[\eta_1^{-1}][G])=\rE(\what\O_{\eta_2}[\eta_1^{-1}][G])$
with the last equality following from Proposition \ref{sk_1vanish} and Morita equivalence.
For all the other finite set of  $\eta_2<\eta_1$  pick lifts $\tilde\theta_{\eta_i\eta_j}$  such that $\tilde\theta_{0\eta_2} \in \St(\what\O_{\eta_2}[D^{-1}][G])$, 
$\tilde \theta_{\eta_1\eta_2}\in  {\St}(\what\O_{\eta_1\eta_2}[G])$.
Notice that for almost all $\eta_2<\eta_1$, we have
\begin{equation}\label{ztheta2}
z(\tilde\theta)_{0,\eta_1,\eta_2}=\tilde\theta_{0\eta_2}\cdot (\tilde\theta_{\eta_1\eta_2})^{-1}\cdot (\tilde\theta_{0\eta_1})^{-1} \in \rK_2(\what\O_{\eta_2}[\eta_1^{-1}][G]).
\end{equation}
Now in view of (PK1) and (PK2) of  Definition \ref{defrestrictedproducts}, (\ref{ztheta1}) and (\ref{ztheta2}) implies that
 the cocycle $z(\ti\theta)$ given using these lifts $\tilde\theta_{\eta_i\eta_j}$ of $\theta_{\eta_i\eta_j}$
lies in the group
$$
 {\rK}_{2}^{\prime } ( \mathbb{A}_{Y,012}[G]) \cdot   {\rK}_{2}( \mathbb{A}_{Y,12}[G])^\flat .
$$

Finally, we want to compare $z(\ti\theta)$ with $z(\ti\lambda)$ which is given by   lifts $\ti\lambda_{\eta_i\eta_j}$ as in the statement
of (2). If $\eta_1$ is not on $D$
then $z(\tilde\theta)_{0,\eta_1,\eta_2}=z(\tilde\lambda)_{0,\eta_1,\eta_2}$.
Suppose that $\eta_1$ is on $D$. Pick  a lift  $\ti\mu_1$ of $\mu_1\in  {\rE}(\widehat{\O}_{\eta_1}[G])$ to $ {\St}(\widehat{\O}_{\eta_1}[G])$.
By Lemma \ref{cocyclele} we have the cocycle identity,  where for simplicity, we replace the subscripts $\eta_1$, $\eta_2$ by $1$ and $2$:
\begin{equation}
z(\tilde\theta)_{0,1,2}=z(\mu_{ 1}, \lambda_{12})^{-1}\cdot z(\tilde\lambda)_{0,1,2}\cdot z(\lambda_{0 1}\mu_{ 1}^{-1}, \mu_{1})
\end{equation}
where we set (recall $\theta_{01}=\lambda_{0 1}\cdot \mu_{ 1}^{-1}$, $\theta_{12}=\mu_1\cdot \lambda_{12}$)
\begin{eqnarray*}
z ( \mu _{ 1},\lambda _{12} ) &=&\ti\theta_{12 }\cdot ( \ti\lambda _{12} ) ^{-1} \cdot
 ( \ti\mu_{1} ) ^{-1} \  \\
z ( \lambda _{01}\mu _{1}^{-1},\mu _{1} ) &=&   \ti \lambda _{01} \cdot    ( \ti\mu _{1} ) ^{-1} \cdot
( \ti\theta _{01} ) ^{-1}.
\end{eqnarray*}
The first expression is in $ {\rK}_2(\widehat\O_{\eta_1\eta_2}[G])$  and the second in $ {\rK}_2(\widehat\O_{0\eta_1}[G])$.
Therefore, by the above, for almost $\eta_2<\eta_1$, $\eta_1$ on $D$, the cocycle $z(\ti\lambda)_{0,\eta_1,\eta_2}$ lies
in
$$
 \rK_2(\what\O_{\eta_2}[\eta_1^{-1}][G])\cdot  {\rK}_2(\widehat\O_{\eta_1\eta_2}[G] )\cdot \rK_2(\what\O_{0\eta_1}[G]).
$$
We can conclude that $z(\ti\lambda)$ lies in the group 
\begin{equation*}
{\rK}_{2}^{\prime } ( \mathbb{A}_{Y,012}[G]) \cdot   {\rK}_{2}( \mathbb{A}_{Y,12}[G])^\flat
\cdot  {\rK}_{2}^{\prime } ( \mathbb{A}_{Y,01}[G])^\flat\ .
\end{equation*}%
as desired. 
 \end{proof}

\subsubsection{}
 Assume that $(\E, \epsilon)$ is a $\O_Y[G]$-bundle with elementary structure
given by $\{e_{\eta_{i}}^{h}\}_{h}$ with  transition matrices $\lambda _{\eta _{i}\eta _{j}}$.
Let the element
$$
z(\ti\lambda):=\prod_{(\eta _{0},\eta _{1},\eta
_{2})}\tilde\lambda_{\eta_0\eta_2}\cdot (\tilde \lambda_{\eta_1\eta_2})^{-1}\cdot (\tilde \lambda_{\eta_0\eta_1})^{-1}\ 
\in \
 {\rK}_{2}^{\prime } ( \mathbb{A}_{Y,012}[G]) \cdot   {\rK}_{2}( \mathbb{A}_{Y,12}[G])^\flat
\cdot  {\rK}_{2}^{\prime } ( \mathbb{A}_{Y,01}[G])^\flat.  
$$
be as in (2) of Proposition \ref{propae} above.

\begin{definition}\label{defc2} 
Assume that $(\E, \epsilon)$ is a $\O_Y[G]$-bundle with elementary structure.
 We define the adelic  second Chern class $ c_2(\E, \epsilon)$  to be the class of $ z(\ti\lambda)$ in $\mathrm{CH}_{\mathbb{A}}^{2}(Y[G])$. We will often abuse notation and write simply $c_2(\E)$ instead of $c_2(\E, \epsilon)$.
\end{definition}  

The following result   implies that $c_2(\E, \epsilon)$ only depends on $(\E, \epsilon)$  
and 
is independent of   the other choices involved in the definition.

\begin{theorem}\label{thm25}
a) The class $z( \ti\lambda)$ in $\mathrm{CH}_{\mathbb{A}}^{2}( Y[ G] ) $ is independent of the  
choice  of lifts $\ti\lambda_{\eta_i\eta_j}$ used in the definition of $z$.

b)  Let $\{\mu _{\eta _{i}}\}$ be an elementary base change and put $%
\theta _{\eta _{i}\eta _{j}}=\mu _{\eta _{i}}\lambda _{\eta _{i}\eta
_{j}}\mu _{\eta _{j}}^{-1}$ for the  elementary transitions in the new basis. For any  choice of lifts
  $\ti\theta_{\eta_i\eta_j}$ that satisfy the requirements of Proposition \ref{propae} (2)
we have
\begin{equation*}
z ( \ti\lambda )\cdot 
z ( \ti \theta   )
^{-1}\in \prod\nolimits_{0\leq i<j\leq 2} \rK_{2}^{\prime } ( \mathbb{A}%
_{Y,ij} [ G]  )^\flat .
\end{equation*}
As a result, the class of $z(\ti\lambda)$ in $\mathrm{CH}_{\mathbb{A}}^{2}( Y[ G] ) $ is unchanged by an elementary base change.
\end{theorem}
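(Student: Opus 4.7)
The plan is to reduce both parts to the centrality of $\rK_2$ in the Steinberg extension (\ref{eq69}). For (a), I would compare two choices of lifts by writing $\tilde\lambda'_{\eta_i\eta_j}=k_{\eta_i\eta_j}\cdot \tilde\lambda_{\eta_i\eta_j}$ with $k_{\eta_i\eta_j}$ in the kernel, i.e.\ in $\rK_2$ of the appropriate ring. After enlarging the divisor $D$ of Proposition \ref{propae}(1) to contain the divisors attached to both choices, centrality of $\rK_2$ gives
$$z(\tilde\lambda')_{(\eta_0,\eta_1,\eta_2)} = k_{\eta_0\eta_2}\cdot k_{\eta_1\eta_2}^{-1}\cdot k_{\eta_0\eta_1}^{-1}\cdot z(\tilde\lambda)_{(\eta_0,\eta_1,\eta_2)}.$$
The substantive step is then verifying that each family $(k_{\eta_i\eta_j})$ defines an element of $\rK_2'(\mathbb{A}_{Y,ij}[G])$ in the sense of Definition \ref{defrestrictedproducts}: this is straightforward since $k_{\eta_0\eta_1}$ lies in $\rK_2(\hat\O_{\eta_1}[D^{-1}][G])$ and $\hat\O_{\eta_1}[D^{-1}]=\hat\O_{\eta_1}$ for the almost-all $\eta_1$ not on $D$, while $k_{\eta_0\eta_2}$ is covered uniformly by the single divisor $D$ as required in Definition \ref{defrestrictedproducts}(b3), and $k_{\eta_1\eta_2}$ has no restriction to satisfy. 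The combined correction therefore lies in the denominator of $\mathrm{CH}^2_\mathbb{A}(Y[G])$.

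For (b), my strategy is to produce a preferred family of lifts $\tilde\theta^{\rm new}$ of the new transitions $\theta_{\eta_i\eta_j}$ for which $z(\tilde\theta^{\rm new})$ coincides identically with $z(\tilde\lambda)$, and then invoke (a) applied to the new elementary structure. Since $\mu_{\eta_0}\in \rE(K(Y)[G])$ has entries regular on some open $V=Y\setminus D_{\mu_0}$, I lift it to $\tilde\mu_{\eta_0}\in \St(\O(V)[G])$; I lift $\mu_{\eta_i}$ to $\tilde\mu_{\eta_i}\in \St(\hat\O_{\eta_i}[G])$ for $i=1,2$; and enlarge the divisor to $D'=D\cup D_{\mu_0}$. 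Setting
$$\tilde\theta_{\eta_i\eta_j}^{\rm new} := \tilde\mu_{\eta_i}\cdot \tilde\lambda_{\eta_i\eta_j}\cdot \tilde\mu_{\eta_j}^{-1},$$
one checks via the embeddings $\O(V)\hookrightarrow \hat\O_{\eta_i}[D'^{-1}]$ that these lifts satisfy Proposition \ref{propae}(2) with divisor $D'$. The $\tilde\mu_{\eta_1}$ and $\tilde\mu_{\eta_2}$ factors telescope, and centrality of $z(\tilde\lambda)\in \rK_2$ yields
$$z(\tilde\theta^{\rm new})_{(\eta_0,\eta_1,\eta_2)}=\tilde\mu_{\eta_0}\cdot z(\tilde\lambda)_{(\eta_0,\eta_1,\eta_2)}\cdot \tilde\mu_{\eta_0}^{-1}=z(\tilde\lambda)_{(\eta_0,\eta_1,\eta_2)}.$$
For any other $\tilde\theta$ satisfying Proposition \ref{propae}(2), part (a) applied to the new elementary structure gives $z(\tilde\theta)\cdot z(\tilde\theta^{\rm new})^{-1}\in \prod_{i<j}\rK_2'(\mathbb{A}_{Y,ij}[G])^\flat$, and combining this with the above completes (b).

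The main obstacle I anticipate is the careful bookkeeping required to verify the restricted-product conditions of Definition \ref{defrestrictedproducts}, both in (a) and in (b), and to ensure that the various Steinberg-group lifts $\tilde\mu_{\eta_0},\tilde\mu_{\eta_1},\tilde\mu_{\eta_2},\tilde\lambda_{\eta_i\eta_j}$ can genuinely be multiplied inside a common Steinberg group over the appropriate multicompletion. This hinges on the explicit choice of divisor $D'$ containing $D$, $D_{\mu_0}$, and all vertical fibers over primes dividing $|G|$, and on the observation that the $\rK_2$-corrections are indexed only by pairs rather than triples, so the ``almost all $\eta_1$'' requirement of (PK1) only needs to be checked away from the finitely many components of $D'$. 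Once the lifts are placed in compatible rings, everything else follows cleanly from the centrality of $\rK_2$ and a cocycle manipulation parallel to that in Lemma \ref{cocyclele}.
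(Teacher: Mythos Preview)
Your proof of part (a) is essentially the same as the paper's: both write the two lifts as differing by central $\rK_2$-elements and verify that these lie in the appropriate restricted products.

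For part (b), your approach is genuinely different from the paper's, and arguably cleaner. The paper breaks the elementary base change $\mu_{\eta_i}$ into three separate steps, one for each codimension $i=0,1,2$, and at each step invokes the cocycle identity of Lemma~\ref{cocyclele} to express $z(\tilde\theta)\cdot z(\tilde\lambda)^{-1}$ as a product of explicit terms of the form $z(\mu_0,\lambda_{01})$, $z(\mu_0,\lambda_{02})$, $z(\lambda_{01}\mu_1^{-1},\mu_1)$, etc., each of which is then checked to lie in the correct $\rK_2'(\mathbb{A}_{Y,ij}[G])$. You instead construct conjugated lifts $\tilde\theta^{\rm new}_{\eta_i\eta_j}=\tilde\mu_{\eta_i}\tilde\lambda_{\eta_i\eta_j}\tilde\mu_{\eta_j}^{-1}$ all at once, observe that the $\tilde\mu$-factors telescope so that $z(\tilde\theta^{\rm new})=z(\tilde\lambda)$ identically, and then reduce everything to part (a). This bypasses Lemma~\ref{cocyclele} entirely and avoids the three-step case analysis. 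The paper's approach has the minor advantage of making the individual correction terms explicit, which is in the spirit of the more elaborate cocycle manipulations needed later in \S\ref{s:cocyclecal}; your approach has the virtue of isolating the essential point, namely that the class depends only on the elementary structure because conjugation by central lifts is trivial. Both are correct; the bookkeeping you flag as the main obstacle (placing all lifts in a common $\St(\hat\O_{\eta_1}[D'^{-1}][G])$, $\St(\hat\O_{\eta_1\eta_2}[G])$, $\St(\hat\O_{\eta_2}[D'^{-1}][G])$ with $D'=D\cup D_{\mu_0}$) goes through exactly as you indicate.
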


 {\it Proof of Theorem \ref{thm25} (a).}
We suppose that we have two choices of lifts $\ti\lambda_{\eta_i\eta_j}$, $\ti\lambda_{\eta_i\eta_j}'$;
we then have
\begin{eqnarray*}
z ( \ti\lambda )_{0,1,2} &=& \ti
\lambda _{02}  (\ti \lambda _{12}) ^{-1} 
( \ti \lambda _{01})^{-1} \\
z ( \ti\lambda' )_{0,1,2} &=& \ti
\lambda'_{02}  (\ti \lambda'_{12}) ^{-1} 
( \ti \lambda' _{01})^{-1}.
\end{eqnarray*}%
(For simplicity, here we drop the symbol $\eta$ from the subscripts.)
It then follows that 
$$
z(\ti\lambda) \cdot  z(\ti\lambda') ^{-1}=\ti
\lambda _{02}  (\ti \lambda _{12}) ^{-1} 
( \ti \lambda _{01})^{-1} \cdot    
  \ti \lambda' _{01}  \ti \lambda'_{12}  (\ti
\lambda'_{02})^{-1}.
$$
It will suffice to prove that for each $0\leq i<j\leq 2$  we have:
\begin{equation}\label{equlifts12}
\prod\nolimits_{\eta _{i}\eta _{j}}  (\ti \lambda_{ij} ) ^{-1} 
 \ti \lambda' _{ij}  \in  {\rK}_{2}^{\prime } ( \mathbb{A}_{Y,ij} [ G ] )^\flat .
\end{equation}
This easily follows   from the definitions (see Definition
\ref{defrestrictedproducts}) and Proposition \ref{propae}.
For example, consider $i=0$, $j=1$.
In this case,  we have $\lambda _{\eta _{0}\eta _{1}}\in
 {\rE} ( \what{\mathcal{O}}_{Y,\eta _{1}}[G]
 ) $ for almost all $\eta _{1}$; hence for such $\eta _{1}$ we have 
chosen $\ti\lambda _{\eta _{0}\eta _{1}}  $, $\ti\lambda'_{\eta _{0}\eta
_{1}}   \in  {\St} ( \what{\mathcal{O}}_{Y,\eta _{1}}[G])$   and therefore%
\begin{equation*}
\ti\lambda _{\eta _{0}\eta _{1}} \cdot (\ti\lambda'_{\eta _{0}\eta
_{1}} )^{-1}\in {\rK}_{2}(\what\O_{Y,\eta_1}[G]).
\end{equation*}

 
 {\it Proof of Theorem \ref{thm25} (b).}   By definition for each $i$
\begin{equation}\label{6.11}
\mu _{\eta _{i}}\in  {\rm E} ( \what{\mathcal{O}}_{Y,\eta _{i}}[ G ]  )
\end{equation}%
and so it certainly follows that $\theta _{\eta _{i}\eta _{j}}\in  {\rE%
}( \hat{\mathcal{O}}_{Y,\eta _{i}\eta _{j}}[ G] ) $, \emph{i.e.}    $\{\theta _{\eta _{i}\eta _{j}}\}$ 
are  elementary.
\smallskip

  Our base change can be performed in three steps: in each step
we alter the bases only at points in codimension $0$, $1$ and $2$,
by each of $\mu _{\eta _{0}},\mu _{\eta _{2}}$ or $\mu _{\eta
_{1}} $  respectively. For ease of notation we again write $\mu _{i}$, resp. $\lambda
_{ij}$, for $\mu _{\eta _{i}}$, resp. $\lambda _{\eta _{i}\eta _{j}}$ etc.
 
\smallskip

{\sl  Step 1.}    Here we just change the base at $\eta_0$ by $\mu_{\eta_0}$. 
The new transition matrices are $\theta_{01}=\mu_0\lambda_{01}$, $\theta_{02}=\mu_0\lambda_{02}$,
$\theta_{12}=\lambda_{12}$. We choose lifts $\ti\lambda_{ij}$, $\ti\theta_{ij}$ as in Proposition \ref{propae}.
We also choose a lifting 
$\ti\mu_0$ of $\mu_0$ as follows: there is a divisor $D_\mu\subset Y$ 
that contains all the vertical fibers at primes that divide the order of the group such that $U=Y-D_\mu$ is affine and
 $\mu_0$ lies in $ {\rE}(\what\O_{Y}(U)[G])$;
we  pick a lift $\ti\mu_0$ in $\St(\what\O_{Y}(U)[G])$.  As a result, 
for almost all $\eta_1$,  $\ti\mu_0$ maps to $ {\St}(\what\O_{Y,\eta_1}[G])$,
and for all $\eta_2$,  $\ti\mu_0$ maps to $ {\St}(\what\O_{Y,\eta_2}[D^{-1}][G])$.
By Lemma \ref{cocyclele} applied to the subset $\{\lambda_{01},\lambda_{12}=\theta_{12}, \mu_0, \lambda_{02}=\lambda_{01}\lambda_{12}, \theta_{01}=
\mu_0\lambda_{01} ,\theta_{02}=\mu_0\lambda_{02}=\mu_0\lambda_{01}\lambda_{12}\}$ of $ {\St}(\hat\O_{Y,0\eta_1\eta_2}[G])$ we have
\begin{eqnarray}\label{6.12}
z(\theta_{01}, \theta_{12})\ =\ z ( \mu _{0}\lambda _{01},\lambda _{12} ) &=&z ( \mu
_{0},\lambda _{01} ) ^{-1}z ( \mu _{0},\lambda _{01}\lambda
_{12} ) z ( \lambda _{01},\lambda _{12} ) \\
&=&z ( \mu _{0},\lambda _{01} ) ^{-1}z ( \mu _{0},\lambda
_{02} ) z ( \lambda _{01},\lambda _{12} )  \notag
\end{eqnarray}%
where
\begin{eqnarray*}
z ( \mu _{0},\lambda _{01} ) &=&\widetilde { \mu
_{0}\lambda _{01}}\cdot (\ti \lambda _{01} ) ^{-1} 
(\ti \mu _{0} )^{-1}\in  \rK_{2} ( \hat{%
\mathcal{O}}_{Y,\eta _{1}} [ G ]  ), \\
z ( \mu _{0},\lambda _{02} ) &=&\widetilde{\mu
_{0}\lambda _{02}}\cdot  ( \ti\lambda _{02} ) ^{-1} 
 (\ti \mu _{0} ) ^{-1}\in  \rK_{2} ( \hat{\mathcal{O}}_{Y,\eta _{0}\eta _{2}} [ G ]  ) .\
\end{eqnarray*}%
 
Notice that from Proposition \ref{propae} (a) and our choice of $\ti\mu_0$,
we can see that $\prod_{\eta _{2}}z (
\mu _{0},\lambda _{02} )$ is in $ \rK_{2}^{\prime } ( \mathbb{A}_{Y,02}%
 [ G ]  )$. Similarly,   $\prod_{\eta _{1}}z ( \mu
_{0}, \lambda _{01} )$ is in  $\ \rK_{2}^{\prime } ( \mathbb{A}_{Y,01} [ G 
 ] ) $.  The result then follows
since we have shown 
\begin{equation*}
z (\ti \lambda )\cdot  z (\ti\theta ) ^{-1}\in \prod\nolimits_{0\leq i<j\leq
2} \rK_{2}^{\prime } ( \mathbb{A}_{Y,ij} [ G ]  ) .
\end{equation*}

{\sl Step 2.}   We now change the bases at $\eta_1$ by $\mu_{\eta_1}$. 
The new transition matrices are given by $\theta_{01}= \lambda_{01}  \mu_1^{-1}$, $\theta_{02}= \lambda_{02}$,
$\theta_{12}=\mu_1\lambda_{12}$. We choose a lift $\ti\mu_{1}\in {\St}(\hat\O_{ Y, \eta_1}[G])$. 
 By Lemma \ref{cocyclele} we have 
\begin{equation}
z(\theta_{01}, \theta_{12})\ =\ z ( \lambda _{01}\mu _{1}^{-1},\mu _{1}\lambda _{12} ) =z ( \mu
_{1},\lambda _{12} ) ^{-1}z ( \lambda _{01},\lambda _{12} )
z ( \lambda _{01}\mu _{1}^{-1},\mu _{1} )
\end{equation} 
where  
\begin{eqnarray*}
z ( \mu _{1},\lambda _{12} ) &=&\widetilde{ \mu
_{1}\lambda _{12} } ( \ti\lambda _{12} ) ^{-1} 
 ( \ti\mu _{1} ) ^{-1}\in  \rK_{2} ( \what{%
\mathcal{O}}_{Y,\eta _{1}\eta _{2}} [ G ]  ) \ \text{\ } \\
z ( \lambda _{01}\mu _{1}^{-1},\mu _{1} ) &=&   \ti \lambda _{01}     ( \ti\mu _{1} ) ^{-1} 
( \widetilde{\lambda _{01}\mu _{1}^{-1}} ) ^{-1}\in
\rK_{2} ( \what{\mathcal{O}}_{Y,\eta _{0}\eta _{1}} [ G ]
 ) .
\end{eqnarray*}%
 By hypothesis,
  $\lambda _{01}$ is in  $ {\rE} ( \hat{\mathcal{O}}_{Y,\eta _{1}}[D^{-1}][ G ]  ) $, and so $
\prod_{\eta _{1}}z( \lambda _{01}\mu _{1}^{-1},\mu
_{1})$ is in  $  \rK_{2}^{\prime } ( \mathbb{A}_{Y,01} [ G ]
 )$.  The result then follows.
  
{\sl Step 3.}    We now change the bases at $\eta_2$ by $\mu^{-1}_{\eta_2}$
(the inverse is for ease in the notation below). 
The new transition matrices are then given by $\theta_{01}=\lambda_{01}$, 
$\theta_{12}=\lambda_{12}\mu_2$, $\theta_{02}=\lambda_{02}\mu_2$.
In addition to the lifts $\ti\lambda_{ij}$, $\ti\theta_{ij}$ we choose lifts $\ti\mu_{\eta_2}\in  {\St}(\hat\O_{ Y, \eta_2}[G])$. 
 By Lemma \ref{cocyclele} we have 
\begin{eqnarray}\label{6.13}
z(\theta_{01}, \theta_{12})\ =\ z ( \lambda _{01},\lambda _{12}\mu _{2} ) &=&z ( \lambda
_{12},\mu _{2} ) ^{-1}z ( \lambda _{01}\lambda _{12},\mu
_{2} ) z ( \lambda _{01},\lambda _{12} ) \\
&=&z ( \lambda _{12},\mu _{2} ) ^{-1}z ( \lambda _{02},\mu
_{2} ) z ( \lambda _{01},\lambda _{12} )  \notag
\end{eqnarray} 
where 
\begin{eqnarray*}
z ( \lambda _{12},\mu _{2} ) &=&\widetilde{  \lambda
_{12}\mu _{2}  }\  ( \ti\mu _{2} ) ^{-1}  ( \ti \lambda _{12} ) ^{-1}\in \rK_{2} ( \hat{\mathcal{O}}%
_{Y,\eta _{1}\eta _{2}} [ G ]  ) \ \text{\ } \\
z ( \lambda _{02},\mu _{2} ) &=&\widetilde{\lambda
_{02}\mu _{2} }\   ( \ti \mu _{2} ) ^{-1} (\ti\lambda _{02} ) ^{-1}\in \rK_{2} ( \hat{\mathcal{O}}%
_{Y,\eta _{0}\eta _{2}} [ G  ) .
\end{eqnarray*}%
 Since $\ti\lambda_{02}$ and $\ti\theta_{02}=\widetilde{\lambda
_{02}\mu _{2} }$ are in ${\St}(\what\O_{Y, \eta_2}[D^{-1}][G])$, for some divisor $D$,
the last expression   contributes
 a term $\prod_{\eta _{2}}z( \lambda _{02}, \mu_{2})$  in     $\rK_{2}^{\prime
} ( \mathbb{A}_{Y,02} [ G ] )$ and the result follows.
 This completes the proof of Theorem \ref{thm25}. \endproof

\bigskip

\section{Equivariant Euler characteristics and the Riemann-Roch theorem}

\setcounter{equation}{0}

We can now state our main result concerning equivariant coherent Euler characteristics.
We refer the reader to \cite{ChinburgTameAnnals} or \cite{ChinburgErez} for the construction of the projective equivariant  Euler characteristic. See also the beginning of the introduction.  Recall that we can identify the locally free class group ${\rm Cl}(\Z[G])$ with both the kernel $\rK^{\rm red}_0(\Z[G])$ of the rank map and with the quotient
$\rK_0(\Z[G])/\langle \Z[G]\rangle$. We will denote by $\bar\chi^P(Y, \E)$ the image 
of the projective equivariant  Euler characteristic $\chi^P(Y, \E)$ in ${\rm Cl}(\Z[G])=\rK_0(\Z[G])/\langle \Z[G]\rangle$.

\begin{theorem}
\label{thm:trystate}  Let $Y$ be a regular flat projective scheme over $\Spec(\Z)$
of dimension $2$, with structure morphism $h:Y \to S=\mathrm{Spec}(\Z)$. 
Assume in addition that $Y$ satisfies assumption (H) and that $\Q[G]$ splits as in (\ref{splitgroupalgebraIntro}).
Let $\E$  be an $\O_Y[G]$-bundle with an  elementary structure $\epsilon$
in the sense of Definition \ref{defadproel}.    
Then
\begin{equation}
\label{eq:theresult}
\bar\chi^P(Y, \E)  = -h_{*}( c_2(\E, \epsilon)   )
\end{equation}
 in  ${\rm Cl}(\Z[G])=\Kr_0^{\rm red}(\Z[G])={\rm CH}^1_{\Bbb A}(S[G])$.
 \end{theorem}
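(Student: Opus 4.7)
The plan is to reduce the general case to $Y = \PP^1_\Z$ by pushforward along a finite flat morphism, following the strategy announced in the introduction. First, invoke B.~Green's theorem to obtain a finite flat morphism $\pi: Y \to \PP^1 = \PP^1_\Z$. Composing, $h = h' \circ \pi$ where $h': \PP^1 \to S = \Spec(\Z)$, so it suffices to show two identities: (i) an adelic Riemann--Roch identity along $\pi$ of the form
\[
\pi_*(c_2(\E,\epsilon)) = c_2(\pi_*\E \oplus \mathcal V,\epsilon') - c_2(\mathcal V,\epsilon_{\mathcal V})
\]
in $\mathrm{CH}^2_{\Bbb A}(\PP^1[G])$ for a suitably chosen auxiliary $\O_{\PP^1}[G]$-bundle $\mathcal V$, and (ii) the Riemann--Roch identity for $\PP^1$, namely $\bar\chi^P(\PP^1, \F) = -h'_*(c_2(\F, \epsilon''))$ for every $\O_{\PP^1}[G]$-bundle $\F$ with elementary structure. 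The auxiliary bundle $\mathcal V$ is forced on us because $\pi_*\E$ need not itself carry an elementary structure: one uses Proposition \ref{prop:finitefree} to spread out the elementary adelic bases of $\E$ via $\pi$, and then trivializes the resulting obstruction by stabilizing with an $\O_{\PP^1}[G]$-bundle $\mathcal V$ of the form induced from a bundle on $\PP^1$ with a free $G$-action; the additivity of Euler characteristics reduces the problem to the pair $(\pi_*\E \oplus \mathcal V, \mathcal V)$ on $\PP^1$.

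For step (i), the key point is to write down, at each Parshin chain, explicit Steinberg cocycles whose classes compute both $c_2(\E)$ and $c_2(\pi_*\E \oplus \mathcal V)$, and to verify that the pushforward map $\pi_*$ between $\K_2$-groups of multicompletions (defined via Proposition \ref{prop:finitefree}, which identifies the multicompletions upstairs as free modules over those downstairs) sends the first cocycle to the second modulo the denominator defining $\mathrm{CH}^2_{\Bbb A}(\PP^1[G])$. The compatibility of $\pi_*$ with the tame symbol and Kato's residue map, together with Theorems \ref{thm17}, \ref{thm18}, \ref{thm20} and Proposition \ref{prop22}, will then give $h_* = h'_* \circ \pi_*$ on the level of adelic Chow groups.

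For step (ii), apply the equivariant Horrocks theorem to assume that every rank $n$ bundle $\F$ on $\PP^1$ with trivial determinant (hence vanishing $c_1$) and trivial along the section $(1{:}1)$ is glued from trivial bundles on $\Spec(\Z[t])$ and $\Spec(\Z[t^{-1}])$ by a single transition matrix $g \in \GL_n(\Z[t, t^{-1}][G])$, and that when $\F$ has degree $0$ the matrix $g$ lies in $\GL'_n(\Q[t,t^{-1}][G])$ and in $\GL'_n(\Z_p[t,t^{-1}][G])$ for every $p$. The determinant of cohomology $\delta(\F) = \det R\Gamma(\PP^1,\F)$ admits, after base change, trivializations $\alpha_\Q$, $\alpha_p$ as elements of the virtual categories $V(\Q[G])$, $V(\Z_p[G])$; the pairs $(g,\alpha_\Q)$ and $(g,\alpha_p)$ lift $g$ to the central extensions $\Hh_n(\Q\llps t\lrps[G])$ and $\hat\Hh_n(\Z_p{\ldb t\rdb}[G])$ constructed in \S\ref{3d}, \S\ref{3.5}. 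Comparing them in $\Hh_n(\Q_p{\ldb t\rdb}[G])$ yields $\alpha_p^{-1}\alpha_\Q \in \K_1(\Q_p[G])$ whose class in $\prod'_p \K_1(\Q_p[G])/(\K_1(\Q[G])\prod_p \K_1(\Z_p[G]))^\flat \cong \mathrm{Cl}(\Z[G])$ represents $\bar\chi^P(\PP^1,\F)$ up to sign. The local Riemann--Roch Propositions \ref{tameprop} and \ref{prop13} identify the boundary maps $\partial$, $\hat\partial$ with the (inverses of the) tame symbol and Kato residue, so the cocycle obtained from lifts of $g$ to the Steinberg groups gives, after pushdown $h'_*$, precisely the class $\alpha_p^{-1}\alpha_\Q$ above; this yields $-h'_*(c_2(\F, \epsilon'')) = \bar\chi^P(\PP^1,\F)$.

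The main obstacle, and the technically hardest part, is assembling the cocycle for $c_2(\F)$ out of the single Horrocks transition matrix $g$: since $g$ itself need not be elementary, one must first perform an equivariant change of basis, using Lemmas \ref{lemmaApprox}, \ref{lemmaApprox*} and the $\rSK_1$-vanishing results of \S\ref{s:SK_1} (Corollaries \ref{SK1density}, \ref{SK1cor1}, \ref{SK1cor2}, Proposition \ref{sk_1vanish}), to reach elementary adelic transitions, and then keep careful track of how the Steinberg lifts behave under that change, using the two-cocycle calculus of Lemma \ref{cocyclele} and the splittings of the central extensions $\Hh$ over $\GL_n(R\langle\!\langle t^{-1}\rangle\!\rangle[G])$ and $\GL_n(F\{t^{-1}\}[G])$ recorded in \S\ref{split} and \S\ref{3e3}. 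Corollary \ref{correc} then guarantees the vanishing of the unwanted contributions from $\eta_2 \neq 2_p$ on $\PP^1$, leaving exactly the desired pairing $\alpha_p^{-1}\alpha_\Q$.
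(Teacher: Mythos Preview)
Your sketch follows the paper's strategy closely and identifies most of the key ingredients correctly, including the reduction via Green's theorem, the Horrocks description on $\PP^1$, the role of the central extensions $\Hh$ and $\hat\Hh$ and their boundary maps, and the $\rSK_1$ machinery needed to produce elementary transitions. The $\PP^1$ part of your outline is essentially what the paper does in Theorem~\ref{proHorr}.

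There is one imprecision in your reduction step that is worth flagging, because the actual construction is more delicate than ``stabilize by a single induced bundle $\mathcal V$ with elementary structure''. In the paper the auxiliary bundle is the \emph{dual} $\V = \mathcal{H}om_{\O_{\PP^1}}(\pi_*\O_Y[G]^n,\O_{\PP^1})$, and $\V$ by itself does \emph{not} have an elementary structure (its transition matrices $\Lambda_{ij}$ have nontrivial determinant). What does have one is $\V'\oplus\V$ with $\V' = \pi_*\O_Y[G]^n$, and similarly $\pi_*\E\oplus\V$: the mechanism is the block matrix
\[
\begin{pmatrix} A & 0\\ 0 & (A^{t})^{-1}\end{pmatrix}
\]
which is elementary whenever $A$ has commutative entries and $\rSK_1$ vanishes (Lemma~\ref{lem:invol}, Proposition~\ref{prop:basec}). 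Accordingly the correct identity along $\pi$ (Proposition~\ref{prop:state}) is
\[
\pi_*(c_2(\E)) \;=\; c_2(\pi_*\E\oplus\V) \;-\; c_2(\V'\oplus\V),
\]
with two \emph{different} bundles on the right, not $c_2(\pi_*\E\oplus\mathcal V)-c_2(\mathcal V)$ for a single $\mathcal V$. One then applies the $\PP^1$ Riemann--Roch to both $\pi_*\E\oplus\V$ and $\V'\oplus\V$ and uses $\bar\chi^P(\PP^1,\V)=\bar\chi^P(\PP^1,\V')=0$ (both are induced from the trivial subgroup) to conclude. This transpose trick is the missing idea in your sketch; without it one cannot simultaneously arrange that both the stabilized bundle and the correction term carry elementary structures.
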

 
Suppose that $\E$ has rank $n$.
Since $\E$ has elementary structure,  $c_1(\E)$ is trivial and the usual Riemann-Roch
theorem for the generic fiber $Y_\Q$ shows that the rank of  $\chi^P(Y, \E)$ is equal to that
of $\chi^P(Y, \O_Y[G]^n)$.  Since $\chi^P(Y, \O_Y[G]^n)$ is the class of a free $\Z[G]$-module, the
above formulation of the main result is equivalent to the one in the introduction. 
In \S \ref{s:redP1} we reduce the proof of this result to the case $Y =\mathbb{P}^1= \mathbb{P}^1_{\Z}$.
 When $Y = \mathbb{P}^1$, a stronger result is proved in \S \ref{s:overp1}.

\bigskip

\section{The proof of the theorem; reduction to the case of ${\Bbb P}^1_\Z$.}
\label{s:redP1}

\setcounter{equation}{0}

Throughout this section we suppose that $h:Y\to \mathrm{Spec}(\mathbb{Z})$, $G$  and $\E$ are as in Theorem \ref{thm:trystate}.
By a result of B. Green (see \cite{GreenSkolem} and \cite{GreenP1} but also \cite{FFcapacity}), there is a finite flat morphism $\pi:Y \to \mathbb{P}^1=\mathbb{P}^1_{\mathbb{Z}}$. 
Let $f:\mathbb{P}^1 \to \mathrm{Spec}(\Z)$ be the structure morphism, so that $h = f \circ \pi$.  
Let $d$ be the degree of $\pi$.   We can view $\V = {\cal Hom}_{\O_{\mathbb{P}^1}}(\pi_* \O_Y[G]^n,\O_{\mathbb{P}^1})$ 
and $\V' = \pi_* \O_Y[G]^n$ as locally free $\O_{\mathbb{P}^1}[G]$-modules of rank $nd$. Parts (iii) and (iv) of
the following result imply Theorem \ref{thm:trystate}. In what follows, for the sake of simplicity, we will omit the 
notation of the elementary structure and simply write $c_2(\E)$ instead of $c_2(\E, \epsilon)$.

\begin{theorem}
\label{thm:trystate2} Let $\E$ be an  $\O_Y[G]$-bundle with elementary structure.   
\begin{enumerate}
\item[i)] The bundles $\pi_* (\E) \oplus \V$  and $\V' \oplus \V$ are locally free and have  elementary structures as $\O_{\mathbb{P}^1}[G]$-bundles.  They therefore have well defined second Chern classes $c_2(\pi_* \E \oplus \V)$ and $c_2(\V' \oplus \V)$ in 
${\rm CH}^2_{\Bbb A}(\mathbb{P}^1[G])$.

\item[ii)]  There is a push down map $\pi_*:{\rm CH}^2_{\Bbb A}(Y[G]) \to {\rm CH}^2_{\Bbb A}(\mathbb{P}^1[G])$
   induced by $\pi:Y \to \mathbb{P}^1$.  One has
   \begin{equation}
   \label{eq:soso}
   c_2(\pi_* (\E) \oplus \V) = \pi_*(c_2(\E)) + c_2(\V' \oplus \V).
   \end{equation}
   
\item[iii)]  There are equalities of equivariant Euler characteristics 
 \begin{equation}
 \label{eq:eulerpush}
  \bar\chi^P(Y, \E) = \bar\chi^P(\PP^1, \pi_* \E) =  \bar\chi^P(\PP^1, \pi_* \E \oplus \V)
    \end{equation}
  in  ${\rm Cl}(\Z[G])=\Kr_0^{\rm red}(\Z[G])={\rm CH}^1_{\Bbb A}(S[G])$.
  
  \item[iv)] We have 
 \begin{eqnarray}
 \label{eq:eulerpush2}
 \bar\chi^P(\PP^1, \pi_* \E \oplus \V)  &=& -f_*(c_2(\pi_* \E \oplus \V))  =  -f_*(\pi_*(c_2(\E)) + c_2(\V' \oplus \V) )\nonumber\\
 & =&  -f_*(\pi_*(c_2(\E))  ) = -h_*(c_2(\E)).
  \end{eqnarray}
  \end{enumerate}
  \end{theorem}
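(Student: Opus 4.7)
The plan is to address parts (i)--(iv) in the order stated, using the finite flat morphism $\pi: Y \to \mathbb{P}^1$ of degree $d$ to reduce everything to the projective line. The main leverage comes from Proposition \ref{prop:finitefree}, which for any Parshin chain $\xi$ on $\mathbb{P}^1$ identifies $\O_Y \otimes_{\O_{\mathbb{P}^1}} \hat\O_{\mathbb{P}^1,\xi}$ with $\bigoplus_{\eta' \in \pi^{-1}(\xi)} \hat\O_{Y,\eta'}$ as a free module of rank $d$, so that completions commute with $\pi_*$ on every Parshin stratum.

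For (i), local freeness of $\pi_*(\E) \oplus \V$ and $\V' \oplus \V$ as $\O_{\mathbb{P}^1}[G]$-bundles follows from flatness of $\pi$ together with the stalk-by-stalk decomposition above: each $(\pi_*\E)\otimes_{\O_{\mathbb{P}^1}}\hat\O_{\mathbb{P}^1,\xi_i}[G]$ is a free $\hat\O_{\mathbb{P}^1,\xi_i}[G]$-module of rank $nd$. The purpose of adding $\V$ is to stabilize the class of $\V' = \pi_*\O_Y[G]^n$: on each Parshin pair the transition matrices for $\pi_*\E$ obtained by pushing forward the elementary bases $\{e_{\eta_i'}\}$ of $\E$ are block diagonal, so the matrix for $\pi_*(\E) \oplus \V$ differs from that for $\V' \oplus \V$ by a block of the form $\mathrm{diag}(\lambda, 1)$; the fact that $\V \oplus \V'$ itself can be given an elementary structure then lets us conclude the same for $\pi_*(\E) \oplus \V$. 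Producing this elementary structure on $\V \oplus \V'$ requires an equivariant analogue of the $M \oplus M^\vee$ trick, together with the $\rSK_1$-vanishing results of \S \ref{s:SK_1}; concretely I would choose local bases for $\V'$ together with the dual bases for $\V$, so that transition matrices assemble into blocks of the form $\bigl(\begin{smallmatrix} A & 0 \\ 0 & (A^\vee)^{-1}\end{smallmatrix}\bigr)$ which are then elementary up to the vanishing of the relevant $\mathrm{SK}_1$ groups.

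For (ii), I would construct $\pi_*:{\rm CH}^2_{\Bbb A}(Y[G]) \to {\rm CH}^2_{\Bbb A}(\mathbb{P}^1[G])$ by assembling the local norm maps
\begin{equation*}
\pi_{*,\eta'}: \rK_2(\hat\O_{Y,\eta'_0\eta'_1\eta'_2}[G]) \to \rK_2(\hat\O_{\mathbb{P}^1,\xi_0\xi_1\xi_2}[G])
\end{equation*}
for $\eta'\in\pi^{-1}(\xi)$, viewing the target group-ring as a subring via Proposition \ref{prop:finitefree}. Well-definedness requires that these norms respect the restricted-product conditions (PK1), (PK2) and map the denominators into denominators; this parallels the construction of $f_*$ in \S \ref{sec4.1} and can be reduced via Morita equivalence and the splitting (\ref{splitgroupalgebraIntro}) to the case $G = 1$, where it follows from the functoriality of the tame symbol and Kato's residue symbol in finite extensions as in \cite{LiuKato}. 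The identity (\ref{eq:soso}) is the heart of the reduction: it compares, cocycle-by-cocycle, the Steinberg lifts $\tilde\lambda_{\eta_i'\eta_j'}$ used to define $c_2(\E)$ with block-diagonal Steinberg lifts for $\pi_*\E \oplus \V$ on $\mathbb{P}^1$. This comparison, essentially Proposition \ref{prop:state}, is the main obstacle and is where the most work is needed; the $\V$ summand appears precisely to absorb the ambiguity coming from the choice of bases for $\pi_*\O_Y[G]^n$ relative to $\O_{\mathbb{P}^1}[G]^{nd}$.

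For (iii) and (iv), the equality $\bar\chi^P(Y,\E) = \bar\chi^P(\mathbb{P}^1, \pi_*\E)$ is immediate because $\pi$ is affine, so $R\pi_* = \pi_*$ and the two \v{C}ech complexes computing equivariant cohomology are quasi-isomorphic; the further equality with $\bar\chi^P(\mathbb{P}^1, \pi_*\E\oplus\V)$ follows since $\V$ is stably free over $\mathbb{P}^1$ as an $\O_{\mathbb{P}^1}[G]$-bundle (after combining with $\V'$), so its contribution is absorbed by the reduction modulo free classes that defines $\bar\chi^P$. For (iv), the first equality is the Riemann--Roch identity for $\mathbb{P}^1$ that is the main result of \S \ref{s:overp1}; the second is a direct substitution of (\ref{eq:soso}); the third uses that $f_*(c_2(\V'\oplus\V)) = 0$, which follows from $\bar\chi^P(\mathbb{P}^1,\V'\oplus\V) = 0$ by applying the Riemann--Roch identity on $\mathbb{P}^1$ to the stably free bundle $\V'\oplus\V$; and the final equality $f_* \circ \pi_* = h_*$ is functoriality of pushdown, reducing via Morita equivalence to the classical compatibility of the norm/residue symbols with the composition of finite morphisms on two-dimensional local fields.
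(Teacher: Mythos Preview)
Your proposal is essentially correct and follows the same architecture as the paper: part~(i) via the block transition matrices $\bigl(\begin{smallmatrix} A & 0 \\ 0 & (A^t)^{-1}\end{smallmatrix}\bigr)$ together with $\rSK_1$-vanishing (this is exactly Lemma~\ref{lem:invol} and Proposition~\ref{prop:basec}); part~(ii) via the transfer/norm on $\rK_2$ and the cocycle comparison you identify as Proposition~\ref{prop:state}; and parts~(iii)--(iv) via affineness of $\pi$ and the Riemann--Roch theorem over $\mathbb{P}^1$ (Theorem~\ref{proHorr}).

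One correction is needed in your argument for~(iii). You assert that $\V$ is stably free over $\mathbb{P}^1$ as an $\O_{\mathbb{P}^1}[G]$-bundle (after combining with $\V'$); this is not established and is not how the paper proceeds. Having an elementary structure means the transition matrices are elementary, which forces $c_1=0$ but does not make the bundle stably free. The paper instead observes that $\V$ and $\V'=\pi_*\O_Y[G]^n$ are \emph{induced} from the trivial subgroup of $G$ (the $G$-action is purely through the group-ring factor), so their Zariski cohomology groups are free $\Z[G]$-modules and $\bar\chi^P(\mathbb{P}^1,\V)=\bar\chi^P(\mathbb{P}^1,\V')=0$ in $\mathrm{Cl}(\Z[G])$. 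You should also note that applying Theorem~\ref{proHorr} in~(iv) requires checking its hypotheses~(a) and~(b); hypothesis~(b), that the restriction of $\pi_*\E\oplus\V$ to the section $t=1$ is stably free, is the last statement of Proposition~\ref{prop:basec}(ii) and follows because an elementary structure on a bundle over $\Spec(\Z)$ forces triviality in $\mathrm{Cl}(\Z[G])$.
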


Notice that, in view of (i), the first equality in part (iv) above follows from the case $Y=\mathbb{P}^1$ of Theorem 
\ref{thm:trystate}.

\subsection{Constructing bundles with  elementary structures}    

\begin{lemma}
\label{lem:invol}
Suppose $R$ is an arbitrary ring.  There is an order two automorphism $\sigma$ of  $\rK_1(R) = \GL(R)/\rE(R) = \GL(R)^{\rm ab}$
induced by the anti-involution $\sigma:A \to A^t$ on $\GL(R)$, where $A^t$ is the transpose of the
matrix $A$.  This involution is trivial if and only if for all $A \in \GL(R)$, the block matrix
\begin{equation}
\label{eq:Aist}
 \begin{pmatrix}A &0\\0&(A^t)^{-1}\end{pmatrix}  
\end{equation}
lies in $\rE(R)$.  This is the case, in particular, if $R$ is commutative and $\rSK_1(R)$ is trivial.
\end{lemma}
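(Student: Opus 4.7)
The plan is to establish the three assertions in sequence, with each reducing to a short verification.

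First I would note that transposition $A \mapsto A^t$ is an anti-automorphism of $\GL(R)$: $(AB)^t = B^t A^t$ and $(A^t)^{-1} = (A^{-1})^t$. Since transposition sends the elementary matrix $e_{ij}(a)$ to $e_{ji}(a)$, it preserves $\rE(R)$, and hence descends to an anti-automorphism of $\GL(R)/\rE(R) = \rK_1(R)$. Because $\rK_1(R)$ is abelian (by the Whitehead Lemma $\rE(R) = [\GL(R),\GL(R)]$), every anti-automorphism is an automorphism, so $\sigma$ is a group automorphism of $\rK_1(R)$. The relation $\sigma^2 = \mathrm{id}$ is immediate from $(A^t)^t = A$.

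Next, for the characterization, I would observe that for any $A \in \GL_n(R) \subset \GL(R)$, the class in $\rK_1(R)$ of the block matrix in (\ref{eq:Aist}) equals $[A] + [(A^t)^{-1}] = [A] - [A^t] = [A] - \sigma([A])$. Since $\rK_1(R) = \GL(R)/\rE(R)$, this class is trivial precisely when the block matrix lies in $\rE(R)$. Letting $A$ range over $\GL(R)$, this shows that $\sigma$ acts as the identity on $\rK_1(R)$ if and only if every such block matrix belongs to $\rE(R)$.

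Finally, for the commutative case, I would invoke the determinant map $\det: \rK_1(R) \to R^\times$. When $R$ is commutative, this is well defined, and its kernel is by definition $\rSK_1(R)$. Since $\det(A^t) = \det(A)$ for any square matrix over a commutative ring, the automorphism $\sigma$ becomes the identity after composition with $\det$. If $\rSK_1(R) = \{1\}$, then $\det$ is injective on $\rK_1(R)$, and therefore $\sigma$ is the identity, which by the previous step gives the block-matrix conclusion.

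I do not anticipate any serious obstacles: the only subtle point is checking that an anti-homomorphism on an abelian group automatically becomes a homomorphism, which is immediate. The rest amounts to the additivity of classes under block direct sum in $\rK_1$ and the invariance of the determinant under transposition.
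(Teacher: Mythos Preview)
Your proposal is correct and follows essentially the same approach as the paper. The only cosmetic difference is that you show $\rE(R)$ is stable under transposition by noting $e_{ij}(a)^t = e_{ji}(a)$, whereas the paper argues via commutators (using the Whitehead lemma that $\rE(R) = [\GL(R),\GL(R)]$); your characterization step implicitly uses the same Whitehead-type fact (that $\begin{pmatrix}A&0\\0&A^{-1}\end{pmatrix}\in\rE(R)$) that the paper cites explicitly from Rosenberg.
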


\begin{proof} By the Whitehead Lemma, $\rE(R)$ is the commutator subgroup of $\GL(R)$.
If $[A,B] = ABA^{-1}B^{-1}$ is a commutator, then $\sigma([A,B]) =  [(B^t)^{-1},(A^{-1})^t]$
is also a commutator.  Hence $\rE(R)$ is stable under $\sigma$.  Since $(AB)^t = B^t A^t$ for all $A$, $B \in \GL(R)$
and $\rK_1(R)$ is the maximal abelian quotient of $\GL(R)$, $\sigma$ defines a group automorphism of $\rK_1(R)$.
By   \cite[Corollary 2.1.3]{RosenbergBook}, the block matrix
$$\begin{pmatrix}A &0\\0&A^{-1}\end{pmatrix}$$
lies in $\rE(R)$ for all $A \in \GL(R)$.  Hence $\sigma(A) = A^t$ equals $A$ in $\rK_1(R)$ if and only if (\ref{eq:Aist}) lies in $\rE(R)$.
If $A$ commutative and $\rSK_1(A)$ is trivial, then $\sigma$ is trivial since $\mathrm{det}(A)= \mathrm{det}(A^t)$.
  \end{proof}
  \smallskip
  
  We do not know whether $\sigma$ is trivial for arbitrary $R$.

\begin{proposition}
\label{prop:basec}
Let  $\E$ be as in  Theorem \ref{thm:trystate}.  The direct image $\pi_* \E$ is a rank $n$ locally free sheaf
of  $\pi_* \O_{Y}[G]$-modules on $\mathbb{P}^1=\mathbb{P}^1_{\mathbb{Z}}$ as well as a locally free sheaf of 
$\O_{\mathbb{P}^1}[G]$-modules of rank $nd$.  The sheaves $\V = {\cal Hom}_{\O_{\mathbb{P}^1}}(\pi_* \O_Y[G]^n,\O_{\mathbb{P}^1})$ 
and $\V' = \pi_* \O_Y[G]^n$ are locally free $\O_{\mathbb{P}^1}[G]$-modules of rank $nd$. 
\begin{enumerate}
\item[i)] There are equalities of equivariant Euler characteristics 
 \begin{equation}
 \label{eq:eulerpush3}
  \bar\chi^P(Y, \E) = \bar\chi^P(\PP^1, \pi_* \E) \quad \mathrm{and} \quad \bar\chi^P(\PP^1,\V) = \bar\chi^P(\PP^1,\V') = 0.
  \end{equation}
  in  ${\rm Cl}(\Z[G])=\Kr_0^{\rm red}(\Z[G])={\rm CH}^1_{\Bbb A}(S[G])$.
  \item[ii)]
  The bundles $\pi_*\E \oplus \V$ and $\V' \oplus \V$ have  elementary structures on $\mathbb{P}^1$.
  The restrictions of $\pi_* \E \oplus \V$ and $\V' \oplus \V$ to the zero section of $\mathbb{P}^1$
  define locally free $\mathbb{Z}[G]$-modules which are stably free.  
  
   \end{enumerate}
\end{proposition}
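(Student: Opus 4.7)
The plan is to establish (i) by standard Leray/projection-formula arguments and then to construct an explicit elementary adelic trivialization for $\pi_*\E\oplus\V$ (and $\V'\oplus\V$) on $\PP^1$ using the doubling trick together with Proposition~\ref{prop:finitefree}. The stably free statement at the zero section will then drop out of the description of these trivializations.

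First, for part (i): since $\pi\colon Y\to \PP^1$ is finite, $R^i\pi_*=0$ for $i>0$, so for any $\O_Y[G]$-bundle $\mathcal G$ on $Y$ the complex $R\Gamma(\PP^1,\pi_*\mathcal G)$ is canonically quasi-isomorphic to $R\Gamma(Y,\mathcal G)$ as a complex of $\Z[G]$-modules. Applied to $\mathcal G=\E$ this gives $\chi^P(Y,\E)=\chi^P(\PP^1,\pi_*\E)$, and hence the first equality in \eqref{eq:eulerpush3}. For the vanishing of $\bar\chi^P(\PP^1,\V')$, I would apply the same principle with $\mathcal G=\O_Y[G]^n$: the $G$-structure on $\O_Y[G]$ acts only on the tensor factor $\Z[G]$, so $H^i(Y,\O_Y[G]^n)\cong H^i(Y,\O_Y)\otimes_{\Z}\Z[G]^n$ as $\Z[G]$-modules. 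A standard argument with a perfect complex of finitely generated $\Z$-modules representing $R\Gamma(Y,\O_Y)$ then shows that $\chi^P(\PP^1,\V')=\chi(Y,\O_Y)\cdot n\cdot[\Z[G]]$, which is zero in $\rK_0^{\mathrm{red}}(\Z[G])$. For $\V$ the same argument applies after replacing $\O_Y$ by the relative dualizing sheaf $\omega_{Y/\PP^1}=\mathcal{H}om_{\O_{\PP^1}}(\pi_*\O_Y,\O_{\PP^1})$, so that $\V\cong\pi_*(\omega_{Y/\PP^1}\otimes_{\O_Y}\O_Y[G]^n)$ as $\O_{\PP^1}[G]$-modules, and the class of the Euler characteristic is again a $\Z$-multiple of $[\Z[G]]$.

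For part (ii), the locally free assertion is routine: $\pi_*$ of an $\O_Y[G]$-bundle is locally free over $\pi_*\O_Y[G]$, and $\pi_*\O_Y$ is itself a locally free $\O_{\PP^1}$-module (since $\pi$ is finite flat and $\PP^1_{\Z}$ is regular of dimension $2$; Proposition~\ref{prop:finitefree} provides the adelic version). To obtain an elementary structure on $\pi_*\E\oplus\V$, I start with an elementary system of bases $\{e_{\eta}^h\}$ for $\E$ on $Y$, with transitions $\lambda_{\eta_i\eta_j}\in\rE(\hat\O_{Y,\eta_i\eta_j}[G])$. For each Parshin chain $\eta$ on $\PP^1$, Proposition~\ref{prop:finitefree} gives a canonical isomorphism $\O_X\otimes_{\O_Y}\hat\O_{\PP^1,\eta}\cong\bigoplus_{\eta'\in\pi^{-1}(\eta)}\hat\O_{Y,\eta'}$, so the bases $\{e_{\eta'}^h\}$ at the points $\eta'$ over $\eta$ assemble into an adelic basis of $\pi_*\E$, and the transitions for $\pi_*\E$ on $\PP^1$ are block-diagonal matrices with blocks $\lambda_{\eta_i'\eta_j'}$ which lie in $\rE(\hat\O_{\PP^1,\eta_i\eta_j}[G])$ after identifying the group rings via Morita-type embeddings. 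The role of $\V$ is to kill any residual obstruction coming from the change-of-basis matrices between the chosen adelic decompositions of $\pi_*\O_Y$ and the standard free basis of $\O_{\PP^1}^d$: pairing with the dual bundle gives block transitions of the form $\mathrm{diag}(A,(A^t)^{-1})$, which lie in $\rE$ by Lemma~\ref{lem:invol} (applicable because the relevant local group rings have trivial $\rSK_1$ under our standing hypotheses on $Y$ and $G$, exactly as exploited in the proof of Proposition~\ref{propae}).

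The hardest part will be showing that the above block-diagonal transitions really lie in the adelic elementary subgroups at every type of Parshin chain on $\PP^1$, since the subtle case occurs at codimension $1$ vertical points $\eta_1$ dividing $|G|$, where elementariness cannot be detected by $\mathrm{Det}$ alone and one must invoke the $\rSK_1$-injectivity results of \S\ref{s:SK_1} (in particular Corollary~\ref{SK1cor2} and Theorem~\ref{SK1theorem1}) to transfer elementariness from $Y$ to $\PP^1$ across the finite flat map $\pi$. Finally, the stably free statement at the zero section follows because the adelic bases constructed above restrict, at the codimension-one horizontal point given by the zero section of $\PP^1\to\Spec(\Z)$, to bases whose transition with any globally chosen basis lies in $\rE$; hence the $\Z[G]$-module of sections of $\pi_*\E\oplus\V$ (resp.\ $\V'\oplus\V$) pulled back along the zero section differs from a free $\Z[G]$-module of the same rank by an element of $\Cl(\Z[G])$ that is trivialized by adding a further free summand, giving stable freeness.
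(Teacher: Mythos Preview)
Your approach to part (i) is correct and matches the paper's: the first equality follows from $\pi$ being finite, and the vanishing of $\bar\chi^P$ for $\V$ and $\V'$ follows because these are induced from the trivial subgroup of $G$ (your tensor decomposition argument is just an explicit form of this).

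For part (ii) your overall strategy is the paper's, but you misidentify where the difficulty lies. The transition matrix for $\pi_*\E\oplus\V$ factors as
\[
\begin{pmatrix}A&0\\0&(A^t)^{-1}\end{pmatrix}\cdot\begin{pmatrix}r_j(\lambda_\eta)&0\\0&1\end{pmatrix},
\]
where $A=\lambda_{W',\eta}$ is the transition matrix for $\V'=\pi_*\O_Y[G]^n$. The crucial point you underemphasize is that the entries of $A$ lie in the \emph{commutative} ring $\hat\O_{\PP^1,\eta}$ (the $G$-action on $\V'$ is on the tensor factor $\Z[G]$ only), so Lemma~\ref{lem:invol} applies once you know $\rSK_1(\hat\O_{\PP^1,\eta})=\{1\}$, which is Corollary~\ref{cor:Blochcor}. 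The second factor is elementary simply because $r_j$ comes from an $R$-algebra homomorphism $S[G]\to M_d(R[G])$ and hence sends elementary generators to products of elementary generators. There is therefore \emph{no} subtle step at primes dividing $|G|$, and the $\rSK_1$-injectivity results of \S\ref{s:SK_1} (Corollary~\ref{SK1cor2}, Theorem~\ref{SK1theorem1}) are not needed for this proposition; you are anticipating a difficulty from Proposition~\ref{propae} that does not arise here because the potentially problematic matrix $A$ has commutative entries.

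Your stably-free argument is also too vague. The clean argument is: the restriction of $\pi_*\E\oplus\V$ to the zero section inherits an elementary structure from the one just constructed; elementary transitions have trivial $\mathrm{Det}$, so by Fr\"ohlich's description of $\Cl(\Z[G])$ the resulting locally free $\Z[G]$-module represents the trivial class, i.e.\ is stably free. There is no need to ``add a further free summand'' beyond what stable freeness already means.
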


\begin{proof}
 
 The first equality in (\ref{eq:eulerpush3}) is clear. Since $\V$ and $\V'$ are induced from the trivial subgroup of $G$,
they have trivial stable Euler characteristics as in (\ref{eq:eulerpush3}).  We now show  (ii) for the bundle $\pi_* \E \oplus \V$, since the case of $\V' \oplus \V$
 is similar.  

We will use $\eta'_i$ to denote 
a point of $Y$.  By assumption, there is a set  of $\what{\mathcal{O}}_{Y,\eta'
_{i}}[ G] $ bases $\{e_{\eta'_{i}}^{h}\}_{h}$ of $\mathcal{E\otimes }_{\O_{Y}}\hat{\mathcal{O}}_{Y,\eta'_{i}}$
which has the properties of Definition \ref{defadproel} when one replaces $\eta_i$ in
this definition by $\eta'_i$.  Here $h$ runs from $1$ to $n = \mathrm{rank}_{\O_Y[G]}(\E)$.  
Suppose $\eta$ is a non-degenerate Parshin chain on $\mathbb{P}^1$.  
Proposition \ref{prop:finitefree}  shows that
\begin{equation}
 \label{eq:blap3}
\widehat{\O}_{\mathbb{P}^1,\eta} \otimes_{\O_{\PP^1}} \pi_*\O_Y[G] =   \oplus_{\eta' \in \pi^{-1}(\eta)}\  \hat{\O}_{Y,\eta'}[G]
 \end{equation}
 where $\eta'$ runs over the Parshin chains on $Y$ over $\eta$.      We thus have
 \begin{equation}
 \label{eq:blap4}
 \pi_* \E \otimes_{\O_{\PP^1}} \hat{\O}_{\mathbb{P}^1,\eta} = \oplus_{\eta' \in \pi^{-1}(\eta)} \ \ \mathcal{E\otimes }_{\O_{Y}} {\hat{\O}}_{Y,\eta'}.
 \end{equation}
 The bases $\{e_{\eta'_{i}}^{h}\}_{h}$ 
 together with the isomorphisms (\ref{eq:blap3}) and (\ref{eq:blap4})  give a set of local bases $\{e_{\eta_{i}}^h\}$ 
 for $\pi_* \E \otimes_{\O_{\PP^1}}  \hat{\O}_{\mathbb{P}^1,\eta_i}$ as a $(\pi_* \O_{Y} \otimes_{\O_{\PP^1}}  \hat{\O}_{\mathbb{P}^1,\eta_i})[G]$-module, where $\eta_i$ ranges over the points of $\mathbb{P}^1 $.

We now consider transition matrices.  Let  $\eta =  (\eta_{i},\eta_{j}) $ be a non-degenerate Parshin chain of length
two on $\mathbb{P}^1 $.   We then have a transition map $\lambda _{\eta}$ in 
$\GL_n( \hat{\O}_{\mathbb{P}^1,\eta} \otimes_{\O_{\PP^1}} \pi_*\O_Y[G]) $ determined by the bases $\{e_{\eta_{i}}^h\}$  and
$\{e_{\eta_{j}}^h\}$ for the completion of $\pi_* \E$ at $\eta_i$ and $\eta_j$, respectively.  
 The isomorphisms (\ref{eq:blap3}) and (\ref{eq:blap4}) identify $\lambda_{\eta}$ with the direct sum of the transition matrices
 $\lambda_{\eta'}$ which result from taking $\{e_{\eta_{i}}^h\}$ (resp. $\{e_{\eta_{i}}^h\}$) as a  basis for the completion of $\E$  at each point $\eta'_i$ (resp. $\eta'_j$) of $Y$ over $\eta_i$ (resp. $\eta_j$).  
 
 We can choose a basis $\{w_{\eta_i}^{\ell}\}_\ell$ for $(\pi_* \O_{Y})_{\eta_i}$ as a free module for $\widehat{\O}_{\mathbb{P}^1, \eta_i}$
 at each point $\eta_i$ of $\mathbb{P}^1$ which has the following properties.  The index  $\ell$
 runs from $1$ to the degree $d$ of $\pi:Y \to \mathbb{P}^1$.  If $\eta_0$ is the generic point of $\mathbb{P}^1$, then $w_{\eta_i} = w_{\eta_0}$ for almost all codimension $1$ points $\eta_i$. For each codimension $1$ point $\eta_i$, we can
 arrange that for almost all closed points $\eta_j$ on the closure of $\eta_i$, the basis element $w_{\eta_j}^\ell$ equals
 $w_{\eta_i}^\ell$.   
 
 Returning now to our original set-up, we have obtained a basis 
$W_{\eta_i} = \{w_{\eta_i}^\ell e_{\eta_i}^h\}_{\ell,h}$  for \hbox{$\hat{\O}_{\mathbb{P}^1,\eta_i} \otimes_{\O_{\mathbb{P}^1}} \pi_* \E$} as a free module for $\hat{\O}_{\mathbb{P}^1,\eta_i}[G]$. 

Recall $\V' = \pi_* \O_Y[G]^n$, which we may consider as either a $\pi_* \O_Y[G]$-module or as a 
$\O_{\mathbb{P}^1}[G]$-module.    Let $\{ e'^{h}\}_h$  be a global basis for $\V'
$ as a free $\pi_* \O_Y[G]$-module of rank $d$.   Then $ W'_{\eta_i} = \{w_{\eta_i}^\ell  e'^h\}_{\ell,h}$ gives a basis for $\V'_{\eta_i} = (\pi_* \V')_{\eta_i}$ as locally free 
$\hat{\O}_{\mathbb{P}^1,\eta_i}[G]$-module of rank $nd$.  

We  use the bases $W_{\eta_i}$, $W_{\eta_j}$, $W'_{\eta_i}$  and $W'_{\eta_j}$ to arrive at transition matrices
$\lambda_{W,\eta}$ and $\lambda_{ W',\eta}$ in $\GL_{nd}( \hat{\O}_{\mathbb{P}^1,\eta}[G])$ for $\pi_* \E$ and
$\V'$ considered as locally free $\O_{\mathbb{P}^1}[G]$-modules.  Note that $\lambda_{ W',\eta}$ lies in
$\GL_{nd}( \hat{\O}_{\mathbb{P}^1,\eta})$, i.e. its entries have group ring elements which are in fact constants. To compare
$\lambda_{W,\eta}$ and $\lambda_{ W',\eta}$, we
will use the embedding
$$
r_j:\GL_n(\hat{\O}_{\mathbb{P}^1,\eta_j} \otimes_{\O_{\PP^1}}\pi_* \O_Y[G]) \to \GL_{nd}(\hat{\O}_{\mathbb{P}^1,\eta_j}[G])
$$
which results from the basis $\{w_{\eta_j}^{\ell}\}_\ell$  for $(\pi_* \hat{\O}_{Y})_{\eta_j}$ as a free module for $\hat{\O}_{\mathbb{P}^1_{\eta_j}}$.
This extends by tensor product with $\hat{\O}_{\mathbb{P}^1,\eta}$ to an embedding
\begin{equation}
\label{eq:rjbig}
r_j:\GL_n({\hat{\O}}_{\mathbb{P}^1,\eta} \otimes_{\O_{\PP^1}} \pi_*\O_Y[G]) \to \GL_{nd}({\hat{\O}}_{\mathbb{P}^1,\eta}[G])
\end{equation}
where $\eta = (\eta_i,\eta_j)$ as before.  Composing $r_j(\lambda _{\eta})$ with the transition matrix
$\lambda_{ W',\eta}$ associated with changing bases for $\V' $ from the $ W'_{\eta_j}$ to $W'_{\eta_i}$ gives
the transition matrix $\lambda_{W,\eta}$ associated with changing bases for the completion of $\pi_* \E$ from $W_{\eta_j}$ to $W_{\eta_i}$.
We thus have the matrix equation
\begin{equation}
\label{eq:equat}
\lambda_{W,\eta} = \lambda_{ W',\eta} \cdot r_j(\lambda_{\eta})
\end{equation}
inside 
$\GL_{nd}(\hat{\O}_{\mathbb{P}^1,\eta}[G])$.

As in the statement, let $\V$ be the locally free $\hat{\O}_{\mathbb{P}^1}[G]$-module of rank $nd$ defined by
$${\cal Hom}_{\O_{\mathbb{P}^1}}(\pi_* \O_Y[G]^n,\O_{\mathbb{P}^1}) = {\cal Hom}_{\O_{\mathbb{P}^1}}(\V',\O_{\mathbb{P}^1}).$$  Let $ W'^*_{\eta_i}$ be the basis for $\V$
which is the $\O_{\mathbb{P}^1}$ dual to the basis $W'_{\eta_i}$ for $\V'$ at $\eta_i$.
Then the transition matrix $\lambda_{ W'^*,\eta}$ associated to this choice  is 
\begin{equation}
\label{eq:zowier}
\lambda_{W'^*,\eta} = (\lambda_{ W',\eta}^t)^{-1}
\end{equation}
where the superscript $t$ on the right stands for  the transpose. 

We conclude that the transition matrix for $\pi_* \E \oplus \V'$  has the block form
\begin{equation}
\label{eq:newdp}
 \begin{pmatrix}\lambda_{ W',\eta} \cdot r_j(\lambda_{\eta})&0\\0&(\lambda_{ W',\eta}^t)^{-1}\end{pmatrix} = 
\begin{pmatrix}A &0\\0&(A^t)^{-1}\end{pmatrix}  \cdot  \begin{pmatrix}r_j(\lambda_{\eta})&0\\0&1\end{pmatrix}
\end{equation} 
where $A = \lambda_{ W',\eta}$.    
To show $\pi_* \E \oplus \V'$ has an elementary structure, it is enough by Definition \ref{defadproel} to show that each
of the two matrices on the right side of (\ref{eq:newdp}) is  elementary.  The first matrix is  elementary by 
Lemma \ref{lem:invol} since the entries of $A$ lie in the commutative ring ${\hat{\O}}_{\mathbb{P}^1,\eta}$ and $ \rSK_1({\hat{\O}}_{\mathbb{P}^1,\eta}) = \{1\}$ for all Parshin pairs $\eta$ by Corollary \ref{cor:Blochcor}. The second
matrix on the right hand side of (\ref{eq:newdp}) is elementary because $\lambda_{\eta}$ is so by assumption
and $r_j$ is a ring homomorphism.

The last statement to prove is that the restriction of $\pi_* \E \oplus \V'$ to the zero section of $\mathbb{P}^1$
is a stably free projective $\Z[G]$-module.  This restriction has an elementary structure as a projective $\Z[G]$-module. Therefore, its transition matrices have trivial determinant ${\rm Det}$ 
and hence this module is stably free by resolvent theory in dimension $1$ (e.g   \cite{FrohlichBook}). 
\end{proof}

\subsection{Steinberg extensions over $Y$ and over $\mathbb{P}^1$.}
\label{s:pushdowns}

Let $(\eta_i, \eta_j)$ stand for a Parshin chain of length two on $\mathbb{P}^1$.  For simplicity, we will denote 
by $R_{ij}$, resp. $S_{ij} $, the ring $\hat\O_{\mathbb{P}^1,\eta_i\eta_j}$, resp. 
$\hat\O_{Y, \eta_i\eta_j}=\pi_*\O_Y\otimes_{\O_{\mathbb{P}^1}}\hat\O_{\mathbb{P}^1,\eta_i\eta_j}$.
 A choice of basis of $S_{ij}$ over $%
R_{ij}\ $yields a homomorphism
\begin{equation*}
r:\GL_{n}( S_{ij}[ G] ) \rightarrow \GL_{nd}( R_{ij}[ G] )
\end{equation*}%
which induces a map on  elementary matrices%
\begin{equation*}
r_{E}: \rE_{n}( S_{ij}[ G]) \rightarrow \rE_{nd}(
R_{ij}[ G] ) 
\end{equation*}%
and we get a diagram
\begin{equation}
\begin{array}{ccccccccc}
1 & \rightarrow &  {\rK}_{2}( S_{ij}[ G] ) & \rightarrow &
 {\rm St}( S_{ij}[ G] ) & \rightarrow &  {\rE}( S_{ij}[
G] ) & \rightarrow & 1 \\
&  & \downarrow r_{K} &  & \downarrow r_{S} &  & \downarrow r_{E} &  &  \\
1 & \rightarrow &  {\rK}_{2}( R_{ij}[ G] ) & \rightarrow &
 {\rm St}( R_{ij}[ G] ) & \rightarrow &  {\rE}( R_{ij} [
G] ) & \rightarrow & 1.%
\end{array}%
\end{equation}%
Suppose now that we change bases by a matrix $c\in {\rE}( R_{ij}) $
and we then replace $r_{E}$ by $r_{E}^{c}=c^{-1}r_{E}c$; since $c$
 elementary it has a lift $s( c) \in  {\rm St}( R_{ij}) $ and
conjugation by this element in independent of the lift, as two such lifts
differ by an element of $ {\rK}_{2}( R_{ij}) $ which is central;
finally, conjugation by $s( c)$ on $ {\rK}_{2}( R_{ij}[ G] ) $ is trivial and so $r_{K}^{s( c) }=r_{K}$.

More generally, reasoning as above, if we choose $c\in  {\rE}( R_{012}[
G]) $ (in fact $c\in {\rE}( R_{012}) $ will suffice for
our purposes) and consider the effect of conjugation by $c$, denoted
when necessary $\mathrm{conj}( c)$, then we get a diagram:%
\begin{equation}
\begin{array}{ccccccccc}
1 & \rightarrow &  {\rK}_{2}( R_{ij}[ G] ) & \rightarrow &
 {\rm St}( R_{ij}[ G] ) & \underset{\pi _{ij}}{\overset{s_{ij}%
}{\leftrightarrows }} & {\rE}( R_{ij}[ G] ) & \rightarrow
& 1 \\
&  & \downarrow \mathrm{conj}( c)  &  & \downarrow \mathrm{conj}( c) &  &
\downarrow \mathrm{conj}( c) &  &  \\
1 & \rightarrow &  {\rK}_{2}( R_{ij}[ G] )^c & \rightarrow &
 {\rm St}( R_{ij}[ G] ) ^{c} & \underset{\pi _{ij,c}}{\overset%
{s_{ij,c}}{\leftrightarrows }} &   {\rE}( R_{ij}[ G] ) ^{c}
& \rightarrow & 1.%
\end{array}%
\end{equation}%
Here $s_{ij}$ and $s_{ij,c}$ denote sections and ${\rK}_{2}( R_{ij}[ G] )^c$,
 ${\rm St}( R_{ij}[ G] ) ^{c}$ are just formal copies of ${\rK}_{2}( R_{ij}[ G] )$,
 ${\rm St}( R_{ij}[ G] )$, while ${\rE}( R_{ij}[ G] ) ^{c}$ is the conjugate in 
 ${\rE}( R_{012}[
G]) $. Note here that for the images in $\rK_2(R_{012}[G])$ we have
$({\rK}_{2}( R_{ij}[ G] )^c )^\flat=({\rK}_{2}( R_{ij}[ G] )^\flat )^c=\rK_2(R_{ij}[G])^\flat$.
  Note also that the right-hand
square commutes with respect to the $\pi $ maps, but not necessarily for the section
maps: to be more precise we have:

\begin{lemma}
\label{lem:preciselem}
Given $x\in  {\rE}( R_{ij}[ G] ) $ we have $\kappa
=s_{ij,c}( x^{c}) s_{ij}( x) ^{-c}\in {\rK}_{2}(R_{ij}[ G])^c$.
\end{lemma}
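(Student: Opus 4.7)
The plan is to verify directly that $\kappa$ lies in the kernel of the projection $\pi_{ij,c}\colon \St(R_{ij}[G])^{c}\to \rE(R_{ij}[G])^{c}$, which by the bottom row of the displayed diagram is precisely $\rK_{2}(R_{ij}[G])^{c}$. Here I interpret $s_{ij}(x)^{-c}$ as $(\mathrm{conj}(c)(s_{ij}(x)))^{-1}$, i.e.\ the inverse in $\St(R_{ij}[G])^{c}$ of the image of $s_{ij}(x)$ under conjugation by $c$. Thus the first step is just to unpack the formula as
\[
\kappa \;=\; s_{ij,c}(x^{c})\cdot \bigl(\mathrm{conj}(c)(s_{ij}(x))\bigr)^{-1}\;\in\;\St(R_{ij}[G])^{c}.
\]

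Next, I would apply $\pi_{ij,c}$ to $\kappa$. On the first factor, $\pi_{ij,c}\circ s_{ij,c}=\mathrm{id}$, so the image is $x^{c}$. For the second factor, the commutativity of the right-hand square of the diagram with respect to the projections (which is part of the standing hypothesis on the diagram, since conjugation by $c$ is an honest group automorphism compatible with the Steinberg projections) gives
\[
\pi_{ij,c}\bigl(\mathrm{conj}(c)(s_{ij}(x))\bigr) \;=\; \mathrm{conj}(c)\bigl(\pi_{ij}(s_{ij}(x))\bigr) \;=\; \mathrm{conj}(c)(x) \;=\; x^{c}.
\]
Combining these two computations yields $\pi_{ij,c}(\kappa)=x^{c}\cdot (x^{c})^{-1}=1$, so $\kappa\in\ker(\pi_{ij,c})=\rK_{2}(R_{ij}[G])^{c}$, as required.

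There is no real obstacle here: the assertion is purely formal once one notes that although the section maps $s_{ij}$, $s_{ij,c}$ need not themselves be compatible with $\mathrm{conj}(c)$ (that is exactly what would say the right-hand square commutes for sections, and is the source of the ambiguity being measured by $\kappa$), the projections $\pi_{ij}$, $\pi_{ij,c}$ are compatible, and this compatibility forces the difference $\kappa$ to land in the central $\rK_{2}$. In particular, $\kappa$ measures the failure of $s_{ij}$ and $s_{ij,c}$ to intertwine conjugation by $c$, and the lemma records that this failure is a cocycle-type element of $\rK_{2}(R_{ij}[G])^{c}$, which will be used in the subsequent cocycle manipulations of the proof.
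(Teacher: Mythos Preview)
Your proof is correct and follows essentially the same approach as the paper: both argue that $\kappa$ is a product of two elements of $\St(R_{ij}[G])^{c}$ and compute $\pi_{ij,c}(\kappa)=x^{c}\cdot x^{-c}=1$ using that $\pi_{ij,c}\circ s_{ij,c}=\mathrm{id}$ together with the compatibility of the projections with $\mathrm{conj}(c)$.
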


\begin{proof} By definition $\kappa $ is the product of two elements in $ {\rm St}(
R_{ij}[ G] ) ^{c}$ and it will suffice to show that $\kappa
\in \ker (\pi _{ij,c})$. To this end we note:%
\begin{eqnarray*}
\pi _{ij,c}( \kappa ) &=&\pi _{ij,c}( s_{ij,c}(
x^{c}) ) \cdot \pi _{ij,c}( s_{ij}( x) ^{-c}) \\
&=&\pi _{ij,c}( s_{ij,c}( x^{c}) ) \cdot\pi _{ij}(
s_{ij}( x) ) ^{-c} \\
&=&x^{c}\cdot x^{-c}=1.
\end{eqnarray*}
\end{proof}

\subsection{Base change.}
\label{s:basechange}

Let $\{e_{i}\}$ be a basis for $\mathcal{E}\otimes_{\O_Y}S_{i}$ over $S_{i}[G] $.  
(There is an  additional implicit subscript we will suppress which runs from $1$ to the rank of $\E$.)
Let $e_{i}=\mu _{ij}e_{j}.$

Let $\{a_{in}\}_{n=1}^{d}$ be a basis for $S_{i}$ over $R_{i}$; we let $
( a_{in}) _{n}=\Lambda _{ij}( a_{jn}) _{n}$ and,
as in the proof of Proposition \ref{prop:basec}, we write
\begin{equation}
{\Lambda}^\sharp_{ij}= 
\begin{pmatrix}
\Lambda _{ij} & 0 \\
0 & (\Lambda _{ij}^t)^{-1}
\end{pmatrix} .
\end{equation}%
We then have bases $( a_{in}e_{i})_{n}$ for $\mathcal{E}\otimes_{\O_Y}
S_{i}$ over $R_{i}[G]$ and hence a further set of transition
matrices
\begin{equation}
( a_{in}e_{i}) _{n}=\lambda _{ij}\cdot ( a_{jn}e_{j})_{n}
\end{equation}%
so that  the bundle $\pi _{\ast }\mathcal{E}\oplus \mathcal{V}$
has transition matrices
\begin{equation*}
{\lambda }^\sharp_{ij}= 
\begin{pmatrix}
\lambda _{ij} & 0 \\
0 & (\Lambda_{ij}^t)^{-1}
\end{pmatrix}.
\end{equation*}
We observe that by the very definition of the map $r_{i}:\GL_{n}( S_{i}[ G]) \rightarrow \GL_{nd}( R_{i}[ G]) $ we have the further equality:
\begin{equation}
( a_{in}e_{i}) _{n}=r_{i}( \mu _{ij})\cdot  (
a_{in}e_{j}) _{n}.
\end{equation}

Next we observe that by definition of $r_{k}$
\begin{equation*}
r_{k}( \mu _{ij})\cdot ( a_{kn}e_{j}) _{n}=(
a_{kn}e_{i}) _{n}
\end{equation*}%
while
\begin{equation*}
\Lambda_{ki}\lambda_{ij}\Lambda_{jk}\cdot ( a_{kn}e_{j})_{n}
=\Lambda_{ki}\lambda_{ij}\cdot ( a_{jn}e_{j})_{n}=\Lambda_{ki}\cdot ( a_{in}e_{i})_{n}=( a_{kn}e_{i})_{n}.
\end{equation*}%
Therefore, we deduce that
\begin{eqnarray}
\label{eq:thirteen}
r_{k}( \mu _{ij})  = \Lambda _{ki}\lambda _{ij}\Lambda _{jk}, \qquad
{r}^\sharp_{k}( \mu _{ij})  = {\Lambda }^\sharp_{ki}%
{\lambda }^\sharp_{ij}{\Lambda }^\sharp_{jk}
\end{eqnarray}%
where
\begin{equation*}
{r}^\sharp_{k}( \mu_{ij}) =
\begin{pmatrix}
r_{k}( \mu_{ij}) & 0 \\
0 & 1
\end{pmatrix} ;
\end{equation*}%
and hence
\begin{equation}
\label{eq:fourteen}
\lambda _{ij} =\Lambda _{ik}r_{k}\left( \mu _{ij}\right) \Lambda _{kj}, \qquad 
{\lambda }^\sharp_{ij} ={\Lambda }^\sharp_{ik}{r}^\sharp_{k}\left( \mu _{ij}\right) {\Lambda }^\sharp_{kj}.
\end{equation}

From (\ref{eq:thirteen}) we conclude that for all  $k$, $h$ one  has
\begin{equation}
\label{eq:importanteq}
{\Lambda }^\sharp_{ik}{r}^\sharp_{k}( \mu _{ij}) {%
\Lambda }^\sharp_{kj}={\lambda }^\sharp_{ij}={\Lambda }^\sharp_{ih}{r}^\sharp_{h}( \mu _{ij}) {\Lambda }^\sharp_{hj}
\end{equation}%
and so
\begin{equation}
\label{eq:sixteen}
{r}^\sharp_{k}( \mu _{ij}) ={\Lambda }^\sharp_{ki}%
{\Lambda }^\sharp_{ih}{r}^\sharp_{h}( \mu _{ij}) {
\Lambda }^\sharp_{hj} {\Lambda }^\sharp_{jk}= {\Lambda }^\sharp_{kh} {r
}^\sharp_{h}( \mu _{ij})  {\Lambda }^\sharp_{hk}.
\end{equation}

\subsection{Reduction step outline.}

\begin{lemma} 
\label{lem:pushpat} Assume the notation and hypotheses of Theorem \ref{thm:trystate} and Proposition \ref{prop:basec}.  
The finite flat morphism
$\pi:Y \to \mathbb{P}^1=\mathbb{P}^1_{\Z}$ induces a pushdown homomorphism
\begin{equation}
\label{eq:chowpush}
\pi_{*}:{\rm CH}^2_{\Bbb A}(Y[G]) \to {\rm CH}^2_{\Bbb A}(\mathbb{P}^1[G]).
\end{equation}
The composition of the morphism $\pi:Y \to \mathbb{P}^1$
with the structure morphism $f:\mathbb{P}^1 \to \mathrm{Spec}(\Z)$
gives the structure morphism $h = f \circ \pi:Y \to \mathrm{Spec}(\Z)$.  There is an equality 
of pushdown homomorphisms
$$h_* = f_* \circ \pi_*$$
from ${\rm CH}^2_{\Bbb A}(Y[G])$ to ${\rm Cl}(\Z[G])=\Kr_0^{\rm red}(\Z[G])={\rm CH}^1_{\Bbb A}(\Spec(\Z)[G])$.
\end{lemma}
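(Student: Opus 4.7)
The plan is to construct $\pi_*$ by assembling local transfer homomorphisms on $\rK_2$ of multicompletions along $\pi$, and then verify that the resulting map descends to adelic Chow groups. By Proposition \ref{prop:finitefree}, for every Parshin chain $\eta$ on $Y$ with image $\xi=\pi(\eta)$ on $\mathbb{P}^1$, the ring $\hat{\mathcal{O}}_{Y,\eta}$ appears as a direct summand of a free $\hat{\mathcal{O}}_{\mathbb{P}^1,\xi}$-module of rank $d=[Y:\mathbb{P}^1]$; consequently $\hat{\mathcal{O}}_{Y,\eta}[G]$ is a finitely generated projective $\hat{\mathcal{O}}_{\mathbb{P}^1,\xi}[G]$-module, and so one obtains a transfer homomorphism
\[
\mathrm{Tr}_{\eta/\xi}\colon \rK_{2}(\hat{\mathcal{O}}_{Y,\eta}[G]) \to \rK_{2}(\hat{\mathcal{O}}_{\mathbb{P}^1,\xi}[G]).
\]
I would then define $\pi_*$ componentwise by the rule $\pi_*((\kappa_\eta)_\eta)_\xi=\sum_{\eta\in\pi^{-1}(\xi)}\mathrm{Tr}_{\eta/\xi}(\kappa_\eta)$; since $\pi$ is finite, each fiber $\pi^{-1}(\xi)$ is a finite set, so this sum is well defined termwise, and similarly for pairs and single points.

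The next step is to verify that $\pi_*$ respects the adelic structure. For the restricted product conditions (PK1) and (PK2) of Definition \ref{defrestrictedproducts}, one invokes Proposition \ref{prop:finitefree} at the intermediate level (pairs $\eta_1\eta_2$) to see that the image of $\rK_2(\hat{\mathcal{O}}_{Y,\eta_1\eta_2}[G])^{\flat}$ lands in $\rK_2(\hat{\mathcal{O}}_{\mathbb{P}^1,\xi_1\xi_2}[G])^{\flat}$ when $\pi(\eta_i)=\xi_i$; the analogue holds for terms in $\rK_2(\hat{\mathcal{O}}_{Y,\eta_2}[\eta_1^{-1}][G])^{\flat}$, using that $\pi^{-1}(\overline{\xi_1})$ is a divisor on $Y$ containing only finitely many components that meet a given fiber above $\xi_2$. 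For the denominator subgroups appearing in (\ref{24ch2}), one checks in the same way that $\pi_*$ sends $\rK'_2(\mathbb{A}_{Y,ij}[G])^{\flat}$ into $\rK'_2(\mathbb{A}_{\mathbb{P}^1,ij}[G])^{\flat}$ for each $0\le i<j\le 2$.

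Finally, the compatibility $h_*=f_*\circ\pi_*$ reduces, term-by-term at each Parshin triple $\eta$ above $\xi$ with $\xi_2$ lying above a prime $p$, to the local identity
\[
f_{*,\xi_0\xi_1\xi_2}\circ \mathrm{Tr}_{\eta/\xi}\;=\;h_{*,\eta_0\eta_1\eta_2}
\]
in $\rK_1(\Q_p[G])$. Unwinding the definitions from \S\ref{sec4.1}, and using the splitting of $\Q[G]$ together with Morita equivalence to reduce to the non-equivariant case, this boils down to the statement that the tame symbol (when $\eta_1$ is horizontal) and Kato's residue symbol (when $\eta_1$ is vertical) commute with the norm/transfer along a finite extension of two-dimensional local fields, followed by compatibility of the norms of residue fields down to $\Q_p$. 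Both facts are due to Kato, cf. \cite{KatoRes}, and in the generality required here they appear in \cite[Cor.~5.5]{LiuKato} and \cite[Thm.~4.2, Thm.~5.1]{LiuKato}; they mirror the reciprocity computations already invoked in Theorems \ref{thm17}, \ref{thm18}, and \ref{thm20}.

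The hardest part of the argument will be the denominator check in the second paragraph, because one has to keep track of the interplay between the three restricted products indexed by $ij\in\{01,02,12\}$ and the fact that a divisor $D$ on $\mathbb{P}^1$ pulls back to a divisor $\pi^{-1}(D)$ on $Y$ which may acquire new singular components along the ramification locus of $\pi$. The resolution is to choose, for any given adelic representative on $Y$, a divisor on $\mathbb{P}^1$ whose pullback dominates both the original divisor and the ramification of $\pi$; the needed finiteness then follows from Proposition \ref{prop:finitefree} applied at each of the three Parshin levels.
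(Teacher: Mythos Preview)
Your approach is essentially the same as the paper's: both construct $\pi_*$ via restriction-of-scalars (transfer) on $\rK_2$ of the multicompletions, and both verify $h_*=f_*\circ\pi_*$ by reducing via Morita equivalence to the non-equivariant case and then invoking Kato's compatibility of tame and residue symbols with norms.

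There is one difference in emphasis worth noting. The paper fixes a single basis of $\O_{Y,\eta'_0}$ over $\O_{\mathbb{P}^1,\eta_0}$ at the generic point, giving an algebra embedding $r_0$ into $d\times d$ matrices, and then spends its main technical effort showing that in computing any local component one may freely replace $r_0$ by the map $r_k$ coming from a local basis at $\eta_k$. The change-of-basis matrix $\lambda_{W',\eta_0\eta_k}$ need not be elementary, but after appending a $1\times 1$ block with entry $\det(\lambda_{W',\eta_0\eta_k})^{-1}$ it becomes so (since $\rSK_1$ of the relevant commutative multicompletion vanishes, by Corollary~\ref{cor:Blochcor}); conjugation by a Steinberg lift of this elementary matrix is then trivial on $\rK_2$ because the Steinberg extension is central. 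This is the device that makes the numerator and denominator checks go through cleanly: to verify (PK1), for instance, one simply switches to $r_1$ and uses that $r_1$ comes from an algebra homomorphism $\hat\O_{Y,\eta_1\eta_2}[G]\to\mathrm{M}_d(\hat\O_{\mathbb{P}^1,\eta_1\eta_2}[G])$. Your version hides this step inside the abstract well-definedness and base-change properties of the $\rK$-theoretic transfer, which is legitimate, but your final paragraph misidentifies where the work lies: it is not divisor bookkeeping along the ramification of $\pi$, but this basis-independence argument.

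One minor point: for the local compatibility the paper cites \cite[Lemma~3]{KatoRes} in the horizontal case and the discussion on p.~160 of \cite{KatoRes} in the vertical case (where Kato's $\mathrm{Res}$ for an extension is \emph{defined} via the norm), rather than the reciprocity statements from \cite{LiuKato} that you mention.
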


\begin{proof} Setting $i = 0$, we fix a basis for the local ring $\O_{Y,\eta'_0} = \pi_*\O_{Y} \otimes_{\O_{\PP^1}} \O_{\mathbb{P}^1,\eta_0}$ at the generic point $\eta'_0$
of $Y$ as a free rank $d$ module over $\O_{\mathbb{P}^1,\eta_0}$ when $\eta_0$ is the generic point of 
$\mathbb{P}^1$.  This basis defines an algebra map $r_0:\hat\O_{Y,\eta_0\eta_1\eta_2}[G] \to \mathrm{M}_d(\hat\O_{\mathbb{P}^1,\eta_0\eta_1\eta_2}[G])$ for every Parshin triple $\eta_0 \eta_1 \eta_2$ on $\mathbb{P}^1$, where
$\hat\O_{Y,\eta_0\eta_1\eta_2} = \pi_*\O_{Y} \otimes_{\O_{\PP^1}}  \hat\O_{\mathbb{P}^1,\eta_0\eta_1\eta_2}$ is the direct sum of
$\hat\O_{Y,\eta'_0 \eta'_1 \eta'_2}$ over all Parshin triples $\eta'_0 \eta'_1 \eta'_2$ on $Y$ lying over $\eta_0 \eta_1 \eta_2$.
This $r_0$ gives a diagram
\begin{equation}
\begin{array}{ccccccccc}
1 & \rightarrow & {\rK}_{2} ( S_{012}[G] ) & \rightarrow &
 {\rm St} ( S_{012}[G]) & \rightarrow & {\rE} ( S_{012} [G]) & \rightarrow & 1 \\
&  & \downarrow r_0 &  & \downarrow r_0 &  & \downarrow r_0 &  &  \\
1 & \rightarrow & {\rK}_{2} ( R_{012} [ G ]   & \rightarrow &
 {\rm St} ( R_{012} [ G ]   & \rightarrow & {\rE} ( R_{012}  [
G ]  ) & \rightarrow & 1%
\end{array}%
\end{equation}%
for every Parshin triple $\eta_0 \eta_1 \eta_2 = 012$ on $\mathbb{P}^1$, using a notation parallel to that in
\S \ref{s:pushdowns}.  We wish to show that we can define the map $\pi_*$ in (\ref{eq:chowpush}) by applying
$r_0$ to every local component of an element of   ${\rm CH}^2_{\Bbb A}(Y[G])$.  To show that this 
is well-defined we have to show that $r_0$ takes the numerators to numerators and denominators
to denominators in the definition of ${\rm CH}^2_{\Bbb A}(Y[G])$ and ${\rm CH}^1_{\mathbb{A}}(\mathbb{P}^1[G])$
in Definition \ref{def:ith}. 

Suppose $x$ lies in the numerator of ${\rm CH}^2_{\Bbb A}(Y[G])$ and that $\eta'$ is a Parshin triple on
$\mathbb{P}^1$ which involves $\eta_0$ and another point $\eta_k$ of $\mathbb{P}^1$. Suppose
that in computing the component $r_0(x)_\eta'$ of $r_0(x)$ we replace the basis for $\O_{Y,\eta'_0}$
as a module for $\O_{\mathbb{P}^1,\eta_0}$ by a basis for $\hat\O_{Y,\eta_k} = \pi_*\O_{Y} \otimes_{\O_{\PP^1}}  \hat\O_{\mathbb{P}^1,\eta_k}$
as a module for $\hat\O_{\mathbb{P}^1,\eta_k}$.  This changes the algebra homomorphism 
$r_0$ to the algebra homomorphism $r_k$ which results from conjugating $r_0$ by   
the  transition matrix  $\lambda_{W',\eta_0 \eta_k}$.  This matrix need not be  elementary, 
but it does have constant coefficients.  Let $\lambda^\dagger_{W',\eta_0 \eta_k}$ be the matrix having
a block in the upper left corner equal to $\lambda_{W',\eta_0 \eta_k}$ and a one-by-one matrix block
in the lower right corner with entry $\mathrm{det}(\lambda_{W',\eta_0 \eta_k})^{-1}$.  Then
$\lambda^\dagger_{W',\eta_0 \eta_k}$ also conjugates $r_0$ to $r_k$.  Since $\lambda^\dagger_{W',\eta_0 \eta_k}$
has determinant $1$ and has coefficients in $R_{0k}$, $\lambda^\dagger_{W',\eta_0 \eta_k}$ is elementary
by   Corollary \ref{cor:Blochcor}.  
Since $ {\rm St}( R_{012}[ G] )$
is a central extension of $ {\rE}( R_{012}[
G] )$ by  $ {\rK}_{2}( R_{012}[ G] )$, the conjugation action of a lift of $\lambda^\dagger_{W',\eta_0 \eta_k}$ to  
$ {\rm St}( R_{012}[ G] )$ does not depend on the choice of this lift. Furthermore,
this conjugation action is trivial on  ${\rK}_{2}( R_{012}[ G])$.  We conclude that
in the above recipe for computing $\pi_*$, we are free to replace $r_0$ by $r_k$ when computing
components at Parshin chains $\eta$ of $\mathbb{P}^1$ which involve $\eta_k$.

The first step in showing that $x \to r_0(x)$ gives a well-defined homomorphism $\pi_*$ on second adelic Chow groups is to show that
if $x \in  {\rK}_{2}^{\prime
}( \mathbb{A}_{Y,012}[ G] ) $, then $r_0(x) \in  {\rK}_{2}^{\prime
}( \mathbb{A}_{\mathbb{P}^1,012}[ G] )$. From Definition \ref{defrestrictedproducts},
we see that this assertion amounts to saying that if $x$ satisfies 
conditions (PK1) and (PK2) then $r_0(x)$ satisfies these conditions when $Y$ is replaced by $\mathbb{P}^1$.
Consider condition (PK1).  For all but finitely many codimension $1$ points $\eta_1$ on $\mathbb{P}^1$,
the component $x_{\eta'_0 \eta'_1\eta'_2}$ of $x$ at each Parshin triple of $Y$ for which $\eta'_1$ lies above $\eta_1$ will satisfy the condition 
in (PK1), namely 
\begin{equation}
\label{eq:PK1x}
x_{\eta'_0\eta'_{1}\eta'_{2}}\in  {\rK}_{2}  ( \hat{
\mathcal{O}}_{Y,\eta'_{1}\eta'_{2}} [ G ]  )^\flat
\end{equation}
for all $\eta'_{2}\in \overline{\eta}'_{1}$.  In determining the component $r_0(x)_{\eta_0 \eta_1 \eta_2}$
we are free to replace $r_0$ by the homomorphism $r_1$ defined above using local bases at the point  $\eta_1$.  Since $r_1$
comes from an algebra homomorphism 
$\what\O_{Y,\eta_1\eta_2}[G] \to \mathrm{M}_d(\what\O_{\mathbb{P}^1,\eta_1\eta_2}[G])$,
we see that (\ref{eq:PK1x}) implies $r_0(x)$ satisfies (PK1) for $\mathbb{P}_1$. The
remaining assertions one must prove in order to show $\pi_*$ is a well defined
homomorphism from ${\rm CH}^2_{\Bbb A}(Y[G])$ to $ {\rm CH}^2_{\Bbb A}(\mathbb{P}^1[G])$
can be proved in a similar way.

Let $\eta = (\eta_0,\eta_1,\eta_2)$ be a (non-degenerate) Parshin triple on $\mathbb{P}^1 $.
Then $L = \hat{\O}_{\mathbb{P}^1, \eta}$ is the total  fractions of the product of discrete valuation rings $R = \hat{\O}_{\mathbb{P}^1,\eta_1\eta_2}$.
We suppose that $\eta_2$ has residue characteristic $p$.  Then $N = L \otimes_{\O_{\mathbb{P}}^1} \pi_* \O_Y$ is a product of the
fields given by the multicompletions of $\O_Y$ at the Parshin triples of $Y$ lying over $\eta$.    We must
show that  
there is a commutative diagram  
\begin{equation}
\label{eq:pushtri2}
 { \xymatrix {
 {\rK}_2(N[G])\ar[r]^(.51){\pi_*}\ar[dr]_(.41){(f \circ \pi)_*}& {\rK}_2(L[G])
\ar[d]^(.48){f_*}\\
&
 {\rK}_1(\mathbb{Q}_p[G]).
}}
\end{equation}
Since $N$ and $L$ are products of fields of characteristic $0$, we can reduce to
the case in which $G$ is the trivial group by Morita equivalence.  There are now
two cases to consider, both of which are dealt with by \cite{KatoRes}.  If
$\eta_1$ is horizontal, then $f_*$ and $(f \circ \pi)_*$ are tame symbols,
and (\ref{eq:pushtri2}) is commutative by  \cite[Lemma 3]{KatoRes}.  If $\eta_1$ is vertical,
then $f_*$ and $(f \circ \pi)_*$ are Kato's residue maps, and the result we need
is shown on page 160 of \cite{KatoRes}, four lines above Proposition 3.  In fact, in this
case, the res map for $N$ is constructed from the res map on $L$ via
the norm map $\pi_*$.
\end{proof}

\begin{proposition} 
\label{prop:state} Assume the notation and hypotheses of Proposition \ref{prop:basec} and Lemma \ref{lem:pushpat} above.   Then we have 
\begin{equation}
\label{eq:preciser} \pi_*(c_2(\E)) + c_2(\V' \oplus \V) =  c_2(\pi_* (\E) \oplus \V).
\end{equation}
\end{proposition}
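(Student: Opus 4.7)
The plan is to work at one Parshin triple $\eta = (\eta_0,\eta_1,\eta_2)$ on $\mathbb{P}^1$ at a time and establish the local Steinberg identity
\begin{equation}
\label{eq:localidentity}
z(\tilde\lambda^\sharp)_{012} \;=\; z(\tilde\Lambda^\sharp)_{012}\cdot r_0^\sharp\!\left(z(\tilde\mu)_{012}\right)
\end{equation}
in $\rK_2(\hat\O_{\mathbb{P}^1,\eta_0\eta_1\eta_2}[G])$, for suitable compatible lifts. Assembling over all Parshin triples of $\mathbb{P}^1$ and invoking Lemma \ref{lem:pushpat} (which identifies $\pi_*$ on second adelic Chow groups with the componentwise application of the algebra homomorphism $r_0$) will then give the asserted identity (\ref{eq:preciser}) in $\mathrm{CH}^2_{\mathbb{A}}(\mathbb{P}^1[G])$.

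Concretely, using the notation of \S \ref{s:basechange}, I first choose bases $\{e^h_{\eta'_i}\}$ realizing the elementary structure on $\E$, with transitions $\mu_{ij}=\bigoplus_{\eta'_i,\eta'_j}\mu_{\eta'_i\eta'_j}$, and bases $\{a_{in}\}$ for $\pi_*\O_Y$ over $\O_{\mathbb{P}^1}$ as in the proof of Proposition \ref{prop:basec}, with associated (constant-coefficient) transitions $\Lambda_{ij}$ and $\Lambda^\sharp_{ij}$ for $\V'\oplus\V$. Equation (\ref{eq:fourteen}) specialised to $k=0$ reads
\[
\lambda^\sharp_{ij} \;=\; \Lambda^\sharp_{i0}\,r^\sharp_0(\mu_{ij})\,\Lambda^\sharp_{0j}.
\]
Fix lifts $\tilde\mu_{\eta'_i\eta'_j}\in\mathrm{St}(\hat\O_{Y,\eta'_i\eta'_j}[G])$ satisfying Proposition \ref{propae}(2) for $\E$, with direct sum $\tilde\mu_{ij}$; fix lifts $\tilde\Lambda^\sharp_{ij}\in\mathrm{St}(\hat\O_{\mathbb{P}^1,\eta_i\eta_j})$ satisfying Proposition \ref{propae}(2) for $\V'\oplus\V$, normalising so that $\tilde\Lambda^\sharp_{00}=1$ and $\tilde\Lambda^\sharp_{ij}=\tilde\Lambda^\sharp_{i0}\cdot\tilde\Lambda^\sharp_{0j}$ (any modification this entails is by elements in the denominators of Definition \ref{def:ith}, harmless by Theorem \ref{thm25}(a)). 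Since $r_0$ is an algebra homomorphism, it induces a group homomorphism $r^\sharp_0$ of Steinberg groups (after standard stabilisation), and I set
\[
\tilde\lambda^\sharp_{ij} \;:=\; \tilde\Lambda^\sharp_{i0}\cdot r^\sharp_0(\tilde\mu_{ij})\cdot \tilde\Lambda^\sharp_{0j}.
\]
Writing $A_i=\tilde\Lambda^\sharp_{i0}$, $C_j=\tilde\Lambda^\sharp_{0j}$, $B_{ij}=r^\sharp_0(\tilde\mu_{ij})$, so that $A_0=1$, a direct manipulation gives
\[
z(\tilde\lambda^\sharp)_{012} \;=\; B_{02}B_{12}^{-1}\bigl(A_1^{-1}C_1^{-1}\bigr)B_{01}^{-1}.
\]
The element $A_1C_1$ lifts $\Lambda^\sharp_{10}\Lambda^\sharp_{01}=1$, so it is central (in $\rK_2$); hence $A_1^{-1}C_1^{-1}$ equals the central element $\kappa^{-1}$, which an identical computation shows equals $z(\tilde\Lambda^\sharp)_{012}$. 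Centrality of $\rK_2$ together with the homomorphism property of $r^\sharp_0$ then yields (\ref{eq:localidentity}).

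Taking the product over all Parshin triples on $\mathbb{P}^1$ and pushing to $\mathrm{CH}^2_{\mathbb{A}}(\mathbb{P}^1[G])$, the two factors on the right of (\ref{eq:localidentity}) represent $c_2(\V'\oplus\V)$ and $\pi_*(c_2(\E))$ respectively (the latter by Lemma \ref{lem:pushpat}), giving $c_2(\pi_*\E\oplus\V)=c_2(\V'\oplus\V)+\pi_*(c_2(\E))$. The main technical obstacle lies not in the algebraic identity itself, which is quite clean, but in verifying that the chosen lifts $\tilde\lambda^\sharp_{ij}$ satisfy the conditions of Proposition \ref{propae}(2) for the bundle $\pi_*\E\oplus\V$ on $\mathbb{P}^1$: one must track how divisor supports and the restricted product conditions (PK1), (PK2) behave under $r_0$, using that $\pi$ pushes the divisor $D\subset Y$ (on which $\tilde\mu_{ij}$ behaves well) to a divisor $\pi(D)\subset\mathbb{P}^1$, and that the constant-coefficient nature of the $\Lambda^\sharp_{ij}$ ensures their good behaviour at almost all Parshin pairs. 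Any remaining discrepancies are absorbed in the denominator subgroups, by Theorem \ref{thm25}(a,b) applied to elementary base changes on $\mathbb{P}^1$.
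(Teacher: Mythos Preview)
Your local identity (\ref{eq:localidentity}) is algebraically valid with the ``normalised'' lifts you propose, but the gap lies precisely in the normalisation step, which is not harmless.  The normalised lifts $\tilde\Lambda^\sharp_{12} = A_1 C_2$ and $\tilde\lambda^\sharp_{12} = A_1\, r_0^\sharp(\tilde\mu_{12})\, C_2$ are products of elements from $\mathrm{St}(\hat\O_{01}[G])$ and $\mathrm{St}(\hat\O_{02}[G])$; they live only in $\mathrm{St}(\hat\O_{012}[G])$, \emph{not} in $\mathrm{St}(\hat\O_{12}[G])$ as Proposition~\ref{propae}(2) requires.  Theorem~\ref{thm25}(a) guarantees independence of the Chern class only among lifts satisfying those integrality constraints, so it does not let you pass from acceptable lifts to your normalised ones.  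Concretely, the discrepancy between your $A_1C_2$ and an acceptable lift $s_{12}(\Lambda^\sharp_{12})$ is an element of $\rK_2(\hat\O_{012}[G])$ that in general lies in none of the denominator pieces $\rK_2'(\mathbb{A}_{ij}[G])^\flat$.  A symptom: with your normalisation the Steinberg cocycle for $\V'\oplus\V$ collapses to $A_1^{-1}C_1^{-1}\in\rK_2(\hat\O_{01}[G])$ at every triple, and since $\Lambda^\sharp_{01}=1$ for almost all $\eta_1$ this represents the \emph{trivial} class in $\mathrm{CH}^2_{\mathbb{A}}(\mathbb{P}^1[G])$.  But $c_2(\V'\oplus\V)$ has no reason to vanish (only its pushdown $f_*c_2(\V'\oplus\V)$ vanishes, by Theorem~\ref{proHorr}).

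The paper's proof in \S\ref{s:cocyclecal} confronts exactly this difficulty.  Rather than using $r_0$ uniformly, it uses $r_2$ for the $(1,2)$ transition (see item~(D), equation~(\ref{eq:sharp11p})), so that every lift stays in the correct Steinberg group.  The price is that no exact pointwise identity like your (\ref{eq:localidentity}) holds; instead one applies the two--cocycle relation (Lemma~\ref{cocyclele}) twice to produce explicit error terms $z(r_0^\sharp(\mu_{01}),\Lambda^\sharp_{01})$ and $z(r_0^\sharp(\mu_{02}),\Lambda^\sharp_{02})$, and then verifies separately (equations~(\ref{eq:zip}) and~(\ref{eq:zip2})) that each of these lands in the restricted product $\rK_2'(\mathbb{A}_{01}[G])$ or $\rK_2'(\mathbb{A}_{02}[G])$.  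The final telescoping (\ref{eq:uglier})--(\ref{eq:uglier3}) then yields the congruence $z(\lambda^\sharp)\cdot z(r_0^\sharp(\mu))^{-1}\equiv z(\Lambda^\sharp)$ modulo the denominator.  In short, the identity you aim for is only true up to controlled error terms, and identifying where those errors live is the substance of the proof.
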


The proof of this result will be completed in \S \ref{s:cocyclecal}.

\subsubsection{}
We now summarise how
the results proved thus far will reduce the proof of Theorem  \ref{thm:trystate} to the case of $Y = {\Bbb P}^1_\Z$.

Part (i) of Theorem \ref{thm:trystate2} follows from Proposition  \ref{prop:basec} (ii).  Part (ii) of Theorem 
\ref{thm:trystate2} is shown by Lemma \ref{lem:pushpat} and Proposition \ref{prop:state}.  The 
equalities in part (iii) of Theorem \ref{thm:trystate2} follow from Proposition  \ref{prop:basec} (i).
The equalities in part (iv) will be shown in Theorem \ref{proHorr} of the next section using Proposition \ref{prop:basec} (ii)
to show that hypothesis (b) of Theorem \ref{proHorr} can be stably satisfied for $\F = \pi_* \E \oplus \V$
and $\F = \V' \oplus \V$.

\subsection{Proof of Proposition \ref{prop:state}.}
\label{s:cocyclecal}

In this section we will prove Proposition \ref{prop:state} 
via cocycle calculations.  These calculations will require
repeated use of Lemma \ref{cocyclele}, and in this Lemma
one must make choices of various lifts in order for the
identity in the Lemma to apply.  We   start by forming adelic cocycles that give representatives for the classes $c_2(\pi_*\E\oplus \V)$, $\pi_*(c_2(\E))$ and $c_2(\V'\oplus \V)$ in ${\rm CH}^{2}_{\mathbb{A}}(\mathbb{P}^1[G])$.
These will be given by choosing lifts to suitable Steinberg groups of  elementary transition
matrices which are as in Proposition \ref{propae}. These lifts have to satisfy the conditions of this
 Proposition \ref{propae} (2) with respect to some divisor $\Delta$ in $\PP^1$ which contains the fibers over
the primes dividing the order of $G$.  
We will call such lifts {\sl acceptable}. By Theorem \ref{thm25},
we can calculate using any set of acceptable lifts. 

To make the notation more clear we will use $s_{i_1\ldots i_k}(\lambda)$ instead of $\tilde \lambda$
for a lift to the  Steinberg group associated to a Parshin chain $\eta_{i_1}\ldots\, \eta_{i_k}$ 
of an  elementary matrix $\lambda$. (Although the notation might be suggesting this, 
we are not choosing sections of the Steinberg sequence.) 
Recall that we have  elementary transition matrices $\lambda^\sharp_{ij}$ for the $\O_{{\mathbb P}^1}[G]$-bundle
$\pi_*\E\oplus\V$.  By Proposition \ref{propae} we can choose acceptable lifts:
\begin{itemize}

\item[A)] $s_{01}(\lambda_{01}^\sharp)$, $s_{02}(\lambda^\sharp_{02})$ of $\lambda_{01}^\sharp$, $\lambda_{02}^\sharp$.

\end{itemize}
Similarly, for $\V'\oplus\V$ and its transitions we can choose acceptable lifts 
\begin{itemize}

\item[B)] $s_{ij}(\Lambda^\sharp_{ij})$ of $\Lambda_{ij}^\sharp$.

\end{itemize}
We can also choose acceptable lifts 
\begin{itemize}

\item[C)]  $s_{01}(r_0^\sharp(\mu_{01}))$, $s_{12}(r_2^\sharp(\mu_{12}))$, $s_{02}(r_0^\sharp(\mu_{02}))$
of the matrices $r_0^\sharp(\mu_{01})$, $r_2^\sharp(\mu_{12})$, $r_0^\sharp(\mu_{02})$.

\end{itemize} 

(These last three matrices in (C) are integral in the sense of Proposition \ref{propae} (1) with respect to some divisor $\Delta$
 in $\PP^1$ which contains the fibers over the primes dividing the order of $G$.  This follows since
$\mu_{ij}$ are  elementary transition matrices for the $\O_Y[G]$-bundle and so they satisfy the
conclusion of Proposition \ref{propae} (1) over $Y$ for a divisor $D$ on $Y$. Indeed, it is now enough 
to take any  $\Delta$ that contains   the image of $D$ under $\pi: Y\to \PP^1$ together with
the complement of the open of $\PP^1$ where the generic basis of $\pi_*\O_Y$ involved in the choice of $r_0$ 
is actually a basis.)

Starting from these lifts we now also consider lifts of some additional elements as follows:
\begin{itemize}

\item[D)] We lift $\lambda_{12}^\sharp=\Lambda^\sharp_{12}\cdot r_2^\sharp(\mu_{12})$ by setting 
\begin{equation}\label{eq:sharp11p}
s_{12}(\lambda_{12}^\sharp):=s_{12}(\Lambda^\sharp_{12})\cdot s_{12}(r_2^\sharp(\mu_{12})).
\end{equation}

\item[E)] We lift $r_0^\sharp(\mu_{12})=\Lambda_{02}^\sharp\cdot r_2^\sharp(\mu_{12})\cdot (\Lambda_{02}^\sharp)^{-1}$ by setting
\begin{equation}\label{eq:sharp12}
s_{012}(r_0^\sharp(\mu_{12})):=s_{02}(\Lambda_{02}^\sharp)\cdot s_{12}(r_2^\sharp(\mu_{12}))\cdot s_{02}(\Lambda_{02}^\sharp)^{-1}.
\end{equation}

\item[F)] We lift $\Lambda_{01}^\sharp\cdot \lambda_{12}^\sharp=r_0^\sharp(\mu_{12})\cdot \Lambda_{02}^\sharp=\Lambda_{02}^\sharp\cdot r_2^\sharp(\mu_{12})$ (see   (\ref{eq:fourteen})) by setting
 \begin{equation}\label{eq:s012} 
s_{012}(\Lambda_{01}^\sharp\cdot \lambda_{12}^\sharp)=s_{012}(r_0^\sharp(\mu_{12})\cdot \Lambda_{02}^\sharp)=s_{012}(\Lambda_{02}^\sharp\cdot r_2^\sharp(\mu_{12})):=s_{02}(\Lambda_{02}^\sharp)\cdot s_{12}(r_2^\sharp(\mu_{12}).
\end{equation}

\end{itemize}

Using these lifts, we can now calculate our various cocycles.
We denote by $z(\lambda^\sharp)$  the adelic element 
with components
 \begin{equation}
z(\lambda^\sharp)_{0,1,2}=z( \lambda^\sharp_{01},\lambda^\sharp_{12}) :=s_{02}(\lambda_{02}^\sharp)\cdot s_{12}(\lambda_{12}^\sharp)^{-1}\cdot s_{01}(\lambda_{01}^\sharp)^{-1}.
\end{equation}
Similarly consider 
$z(\Lambda^\sharp)$ to be the adelic element 
with components
 \begin{equation}
z(\Lambda^\sharp)_{0,1,2}=z( \Lambda^\sharp_{01},\Lambda^\sharp_{12}) :=s_{02}(\Lambda_{02}^\sharp)\cdot s_{12}(\Lambda_{12}^\sharp)^{-1}\cdot s_{01}(\Lambda_{01}^\sharp)^{-1}.
\end{equation}
Also consider the adelic element $z(r^\sharp_0(\mu) )$ with components
 \begin{equation}
z(r_0^\sharp(\mu) )_{0,1,2}= z(r_0^\sharp(\mu_{01}), r_0^\sharp(\mu_{12})):=s_{02}(r_0^\sharp(\mu_{02}))\cdot s_{012}(r_0^\sharp(\mu_{12}))^{-1}\cdot 
s_{01}(r_0^\sharp(\mu_{01}))^{-1}.
\end{equation}
The class of $z(\lambda^\sharp)$  in ${\rm CH}^{2}_{\mathbb{A}}(\mathbb{P}^1[G])$
corresponds to $c_{2}( \pi _{\ast }\mathcal{E\oplus V})$. By the construction in 
Lemma \ref{lem:pushpat}, the class of $z( {r}^\sharp_{0}( \mu )) $ corresponds to $\pi _{\ast
}c_{2}( \mathcal{E})$.  Finally, the class of 
$z({\Lambda}^\sharp )$ corresponds to 
$c_2(\mathcal{V}' \oplus \mathcal{V})$. Thus Proposition \ref{prop:state}
will follow if we can show that  
\begin{equation}
\label{eq:thepoint}
z( {\lambda }^\sharp )  \cdot z(  {r}^\sharp_{0}( \mu  ) )^{-1} \cdot  z({\Lambda}^\sharp )^{-1} \in 
\prod_{   0\leq i<j\leq 2} {\rK}'_{2}  ( \mathbb{A}%
_{\mathbb{P}^1,ij} [ G ]  )^\flat.
\end{equation}

\begin{lemma} We have an equality 
\begin{eqnarray}
\label{eq:luminy1}
z( \lambda^\sharp_{01},\lambda^\sharp_{12}) z(
r^\sharp_{0}( \mu _{01}) ,\Lambda^\sharp_{01})
&=&z( r^\sharp_{0}( \mu _{01}) \Lambda^\sharp_{01},\lambda^\sharp_{12}) z( r^\sharp_{0}( \mu
_{01}) ,\Lambda^\sharp_{01})  \notag \\
&=&z( r^\sharp_{0}( \mu _{01}) ,\Lambda^\sharp_{01} \lambda^\sharp_{12}) z( \Lambda^\sharp_{01},%
\lambda^\sharp_{12})
\end{eqnarray}%
where%
\begin{eqnarray*}
z( \lambda^\sharp_{01},\lambda^\sharp_{12})
&=&z( r^\sharp_{0}( \mu _{01}) \Lambda^\sharp
_{01},\lambda^\sharp_{12}) =s_{02}( \lambda^\sharp%
_{02}) s_{12}( \lambda^\sharp_{12}) ^{-1}s_{01}(
\lambda^\sharp_{01}) ^{-1} \\
z( r^\sharp_{0}( \mu _{01}) ,\Lambda^\sharp_{01}) &=&s_{01}( \lambda^\sharp_{01}) s_{01}(
\Lambda^\sharp_{01})^{-1}s_{01}( r^\sharp_{0}(
\mu _{01}) ) ^{-1} \\
z( r^\sharp_{0}( \mu _{01}) ,\Lambda^\sharp_{01}%
\lambda^\sharp_{12}) &=&s_{02}( \lambda^\sharp
_{02}) s_{012}( \Lambda^\sharp_{01} \lambda^\sharp
_{12}) ^{-1}s_{01}( r^\sharp_{0}( \mu _{01})
) ^{-1} \\
z( \Lambda^\sharp_{01},\lambda^\sharp_{12})
&=&s_{012}( \Lambda^\sharp_{01} \lambda^\sharp_{12})
s_{12}( \lambda^\sharp_{12}) ^{-1}s_{01}( 
\Lambda^\sharp_{01}) ^{-1}
\end{eqnarray*}%
with all lifts as defined in (A)-(F) above.  \end{lemma}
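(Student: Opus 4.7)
The first equality in the displayed identity is essentially tautological: from (\ref{eq:fourteen}) we have the factorisation $\lambda^\sharp_{01}=r^\sharp_{0}(\mu_{01})\,\Lambda^\sharp_{01}$, so the elements $\lambda^\sharp_{01}$ and $r^\sharp_{0}(\mu_{01})\,\Lambda^\sharp_{01}$ coincide in $\rE(\hat{\mathcal O}_{\PP^1,\eta_0\eta_1}[G])$, and the chosen lift $s_{01}(\lambda^\sharp_{01})$ will serve simultaneously as the lift of $r^\sharp_{0}(\mu_{01})\,\Lambda^\sharp_{01}$ appearing in the cocycle $z(r^\sharp_{0}(\mu_{01})\,\Lambda^\sharp_{01},\lambda^\sharp_{12})$. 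Thus no new choices are involved for the left equality.

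The substance of the lemma is the right-hand equality, and the plan is to deduce it from the abstract two-cocycle identity of Lemma \ref{cocyclele}. I would apply that lemma inside the Steinberg group $\St(\hat{\mathcal O}_{\PP^1,\eta_0\eta_1\eta_2}[G])$ (which sits as a central extension of $\rE(\hat{\mathcal O}_{\PP^1,\eta_0\eta_1\eta_2}[G])$ by $\rK_2(\hat{\mathcal O}_{\PP^1,\eta_0\eta_1\eta_2}[G])$) to the three elements
\[
c=r^\sharp_{0}(\mu_{01}),\qquad d=\Lambda^\sharp_{01},\qquad b=\lambda^\sharp_{12}.
\]
Here I use that, by (\ref{eq:fourteen}), one has $cd=\lambda^\sharp_{01}$ and $cdb=\lambda^\sharp_{01}\lambda^\sharp_{12}=\lambda^\sharp_{02}$, while $db=\Lambda^\sharp_{01}\lambda^\sharp_{12}$. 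Lemma \ref{cocyclele} then yields
\[
z(cd,b)\,z(c,d)\;=\;z(c,db)\,z(d,b),
\]
which is precisely the identity to be proved.

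The only thing left to check is that the four cocycle expressions displayed at the end of the statement agree with the ones produced by Lemma \ref{cocyclele} once one feeds in the specific lifts (A)--(F) prescribed above. This is an exercise in matching the data: $cd=\lambda^\sharp_{01}$ uses the lift $s_{01}(\lambda^\sharp_{01})$ from (A); $cdb=\lambda^\sharp_{02}$ uses the lift $s_{02}(\lambda^\sharp_{02})$ from (A); $db=\Lambda^\sharp_{01}\lambda^\sharp_{12}$ uses the lift $s_{012}(\Lambda^\sharp_{01}\lambda^\sharp_{12})$ introduced in (F); and the lifts of $c$, $d$, $b$ are $s_{01}(r^\sharp_{0}(\mu_{01}))$, $s_{01}(\Lambda^\sharp_{01})$, $s_{12}(\lambda^\sharp_{12})$ from (C), (B), (D). Substituting these into the defining formula $z(x,y)=\widetilde{xy}\cdot\tilde y^{-1}\cdot\tilde x^{-1}$ reproduces each of the four expressions displayed in the lemma. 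The main (though very mild) obstacle is purely bookkeeping: making sure that the lifts chosen in (A)--(F) are mutually compatible across the different Parshin chains, so that all six elements live in a common ambient Steinberg group where Lemma \ref{cocyclele} can be legitimately invoked; once this is verified, no further calculation is required.
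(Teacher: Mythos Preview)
Your proposal is correct and matches the paper's own proof essentially verbatim: the paper also observes that only the second equality needs justification and obtains it by applying Lemma~\ref{cocyclele} with $c=r^\sharp_{0}(\mu_{01})$, $d=\Lambda^\sharp_{01}$, $b=\lambda^\sharp_{12}$, $cd=\lambda^\sharp_{01}$, $db=\Lambda^\sharp_{01}\lambda^\sharp_{12}$, $cdb=\lambda^\sharp_{02}$, with the lifts from (A)--(F). Your additional remark matching each displayed cocycle expression against the prescribed lifts is a little more explicit than the paper, but the argument is the same.
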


\begin{proof} We just need to show the second equality; this is an application of Lemma \ref{cocyclele}
for $c=r_0^\sharp(\mu_{01})$, $d=\Lambda_{01}^\sharp$, $b=\lambda_{12}^\sharp$, $cd=\lambda_{01}^\sharp$,
$db=\Lambda_{01}^\sharp\lambda_{12}^\sharp$, $cdb=\lambda_{02}^\sharp$ with their lifts chosen as in (A)-(F). 
\end{proof}
\smallskip

We note that $z( r^\sharp_{0}( \mu _{01}) , 
\Lambda^\sharp_{01}) \in \rK_{2}( R_{01}[G])$; in
fact, for almost all $\eta _{1}$, we have $r^\sharp_{0}( \mu
_{01}) \in {\rm GL}(  R_1[G])$, $\Lambda^\sharp_{01}\in {\rm GL}( R_1[ G ] )$, and so
\begin{equation}\label{eq:zip}
\prod\nolimits_{\eta _{1}}z( r^\sharp_{0}( \mu _{0\eta
_{1}}) ,\Lambda^\sharp_{0\eta _{1}}) \in  {\rK}_{2}^{\prime
}( \mathbb{A}_{\PP^1,01} [ G] ) 
\end{equation}%
which lies in the denominator of ${\rm CH}^{2}_{{\mathbb A}}(\PP^1[G])$.

\begin{lemma} We have an identity 
\begin{align}\label{old8.43}
z( r^\sharp_{0}( \mu _{01}) , r^\sharp_{0}(
\mu _{12}) \Lambda^\sharp_{02}) z( r^\sharp_{0}( \mu _{12}) ,\Lambda^\sharp_{02}) &=z(
r^\sharp_{0}( \mu _{01}) r^\sharp_{0}( \mu_{12}), \Lambda^\sharp_{02}) z( r^\sharp_{0}( \mu _{01}) ,r^\sharp_{0}( \mu _{12}) ) \\
&=z( r^\sharp_{0}( \mu _{02}) ,\Lambda^\sharp_{02}) z( r^\sharp_{0}( \mu _{01}) ,r^\sharp_{0}( \mu _{12}) )\notag
\end{align}%
where%
\begin{eqnarray*}
z( r^\sharp_{0}( \mu _{01}) ,r^\sharp_{0}(
\mu _{12}) \Lambda^\sharp_{02}) &=&s_{02}( 
r^\sharp_{0}( \mu _{02}) \Lambda^\sharp_{02}) s_{012}(
r^\sharp_{0}( \mu _{12}) \Lambda^\sharp_{02})
^{-1}s_{01}( r^\sharp_{0}( \mu _{01}) ) ^{-1} \\
z( r^\sharp_{0}( \mu _{12}) ,\Lambda^\sharp_{02}) &=&s_{012}( r^\sharp_{0}( \mu _{12})
\Lambda^\sharp_{02}) s_{02}( \Lambda^\sharp_{02})
^{-1} s_{012}( r^\sharp_{0}( \mu _{12}) ) ^{-1} \\
z( r^\sharp_{0}( \mu _{01}) r^\sharp_{0}( \mu
_{12}) ,\Lambda^\sharp_{02}) &=&z( r^\sharp_{0}( \mu _{02}) ,\Lambda^\sharp_{02}) =s_{02}(
r^\sharp_{0}( \mu _{02}) \Lambda^\sharp_{02})
s_{02}( \Lambda^\sharp_{02}) ^{-1}s_{02}( r^\sharp_{0}( \mu _{02}) ) ^{-1} \\
z( r^\sharp_{0}( \mu _{01}) ,r^\sharp_{0}(
\mu _{12}) ) &=&s_{02}( r^\sharp_{0}( \mu
_{02}) ) s_{012}( r^\sharp_{0}( \mu _{12})
) ^{-1}s_{01}( r^\sharp_{0}( \mu _{01}) )
^{-1}
\end{eqnarray*}%
 with the lifts  defined as in (A)-(F). Note here that 
 $r_0^\sharp(\mu _{02}) \Lambda^\sharp_{02}=\lambda^\sharp_{02}$
 and therefore we take $s_{02}(r_0^\sharp(\mu _{02}) \Lambda^\sharp_{02})=s_{02}(\lambda^\sharp_{02})$.
 Recall that we also have $r^\sharp_{0}( \mu _{01}) r^\sharp_{0}( \mu
_{12})=r^\sharp_{0}(\mu_{02})$.
 \end{lemma}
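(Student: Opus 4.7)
The plan is to derive the identity directly from the abstract two-cocycle relation of Lemma \ref{cocyclele} applied to the triple
$$c = r^\sharp_0(\mu_{01}), \qquad d = r^\sharp_0(\mu_{12}), \qquad b = \Lambda^\sharp_{02}$$
viewed as elements of $\rE(R_{012}[G])$. Since $\mu_{02} = \mu_{01}\mu_{12}$ and $r^\sharp_0$ is a (block) ring homomorphism, we have $cd = r^\sharp_0(\mu_{02})$, which gives the second equality in \eqref{old8.43} automatically. Moreover $db = r^\sharp_0(\mu_{12})\Lambda^\sharp_{02}$, and the first identity in \eqref{eq:fourteen} with $k=0$, $i=0$, $j=2$ gives $cdb = r^\sharp_0(\mu_{02})\Lambda^\sharp_{02} = \lambda^\sharp_{02}$.

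First I would identify the pre-images in $\St(R_{012}[G])$ to feed into Lemma \ref{cocyclele}. Using the lifts fixed in items (A)--(F) above, the natural choices are
$$\tilde c = s_{01}(r^\sharp_0(\mu_{01})),\quad \tilde d = s_{012}(r^\sharp_0(\mu_{12})),\quad \tilde b = s_{02}(\Lambda^\sharp_{02}),$$
$$\widetilde{cd} = s_{02}(r^\sharp_0(\mu_{02})),\quad \widetilde{db} = s_{012}(r^\sharp_0(\mu_{12})\Lambda^\sharp_{02}),\quad \widetilde{cdb} = s_{02}(\lambda^\sharp_{02}),$$
where the last is identified with $s_{02}(r^\sharp_0(\mu_{02})\Lambda^\sharp_{02})$ via $cdb = \lambda^\sharp_{02}$. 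Each of these elements is defined either directly in (A)--(F) or as the image of such a lift under the canonical maps $\tau^{012}_{ij}: \St(R_{ij}[G]) \to \St(R_{012}[G])$ of \S\ref{s:adco} applied to the Steinberg groups; crucially, the lifts chosen in (E) and (F) were designed precisely so that $\widetilde{db}$ factors as $\tilde d$ times the translates required.

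Next I would spell out the four expressions $z(cd,b)$, $z(c,d)$, $z(c,db)$, $z(d,b)$ obtained from Lemma \ref{cocyclele} using these lifts and compare termwise with the formulas in the statement. Each matches on the nose: for example $z(cd,b) = \widetilde{cdb}\cdot\tilde b^{-1}\cdot\widetilde{cd}^{-1} = s_{02}(\lambda^\sharp_{02})\cdot s_{02}(\Lambda^\sharp_{02})^{-1}\cdot s_{02}(r^\sharp_0(\mu_{02}))^{-1}$, which is exactly $z(r^\sharp_0(\mu_{02}),\Lambda^\sharp_{02})$, and similarly for the others. Lemma \ref{cocyclele} then yields
$$z(cd,b)\cdot z(c,d) = z(c,db)\cdot z(d,b)$$
in $\rK_2(R_{012}[G])$, which after rewriting gives \eqref{old8.43}.

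No step should present real difficulty: the entire argument is a bookkeeping check that the six lifts picked in (A)--(F) are consistent pre-images in the single Steinberg group $\St(R_{012}[G])$ so that the abstract cocycle identity applies. The only point that deserves care is the identification $s_{02}(r^\sharp_0(\mu_{02})\Lambda^\sharp_{02}) = s_{02}(\lambda^\sharp_{02})$, which is forced by the factorization $\lambda^\sharp_{02} = r^\sharp_0(\mu_{02})\Lambda^\sharp_{02}$ and the fact that we take these to be literally the same lift.
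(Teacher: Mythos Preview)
Your proposal is correct and follows exactly the same approach as the paper: apply Lemma \ref{cocyclele} with $c=r_0^\sharp(\mu_{01})$, $d=r_0^\sharp(\mu_{12})$, $b=\Lambda^\sharp_{02}$ and the lifts from (A)--(F), noting that the second equality is immediate from $r^\sharp_0(\mu_{01})r^\sharp_0(\mu_{12})=r^\sharp_0(\mu_{02})$. Your write-up is in fact more explicit than the paper's one-line proof in spelling out which item of (A)--(F) supplies each lift.
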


\begin{proof} We just need to show the first identity. This follows
from Lemma \ref{cocyclele} applied to $c=r_0^\sharp(\mu_{01})$, $d=r_0^\sharp(\mu_{12})$,
$b=\Lambda^\sharp_{02}$, $cd=r_0^\sharp(\mu_{01})r_0^\sharp(\mu_{12})=r_0^\sharp(\mu_{02})$, $db=r_0^\sharp(\mu_{12})\Lambda^\sharp_{02}$,
$cdb=r_0^\sharp(\mu_{01})r_0^\sharp(\mu_{12})\Lambda_{02}^\sharp=r_0^\sharp(\mu_{02})\Lambda_{02}^\sharp=\lambda_{02}^\sharp$
and their lifts as specified in (A)-(F).  
 \end{proof}
 \smallskip

Note  that  $z( r^\sharp_{0}( \mu _{02}) , 
\Lambda^\sharp_{02}) \in \rK_{2}(R_{02}[G])$. 
In fact, the following stronger statement is true.  We chose lifts which are acceptable relative to some effective divisor $\Delta$ on $\mathbb{P}^1$ which contains
all the vertical fibers over primes which divide the order of $G$.  Therefore the lifts $s_{02}(r^\sharp_{0}( \mu _{02})$,
$s_{02}(\Lambda^\sharp_{02})$ and 
$s_{02}(r^\sharp_{0}( \mu _{02}) \cdot 
\Lambda^\sharp_{02})$ lie in ${\rm St}(R_2[\Delta^{-1}][G])$.
This implies that 
$z( r^\sharp_{0}( \mu _{02}) , 
\Lambda^\sharp_{02}) \in \rK_{2}(R_2[\Delta^{-1}][G])$
for all choices of $\eta_2$.
We conclude from this and Definition \ref{defrestrictedproducts}(b3) that 
\begin{equation}\label{eq:zip2}
\prod\nolimits_{\eta _{2}}z( r^\sharp_{0}( \mu _{0\eta
_{2}}) ,\Lambda^\sharp_{0\eta _{2}}) \in {\rK}_{2}^{\prime
}( \mathbb{A}_{\PP^1,02} [ G] ) 
\end{equation}%
which lies in the denominator of ${\rm CH}^{2}_{{\mathbb A}}( \PP^1[G])$.  

\begin{corollary}
\label{cor:hopeit}
There is a  congruence
\begin{equation}
\label{eq:rhsgood}
z(\lambda^\sharp) \cdot z(r^\sharp_0(\mu))^{-1} 
\equiv \prod_{(\eta_1,\eta_2)} z(r^\sharp_0(\mu_{12}),\Lambda^\sharp_{02})^{-1} \cdot z(\Lambda^\sharp_{01},\lambda^\sharp_{12})
\end{equation}
in $\mathrm{CH}_{\mathbb{A}}^2(\mathbb{P}^1[G])$.
\end{corollary}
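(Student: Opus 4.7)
The plan is to combine the two cocycle identities of the preceding lemmas to compute $z(\lambda^\sharp)_{0,1,2}\cdot z(r_0^\sharp(\mu))_{0,1,2}^{-1}$ for each Parshin triple, then argue that the resulting ``extra'' factors lie in the denominator of $\mathrm{CH}^2_{\mathbb A}(\mathbb P^1[G])$.

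First I would observe that the choices (A)--(F) of lifts are consistent in the following crucial way. From (\ref{eq:fourteen}) applied with $k=0$ we have the identity $\Lambda^\sharp_{01}\,\lambda^\sharp_{12}=r_0^\sharp(\mu_{12})\,\Lambda^\sharp_{02}$ in $\rE(R_{012}[G])$, and the lift $s_{012}(\Lambda^\sharp_{01}\,\lambda^\sharp_{12})=s_{012}(r_0^\sharp(\mu_{12})\,\Lambda^\sharp_{02})$ defined in (F) equals $s_{02}(\Lambda^\sharp_{02})\cdot s_{12}(r_2^\sharp(\mu_{12}))$. As a consequence, in the two cocycle identities just proved, the term $z(r_0^\sharp(\mu_{01}),\Lambda^\sharp_{01}\,\lambda^\sharp_{12})$ appearing in (\ref{eq:luminy1}) coincides with the term $z(r_0^\sharp(\mu_{01}),r_0^\sharp(\mu_{12})\,\Lambda^\sharp_{02})$ appearing in (\ref{old8.43}); I will denote this common element by $A$.

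Next I would rewrite the two identities as
\begin{equation*}
z(\lambda^\sharp_{01},\lambda^\sharp_{12})=A\cdot z(\Lambda^\sharp_{01},\lambda^\sharp_{12})\cdot z(r_0^\sharp(\mu_{01}),\Lambda^\sharp_{01})^{-1}
\end{equation*}
and
\begin{equation*}
z(r_0^\sharp(\mu_{01}),r_0^\sharp(\mu_{12}))=A\cdot z(r_0^\sharp(\mu_{12}),\Lambda^\sharp_{02})\cdot z(r_0^\sharp(\mu_{02}),\Lambda^\sharp_{02})^{-1}.
\end{equation*}
Taking the ratio of these expressions, and using that $\rK_2(R_{012}[G])$ is abelian so that the $A$ factors cancel, I obtain for each Parshin triple
\begin{equation*}
z(\lambda^\sharp)_{0,1,2}\cdot z(r_0^\sharp(\mu))_{0,1,2}^{-1}=z(r_0^\sharp(\mu_{12}),\Lambda^\sharp_{02})^{-1}\cdot z(\Lambda^\sharp_{01},\lambda^\sharp_{12})\cdot z(r_0^\sharp(\mu_{01}),\Lambda^\sharp_{01})^{-1}\cdot z(r_0^\sharp(\mu_{02}),\Lambda^\sharp_{02}).
\end{equation*}

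Finally, taking the product over all Parshin triples $(\eta_0,\eta_1,\eta_2)$, the last two factors on the right are precisely the kind of adelic elements discussed in (\ref{eq:zip}) and (\ref{eq:zip2}): they lie in $\rK_2'(\mathbb A_{\mathbb P^1,01}[G])^\flat$ and $\rK_2'(\mathbb A_{\mathbb P^1,02}[G])^\flat$ respectively, hence in the denominator of $\mathrm{CH}^2_{\mathbb A}(\mathbb P^1[G])$. This yields the asserted congruence. There is no real obstacle here: the argument is purely formal once the lifts are set up as in (A)--(F); the only point requiring attention is the compatibility of the two lifts of the common element $\Lambda^\sharp_{01}\,\lambda^\sharp_{12}=r_0^\sharp(\mu_{12})\,\Lambda^\sharp_{02}$, which is precisely what the prescription (F) was designed to guarantee.
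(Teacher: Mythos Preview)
Your proof is correct and follows essentially the same approach as the paper: both arguments combine the two cocycle identities, use the compatibility of lifts in (F) to identify $z(r_0^\sharp(\mu_{01}),\Lambda^\sharp_{01}\lambda^\sharp_{12})$ with $z(r_0^\sharp(\mu_{01}),r_0^\sharp(\mu_{12})\Lambda^\sharp_{02})$ and cancel this common factor, and then invoke (\ref{eq:zip}) and (\ref{eq:zip2}) to discard the remaining two factors into the denominator of $\mathrm{CH}^2_{\mathbb A}(\mathbb P^1[G])$. The only cosmetic difference is the order of operations: you first cancel the common factor as an exact equality and then pass to the congruence, whereas the paper first passes to congruences and then cancels.
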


\begin{proof} Because of (\ref{eq:zip}), (\ref{eq:luminy1}) gives the congruence
\begin{equation}
\label{eq:step1}
z(\lambda^\sharp)
\equiv \prod_{(\eta_1,\eta_2)} z( r^\sharp_{0}( \mu _{01}) \Lambda^\sharp_{01},\lambda^\sharp_{12})\equiv 
\prod_{(\eta_1,\eta_2)}z( r^\sharp_{0}( \mu _{01}) ,\Lambda^\sharp_{01} \lambda^\sharp_{12}) \cdot z( \Lambda^\sharp_{01},%
\lambda^\sharp_{12})  
\end{equation}
in $\mathrm{CH}_{\mathbb{A}}^2(\mathbb{P}^1[G])$. Because of (\ref{eq:zip2}), (\ref{old8.43}) gives the congruence
\begin{eqnarray}
\label{eq:step2}
 \prod_{(\eta_1,\eta_2)} z( r^\sharp_{0}( \mu _{01}) , r^\sharp_{0}(
\mu _{12}) \Lambda^\sharp_{02}) \cdot z( r^\sharp_{0}( \mu _{12}) ,\Lambda^\sharp_{02})  
 &\equiv& \prod_{(\eta_1,\eta_2)}  z(
r^\sharp_{0}( \mu _{01}) r^\sharp_{0}( \mu_{12}), \Lambda^\sharp_{02}) \cdot z( r^\sharp_{0}( \mu _{01}) ,r^\sharp_{0}( \mu _{12}) ) \nonumber \\
&\equiv& z( r^\sharp_{0}( \mu) ) 
\quad \mathrm{in}  \quad \mathrm{CH}_{\mathbb{A}}^2(\mathbb{P}^1[G]).
\end{eqnarray}
Multiply (\ref{eq:step1}) by the inverse of  (\ref{eq:step2}).  We conclude that 
$ z( \lambda^\sharp) \cdot z( r^\sharp_{0}( \mu))^{-1}  $ is equal to 
\begin{equation}
\label{eq:step3}
\prod_{(\eta_1,\eta_2)}z( r^\sharp_{0}( \mu _{01}) ,\Lambda^\sharp_{01} \lambda^\sharp_{12}) \cdot z( \Lambda^\sharp_{01},%
\lambda^\sharp_{12}) \cdot 
z( r^\sharp_{0}( \mu _{01}) , r^\sharp_{0}(
\mu _{12}) \Lambda^\sharp_{02})^{-1} \cdot z( r^\sharp_{0}( \mu _{12}) ,\Lambda^\sharp_{02})^{-1}.
\end{equation}
in $ \mathrm{CH}_{\mathbb{A}}^2(\mathbb{P}^1[G])$.
We have $\Lambda^\sharp_{01} \lambda^\sharp_{12} = r_0^\sharp(\mu_{12})\Lambda^\sharp_{02}$ by (\ref{eq:fourteen})
and by our choices two terms on the right side
of  (\ref{eq:step3}) 
cancel to give (\ref{eq:rhsgood}). 
\end{proof}
\smallskip

We now expand the right hand side of (\ref{eq:rhsgood}).  By definition we have
\begin{eqnarray}
\label{eq:uglier}
z(r^\sharp_0(\mu_{12}),\Lambda^\sharp_{02})^{-1} \cdot z(\Lambda^\sharp_{01},\lambda^\sharp_{12}) = \\
\left( s_{012}(r_0^\sharp(\mu_{12})) \cdot s_{02}(\Lambda^\sharp_{02}) \cdot s_{012}(r^\sharp_0(\mu_{12}) \cdot \Lambda^\sharp_{02})^{-1} \right ) \cdot 
(s_{012}(\Lambda^\sharp_{01} \lambda^\sharp_{12}) \cdot s_{12}(\lambda^\sharp_{12})^{-1} 
\cdot s_{01}(\Lambda^\sharp_{01})^{-1}).\nonumber
\end{eqnarray}
Notice that $\Lambda^\sharp_{01} \lambda^\sharp_{12} = r^\sharp_0(\mu_{12}) \Lambda^\sharp_{02}$ and $s_{012}(r^\sharp_0(\mu_{12}) \cdot \Lambda^\sharp_{02}) = s_{012}(\Lambda^\sharp_{01} \lambda^\sharp_{12})$
and so the middle terms on the right in (\ref{eq:uglier}) cancel.  This and
the expression for $s_{012}(r_0^\sharp(\mu_{12}))$ in (\ref{eq:sharp12}) show 
\begin{eqnarray}
\label{eq:uglier2}
z(r^\sharp_0(\mu_{12}),\Lambda^\sharp_{02})^{-1} \cdot z(\Lambda^\sharp_{01},\lambda^\sharp_{12}) = \\
 s_{02}(\Lambda^\sharp_{02}) \cdot s_{12}(r_2^\sharp(\mu_{12}))\cdot 
s_{02}(\Lambda^\sharp_{02})^{-1} \cdot s_{02}(\Lambda^\sharp_{02})
\cdot s_{12}(\lambda^\sharp_{12})^{-1} 
\cdot s_{01}(\Lambda^\sharp_{01})^{-1} = \nonumber\\
 s_{02}(\Lambda^\sharp_{02}) \cdot s_{12}(r_2^\sharp(\mu_{12}))
\cdot s_{12}(\lambda^\sharp_{12})^{-1} 
\cdot s_{01}(\Lambda^\sharp_{01})^{-1}
.\nonumber
\end{eqnarray}
We now use the expression for 
$s_{12}(\lambda^\sharp_{12})$ in (\ref{eq:sharp11p}) to have  
\begin{eqnarray}
\label{eq:uglier3}
z(r^\sharp_0(\mu_{12}),\Lambda^\sharp_{02})^{-1} \cdot z(\Lambda^\sharp_{01},\lambda^\sharp_{12}) = \\
 s_{02}(\Lambda^\sharp_{02}) \cdot s_{12}(r_2^\sharp(\mu_{12}))
\cdot\left ( s_{12}(\Lambda^\sharp_{12}) \cdot s_{12}(r_2^\sharp(\mu_{12})) \right )^{-1} 
\cdot s_{01}(\Lambda^\sharp_{01})^{-1}
 = \nonumber\\
  s_{02}(\Lambda^\sharp_{02}) 
\cdot  s_{12}(\Lambda^\sharp_{12})^{-1} \cdot s_{01}(\Lambda^\sharp_{01})^{-1} = \nonumber\\
z(\Lambda^\sharp_{02},\Lambda^\sharp_{01}).\nonumber
\end{eqnarray}
Plugging this into the right hand side of (\ref{eq:rhsgood}) shows 
(\ref{eq:thepoint}), and this completes the proof of Proposition
 \ref{prop:state}.  \endproof

\section{The proof of the theorem; bundles over ${\Bbb P}^1_\Z$}
\label{s:overp1}

\setcounter{equation}{0}

\subsection{Bundles over the projective line $\PP^1$}

Let $R$ be a commutative ring and let $G$ be a finite group.  Let $\PP^1 = \PP^1_R$  be the projective line over $\Spec(R)$.
Thus $\PP^1$ is covered by two affine patches  ${\Bbb A}^1_0 = \mathrm{Spec}(R[t])$ and ${\Bbb A}^1_\infty = \mathrm{Spec}(R[t^{-1}])$ glued along $\mathrm{Spec}(R[t,t^{-1}])$. If $L$ is a module for $ R[G]\otimes_R R[t]  = R[G][t]$ (resp. $R[G] \otimes_R R[t^{-1}]$), let
$\tilde L $   be the corresponding sheaf of $R[G]$-modules on ${\Bbb A}^1_0$ (resp. ${\Bbb A}^1_\infty$).
The following is a variation of a result of Horrocks.

\begin{theorem}
\label{thm:Horrocks}
Let $R$ be a finite field or a Dedekind ring with finite residue fields.   Let $\E$ be an $\O_{\PP^1_R}[G]$-bundle, 
i.e. a finitely generated locally free $\O_{\PP^1_R}[G]$-module.
Then there is a finitely generated locally free $R[G]$-module $\M$ such that
$$
 \E_{|{\Bbb A}^1_0}\simeq \widetilde{(\M\otimes_R R[t]) } \quad and \quad  \E_{|{\Bbb A}^1_\infty}\simeq \widetilde{ (\M\otimes_R R[t^{-1}])}  
.$$
In particular, if $R$ is a field or a local Dedekind ring then $\M$ is a free $R[G]$-module.
\end{theorem}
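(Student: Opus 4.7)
My plan is to prove this equivariant version of Horrocks' theorem in two stages: first reduce to the case that $R$ is local, then treat the local case by combining the Grothendieck splitting of vector bundles on $\PP^1_k$ with a Nakayama-type lifting argument. In all cases I would take
$\M := s^*\E$
for a fixed section $s:\Spec(R) \to \PP^1_R$, say the section $(1{:}1)$; this is a priori a finitely generated locally free $\O_{\Spec(R)}[G]$-module of rank $n$.

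For the reduction, I would observe that the desired isomorphisms $\E|_{\mathbb{A}^1_0} \simeq \widetilde{\M \otimes_R R[t]}$ and $\E|_{\mathbb{A}^1_\infty} \simeq \widetilde{\M \otimes_R R[t^{-1}]}$ are local on $\Spec(R)$, and that the formation of $\M = s^*\E$ commutes with flat base change on $R$ (since $\PP^1_R \to \Spec(R)$ is flat and $s$ is a section). Hence it suffices to verify the assertions after localizing at each prime of $R$, reducing us to the case of $R$ a field or a local Dedekind ring with finite residue field. In this local setting, our definition of locally free $\O_Y[G]$-bundle specializes to free $R[G]$-module of constant rank, giving at once the ``in particular'' clause in the statement.

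For the local case, over the residue field $k$ of $R$ (or over $R$ itself when $R$ is a field), I would invoke Grothendieck's splitting theorem to obtain
$\E \otimes_R k \simeq \bigoplus_i \O_{\PP^1_k}(a_i)^{\oplus n_i}$.
Because $G$ acts trivially on $\PP^1$, Krull--Schmidt (applied in the abelian category of $G$-equivariant bundles on $\PP^1_k$, whose indecomposable objects are $\O(a)\otimes V$ for $V$ an indecomposable $k[G]$-module) forces this splitting to be $G$-equivariant, giving $\E \otimes_R k \simeq \bigoplus_i \O_{\PP^1_k}(a_i) \otimes_k V_i$ for certain $k[G]$-modules $V_i$. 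Restriction to either affine chart makes each $\O(a_i)$ trivial, so each restriction is free over $k[G][t]$ (resp.\ over $k[G][t^{-1}]$) of the form $(\M \otimes_R k) \otimes_k k[t]$ (resp.\ $\otimes_k k[t^{-1}]$). To lift this across the special fiber I would pick elements of $\Gamma(\mathbb{A}^1_0, \E)$ whose reductions modulo $\mathfrak{m}_R$ give a $k[G][t]$-basis of $(\E \otimes_R k)|_{\mathbb{A}^1_{0,k}}$ compatible with a chosen $R[G]$-basis of $\M$ at the zero section, and apply Nakayama's lemma to conclude these lifts form an $R[G][t]$-basis of $\E|_{\mathbb{A}^1_0}$ of the required form $\M \otimes_R R[t]$. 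The analogous argument at infinity with the same $\M$ completes the proof.

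The main technical obstacle will be the Nakayama-style lifting across the special fiber in the equivariant setting. One must ensure that lifts of a $k[G][t]$-basis actually constitute a basis and not merely a generating set, which uses that $\mathfrak{m}_R$ lies in the Jacobson radical of $R[G][t]$ in an appropriate sense (after localizing at closed points, since $t$ itself is not in the radical). The crucial rigidity that makes this work is that $\E$ extends globally to $\PP^1_R$: the compatibility between the two affine restrictions through a single common module $\M$ -- together with Grothendieck's splitting on the special fiber -- pins down the lift uniquely and prevents the bases on the two patches from drifting apart. I expect this verification, together with the careful tracking of the $G$-action throughout, to form the technical heart of the argument.
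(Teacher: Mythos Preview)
Your proposal has two genuine gaps.

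\textbf{The global-to-local reduction is not free.} You assert that the isomorphisms $\E|_{\mathbb{A}^1_0}\simeq\widetilde{\M\otimes_R R[t]}$ are ``local on $\Spec(R)$'' and that it therefore suffices to check them after localizing at each prime. But the existence of an isomorphism of $R[t]$-modules is not a local property on $\Spec(R)$: local isomorphisms need not glue. Concretely, the statement that an $R[G][t]$-module $M$ with $M_{\mathfrak m}\simeq N_{\mathfrak m}\otimes_{R_{\mathfrak m}} R_{\mathfrak m}[t]$ for all maximal $\mathfrak m$ is itself of the form $N\otimes_R R[t]$ is precisely Quillen's patching lemma from his solution of Serre's problem, and this is what the paper invokes explicitly for the passage from the DVR case to a general Dedekind $R$. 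Your choice $\M=s^*\E$ does not furnish a canonical map $\M\otimes_R R[t]\to\Gamma(\mathbb{A}^1_0,\E)$ whose bijectivity could be checked locally, so there is nothing to which a naive local-to-global argument applies.

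\textbf{The Krull--Schmidt step over the residue field is not justified.} You claim that the indecomposable $G$-equivariant bundles on $\PP^1_k$ (trivial $G$-action on $\PP^1_k$) are exactly the $\O(a)\otimes V$. This is false for arbitrary $G$-equivariant bundles once $\mathrm{char}(k)$ divides $|G|$: for instance with $G=\Z/2$, $k=\mathbb{F}_2$ one has $\mathrm{Ext}^1_G(\O,\O(1))\cong H^1(G,H^0(\O(1)))\neq 0$, producing indecomposables that are nontrivial extensions. You would need to argue that such extensions cannot be locally free over $\O_{\PP^1}[G]$ (in the example they are not, since the nilpotent $g-1$ degenerates at the zero of the section), but you do not do this and it is not immediate in general. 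The paper sidesteps the whole issue: over a finite field it passes to the semisimple quotient $r(k[G])=\prod_i \mathrm{M}_{n_i}(k_i)$, uses Morita equivalence to reduce to ordinary vector bundles on $\PP^1_{k_i}$, applies classical Horrocks there, and then lifts across the nilpotent ideal $n(k[G])$ by Nakayama. Your Nakayama lift from the residue field to a DVR is the one piece that matches the paper; what is missing is a correct argument over the field and the Quillen patching step over a global Dedekind base.
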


\begin{proof}  We first show that it will suffice to prove there is an isomorphism of $R[G][t]$-modules
\begin{equation}
\label{eq:enough}
\Gamma({\Bbb A}^1_0,\E) \simeq \M\otimes_R R[t]
\end{equation}
for some $\M$ as in the Theorem.  For then,  on replacing ${\Bbb A}^1_0$ by ${\Bbb A}^1_\infty$,
we will have shown there is a finitely generated locally free
free $R[G]$-module $\M'$ such that $\Gamma({\Bbb A}^1_\infty,\E)\simeq \M' \otimes_R R[t]$.  This will imply
there are $R[G][t,t^{-1}]$-module isomorphisms
$$
\M\otimes_R R[t,t^{-1}] \simeq \Gamma({\Bbb A}^1_0\cap {\Bbb A}^1_1, \E) \simeq \M' \otimes_R R[t,t^{-1}].
$$
By tensoring these isomorphisms with the $R$-algebra surjection $R[t,t^{-1}] \to R$ which sends $t$ to $1$  we find that $\M \simeq \M'$ as $R[G]$-modules,
so Theorem \ref{thm:Horrocks} will follow.

Suppose now that  $R$ is a finite field.  To prove (\ref{eq:enough}) it will suffice to show
that $\Gamma({\Bbb A}^1_0,\E)$ is a free $R[G][t]$-module.
Let $r(R[G])$ be the quotient of $R[G]$ by
its maximal two-sided nilpotent ideal $n(R[G])$.
Because
$\E$ is a locally free $\O_{\PP^1}[G]$-module, we have an exact
sequence of sheaves
\begin{equation}
\label{eq:exactnil}
0 \to n(R[G]) \otimes_{R[G]} \E \to \E \to r(R[G]) \otimes_{R[G]} \E \to 0.
\end{equation}
Since ${\Bbb A}^1_0$ is affine, this gives a surjection
\begin{equation}
\label{eq:surjit}
\Gamma({\Bbb A}^1_0,\E) \to \Gamma({\Bbb A}^1_0, r(R[G]) \otimes_{R[G]} \E).
\end{equation}
The stalk $\E_P$ of $\E$ at each $P \in {\Bbb A}^1_0$ is the localization $\Gamma({\Bbb A}^1_0,\E)_{P}$
of $\Gamma({\Bbb A}^1_0,\E)$ at $P$ since ${\Bbb A}^1_0$ is affine.

Let $m$ be the  rank of the locally free $\E$.  Suppose we prove there
is an isomorphism
\begin{equation}
\label{eq:radm}
(r(R[G])[t])^m \simeq \Gamma({\Bbb A}^1_0, r(R[G]) \otimes_{R[G]} \E)
\end{equation}
of modules for  $r(R[G]) \otimes_R R[t] = r(R[G])[t]$.
Lift a set of  $m$ generators for the  $r(R[G])[t]$-module
$ \Gamma({\Bbb A}^1_0, r(R[G]) \otimes_{R[G]} \E)$ 
via the surjection (\ref{eq:surjit}).  Because
$n(R[G])$ is nilpotent in (\ref{eq:exactnil}), this
produces $m$ elements of $\Gamma({\Bbb A}^1_0,\E)$
which generate the stalk $\E_P = \Gamma({\Bbb A}^1_0,\E)_{P}$ at each point $P \in {\Bbb A}^1_0$.
This gives a homomorphism $\psi: (R[G][t])^m \to \E$ which localizes at each $P$
to an isomorphism of locally free  $\O_{\PP^1, P}[G]$-modules.  Thus $\psi$
is an isomorphism, so  when $R$ is finite we are reduced to showing (\ref{eq:radm}).

The ring $r(R[G])$ is semi-simple and is thus isomorphic
to a finite direct sum $\oplus_i R_i$ of simple $R$-algebras $R_i$. Since $R$ is finite, $R_i$ is isomorphic
to a matrix algebra $\mathrm{Mat}_{n_i}(k_i)$ for some finite extension $k_i$ of $R$ and some integer $n_i \ge 1$.  Thus
$r(R[G]) \otimes_{R[G]} \E$ is isomorphic to $\oplus_i E_i$ where $E_i$ is a rank $m$ locally
free $R_i$-module on $\PP^1 = \PP^1_R$.  Therefore to show (\ref{eq:radm}), it will suffice to show that $\Gamma({\Bbb A}^1_0,E_i)$ is a free
rank $m$ module for $R_i \otimes_R R[t] = R_i[t]$.  There is a Morita equivalence between the
category of modules for $R_i = \mathrm{Mat}_{n_i}(k_i)$ and the category of vector spaces over $k_i$.
This implies that it will suffice to show that a locally free rank $m$
sheaf $T_i$ of $k_i$-modules on $\PP^1_R$ has the property that
\begin{equation}
\label{eq:Teq}
\Gamma({\Bbb A}^1_0,T_i) \simeq (k_i \otimes_R R[t])^m = (k_i[t])^m
\end{equation}
as  $k_i[t]$-modules.  Here $T_i$ corresponds to a rank $m$ vector bundle on
$k_i \otimes_R \PP^1_R = \PP^1_{k_i}$, so the isomorphism (\ref{eq:Teq}) follows
from   \cite[Theorem 1]{Horrocks}. This completes the proof when $R$ is a finite field.

Suppose now that $R$ is a discrete valuation ring with finite residue field $k$
and uniformizer $\pi$.  Since ${\Bbb A}^1_0$ is affine and $\E$ is a locally free $\O_{\PP^1_R}[G]$-module,
we have an exact sequence
$$
0 \to \pi \cdot \Gamma({\Bbb A}^1_0,\E) \to \Gamma({\Bbb A}^1_0,\E) \to \Gamma({\Bbb A}^1_0,k \otimes_R \E) \to 0
$$
where $\Gamma({\Bbb A}^1_0,k \otimes_R \E) \simeq \Gamma(k \otimes {\Bbb A}^1_0, k \otimes_R \E)$
is a free $k[G][t]$-module by what has already been shown for finite fields.  On lifting generators
and using Nakayama's Lemma we see that $\Gamma({\Bbb A}^1_0,\E)$ is a free $R[G][t]$-module.

Finally, suppose $R$ is a Dedekind ring with finite residue fields.  Following Quillen \cite{QuillenSerre} we will
call an $R[G][t]$-module $M$ {\sl extended} if it is isomorphic to $N \otimes_R R[t]$ for some
locally free $R[G]$-module $N$.  This implies $N$ is isomorphic to $M/tM$.  By what has already
been shown for discrete valuation rings, for each maximal ideal $\mathfrak{m}$ of $R$, the localization
$$
\Gamma({\Bbb A}^1_0,\E)_{\mathfrak{m}} = \Gamma(R_{\mathfrak{m}} \otimes_R {\Bbb A}^1_0, R_{\mathfrak{m}} \otimes_R \E)
$$
is an extended $R_{\mathfrak{m}}[G][t]$-module.  To complete the proof it will suffice to show
that $M = \Gamma({\Bbb A}^1_0,\E)$ is an extended $R[G][t]$-module. We briefly sketch how this
follows from Quillen's patching Lemma (\cite[Theorem 1]{QuillenSerre}).

First observe that since we do know that $M_{\mathfrak{m}}$ is extended, when we let
$N = M/tM$, the localization $N_{\mathfrak{m}}$ is a locally free $R_{\mathfrak{m}}[G]$-module
of rank equal to the locally free rank $m$ of $\E$.  Since $R$ is a Dedekind ring and $\mathfrak{m}$
ranges over all maximal ideals of $R$, this
implies $N$ is a locally free $R[G]$-module of rank $m$.

As in \cite{QuillenSerre}, let $S$ be the set of $f \in R$ such that $M_f$ is  extended as a module for $R_f[G][t]$.
It will suffice to show that $1 \in S$.  The argument in the first part of the proof of Theorem 1
of \cite{QuillenSerre} shows that it will suffice to show that if $f_0, f_1 \in S$ and $Rf_0 + R f_1 = R$
then $1 \in S$.  Suppose $f \in S$.  We have
\begin{eqnarray}
\label{eq:quileq}
\mathrm{Hom}_{R_f[G][t]}(N \otimes_R R_f[t], N \otimes_R R_f[t]) &=& \mathrm{Hom}_{R[G]}(N, N\otimes_R R_f[t])\nonumber\\
&=& R_f[t] \otimes_R {\mathcal A} = {\mathcal A}_f[t]
\end{eqnarray}
when ${\mathcal A} = \mathrm{End}_{R[G]}(N)$. The remainder of the proof of Theorem 1 in \cite{QuillenSerre}
now applies because ${\mathcal A}$ is allowed to be non-commutative in \cite[Lemma 1]{QuillenSerre}.
\end{proof}

\subsubsection{} Suppose that $M_0$ is a finitely generated locally free 
$R[G]$-module and $\gamma$ an element of the group ${\rm Aut}(M_0\otimes_RR[t, t^{-1}])$
of the $R[G][t,t^{-1}]$-linear automorphisms of $M_0\otimes_R R[t, t^{-1}]$. Then, glueing
$M_0\otimes_R R[t]$ and $M_0\otimes_R R[t^{-1}]$ by using $\gamma$ provides a
 a finitely generated locally free $\O_{\PP^1_R}[G]$-module $\E=\E(M_0, \gamma)$.
By Theorem \ref{thm:Horrocks}, if $R$ is a finite field or a Dedekind ring,
 every finitely generated locally free $\O_{\PP^1_R}[G]$-module $\E$ is of this form.
 We then call $(M_0, \gamma)$ ``Horrocks data" associated to $\E$.
 When $M_0\simeq R[G]^n$ is free, we can identify  $\GL_n(R[G][t,t^{-1}])$ 
 with the group ${\rm Aut}(M_0\otimes_RR[t, t^{-1}])$
by sending $g$ to $\gamma_g$ given by $m\mapsto m\cdot g^{-1}$.
Then we write the Horrocks data $(M_0, \gamma_g)$ simply as $(M_0, g)$.

\subsection{The adelic Riemann-Roch theorem over $\PP^1_\Z$}

\subsubsection{} We first show a special case of our main result over $\PP^1=\PP^1_\Z$.
Write $f: \PP^1_\Z\to S=\Spec(\Z)$
for the structure morphism. We continue to assume that the group algebra $\Q[G]$ splits
in the sense of Definition \ref{splitgroupalgebra}.

 \begin{theorem}\label{proHorr}
 Suppose $\F$ is an  $\O_{\PP^1}[G]$-bundle of rank $m$
  on $\PP^1=\PP^1_\Z$  which satisfies:

(a) The reduced Euler characteristics $\bar\chi(\PP^1, \F)_\Q\in \Kr_0(\Q[G])$, and $\bar\chi(\PP^1, \F)_{\Z_p}\in \Kr_0(\Z_p[G])$,
for all primes $p$, are trivial;

(b) The $\Z[G]$-module obtained by pulling back $\F$ along $\Spec(\Z)\to \PP^1$ given by $t=1$  is stably free.

Then

i) The sheaf $\F$ has an (adelic)  elementary structure $\epsilon$. Therefore, the first Chern class $c_1(\F)$
  is trivial in  ${\rm CH}^1_{\Bbb A}(\PP^1[G])$ and the second Chern class
  $c_2(\F, \epsilon)$ is defined in ${\rm CH}^2_{\Bbb A}(\PP^1[G])$.

ii) We have the Riemann-Roch identity
  \begin{equation}\label{RRhorror}
\bar\chi^P(\PP^1, \F)=-f_*(c_2(\F, \epsilon) )
\end{equation}
in ${\rm Cl}(\Z[G])=\Kr_0^{\rm red}(\Z[G])={\rm CH}^1_{\Bbb A}(\Spec(\Z)[G])$.
  \end{theorem}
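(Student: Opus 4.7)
The proof proposal is as follows.

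First, I would apply the equivariant Horrocks Theorem \ref{thm:Horrocks} to produce Horrocks data $(\M,g)$ with $\M$ a locally free $\Z[G]$-module of rank $m$ and $g\in\GL_m(\Z[t,t^{-1}][G])$, so that $\F\cong\E(\M,g)$ arises by gluing $\widetilde{\M\otimes_\Z \Z[t]}$ and $\widetilde{\M\otimes_\Z \Z[t^{-1}]}$ via $g$. Pulling back along $t=1$ recovers $\M$, so hypothesis (b) gives $\M\oplus\Z[G]^a\cong \Z[G]^{n}$ for some $a$. After replacing $\F$ by $\F\oplus\O_{\PP^1}[G]^a$ (which does not alter either side of (\ref{RRhorror}) since the determinant of cohomology of the trivial bundle is a free module), I may assume $\M=\Z[G]^n$. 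The determinant of cohomology $\delta(\F)=\det R\Gamma(\PP^1,\F)\in V(\Z[G])$ lies in the class group modulo free modules, and its class is precisely $\bar\chi^P(\PP^1,\F)$; hypothesis (a) says this virtual class is trivial after base change to $\Q[G]$ and to every $\Z_p[G]$. By the description in \S\ref{3e1} and \S\ref{3.5.2}, this is equivalent to $g$ lying in $\GL'_n(\Q\llps t\lrps[G])$ and in $\GL'_n(\Z_p\ldb t\rdb[G])$ for every $p$.

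Next, I would establish (i). Choose isomorphisms $\alpha_\Q\colon[\Q[G]^n]\xrightarrow{\sim}\delta(\F)_\Q$ and, for each $p$, $\alpha_p\colon[\Z_p[G]^n]\xrightarrow{\sim}\delta(\F)_{\Z_p}$; these exist by the discussion above and produce lifts $(g,\alpha_\Q)\in\Hh_n(\Q\llps t\lrps[G])$ and $(g,\alpha_p)\in\hat\Hh_n(\Z_p\ldb t\rdb[G])$ of $g$ into the central extensions of \S\ref{3d} and \S\ref{3e1}. To obtain an elementary structure I would adjust $g$ by a unimodular change of basis: since $g$ has trivial class in $\rK_0$ over $\Q$ and each $\Z_p$, its class in $\rK_1$ at each Parshin pair $(\eta_0,\eta_1)$, $(\eta_1,\eta_2)$, $(\eta_0,\eta_2)$ on $\PP^1$ can be killed using the vanishing of $\rSK_1$ provided by Lemmas \ref{sk_1lemma1}, \ref{sk_1lemma2}, Corollaries \ref{SK1cor1}, \ref{SK1cor2} and \ref{cor:Blochcor}, together with the approximation result Lemma \ref{lemmaApprox} and the density Corollary \ref{SK1density}. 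This reduces the transition data to the elementary subgroups and produces the desired elementary structure $\epsilon$; in particular $c_1(\F)=0$ and $c_2(\F,\epsilon)\in\mathrm{CH}^2_{\mathbb A}(\PP^1[G])$ is well defined by Definition \ref{defc2} and Theorem \ref{thm25}.

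For (ii), the key comparison is as follows. The pairs $(g,\alpha_\Q)$ and $(g,\alpha_p)$ both map to the same $g$ in $\GL'$, and both trivializations $\alpha_\Q$ and $\alpha_p$ induce, after tensoring with $\Q_p$, isomorphisms $[\Q_p[G]^n]\to\delta(\F)_{\Q_p}$. Their ratio $\alpha_p^{-1}\cdot\alpha_\Q$ is an automorphism of $[\Q_p[G]^n]$, i.e.\ an element of $\rK_1(\Q_p[G])$, and by Fr\"ohlich's description (\S\ref{Frodescription}) the tuple $(\alpha_p^{-1}\alpha_\Q)_p\in\prod'_p\rK_1(\Q_p[G])$ represents the class of $\delta(\F)$, hence $\bar\chi^P(\PP^1,\F)$, in ${\rm CH}^1_{\mathbb A}(\Spec(\Z)[G])=\Cl(\Z[G])$. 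On the other hand, using the universal central extension (\ref{eq:StoHmap}) and its $p$-adic variants (\ref{exthat}), choosing Steinberg lifts $\tilde\lambda_{\eta_0\eta_1}$, $\tilde\lambda_{\eta_1\eta_2}$, $\tilde\lambda_{\eta_0\eta_2}$ of the elementary transition matrices obtained above, the Steinberg cocycle $z(\tilde\lambda)$ representing $c_2(\F,\epsilon)$ maps under the boundary homomorphisms $\partial$ and $\hat\partial$ precisely to the obstruction $\alpha_p^{-1}\alpha_\Q$. By Proposition \ref{tameprop} and Proposition \ref{prop13}, these boundaries are inverse to the tame symbol and to Kato's residue, which is exactly how the pushdown $f_*\colon\mathrm{CH}^2_{\mathbb A}(\PP^1[G])\to\Cl(\Z[G])$ is defined (\S\ref{sec4.1}). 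Thus applying $f_*$ to $c_2(\F,\epsilon)$ recovers the class of $(\alpha_p^{-1}\alpha_\Q)_p$, yielding the identity $\bar\chi^P(\PP^1,\F)=-f_*(c_2(\F,\epsilon))$, where the sign reflects the conventional discrepancy between $c_1$ and the Fr\"ohlich identification (Remark \ref{ChernFrohlich}).

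The main obstacle will be carrying out the elementary-structure adjustment at vertical fibers over primes dividing $|G|$ and simultaneously tracking the Steinberg cocycle produced by the change of basis, so that the identification of $\prod_p \alpha_p^{-1}\alpha_\Q$ with $f_*(z(\tilde\lambda))$ is unambiguous modulo the denominator groups in the definitions of ${\rm CH}^1_{\mathbb A}(S[G])$ and ${\rm CH}^2_{\mathbb A}(\PP^1[G])$. This requires delicate use of the splittings of the central extension $\Hh$ over $\GL(R\langle\!\langle t^{-1}\rangle\!\rangle[G])$ and $\GL(R\lps t\rps[G])$ established in \S\ref{split} and \S\ref{3e3}, Corollary \ref{correc}, and the $\rSK_1$-vanishing statements that compensate for the fact that the original $g$ need not itself be elementary.
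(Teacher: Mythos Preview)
Your overall strategy matches the paper's exactly: reduce via Horrocks and stability to free $\M=\Z[G]^n$, use hypothesis (a) to place $g$ in $\GL'$ over $\Q$ and each $\Z_p$, lift to $(g,\alpha_\Q)$ and $(g,\alpha_p)$ in the $\Hh$-extensions, and identify $\bar\chi^P$ with the idele $(\alpha_p^{-1}\alpha_\Q)_p$.  What is missing is the concrete mechanism that turns this into a computable second Chern class, and your sketch of part (i) elides the main point.

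Saying that the $\rK_1$-class of $g$ ``can be killed using the vanishing of $\rSK_1$'' is not accurate: $\rSK_1=\{1\}$ only says $\rK_1\cong\Det(\rK_1)$, and $\Det(g)$ is typically nontrivial.  The paper first uses hypothesis (a) together with Lemma \ref{bhs} to find $z_\Q\in\Q[G]^\times$ and $z_p\in\Z_p[G]^\times$ with $\Det(z_\Q g)=\Det(z_p g)=1$, and then --- crucially --- invokes Corollary \ref{SK1cor2} to produce $h_p\in\GL'(\Z_p\langle\!\langle t^{-1}\rangle\!\rangle[G])$ with $[h_p]=[z_pg]^{-1}$ in $\rSK_1(\Z_p\ldb t\rdb[G])$.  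The explicit new bases are then $f_0=z_\Q e_0$, $f_{1_p}=z_ph_p e_{1_p}$, $f_{\eta_2}=z_ph_p e_{\eta_2}$ for $\eta_2\ne 2_p$, etc.  The role of $h_p$ is exactly to make the transition $\theta_{1_p 2_p}=z_ph_pg$ elementary in $\hat\O_{1_p2_p}[G]=\Z_p\ldb t\rdb[G]$; without this ingredient the vertical-fiber case at primes dividing $|G|$ cannot be handled, and your final paragraph correctly flags this as the obstacle but does not supply it.

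The second missing idea is the localization of the Steinberg cocycle.  With the bases above, the paper shows (Proposition \ref{adelictheta}) that $z(\tilde\theta)_{(\eta_0,\eta_1,\eta_2)}=1$ for \emph{every} triple except $(0,1_p,2_p)$.  This collapses the infinite adelic product to a single explicit term
\[
z(\tilde\theta)_{(0,1_p,2_p)}=s_{02}(z_\Q g)\,s_{12}(z_ph_pg)^{-1}\,s_{01}(z_\Q h_p^{-1}z_p^{-1})^{-1},
\]
which is then evaluated under $\hat\partial$ in three pieces using the splittings of $\Hh$ over $\GL(\Q[G])$, over $\GL'(\Z_p\langle\!\langle t^{-1}\rangle\!\rangle[G])$, and over $\GL(\Q_p\{t^{-1}\}[G])$ (the last via Corollary \ref{correc}).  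Your claim that $z(\tilde\lambda)$ ``maps under $\partial$ and $\hat\partial$ precisely to $\alpha_p^{-1}\alpha_\Q$'' is the desired conclusion, not an input; it only becomes verifiable after this reduction to a single triple.  Finally, the minus sign in (\ref{RRhorror}) comes from the definition $f_{*\eta_0\eta_1\eta_2}=\hat\partial^{-1}$ in \S\ref{ss:4a2}, not from Remark \ref{ChernFrohlich}.
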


Note that the left hand side of (\ref{RRhorror}) is independent of the choice of $\epsilon$,
and hence, a posteriori, so is the right hand side.

\begin{proof} Recall that, by definition $\bar\chi(\PP^1,\F)_\Q=\chi(\PP^1, \F)_\Q-\chi(\PP^1, \O_{\PP^1}[G]^m)_\Q$
and similarly for $\bar\chi(\PP^1, \F)_{\Z_p}$. Notice that by our constructions, the statements (i) and (ii) are true for $\F$ if and only if they are true
for the bundle   $\F\oplus \O_{\PP^1}[G]^n$ for some $n\geq 0$. Also $\bar\chi(\PP^1, \F)_\Q= \bar\chi(\PP^1, \F\oplus \O_{\PP^1}[G]^n)_\Q$,
$\bar\chi(\PP^1, \F)_{\Z_p}= \bar\chi(\PP^1, \F\oplus \O_{\PP^1}[G]^n)_{\Z_p}$.  By Theorem \ref{thm:Horrocks}, assumption (b) and these observations, we may assume
 that the sheaf $\F$ is given by Horrocks data $(\Z[G]^m, \gamma_g)$ where $g\in \GL_m(\Z[G][t, t^{-1}])$.
Hence, we
 can write $\F=\E(L_0\cdot g^{-1})$  where $L_0=\Z[G][t]^m$ and
 in our notation (see \S \ref{3c2}, \S \ref{3d}),
 $$
 \V_{g}=\delta(L_0\cdot g^{-1})-\delta(L_0)=\det(R\Gamma(\PP^1, \F))-\det(R\Gamma(\PP^1, \O_{\PP^1}[G]^m)).
 $$
This shows that   assumption
(a)   implies that the matrix $g\in \GL_m(\Z[G][t, t^{-1}])$
actually belongs to $\GL'_m(\Q[G][t, t^{-1}])$ and to $\GL'_m(\Z_p[G][t, t^{-1}])$
for all primes $p$. Denote by $[g]$ the class of $g$ in $\Kr_1(\Z[G][t,t^{-1}])$.
In the next paragraph, we will denote $g$ by $g_\Q$ when we consider it as an element of
$\GL'_m(\Q[G][t, t^{-1}])$ and by $g_p$ when we consider it as an element of
$\GL'_m(\Z_p[G][t, t^{-1}])$.
Recall now that
$$
{\rm Cl}(\Z[G])=\Kr_0^{\rm red}(\Z[G])={\rm CH}^1_{\Bbb A}(\Spec(\Z)[G])=\frac{{\prod'_p} \Kr_1(\Q_p[G])}{\Kr_1(\Q[G])\cdot\prod_p  {\rm K}_1(\Z_p[G])^\flat}.
$$
To calculate a $\Kr_1$-idele   in ${\prod'_p} \Kr_1(\Q_p[G])$ which maps
to $\bar\chi^P(\PP^1,\F)$ under this map we argue as follows.
Choose trivializations
$$
\alpha_\Q: [0]\xrightarrow{\sim} (\V_{g})_\Q=(\det(R\Gamma(\PP^1, \F))-\det(R\Gamma(\PP^1, \O_{\PP^1}[G]^m)))_\Q
$$
$$
\alpha_{\Z_p}: [0]\xrightarrow{\sim} (\V_g)_{\Z_p}=(\det(R\Gamma(\PP^1, \F))-\det(R\Gamma(\PP^1, \O_{\PP^1}[G]^m)))_{\Z_p}
$$
and consider, for each prime $p$, the element $ \alpha_p^{-1}\cdot \alpha_\Q$
 in the group ${\rm Aut}_{V(\Q_p[G])}([0])=\Kr_1(\Q_p[G])$. The $\Kr_1$-idele
 $( \alpha_p^{-1}\cdot \alpha_\Q)_p$ represents $\bar\chi^P(\PP^1,\F)$.
 By the definition of the central extensions, the elements
 $\alpha_\Q$,  $\alpha_p$  correspond to lifts $\ti g_\Q=w_{\Q} ( g_{\Q
} )  $, $\ti g_p=w_{p} ( g_{p} )  $
of $g_\Q$, $g_p$ in
$$
1\to \Kr_1(\Q[G])\to \Hh(\Q[G][t, t^{-1}]^m)\to \GL'_m(\Q[G][t, t^{-1}])\to 1,
$$
$$
1\to \Kr_1(\Z_p[G])\to \Hh(\Z_p[G][t, t^{-1}]^m)\to \GL'_m(\Z_p[G][t, t^{-1}])\to 1.
$$
We can now write (recall $g_p=g_\Q=g$ in $\GL'_m(\Q_p[G][t, t^{-1}])$)
\begin{eqnarray}\label{calc}
 \ti g_\Q\cdot \ti g^{-1}_p &=& ( g_{\Q},\alpha _{\Q} )
 ( g_{p},\alpha _{p} ) ^{-1}= ( g_{\Q},\alpha _{\Q} )  ( g_{p}^{-1},\alpha _{p}^{-g_{p}^{-1}} ) \\
&=& ( 1,   (      \alpha _{p}^{-g_{p}^{-1}})^{g_\Q}\cdot \alpha_\Q              ) =  ( 1, \alpha_p^{-1}\cdot \alpha_\Q) =\alpha_p^{-1}\cdot \alpha_\Q.  \notag
\end{eqnarray}%

Hence, $\alpha_p^{-1}\cdot \alpha_\Q=\ti g_\Q\cdot \ti g^{-1}_p$ with the product calculated
in $\Hh(\Q_p[G][t, t^{-1}])$ and we conclude that
\begin{equation}\label{FroRR}
\bar\chi^P(\PP^1, \F)=\prod_p (\ti g_\Q\cdot \ti g^{-1}_p)
\end{equation}
in $\Kr_0^{\rm red}(\Z[G])= ({\prod'_p} \Kr_1(\Q_pG))/ \Kr_1(\Q[G])\cdot\prod_p \rK_1(\Z_p[G])^\flat$. 

To show the Riemann-Roch identity, we will express the element in the right hand side
of (\ref{FroRR}) as the negative of the push-down of the second Chern class of $\F$.
We continue by giving first some preliminaries.
\smallskip

\subsubsection{} \label{9b2} Recall the set-up and definitions of  \S \ref{s:SK_1}.
Let $R$ be an integral domain with fraction field $N$ of characteristic $0$.
Let $N^c$ be an algebraic closure of $N$.
Consider the base change  
$$
  \Kr_{1}(R[t,t^{-1}][G]) \to  \Kr_{1}( N^c[t,t^{-1}][G]).
$$
Using Lemma \ref{sk_1lemma1} we can see that the kernel of this is equal to ${\rm SK}_1(R[t,t^{-1}][G])$.
Recall that the Bass-Heller-Swan theorem gives a homomorphism
$$
b_R: \Kr_{1}(R[t,t^{-1}][G]) \to \Kr_0(R[G])\times \Kr_1(R[G]).
$$
The base change $\Kr_1(R[G])\to \Kr_{1}(R[t,t^{-1}][G]) $
splits the second projection.
The map $b$ is an isomorphism when $R=N$ is a field of
characteristic zero. An explicit description of $b$ for the algebraically closed
$N^c$ is as follows:
Using Morita equivalence and by taking determinants we
obtain an isomorphism
\begin{equation}
 \Kr_{1} (  N^c[G][t,t^{-1}]) \xrightarrow{ \sim } {\rm Hom}(R_G,   N^c[t,t^{-1}]^{\times})
\end{equation}
where $R_G$ is the
group of $N^c$-valued characters
of $G$.
Since $ (N^c[t,t^{-1}])^{\times}=t^{\Z}\cdot  (N^c)^\times$
the target can be written
$$
{\rm Hom}(R_G,  t^\Z)\times {\rm Hom}(R_G,  (N^c)^\times )\xrightarrow{\sim}
\Kr_0(N^c[G])\times \Kr_1(N^c[G])
$$
 and $b_{N^c}$ is the resulting composition.

\begin{lemma}\label{bhs}
Suppose $(N[G]^m, g)$ are Horrocks data
for an $\O_{\PP^1_N}[G]$-bundle $\E$ on $\PP^1_N$.
Denote by $[g]$ the class of $g$ in $\rK_1(N[t,t^{-1}][G])$.
Then the  component
of $b_{ N^c}([g])$ in ${\rm Hom}(R_G,  t^\Z)=\Kr_0( N^c[G])$
is given by the character function
$$
\chi\mapsto t^{{\rm deg}((\E\otimes_{  N^c}V_{\bar\chi})^G)},
$$
where $V_\psi$ is a $N^c[G]$-module with character $\psi$. As a result, the  component
of $b_{ N}([g])$ in $ \Kr_0( N[G])$ is equal to the reduced
Euler characteristic $\bar\chi(\PP^1_{N}, \E)= \chi(\PP^1_{N}, \E)- \chi(\PP^1_{N}, \O_{\PP^1_{N}}[G]^m)$
in $\Kr_0( N[G])$.
\end{lemma}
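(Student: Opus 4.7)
The plan is to reduce to the algebraically closed field $N^c$, where the explicit description of $b_{N^c}$ stated just before the lemma applies character-by-character, and then translate the $t$-adic valuation of the reduced norm into a degree via Riemann--Roch on $\PP^1_{N^c}$. Naturality of the Bass--Heller--Swan homomorphism gives a commutative square comparing $b_N$ and $b_{N^c}$ under the base change $N \to N^c$, and the induced map $\Kr_0(N[G]) \to \Kr_0(N^c[G])$ is injective: any finitely generated $N[G]$-module $M$ is determined (as an element of $\Kr_0$) by the tuple of multiplicities $\dim_{N^c} \Hom_G(V_\chi, M \otimes_N N^c)$ over the irreducible characters $\chi$ of $G$. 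Consequently both assertions can be verified after extending scalars to $N^c$.

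First I would compute $b_{N^c}([g])$ explicitly. Under the Morita decomposition $N^c[G] \simeq \prod_\chi \mathrm{Mat}_{n_\chi}(N^c)$ with $n_\chi = \dim V_\chi$, the element $g \in \GL_m(N^c[G][t,t^{-1}])$ becomes a tuple $(g_\chi)_\chi$ with $g_\chi \in \GL_{m n_\chi}(N^c[t,t^{-1}])$, and $\Det_\chi([g]) = \det(g_\chi) \in N^c[t,t^{-1}]^\times$. Writing $\det(g_\chi) = c_\chi\, t^{a_\chi}$ with $c_\chi \in (N^c)^\times$ and $a_\chi \in \Z$, the $\Kr_0(N^c[G])$-component of $b_{N^c}([g])$ is the character function $\chi \mapsto t^{a_\chi}$, corresponding to the class $\sum_\chi a_\chi [V_\chi]$.

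Next I would identify $a_\chi$ with a degree. The equivariant isotypic decomposition $\E_{N^c} \cong \bigoplus_\chi V_\chi \otimes_{N^c} \mathcal{F}_\chi$ with $\mathcal{F}_\chi = \Hom_G(V_\chi, \E_{N^c}) = (\E_{N^c} \otimes V_{\bar\chi})^G$ is compatible with the Horrocks data: the rank $m n_\chi$ vector bundle $\mathcal{F}_\chi$ on $\PP^1_{N^c}$ has Horrocks transition matrix $g_\chi$. Its determinant line bundle is glued by $\det g_\chi$, and the sign convention of \S \ref{3c2} (gluing $R[t^{-1}]^m$ with $L_0 \cdot g^{-1}$ via their natural inclusions into $R[t, t^{-1}]^m$) gives $\deg = v_0$; a direct check in the scalar case $g = t^n$ shows $\E(L_0 \cdot t^{-n}) \simeq \O(n)$. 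Therefore $a_\chi = v_0(\det g_\chi) = \deg \mathcal{F}_\chi$, which proves the first assertion.

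For the second assertion, both $\E$ and $\O_{\PP^1_N}[G]^m$ have finite cohomology, so $\bar\chi(\PP^1_N, \E)$ is a well-defined class in $\Kr_0(N[G])$. By the injectivity above, it suffices to compare $\chi$-multiplicities after base change to $N^c$. For each $\chi$, the $\chi$-multiplicity is
\[
\chi(\PP^1_{N^c}, \mathcal{F}_\chi) - \chi(\PP^1_{N^c}, \O_{\PP^1_{N^c}}^{m n_\chi}) = (\deg \mathcal{F}_\chi + m n_\chi) - m n_\chi = \deg \mathcal{F}_\chi,
\]
using $(\O_{\PP^1_{N^c}}[G]^m \otimes V_{\bar\chi})^G \simeq \O_{\PP^1_{N^c}}^{m n_\chi}$ together with classical Riemann--Roch on $\PP^1_{N^c}$ in the form $\chi(\PP^1_{N^c}, \mathcal{F}) = \deg \mathcal{F} + \mathrm{rank}(\mathcal{F})$. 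This matches $a_\chi$ from the previous step. The main obstacle in writing this out is bookkeeping: keeping the ranks $m$, $n_\chi$, $m n_\chi$ straight through Morita, and pinning down the sign convention relating the Horrocks presentation of $\E(L)$ from \S \ref{3c2} to the degree of the associated determinant line bundle.
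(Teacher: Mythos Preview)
Your proposal is correct and follows essentially the same approach as the paper: the paper's proof simply observes that the degree of a bundle obtained by the Horrocks gluing equals the $t$-adic valuation of the determinant of the transition matrix (your first part), and then deduces the second part from the usual Riemann--Roch on $\PP^1_{N^c}$. You have expanded the same argument with explicit bookkeeping (the Morita decomposition, the injectivity of $\Kr_0(N[G]) \to \Kr_0(N^c[G])$, the sign check with $g=t^n$), but the route is identical.
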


\begin{proof}
The second part of the statement follows from the first part and the (usual) Riemann-Roch theorem
on the curve $\PP^1_{N^c}$. To show the first part is enough to
observe  that the degree
of a vector bundle obtained by gluing as above is given by the
valuation of the determinant of the transition (gluing) matrix at
$t=0$. 
\end{proof}
\smallskip

\subsubsection{} We now continue with the proof of Theorem \ref{proHorr}. Since $g$ is in $\GL'_m(\Q[G][t, t^{-1}])$
Lemma \ref{bhs} and the above discussion implies that there is  $\kappa_\Q\in \Kr_1(\Q[G])$
with ${\rm Det}([g])^{-1}={\rm Det}(\kappa_\Q)$.
Similarly,  since  $g$ is in $\GL'_m(\Z_p[G][t, t^{-1}])$
and $\Kr_0(\Z_p[G])\subset \Kr_0(\Q_p[G])$ we obtain that there
is $\kappa_p\in \Kr_1(\Z_p[G])$ such that
${\rm Det}([g])^{-1}={\rm Det}(\kappa_p)$. Lift $\kappa_\Q$, $\kappa_p$ to $ z_\Q\in \Q[G]^\times$, $ z_p\in \Z_p[G]^\times$, and consider the elements $g'_\Q=z_\Q\cdot  g \in
\GL(\Q[G][t, t^{-1}])$, $g'_p=z_p\cdot g \in
\GL(\Z_p[G][t, t^{-1}])$. For these elements we have
$$
[g'_\Q]\in {\rm SK}_1(\Q[G][t, t^{-1}]),\quad [g'_p]\in {\rm SK}_1(\Z_p[G][ t, t^{-1}]).
$$
Since by Morita equivalence and Lemma \ref{sk_1lemma1}, ${\rm SK}_1(\Q[G][t, t^{-1}])=(0)$, this shows that $g'_\Q$ is in $\rE(\Q[G][t, t^{-1}])$.
Consider the image of $[g'_p]$ in  ${\rm SK}_1(\Z_p[G]{\ldb t\rdb})$.
By Corollary \ref{SK1cor2}, the natural homomorphism
$$
  {\rm SK}_1(\Z_p[G]\langle\!\langle t^{-1}\rangle\!\rangle)\to  {\rm SK}_1(\Z_p[G]{\ldb t\rdb}),
$$
where $\Z_p\langle\!\langle t^{-1}\rangle\!\rangle$ is the $p$-adic completion of $\Z_p[t^{-1}]$,
is surjective. 
Therefore, for each $p$, we can find an element
$h_p\in {\rm GL}'(\Z_p[G]\langle\!\langle t^{-1}\rangle\!\rangle)$ with $[h_p]=[g'_p]^{-1}$
in  $ {\rm SK}_1(\Z_p[G]{\ldb t\rdb})\hookrightarrow 
 {\rK}_1(\Z_p[G]{\ldb t\rdb})$.  
Notice that for those $p$ which do not divide the order of $G$ and where $z_{\Q}$ is 
a unit,
 we can take $z_{p}=z_{\Q}$ and $h_p=1$.

We will now show how to choose (stably) Parshin bases
$f_{\eta_i}$ for $\F$  which provide us with an (adelic)  elementary structure.
For simplicity, we will write $\hat\O_{\eta_i}$, $\hat\O_{\eta_i\eta_j}$, instead of $\hat\O_{\PP^1,\eta_i}$,
$\hat\O_{\PP^1,\eta_i\eta_j}$, etc.
The Horrocks description for $\F$ provide us bases $e_{\eta_i}$ for $\hat \F_{\eta_i}$
which are determined by fixing a basis of $M_0\simeq \Z[G]^m$. 
Denote by $0$ the (unique) generic point of $\PP^1$.
Denote by $1_H$ the generic point of the divisor $H=\{t=0\}$, and for each prime $p$, denote
by $1_p$ the generic point of the fiber of $p$. We also denote by
$2_p$ the unique closed point $t=0$ in characteristic $p$ which
is the intersection of $1_H$ and $1_p$. The Horrocks gluing description implies
that there is a basis $e_0=\{e^h_0\}_{h=1}^m$ over the generic point such that
that $e_\eta=g^{-1}e_0$ if $\eta$ is on $1_H$ and $e_\eta=e_0$ otherwise.
This implies the following values for the transition matrices $\lambda_{\eta_i\eta_j}$
(recall $e_{\eta_i}=\lambda_{\eta_i\eta_j}e_{\eta_j}$):
\begin{equation}
\lambda_{0\eta_1}= 
\begin{cases} 
g, \ \ \hbox{\rm if $\eta_1=1_H$}, \\
 1, \ \  \hbox{\rm if $\eta_1\neq 1_H$}.
\end{cases}
 \end{equation}
\begin{equation}
\lambda_{\eta_1\eta_2}= 
\begin{cases}
1, \ \ \ \hbox{\rm if    $\eta_2\neq 2_p$},\\
1, \ \ \ \hbox{\rm if   $\eta_2=2_p$, $\eta_1=1_H$},\\
g, \ \ \  \hbox{\rm if   $\eta_2=2_p$, $\eta_1\neq 1_H$},
\end{cases}
\end{equation}
All the other values are determined from these and the cocycle condition.
We now give different bases $f_{\eta_i}$ by $f_{0}=z_\Q\cdot e_0$ and 
\begin{equation*}
 \ f_{\eta_1}=\begin{cases} e_{1_H},\ \ \ \ \ \ \  \hbox{\rm if $\eta_1=1_H$}\\
z_ph_p\cdot e_{1_p},     \hbox{\rm if $\eta_1=1_p$}\\
z_\Q\cdot e_{\eta_1}, \  \hbox{\rm if $\eta_1\not\in\{ H, 1_p\ \hbox{\rm for all $p$}\}$}
\end{cases}, \quad 
 f_{\eta_2}=\begin{cases} e_{2_p}, \ \ \ \ \ \ \ \ \hbox{\rm if $\eta_2=2_p$}\\
z_ph_p\cdot e_{\eta_2}, \ \hbox{\rm if $\eta_2\neq 2_p$ \hbox{\rm in char. $p$}}.
\end{cases}
\end{equation*}
 These give the following values for the transition matrices $\theta_{\eta_i\eta_j}$ with
respect to   $f_{\eta_i}$:
\begin{equation}
\theta_{0\eta_1}= 
\begin{cases}
1,   \ \ \ \ \ \ \ \ \ \ \ \hbox{\rm if $\eta_1\not\in\{ 1_H, 1_p\ \hbox{\rm for all $p$}\}$}\\
 z_\Q\cdot  g ,\ \ \ \ \ \ \hbox{\rm if $\eta_1= 1_H$}\\
 z_\Q h_p^{-1}z_p^{-1}, \ \hbox{\rm if $\eta_1= 1_p$}
\end{cases}
\end{equation}
 \begin{equation}
\theta_{\eta_1\eta_2}= 
\begin{cases}
1, \ \ \ \ \ \ \ \ \ \ \hbox{\rm if $\eta_1=1_H$}, \\
1, \ \ \ \  \ \ \ \ \ \ \hbox{\rm if $\eta_1=1_p$, $\eta_2\neq 2_p$}\\
 z_\Q\cdot  g, \ \ \ \ \ \hbox{\rm if $\eta_2=2_p$, $\eta_1\neq 1_H, 1_p\ \hbox{\rm for all $p$}$}\\
z_ph_p\cdot g, \ \ \ \hbox{\rm if $\eta_1=1_p$, $\eta_2=2_p$}\\
z_\Q h_p^{-1}z_p^{-1}, \hbox{\rm if $\eta_1$ is horizontal, $\eta_1\neq 1_H$,
$\eta_2\neq 2_p$ in characteristic $p$.}
\end{cases}
\end{equation}
 We now verify that the matrices $\theta_{\eta_i\eta_j}$
are   elementary for all pairs $\eta_i$, $\eta_j$.
By our construction,  ${\rm Det}(\theta_{\eta_i\eta_j})=1$. (Notice for example that
${\rm Det}( z_\Q)={\rm Det}( z_p)={\rm Det}(g)^{-1}$.)
Observe that: by Morita equivalence and the fact that ${\rm SK}_1$ 
is trivial for commutative local rings, we know that   ${\rm SK}_1(\hat\O_{0\eta_1}[G])=(0)$,
and that ${\rm SK}_1(\hat\O_{\eta_1\eta_2}[G])=(0)$
if $\eta_1$ is horizontal; and by  Proposition \ref{sk_1vanish}
and Morita equivalence we know that ${\rm SK}_1(\hat\O_{0\eta_2}[G])=(0)$.
The only thing left to check
is that $\theta_{1_p2_p}=z_ph_p  g$
has trivial image in $ {\rm SK}_1(\hat\O_{1_p2_p}[G])={\rm SK}_1(\Z_p[G]{\ldb t\rdb})$.
This follows from our choice of $ z_p$, $h_p$ above.

Hence, the above completes
the proof of  part (i) of the statement
of Theorem \ref{proHorr}.

 It now remains to show part (ii) which is the Riemann-Roch identity.

Notice that the elements $\theta_{\eta_i\eta_j}$ satisfy the conclusion of 
part (a) of Proposition \ref{propae} for the divisor $D$ which is the union of $1_H$ with the
fibers $1_p$ over the finite list $T$ of primes $p$ which either divide the order of $G$ or are 
such that  $z_\Q\in \Q[G]^\times$ does not belongs to $\Z_p[G]^\times$. Set $Q=\prod_{p\in T}p$. 
Enlarge $T$ and the corresponding divisor $D$ to ensure that the group $\rSK_1(\Z[Q^{-1}][t, t^{-1}][G])$ is trivial. 
(We can do this since, by our assumption, for sufficiently large $Q$, $\Z[Q^{-1}][G]$ is a product
of matrix rings with entries in principal ideal domains; we can then follow the same arguments as in \S \ref{9b2}.)

We will now show how to choose lifts  $\ti\theta_{\eta_i\eta_j}$ as in Proposition 
\ref{propae}  that can be used to calculate the adelic second Chern class 
according to the recipe in Definition \ref{defc2}.

\begin{proposition}\label{adelictheta} There   are choices of lifts $\ti \theta_{\eta_i\eta_j}$ 
of $\theta_{\eta_i\eta_j}$ so that $\ti\theta_{\eta_0\eta_1}\in \St(\hat\O_{ \eta_1}[D^{-1}][G])$,
$\ti\theta_{\eta_1\eta_2}\in \St(\hat\O_{ \eta_1\eta_2}[G])$, and $\ti\theta_{\eta_0\eta_2}
\in \St(\hat\O_{ \eta_2}[D^{-1}][G])$, so that for the element $z(\ti\theta)$ of Proposition \ref{propae} (b) we have:

(i)  $z( \ti\theta)_{(\eta_0,\eta_1,\eta_2)} =1$, unless $\eta _{1}=1_{p}$ and $\eta _{2}=2_{p}$,

(ii) $z (\ti \theta )_{(0,1_p,2_p)}=1 $, if $p$ does not divide the order of the group $G$ and is such that $z_\Q\in \Q[G]^\times$ belongs to $\Z_p[G]^\times$.
\end{proposition}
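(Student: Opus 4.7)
The plan is to build the lifts $\tilde\theta_{\eta_i\eta_j}$ out of a very small universal list of lifted building blocks that appear in the $\theta_{\eta_i\eta_j}$, and then observe that the cocycle $z(\tilde\theta)_{(0,\eta_1,\eta_2)}$ telescopes to $1$ whenever at most one of the three edges of the triple involves a nontrivial block. Concretely, I would first fix once-and-for-all a lift $\widetilde{z_\Q g}\in\St(\Q[t,t^{-1}][G])$ of $z_\Q g$, which is elementary there by Lemma \ref{sk_1lemma1} and Morita equivalence, together with a lift $\widetilde{z_p h_p g}\in \St(\hat\O_{1_p 2_p}[G])$ of $z_p h_p g$ and a lift $\widetilde{z_\Q h_p^{-1} z_p^{-1}}$ in an appropriate Steinberg group over $\hat\O_{1_p}[D^{-1}][G]$. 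Since $D$ contains the fibers $1_p$ for all $p\in T$, the universal lift $\widetilde{z_\Q g}$ maps simultaneously into $\St(\hat\O_{1_H}[D^{-1}][G])$ and into $\St(\hat\O_{\eta_2}[D^{-1}][G])$ for every closed point $\eta_2$, so the same element can serve as the lift for $\theta_{0,1_H}$, $\theta_{0,2_p}$, and $\theta_{\eta_1,2_p}$ whenever $\eta_1$ is horizontal and distinct from $1_H$ and $1_p$.

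Next, I would define $\tilde\theta_{\eta_i\eta_j}$ by transporting these universal lifts according to the case list preceding the proposition. In each triple $(0,\eta_1,\eta_2)$ other than $(0,1_p,2_p)$, the nontrivial building blocks appearing in $\tilde\theta_{02}$, $\tilde\theta_{12}$, and $\tilde\theta_{01}$ come from a single universal lift, and the remaining edges are $1$; a direct substitution then shows $z(\tilde\theta)_{(0,\eta_1,\eta_2)}=1$. For instance, at $(0,1_H,2_p)$ one has $\tilde\theta_{02}=\tilde\theta_{01}=\widetilde{z_\Q g}$ and $\tilde\theta_{12}=1$; at $(0,\eta_1,2_p)$ with $\eta_1$ horizontal and distinct from $1_H$ and $1_p$, both $\tilde\theta_{02}$ and $\tilde\theta_{12}$ are images of $\widetilde{z_\Q g}$ and $\tilde\theta_{01}=1$; at $(0,1_p,\eta_2)$ with $\eta_2\neq 2_p$, the lifts of $\theta_{0,1_p}$ and $\theta_{0,\eta_2}$ are images of the same universal lift $\widetilde{z_\Q h_p^{-1} z_p^{-1}}$ while $\tilde\theta_{1_p,\eta_2}=1$. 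The only triple at which the building blocks fail to match up is $(0,1_p,2_p)$, where the three edges are lifts of $z_\Q h_p^{-1}z_p^{-1}$, $z_p h_p g$ and $z_\Q g$ respectively in three genuinely different completions, and where $z(\tilde\theta)$ can be a nontrivial element of $\rK_2(\hat\O_{\eta_0 1_p 2_p}[G])$.

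For part (ii), recall that when $p\notin T$ the construction of $z_p$ and $h_p$ allowed the choices $z_p=z_\Q$ and $h_p=1$. With these choices $\theta_{0,1_p}=1$ and $\theta_{1_p,2_p}=z_\Q g=\theta_{0,2_p}$; using the \emph{same} universal lift $\widetilde{z_\Q g}$ for both non-identity edges then gives $z(\tilde\theta)_{(0,1_p,2_p)}=\widetilde{z_\Q g}\cdot(\widetilde{z_\Q g})^{-1}\cdot 1 = 1$ as required.

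The main obstacle I expect is verifying the acceptability of the lifts near the triples $(0,1_p,2_p)$ for $p\in T$: one must show that $\widetilde{z_\Q h_p^{-1}z_p^{-1}}$ can be taken in $\St(\hat\O_{1_p}[D^{-1}][G])$ and $\widetilde{z_p h_p g}$ in $\St(\hat\O_{1_p 2_p}[G])$ simultaneously, so that the three edges of the triple all define elements of $\St(\hat\O_{\eta_0 1_p 2_p}[G])$ whose product lies in the central $\rK_2(\hat\O_{\eta_0 1_p 2_p}[G])$. This rests on the vanishing of $\rSK_1$ statements from \S\ref{s:SK_1}, in particular Corollary \ref{cor:Blochcor} and Corollary \ref{SK1cor2}, the latter being what allowed us to choose $h_p\in\GL'(\Z_p\langle\!\langle t^{-1}\rangle\!\rangle[G])$ with $[h_p]=[g'_p]^{-1}$ in the first place; the remaining bookkeeping is essentially the same sort of case analysis of adelic integrality already used in Proposition \ref{propae}.
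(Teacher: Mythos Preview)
Your strategy is exactly the paper's: in each triple other than $(0,1_p,2_p)$ one edge is trivial and the other two equal a single ``building block'', so a common Steinberg lift makes the cocycle telescope to~$1$. The paper frames this as finding, case by case, a subring $R$ of the intersection of the two relevant multicompletions with $\theta\in\rE(R[G])$ and lifting once in $\St(R[G])$. Two points in your execution need tightening, however.

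First, taking $\widetilde{z_\Q g}\in\St(\Q[t,t^{-1}][G])$ is too coarse: for $p\notin T$ the ring $\hat\O_{2_p}[D^{-1}]$ is $\Z_p\llps t\lrps$ (only $1_H$ is inverted, not $p$), and $\Q$ does not map there. The paper instead lifts $z_\Q g$ in $\St(\Z[Q^{-1}][t,t^{-1}][G])$, having first enlarged $T$ so that $\rSK_1(\Z[Q^{-1}][t,t^{-1}][G])=\{1\}$; this ring maps into $\hat\O_{2_p}[D^{-1}]$ for \emph{every} $p$. Second, you omit the case $\eta_1$ horizontal $\neq 1_H$ with $\eta_2\neq 2_p$ (the paper's Case~2a), where the two nontrivial edges are $\theta_{\eta_1\eta_2}=\theta_{0\eta_2}=z_\Q h_p^{-1}z_p^{-1}$. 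Your stated source for this block, $\hat\O_{1_p}[D^{-1}]$, does not naturally map to $\hat\O_{\eta_1\eta_2}$ for horizontal $\eta_1$; one needs a smaller common subring. The paper uses $\Q_p\{t^{-1}\}=\Q\otimes_\Z\Z_p\langle\!\langle t^{-1}\rangle\!\rangle$ here (and $\Z_p\langle\!\langle t^{-1}\rangle\!\rangle[Q^{-1}]$ in the companion Case~3 with $\eta_1=1_p$); Lemma~\ref{sk_1lemma2} ensures $z_\Q h_p^{-1}z_p^{-1}$ is elementary over $\Q_p\{t^{-1}\}$, and this ring maps via $\hat\O_{\eta_2}$ to all of $\hat\O_{\eta_1\eta_2}$, $\hat\O_{\eta_2}[D^{-1}]$, and $\hat\O_{1_p}[D^{-1}]$. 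Once you replace your two source rings by $\Z[Q^{-1}][t,t^{-1}]$ and $\Q_p\{t^{-1}\}$ respectively, your argument goes through and coincides with the paper's.
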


\begin{proof} Let us prove (i) first.   We will consider $z ( \ti\theta)_{(\eta _{0},\eta _{1}, \eta_{2})}$ for $\eta_2$ in characteristic $p$.  We suppose we do not have $\eta_1=1_p$, $\eta_2=2_p$.
Then   all cases of such triples $(\eta_0, \eta_1, \eta_2)$ have
 similar structure: namely, one of the $\theta _{\eta _{i}\eta _{j}}=1  $ and so the remaining two transition maps $ 
\theta _{\eta _{a}\eta _{b}}$ are equal (up to inversion) to a value that we denote $\theta $; there
are three relevant rings $\hat\O_{\eta_1}[D^{-1}]$, $\hat\O_{\eta_1\eta_2}$, and $\hat\O_{\eta_2}[D^{-1}]$
and the common value $\theta$ belongs to the intersection of two of them.
 In all cases we shall identify a subring $R$ of this intersection with the
property that $\theta \in  {\rE}( R[ G])$; we
can then use a lift $\ti \theta   \in {\St}(
R[ G] ) $ twice in computing $z( \ti \theta)_{(\eta_0, \eta_1,\eta_2)}$ to get the value 1. 
\smallskip

{\it Case 1:  Horizontal case $\eta_{1}=1_{H}$.}

 Here there is only one situation to consider; namely,   $\eta _{2}=2_p$; then $\theta _{1_{H}\eta _{2}}=1$, $\theta _{0\eta _{2}}=z_{\Q}g_{\Q}=\theta _{01_{H}}$.
We know that $z_{\Q}g_{\Q}\in {\rm SL}( \Z[Q^{-1}]
[t,t^{-1}] [ G] ) $. Since $\rSK_{1}( \Z[Q^{-1}][t,t^{-1}] [G]) =\{1\}$, we have 
$z_{\Q}g_{\Q}\in \rE ( \Z[Q^{-1}] [ t,t^{-1} ]  [ G ])$. 
Note that
\begin{equation}
\Z[Q^{-1}][ t,t^{-1}] \rightarrow \hat{\mathcal{O}}_{ 1_{H}}[D^{-1}],
\ \ \Z[Q^{-1}][ t,t^{-1}] \rightarrow \hat{\mathcal{O}}_{ \eta _{2}}[D^{-1}].
\end{equation}%
  We can therefore use the surjection $\St ( \Z[Q^{-1}] [
t,t^{-1} ]  [ G ]  ) \rightarrow \rE ( \Z[Q^{-1}] [
t,t^{-1} ][G])$ to find a common lift of both 
$\theta _{1_{H}\eta _{2}}$ and $\theta _{0\eta _{2}}$;
the corresponding $z(\ti\theta)_{(0,1_H,\eta_2)}$ is then trivial.
In other words, here we take in the above sketch $R=\Z[Q^{-1}][t, t^{-1}]$.
\smallskip

{\it Case 2: Horizontal case  $\eta_{1}\neq 1_{H} $. }

Here $\theta_{\eta _{0}\eta _{1}}=1$ and there are two subcases to
consider.

Subcase (a):  $\eta _{2}\neq 2_{p}$. 
Then $\theta _{\eta _{1}\eta _{2}}=z_{\Q}h_{p}^{-1}z_{p}^{-1}=\theta
_{\eta _{0}\eta _{2}}$. We have  $z_{\Q}h_{p}^{-1}z_{p}^{-1}=1$, if $p\not\in T$, and $z_{\Q}h_{p}^{-1}z_{p}^{-1}\in  {\rm SL} ( \Q\otimes_{\Z}\Z_{p} \langle\!\langle t^{-1} \rangle\!\rangle  [ G])=\rE(\Q\otimes_{\Z}\Z_{p} \langle\!\langle t^{-1} \rangle\!\rangle  [ G])$  if $p\in T$ (cf. Lemma \ref{sk_1lemma2}). 
The situation is trivial if $p\not\in T$. If $p\in T$, $\hat\O_{\eta_2}[D^{-1}]$ and $\hat\O_{\eta_1\eta_2}$ are the two relevant rings; we can then take
$R$ to be $\Q\otimes_{\Z}\Z_{p} \langle  \langle t^{-1} \rangle \rangle$ and proceed as before
 to get $z(\ti \theta)_{(\eta _{0}, \eta _{1},\eta _{2})} =1$.

Subcase (b):  $\eta _{2}=2_{p} $. Then $\theta _{\eta _{1}\eta _{2}}=z_{\Q}g_{\Q}=\theta _{\eta
_{0}\eta _{2}}$, $\hat\O_{\eta_2}[D^{-1}]=\Z_p\llps t\lrps[Q^{-1}]$ and $\hat\O_{\eta_1\eta_2}$ are the two relevant rings; 
here we can work with $R=\Z_p[Q^{-1}][t,t^{-1}]$. Notice here that since $\eta_1\neq 1_H$, $t$ is invertible in $\hat\O_{\eta_1\eta_2}$.
 \smallskip

{\it Case 3: Vertical case when $\mathbf{\eta }_{1}={1}_{p}$.}

Here 
there is only the case $\eta _{2}$ $\neq 2_{p}$ (since the case $\eta _{2} 
=2_{p}$ is excluded).  Then $\theta _{1_{p}\eta _{2}}=1$ and $\theta _{\eta_01_{p}}=z_{\Q}h_{p}^{-1}z_{p}^{-1}=\theta _{\eta _{0}\eta _{2}} $. The two relevant
rings are $\hat\O_{\eta_1}[D^{-1}]=\hat\O_{\eta_1}[t^{-1}, Q^{-1}]$, $\hat\O_{\eta_2}[D^{-1}]=\Z_p[[t-\eta_2]][t^{-1}, Q^{-1}]$; Since $t$ is a unit in $\Z_p[[t-\eta_2]]$,
here we can work with $R=\Z_p\langle
\!\langle t^{-1} \rangle\!\rangle[Q^{-1}]$.
 \smallskip
 
To prove (ii) we observe that for a prime $p$ that does not divide the 
order of the group and with $z_\Q$ a unit at $p$, we have taken $z_p=z_\Q$, $h_p=1$.
Then $\theta_{01_p}=1$, $\theta_{1_p2_p}=\theta_{02_p}=z_\Q\cdot g=z_p\cdot g$.
Part (ii) then follows by a similar argument as above by using the ring $R=\Z_p[Q^{-1}][t, t^{-1}]$. 

This completes the proof of Proposition \ref{adelictheta}.
\end{proof} 
\smallskip

By Proposition \ref{adelictheta} a representative for the pushdown $f_*(c_2(\F))$ 
of the second adelic Chern class $c_2(\F)$ of $\F$ is given by the idele in $\prod_p'\rK_1(\Q_p[G])$ whose
component at $p$ is given by the push-down $f_{\ast}$ of 
\begin{equation}\label{eq15}
z (\ti \theta )_{(0,1_p,2_p)}= {s}_{02} ( z_{\Q}g_{\Q} )  {s}_{12} (
z_{p}h_{p}g ) ^{-1} {s}_{01} ( z_{\Q}h_{p}^{-1}z_{p}^{-1} ) ^{-1}.
\end{equation}   
Here, for clarity, in the right hand side,  we use the symbol $s_{12}$ to denote the 
lift of an element in the Steinberg group 
$ {\St}(\hat\O_{ \eta_1\eta_2}[G])={\St}(\Z_p{\ldb t\rdb}[G])$, the symbol $s_{02}$ to denote a lift in the Steinberg group 
$ {\St}(\hat\O_{\eta_2}[D^{-1}][G])={\St}(\Z_p\llps t\lrps[Q^{-1}][G])$, and the symbol $s_{01}$ to denote a lift in the Steinberg group 
$ {\St}(\hat\O_{\eta_1}[D^{-1}][G])$. 
The product is taken in $ {\St}(\hat\O_{\eta_0\eta_1\eta_2}[G])=\St(\Q_p{\ldb t\rdb}[G])$.

Recall that with the above notation we write 
$ 
 w_{\Q
} ( g_{\Q} ) = ( g_{\Q},\alpha _{\Q} )$, $ w_{p} ( g_{p} ) = ( g_{p},\alpha
_{p} ) $.
The desired result will follow if we show
$$
f_{\ast } ( z (\ti \theta )_{(0,1_p,2_p)} )
 ) =\alpha^{-1}_{\Q}  \alpha _{p}\kappa^{-1}_{\Q}\kappa
_{p}
$$
 with $\kappa _{\Q}\in \rK_{1} ( \Q[G]
 )^\flat $, $\kappa _{p}\in \rK_{1} ( \Z_p[G])^\flat$. We now  evaluate the pushdown $f_{\ast } ( z (\ti \theta )_{(0,1_p,2_p)} )$  by working with each of the three
right hand terms in equation (\ref{eq15}). Recall that the pushdown is defined 
via the inverse of the homomorphisms $\partial$, $\hat\partial$ and in particular,
$f_{\ast } ( z (\ti \theta )_{(0,1_p,2_p)} )=\hat\partial  ( z (\ti \theta )_{(0,1_p,2_p)} )^{-1}$.

 \medskip

1) $   {s}_{02}( z_{\Q}g_{\Q}) $: By the above we know that
\begin{equation*}
z_{\Q}g_{\Q}\in {\rm SL}( \Z[Q^{-1}][ t,t^{-1}]
[ G] ) \subset \rE( \Z[Q^{-1}]\otimes\Z\llps t\lrps [ G] ) \subset \rE( \hat{\mathcal{O}%
}_{02_{p}}[ G]) .
\end{equation*}
We let $\mathcal{H}( \Z[Q^{-1}]\otimes \Z\llps t\lrps
[ G])  $ denote the pullback of $\mathcal{H}( \Q(( t))[ G] ) $ along
\begin{equation*}
{\rm GL}^{\prime }( \Z[Q^{-1}]\otimes \Z\llps t\lrps [ G
] ) \subset {\rm GL}^{\prime }( \Q\llps t\lrps[ G] ).
\end{equation*}
We may now compute   using the following diagram
\begin{equation}\label{stsections}
\begin{array}{ccc}
 {\rE}(  \Z[Q^{-1}]\otimes\Z\llps t\lrps[ G] ) & \overset{ {s}_Q}{\rightarrow } &
 {\St}( \Z[Q^{-1}]\otimes\Z\llps t\lrps [ G] ) \\
\downarrow \text{inclusion} &  & \downarrow \partial \\
{\rm GL}^{\prime } ( \Z[Q^{-1}]\otimes\Z\llps t\lrps[G]) & \overset{ w_Q}{\rightarrow } &
{\mathcal{H}}( \Z[Q^{-1}]\otimes\Z\llps t\lrps[G])
\end{array}
\end{equation}
which commutes up to an element of $\rK_{1}( \Q[ G]
 )$ (see (\ref{eq:StoHmap})). Here $ w_Q$ is a set-theoretic section of the $\Hh$-sequence
that is compatible with the natural splitting of   $\mathcal{H}( \Q( (
t) ) [ G])$   over $\GL(\Q [ G ] )$.  We then get the equality
\begin{equation}\label{eq9.18}
\hat\partial(  {s}_{02}( z_{\Q}g_{\Q})
) =\partial( s_{Q}( z_{\Q}g_{\Q
}) ) =\kappa _{\Q} w_{Q}( z_{\Q}g_{\Q}) =\kappa_{\Q}z_{\Q}  w_{Q}(
g_{\Q})
\end{equation}%
for some $\kappa _{\Q}\in \rK_{1}( \Q[ G]) $.
\medskip

2)  $  {s}_{12}(
z_{p}h_{p}g_{p})$: From the above we know that
\begin{equation*}
z_{p}\in   \Z_{p}[ G]^\times ,\quad  h_{p}\in \SL(
\Z_{p}\langle\!\langle t^{-1}\rangle\!\rangle [ G]) ,\quad  g_{p}\in \GL^{\prime }( \Z_{p}[
t,t^{-1}] [ G] ) .
\end{equation*}
Thus all terms lie in ${\rm GL}^{\prime }( \Z_{p}{\ldb t\rdb}[ G] ) =\GL^{\prime }( 
\hat\O_{1_p2_p}[ G] ) $ and by construction the product of
these three elements lies in $\rE( \Z_{p}{\ldb t\rdb}[ G] ) $. We now compute $\hat\partial( s_{12}( z_{p}h_{p}g_{p}) ) $ using the analogous diagram to
(\ref{stsections}) above for the ring $\Z_{p}{\ldb t\rdb}[ G] $, which
commutes up to an element of $\rK_{1}( \Z_{p}[ G])$. Using the fact that $ {\mathcal{H}}({\Z}_{p}{\ldb t\rdb}[ G] ) $ splits naturally over ${\rm GL}^{\prime
}( \Z_{p}\langle\!\langle t^{-1}\rangle\!\rangle [ G] )$, we obtain the equality
\begin{equation}\label{eq9.19}
\hat\partial(  {s}_{12}( z_{p}h_{p}g_{p}) )
=\kappa _{p} {w}_{p}( z_{p}h_{p}g_{p}) =\kappa
_{p}z_{p}h_{p} {w}_{p}( g_{p})
\end{equation}%
for some $\kappa _{p}\in \rK_{1}( \Z_{p}[ G] )$.
\medskip

3) $  {s}_{01}( z_{\Q
}h_{p}^{-1}z_{p}^{-1})  $: If $p\not\in T$ then this term is trivial. We assume that $p$ is in $T$
so that $\Z_p[Q^{-1}]=\Q_p$.  
From the above work we know that
\begin{equation*}
z_{\Q}\in   \Z[Q^{-1}][ G]^\times ,\quad  h_{p}\in
{\rm SL}( \Z_{p}\langle\!\langle t^{-1}\rangle\!
\rangle [ G]) ,\quad  z_{p}\in   \Z_{p}[ G]^\times .
\end{equation*}
Thus all terms lie in ${\rm GL}( \Q_p\{t^{-1}\}[ G]) $. By construction their product lies in $
{\rm SL}( \Q_p\{t^{-1}\} [ G])$.  Using Lemma \ref{sk_1lemma2} we deduce that $z_{\Q}h_{p}^{-1}z_{p}^{-1}\in \rE( 
\Q_p\{t^{-1}\} [ G])$.
Define 
$\hat{\mathcal{H}}(\Q_p\{t^{-1}\} [ G] ) $ to be the pullback of $\hat\Hh( \Q_{p}{\ldb t\rdb}[ G] ) $ along
the inclusion
\begin{equation*}
 \GL( 
\Q_p\{t^{-1}\} [ G]) 
\hookrightarrow \GL^* ( \Q_{p}{\ldb t\rdb}[ G]).
\end{equation*}
By \S \ref{3e3} we have a natural splitting $w$  of the resulting
central extension of $\GL(\Q_p\{t^{-1}\}[G])$ which agrees with the 
natural splittings over $\Q[G]^\times$, $\Z_p[G]^\times$,
${\rm SL}(\Z_p\langle\!\langle t^{-1}\rangle\!\rangle[G])$
that we have used in cases (1) and (2) above. We can also restrict this extension along $\rE(\Q_p\{t^{-1}\}[G])$.
The fact that this last extension of  $\rE(\Q_p\{t^{-1}\}[G])$ splits also follows from Corollary \ref{correc}, since the Steinberg sequence for 
$ \Q_p\{t^{-1}\}[G]$
is the universal central extension of the perfect group $\rE(\Q_p\{t^{-1}\}[G])$.
The universality gives  
$$
\hat\partial: \St(\Q_p  \{ t^{-1}\} [ G]) \to \hat{\mathcal{H}}( \Q_{p}{\ldb t\rdb}[ G] )
$$
which then has to be equal to the composition $w\circ \pi$ where $\pi$ is the natural homomorphism from the   Steinberg group to the
 elementary group of $\Q_p\{t^{-1}\}[G]$.
Thus we get
\begin{equation}\label{eq9.20}
\hat\partial({s}_{01}( z_{\Q}h_{p}^{-1}z_{p}^{-1}
) ) =w( z_{\Q}h_{p}^{-1}z_{p}^{-1}) =z_{\Q}h_{p}^{-1}z_{p}^{-1}.
\end{equation}

In summary from (\ref{eq9.18}), (\ref{eq9.19}) and (\ref{eq9.20}), we see that the expression
\begin{equation*}
\hat\partial  ( z (\ti \theta )_{(0,1_p,2_p)} )=\hat\partial( {s}_{02}( z_{\Q}g_{\Q})) \hat\partial( {s}_{12}( z_{p}h_{p}g_{p})) ^{-1}
\hat\partial(  {s}_{01}( z_{\Q
}h_{p}^{-1}z_{p}^{-1})) ^{-1}
\end{equation*}%
is equal to
\begin{eqnarray*}
&&z_{\Q}w_{Q}( g_{\Q}) ( z_{p}h_{p}
{w}_{p}( g_{p}) ) ^{-1}( z_{\Q}h_{p}^{-1}z_{p}^{-1}) ^{-1}\kappa_{\Q}\kappa _{p}^{-1} \\
&=&z_{\Q}w_{Q}( g_{\Q})  {w}_{p}( g_{p}) ^{-1}h_{p}^{-1}z_{p}^{-1}z_{p}h_{p}z_{\Q}^{-1}\kappa_{\Q}\kappa _{p}^{-1} \\
&=&w_{Q}( g_{\Q}) {w}_{p}(g_{p}) ^{-1}\kappa_{\Q}\kappa_{p}^{-1}.
\end{eqnarray*}
By (\ref{calc}) above this is equal to $ \alpha _{p}^{-1}\alpha_\Q\kappa _{\Q}\kappa _{p}^{-1}$.
Since $f_{\ast } ( z (\ti \theta )_{(0,1_p,2_p)} )=\hat\partial  ( z (\ti \theta )_{(0,1_p,2_p)} )^{-1}$
we obtain $f_{\ast } ( z (\ti \theta )_{(0,1_p,2_p)} )= \kappa_p\kappa_\Q^{-1}\alpha^{-1}_\Q\alpha _{p}$
which completes our proof.
\end{proof}

\bigskip
\bigskip

\section{Appendix: Adelic Riemann-Roch for general bundles when $G=1$}\label{Appendix}

\setcounter{equation}{0}

In this appendix, we assume that $G$ is trivial. We explain how our results, when combined with the Deligne-Riemann-Roch theorem \cite{DeligneDet}, and some standard constructions (\cite{BudChern}, \cite{BeSch})
imply, in this case, a general adelic Riemann-Roch theorem for all bundles.

\subsection{More central extensions}\label{morecentral} Suppose that $A$ is a commutative ring 
with ${\rm SK}_1(A)=(1)$. Then ${\rm E}(A)=\SL(A)$ 
and we can construct a central extension
\begin{equation}\label{c2}
1\to \rK_2(A)\to {\rm GSt}(A)\to \GL(A)\to 1
\end{equation}
of $\GL(A)$ as follows: Observe that $\GL(A)=A^*\ltimes \SL(A)=A^*\ltimes {\rm E}(A)$,
where $A^*\subset \GL(A)$ by $a\mapsto d_1(a):={\rm diag}(a, 1, 1,\ldots )$ and 
 $A^*$ acts on $\SL(A)$ by conjugation. Since the Steinberg extension
\begin{equation}\label{cextSt}
1\to \rK_2(A)\to {\rm  St}(A)\to {\rm E}(A)=\SL(A)\to 1
\end{equation}
is the universal central extension of the perfect group ${\rm E}(A) $, this conjugation action 
lifts to ${\rm St}(A)$ and we can define ${\rm GSt}(A)$ as the corresponding semi-direct product
$A^*\ltimes {\rm St}(A)$; the restriction of the homomorphism  ${\rm GSt}(A)\to \GL(A)$
to $A^*$ is then given by $a\mapsto d_1(a)$
and the restriction of the extension (\ref{c2}) over $\SL(A)$ is the Steinberg extension (\ref{cextSt}).

\begin{proposition}\label{compareExt}
1) The restriction of (\ref{c2}) to  $A^*\times A^*\subset\GL(A)$,
$(a, b)\mapsto {\rm diag}(a, b, 1,   \ldots )$, is isomorphic to the extension 
\begin{equation}\label{c1c1}
1\to \rK_2(A)\to \widehat{ A^*\times A^*}\to A^*\times A^*\to 1 
\end{equation}
where $
\widehat{ A^*\times A^*}:=A^*\times A^*\times \rK_2(A)
$ 
with operation given by 
$$
(f, g; r)\cdot (f', g'; r')=(ff', gg'; rr' \{f', g\} ),
$$
where $\{ , \}: A^*\times A^*\to \rK_2(A)$ is the Steinberg symbol.

2) The restriction of (\ref{c2})  
 to 
the subgroup 
$$
A^*\times \GL(A) =\left\{\left (\begin{matrix}
a & 0\\
0& X
\end{matrix}\right )\right\} \subset  \GL(A)
$$
 is isomorphic to the sum of the pull-back extensions $p_2^*({\rm GSt}(A))$ and 
$(p_1\times \det(p_2))^*(\widehat{ A^*\times A^*})$,
where $p_1: A^*\times \GL(A)\to A^*$, $p_2: A^*\times\GL(A)\to \GL(A)$
are the two projections.
\end{proposition}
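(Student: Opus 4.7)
The plan is to identify each central extension by computing its $2$-cocycle in $\rK_2(A)$ from an explicit set-theoretic section. For Part (1), I would write $\mathrm{diag}(a,b,1,\ldots)=d_1(ab)\cdot h_{12}(b)^{-1}$, where $h_{12}(b)=\mathrm{diag}(b,b^{-1},1,\ldots)\in\SL(A)=\rE(A)$, and use a chosen Steinberg lift $\tilde h_{12}(b)\in{\rm St}(A)$ of $h_{12}(b)$ to define the section
\begin{equation*}
\sigma(a,b)=j(ab)\iota(\tilde h_{12}(b))^{-1}
\end{equation*}
of ${\rm GSt}(A)\to A^*\times A^*$, where $j:A^*\to{\rm GSt}(A)$ and $\iota:{\rm St}(A)\to{\rm GSt}(A)$ are the two canonical inclusions coming from the semi-direct product structure. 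A direct manipulation, using the semi-direct commutation $\iota(s)j(x)=j(x)\iota({}^{x^{-1}}s)$ and the centrality of $\rK_2(A)$ in ${\rm GSt}(A)$, will show that the associated $2$-cocycle $c=\sigma(a,b)\sigma(a',b')\sigma(aa',bb')^{-1}$ is a product in $\rK_2(A)$ of a Matsumoto-type defect $\tilde h_{12}(b')\tilde h_{12}(b)\tilde h_{12}(bb')^{-1}$ and a ``torus-action defect'' $\tilde h_{12}(b)^{-1}\cdot {}^{(a'b')^{-1}}\tilde h_{12}(b)$ measuring the failure of the lifted $d_1(a'b')$-conjugation action on ${\rm St}(A)$ to fix $\tilde h_{12}(b)$. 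Both factors are Steinberg symbols by the standard identities in ${\rm St}(A)$ (cf.\ \cite[\S\S 9--11]{MilnorK}), and after bilinear simplification the cocycle reduces to $\{a',b\}$, matching the cocycle read off from the multiplication rule $(f,g;r)(f',g';r')=(ff',gg';rr'\{f',g\})$ of $\widehat{A^*\times A^*}$.

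For Part (2), I would exploit the fact that inside $\GL(A)$ the two subgroups $T_1=\{d_1(a)\mid a\in A^*\}$ and $T_2=\{\mathrm{diag}(1,X)\mid X\in\GL(A)\}$ commute elementwise, and that the inclusion $(a,X)\mapsto\mathrm{diag}(a,X)$ is precisely the commuting-product map $T_1\times T_2\to\GL(A)$. For any central extension $1\to K\to\tilde G\to G\to 1$ with commuting subgroups $H_1,H_2\subset G$, the pullback to $H_1\times H_2$ has a canonical Baer-sum decomposition
\begin{equation*}
p_1^{*}(\tilde G|_{H_1})\ \boxplus\ p_2^{*}(\tilde G|_{H_2})\ \boxplus\ [-,-]_{H_1,H_2},
\end{equation*}
whose last summand is the central $K$-extension of $H_1\times H_2$ determined by the bimultiplicative commutator-of-lifts pairing. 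Applied here, the $T_1$-summand is trivial because ${\rm GSt}(A)=A^*\ltimes{\rm St}(A)$ splits canonically over $A^*$; the $T_2$-summand yields $p_2^{*}{\rm GSt}(A)$ after identifying stably the shift $X\mapsto\mathrm{diag}(1,X)$ with the identity on $\GL(A)$; and the commutator pairing of $d_1(a)$ with $\mathrm{diag}(1,X)$, being bimultiplicative and central, depends on $X$ only through $\det(X)$. By Part (1) applied to $(a,\det X)\in A^*\times A^*$, this pairing is exactly the pullback of $\widehat{A^*\times A^*}$ along $p_1\times \det(p_2)$, giving the asserted Baer-sum decomposition.

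The main obstacle will be two-fold. First, the explicit Steinberg-group identity
\begin{equation*}
{}^{a}\tilde h_{12}(b)\cdot \tilde h_{12}(b)^{-1}=\{a,b\}\in\rK_2(A)
\end{equation*}
underlying Part (1) requires unwinding the lifted $A^*$-action on the standard generators $x_{ij}(u)$ of ${\rm St}(A)$ and is sensitive to sign and normalization conventions for $\tilde h_{ij}$; this is a direct but delicate computation. Second, in Part (2) one must verify the shift invariance of the extension ${\rm GSt}(A)$: at each finite stage $\GL_n(A)$ the shift $X\mapsto\mathrm{diag}(1,X)$ differs from the identity by conjugation by a permutation matrix not always in $\SL$, but in the stable limit $\GL(A)=\varinjlim_n\GL_n(A)$ it lifts functorially to ${\rm St}(A)$ by the universal property of the Steinberg group and produces a canonical equivalence of central $\rK_2(A)$-extensions. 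Once these inputs are in place, Part (2) follows from the commuting-product Baer-sum formalism combined with Part (1).
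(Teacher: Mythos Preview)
Your approach to Part (1) is essentially the same as the paper's: an explicit section built from $d_1(ab)\cdot h_{12}(b)^{-1}$ and the identities in \cite[\S 9]{MilnorK} yielding the cocycle $\{f',g\}$. The paper simply cites Milnor and \cite{BudChern} for this computation.

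For Part (2) your route is genuinely different. The paper writes $A^*\times\GL(A)$ as the semi-direct product $(A^*\times A^*)\ltimes\SL(A)$ (the two copies of $A^*$ being $p_1$ and $\det\circ p_2$) and invokes the Brylinski--Deligne classification of central extensions of a semi-direct product by the triple (restriction to $\SL(A)$, restriction to $A^*\times A^*$, lifted action). The restriction to $\SL(A)$ is the Steinberg extension, the lifted action is forced by universality, and the restriction to $A^*\times A^*$ is Part (1); the same triple is then read off from the right-hand side. You instead use the commuting-product factorisation $d_1(A^*)\cdot\mathrm{diag}(1,\GL(A))$ and a Baer-sum formula with the commutator-of-lifts pairing. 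Your argument that this pairing factors through $\det$ is correct: since ${}^{a}x_{ij}(u)=x_{ij}(u)$ for $i,j\ge 2$, the commutator $[j(a),\tilde Y]$ vanishes for $Y\in\mathrm{sh}(\rE(A))$, so the pairing descends to $A^*\times\rK_1(A)=A^*\times A^*$ and is then identified by Part (1). The paper's approach buys you a clean structural reason (the triple is forced), avoiding the need to separately establish shift-invariance of $\mathrm{GSt}(A)$ over $\GL(A)$; in fact the paper only needs shift-invariance of $\St(A)$ over $\SL(A)=\rE(A)$, which is immediate from universality. Your approach is more elementary and self-contained but pays for it with that extra shift step, which, as you note, requires some care since the shift is not an inner automorphism of $\GL(A)$.
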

\begin{proof} Notice that (2) implies (1).
Statement (2) is stated, without proof, in \cite{BeSch}. 
Statement (1) follows easily from the definition of
the Steinberg symbol and the calculations in \cite[\S 9]{MilnorK}
(see   the last line of p. 1654 in \cite {BudChern}).
To show (2), observe that we can write $A^*\times \GL(A)$
as a semi-direct product $(A^*\times A^*)\ltimes \SL(A)$. 
By  \cite[Construction 1.7]{BrylinskiDeligne}, central extensions of a semi-direct product $H\ltimes \Gamma$
by a group $T$ are determined 
by triples consisting of
\begin{itemize}
\item[a)] a central extension $\hat \Gamma$ of $\Gamma$ by $T$,
\item[b)] a central extension $\hat H$ of $H$ by $T$,
\item[c)] an action of $H$ on $\hat \Gamma\to \Gamma$, lifting the
action of $H$ on $\Gamma$.
\end{itemize} 
Given a central extension of $H\ltimes \Gamma$ we obtain the
corresponding central extensions as in (a) and (b) by restrictions.
Apply this to $H=A^*\times A^*$, $\Gamma=\SL(A)$. 
The restriction of $(\ref{c2}) $ to $\Gamma=\SL(A)$
is isomorphic to the Steinberg extension and the action
of $A^*\times A^*$ on ${\rm St}(A)$ is uniquely determined
(the first factor acts trivially and the second by lifting 
the conjugation). It remains to consider the restriction 
of $(\ref{c2}) $ to $A^*\times A^*$ which is determined
by part (1). Hence, we see that the triple for the restriction 
of $(\ref{c2}) $ to $A^*\times \GL(A)$ agrees with that of 
$p_2^*({\rm GSt}(A))\cdot  (p_1\times \det(p_2))^*(\widehat{ A^*\times A^*})$.
By   loc. cit., this implies part (2).
\end{proof}

\subsection{Second Chern class}

We  now assume that $R$
is a Dedekind ring, that $f: Y\to S=\Spec(R)$ is a flat projective 
morphism of relative dimension $1$, and that $Y$ is regular.
We can define the $\rK_2$-adele groups  ${\rK}_{\ell}^{\prime } ( \mathbb{A}_{Y,012})$,
$ {\rK}_{\ell}^{\prime } ( \mathbb{A}_{Y,ij})$, and the adelic Chow groups ${\rm CH}^\ell_{\Bbb A}(Y)$, $\ell=1, 2$, 
${\rm CH}^1_{\Bbb A}(S)$ as in \S \ref{def:ECg}, by taking $G=\{1\}$.

Now suppose that $\E$ is a locally free coherent sheaf of 
$\O_Y$-modules on $Y$ of rank $n$. As in \S \ref{s:Chern}, choose trivializations $e_{\eta }$ of $\hat\E_{\eta }$ over $\Spec(\hat\O_{Y,\eta})$ for each $\eta\in Y$;
these give transition matrices $\lambda_{\eta_i\eta_j}\in \GL(\hat\O_{Y, \eta_i\eta_j})$.
We define the first  adelic Chern class $c_1(\E)$ of $\E$ in ${\rm CH}^1_{\Bbb A}(Y)$ as in 
\S \ref{sec5.2.1}. 
Notice that all the rings $\hat\O_{Y, \und\eta}$,
where $\und\eta$ is any Parshin chain on $Y$, have trivial $\rSK_1$ and so  the constructions of \S \ref{morecentral}
apply to all of them. To define the second adelic Chern class $c_2(\E)$ of $\E$ we  
 consider 
\begin{equation}
z(\ti \lambda):=\prod_{(\eta_0, \eta_1, \eta_2)} \tilde \lambda_{\eta_0\eta_2}\cdot (\ti \lambda_{\eta_1\eta_2})^{-1}\cdot 
(\ti \lambda_{\eta_0\eta_1})^{-1}
\end{equation}
as in \S\ref{sec5.2.3}. Here $\tilde\lambda_{\eta_i\eta_j} \in {\rm GSt}(\hat\O_{Y, \eta_i\eta_j})$
is a lift of $\lambda_{\eta_i\eta_j}\in \GL(\hat\O_{Y, \eta_i\eta_j})$.
We can easily see by picking a divisor $D$ as in Proposition \ref{propae}, that
$z(\ti \lambda)$ lies in the group 
\begin{equation*}
 {\rK}_{2}^{\prime } ( \mathbb{A}_{Y,012}) \cdot   {\rK}_{2}( \mathbb{A}_{Y,12})^\flat
\cdot  {\rK}_{2}^{\prime } ( \mathbb{A}_{Y,01})^\flat. 
\end{equation*} 
Hence, we can define the second adelic Chern class $c_2(\E)$ to be the class
of $z(\ti \lambda)$ in ${\rm CH}^2_{\Bbb A}(Y)$. We can now see, using the arguments
in the proof of Theorem \ref{thm25}, that $c_2(\E)$ is independent of 
the choices of $e_\eta$ and of the lifts $\ti\lambda_{\eta_i\eta_j}$.
(These arguments become simpler in the case at hand.)

\subsection{A pairing}
The special case that $\E=\L\oplus \M$, where $\L$ and $\M$ are of rank $1$, \emph{i.e.} invertible sheaves, is going to be useful in what follows. To calculate $c_2(\L\oplus \M)$ we pick rational sections $l$ and $m$ of the line bundles $\L$ and $\M$;
these provide bases of $\L_{\eta_0}$ and $\M_{\eta_0}$ and allow us to identify $\L$ and $\M$ with $\O_Y(L)$ and $\O_Y(M)$ where $L$ and $M$ are Cartier divisors
on $Y$. Choose further bases for $\L$ and $\M$ at all other points $\eta$ of $Y$. 
Our choice of bases provide us with transitions
$l_{\eta_i\eta_j}$, $m_{\eta_i\eta_j}$ in $\hat\O_{Y, \eta_i\eta_j}^*$.
This gives
$$
t_{\eta_i\eta_j}=(l_{\eta_i\eta_j}, m_{\eta_i\eta_j})\in \hat\O_{Y, \eta_i\eta_j}^*\times \hat\O_{Y, \eta_i\eta_j}^*
$$
as transitions for $\L\oplus\M$ and, for a Parshin triple $(\eta_0,\eta_1,\eta_2)$,
we can form
$$
z_{\eta_0\eta_1\eta_2}=\tilde t_{\eta_0\eta_2}\cdot (\ti t_{\eta_1\eta_2})^{-1}\cdot (\ti 
t_{\eta_0\eta_1})^{-1}\in \rK_2(\hat\O_{Y, \eta_0\eta_1\eta_2})
$$
using the lifts in the extension $(\ref{c1c1})$; then we have
 \begin{equation}\label{gen5}
z_{\eta_0\eta_1\eta_2}=\{l_{\eta_1\eta_2}, m_{\eta_0\eta_1}\}^{-1}.
\end{equation}
To determine a favorable expression for the representative $z_{\eta_0\eta_1\eta_2}$
we can assume that the supports of $L$ and $M$ do not share any 
irreducible components and that do not contain any irreducible
components of singular fibers of $f: Y\to S$. If $\eta$ is outside the support $|L|$, resp. $|M|$, we choose as $1$ the basis
of $\L=\O_Y(L)$, resp. $\M=\O_Y(M)$, over $\hat\O_{Y, \eta}$; we choose also bases
over $\hat\O_{Y, \eta}$ for all other points $\eta$.
There are several cases to consider:

a) $\eta_1\not\in |L|\cup |M|$ and $\eta_2\not\in |L|\cup |M|$.
Then   obviously $z_{\eta_0\eta_1\eta_2}=1$.

b) $\eta_1\not\in |L|\cup |M|$ but $\eta_2 \in |L|\cup |M|$.
Then $l_{\eta_0\eta_1}=m_{\eta_0\eta_1}=1$ and  
$z_{\eta_0\eta_1\eta_2}=1$.

c) $\eta_1\in |M|$; then $\eta_1\not\in |L|$. Suppose that $\eta_2\not\in |D_1|$.
Then $l_{\eta_i\eta_j}=1$ and $z_{\eta_0\eta_1\eta_2}=1$.

d)  $\eta_1\in |M|$; then $\eta_1\not\in |L|$. Suppose that 
$\eta_2 \in |L|$, then $\eta_2\in |L|\cap |M|$. 
In this case, we have $m_{\eta_0\eta_1}=\varpi_{\eta_1}^{n(\eta_1; M)}$, where
$\varpi_{\eta_1}$ is a uniformizer at $\eta_1$ and $n(\eta_1; M)$ is the multiplicity of $M$ along 
$\eta_1$. We also have $l_{\eta_0\eta_2}=l_{\eta_0\eta_1}l_{\eta_1\eta_2}$
and since $l_{\eta_0\eta_1}=1$,   $l_{\eta_0\eta_2}= l_{\eta_1\eta_2}=t_{L,\eta_2}$
where $t_{L,\eta_2}$ is a local equation for the Cartier divisor $L$ in the (complete)
local ring $\hat\O_{Y, \eta_2}$. We get 
$$
z_{\eta_0\eta_1\eta_2}=\{t_{L,\eta_2}, \varpi_{\eta_1}\}^{-n(\eta_1;M)}.
$$

e) $\eta_1\in |L|$; then $\eta_1\not\in |M|$. Suppose that 
$\eta_2 \in |M|$, then $\eta_2\in |L|\cap |M|$. In this case, we have
$m_{\eta_0\eta_1}=1$ and so again $z_{\eta_0\eta_1\eta_2}=1$.

We see that case (d) is the only one in which we have a non-trivial contribution; this is when 
$\eta_1\in |M|$ and $\eta_2\in |L|\cap |M|$. Since $|L|$ and $|M|$ have no 
common irreducible components, there is only a finite number 
of Parshin triples $(\eta_0,\eta_1, \eta_2)$ where this happens. 
Therefore,  the second Chern class $c_2(\L\oplus\M)$ is represented by
\begin{equation}\label{gen6}
 z(\ti t)_{\eta_0\eta_1\eta_2}=\begin{cases} \{t_{L,\eta_2}, \varpi_{\eta_1}\}^{-n(\eta_1;M)} , 
\ \hbox{\rm if $\eta_1\in |M|$  and  $\eta_2\in |L|\cap |M|$,} \\
\ \   1, \ \ \ \ \ \ \ \ \ \ \ \ \ \ \ \ \ \ \ \ \ \hbox{\rm otherwise}
\end{cases}
\end{equation}
and $z(\ti t)$  belongs to $\rK_2'({\Bbb A}_{Y, 012})$.

Notice that the pairing 
$$
\cap: {\rm Pic}(Y)\times{\rm Pic}(Y)\to {\rm CH}^2_{\Bbb A}(Y)
$$
given by $([\L], [\M])\mapsto [\L]\cap [\M]:=c_2(\L\oplus \M)$ is bilinear and symmetric.
The bilinearity follows from Proposition \ref{compareExt} (1) 
and the bilinearity of the Steinberg symbol. Symmetry follows
from $\L\oplus\M\simeq \M\oplus\L$.

\subsection{The adelic Riemann-Roch theorem}
Assume now that, in addition to the above hypotheses, all the irreducible components of all the fibers
of $f: Y\to S=\Spec(R)$ are smooth and that the residue fields of $R$ at closed points are finite. Then we can also define a push-down homomorphism
$$
f_*:  {\rm CH}^2_{\Bbb A}(Y)\to {\rm CH}^1_{\Bbb A}(S)
$$ 
exactly as in \S \ref{sec4.1}. Recall that $\rK_0(R)\cong \Z\oplus {\rm Pic}(R)$ given by $[P]\to ({\rm rank}(P), \det(P))$;
the usual theory of ideles of $R$ gives a canonical isomorphism ${\rm Pic}(R)\cong {\rm CH}^1_{\Bbb A}(S)$
given by the first Chern class $c_1$.

Suppose that 
$\L$ and $\M$ are invertible sheaves on $Y$; then Deligne \cite[\S 6]{DeligneDet}
associates to the pair $(\L, \M)$ an invertible sheaf $\langle \L, \M\rangle$
on $S$; $(\L, \M)\mapsto  \langle \L, \M\rangle$ is the ``Deligne pairing".

\begin{proposition}\label{DP}
Under the above assumptions, we have
$ 
f_*([\L]\cap [\M])=[\langle \L, \M\rangle]
$  
in ${\rm CH}^1_{\Bbb A}(S)\cong {\rm Pic}(R)$.
\end{proposition}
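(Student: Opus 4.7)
The plan is to establish the identity by biadditive reduction to effective divisors in general position, followed by a direct local computation at each relevant Parshin triple, matching the output against the divisor of the canonical rational section of the Deligne pairing.

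First I would observe that both sides are symmetric biadditive pairings $\Pic(Y)\times\Pic(Y)\to\Pic(R)={\rm CH}^1_{\mathbb A}(S)$: for $\cap$ this is recorded just before the statement of the proposition, following from Proposition \ref{compareExt} and the bilinearity of the Steinberg symbol, and for $\langle-,-\rangle$ it is built into Deligne's construction in \cite{DeligneDet},~\S 6. Biadditivity then reduces the proof to the case $\L=\O_Y(L)$, $\M=\O_Y(M)$ with $L,M$ effective, of disjoint support, sharing no common component, and containing no entire fiber component; in this setup formula (\ref{gen6}) provides an explicit adelic representative of $c_2(\L\oplus\M)$ supported on triples $(\eta_0,\eta_1,\eta_2)$ with $\eta_1\in|M|$ and $\eta_2\in|L|\cap|M|$.

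Next I would push forward the cocycle (\ref{gen6}) via the maps $f_{*\eta_0\eta_1\eta_2}$ of \S\ref{sec4.1}. For horizontal $\bar\eta_1$ with branches $\alpha$ at $\eta_2$, the tame symbol $\tau=\partial^{-1}$ applied to $\{t_{L,\eta_2},\varpi_{\eta_1}\}$ reduces to the residue $t_L\bmod\mathfrak m_\alpha\in\Q_p(\eta_{1,\alpha})^\times$, whose $\Q_p$-norm has $p$-adic valuation $f_\alpha\cdot\nu_\alpha$, with $f_\alpha=[k(\eta_2):\F_p]$ and $\nu_\alpha$ the length of $\O_{k(\eta_{1,\alpha})}/(t_L)$; summing over $\alpha$ recovers $i_{\eta_2}(L,\bar\eta_1)\cdot[k(\eta_2):\F_p]$. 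For vertical $\bar\eta_1$ through $\eta_2$ one can take $\varpi_{\eta_1}=p$ by the smoothness hypothesis; a Weierstrass preparation $t_L=f\cdot U$ in $W(k(\eta_2))\lps s\rps$, combined with the identity $\mathrm{Res}(\{a,s\})=a$, the vanishing of $\mathrm{Res}$ on the image of $\rK_2(F\otimes_R R\lps s\rps)$, and Corollary \ref{correc}, produces $\mathrm{Res}(\{t_L,p\})=p^{-j}$ with $j=i_{\eta_2}(L,\bar\eta_1)$; after the norm to $\Q_p$ and the exponent $-n(\eta_1;M)$ from (\ref{gen6}) are taken into account, this contributes $p^{n(\eta_1;M)\cdot j\cdot[k(\eta_2):\F_p]}$ at $p$. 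Assembling over all components of $M$ through $\eta_2$ and over all $\eta_2\in|L|\cap|M|$ lying over $p$ yields an idele whose $p$-adic valuation equals $\sum_{\eta_2/p} i_{\eta_2}(L,M)\cdot[k(\eta_2):\F_p]$.

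Finally I would match this with the Deligne side: the canonical rational section $\langle l,m\rangle$ of $\langle\L,\M\rangle$ attached to rational sections $l,m$ of $\L,\M$ has divisor $f_*(L\cdot M)$ on $S$ by the defining property of the pairing (\cite{DeligneDet},~\S 6), so $[\langle\L,\M\rangle]\in\Pic(R)$ is represented by exactly the same idele class, completing the proof. The main technical obstacle will be the careful sign bookkeeping: the $(-1)^{v(a)v(b)}$ factor in the tame symbol, the inversion $z=\{l_{12},m_{01}\}^{-1}$ in (\ref{gen5}), the convention $\partial=\tau^{-1}$ of Proposition \ref{tameprop}, and the sign ambiguity in the identification $\Pic(R)\simeq{\rm CH}^1_{\mathbb A}(S)$ flagged in Remark \ref{ChernFrohlich} must all be reconciled and seen to cancel. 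A conceptually cleaner derivation via the Deligne--Riemann--Roch formula applied to $\L\oplus\M$ and its determinant $\L\otimes\M$ would be logically circular at this point, since Proposition \ref{DP} is precisely the adelic input toward the more general Theorem \ref{genARR} of the appendix.
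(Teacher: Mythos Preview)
Your approach is essentially correct and reaches the same conclusion, but it differs from the paper's argument in one key respect that makes the latter considerably shorter.

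The paper reduces by bilinearity to $L$, $M$ \emph{irreducible}, classifies each as either horizontal or a smooth fiber, disposes of the both-vertical case trivially, and then uses \emph{symmetry} of the pairing to assume that $M$ is horizontal. This single move eliminates the vertical case for $\eta_1\in|M|$ entirely: every relevant $\eta_1$ is then the generic point of the horizontal curve $M$, so only the tame symbol is needed, never Kato's residue. The tame symbol $\partial(\{t_{L,\eta_2},\varpi_{\eta_1}\})$ then collapses to $t_{L,\eta_2}\bmod(\varpi_{\eta_1})$, which is literally the local generator of $\L|_M$ at the point of $M$ above $\eta_2$. Its norm to $k(S)_{\xi_1}$ is therefore, by inspection, the transition idele for $\mathrm{Norm}_{M/S}(\L|_M)=\langle\L,\M\rangle$, giving a direct match of ideles rather than a match of their valuations. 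No Weierstrass preparation, no Corollary~\ref{correc}, and much less sign tracking.

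Your proof carries out the vertical-$\eta_1$ case as well, via Kato's residue and Weierstrass preparation, and then compares valuations with the divisor $f_*(L\cdot M)$ of $\langle l,m\rangle$. This is valid but more laborious, and also introduces a small inconsistency: your reduction step asserts that $L,M$ ``contain no entire fiber component'' (which would make the vertical discussion vacuous and the reduction itself non-obvious), whereas the actual constraint needed for (\ref{gen6}) is only that $L,M$ avoid components of \emph{singular} fibers --- automatic under the Appendix hypothesis. If you drop that clause, your argument is fine; but you may as well exploit symmetry instead and skip the vertical computation altogether. What your route does buy is a self-contained computation of $f_*$ on vertical contributions in terms of intersection numbers, which is of some independent interest even if unnecessary here.
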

\begin{proof} 
Using the bilinearity of the Deligne pairing and the above,
we see that it is enough to assume that $\L=\O_Y(L)$, $\M=\O_Y(M)$, where $L$ and $M$ are irreducible with $|L|\neq |M|$, and are either horizontal, 
in which case the morphism to $S$ is finite and flat, or are equal to a smooth special fiber of $Y\to S$. If they are both vertical, we obviously have 
$\langle \L, \M\rangle\simeq \O_S$ and also $[\L]\cap [\M]=0$.
By symmetry, we can assume that $M$ is horizontal.  Then by \cite{DeligneDet}
$$
\langle \L, \M\rangle=\langle \L, \O_Y(M)\rangle\cong {\rm Norm}_{M/S}(\L_{|M}).
$$
To compare this with $f_*([\L]\cap[\M])$ we consider the push-down
$f_*(\{t_{L, \eta_2}, \varpi_{\eta_1}\}^{-1})\in k(S)_{\xi_1}^*$. Here $\xi_1=f(\eta_2)$
is a closed point of $S$ and $k(S)_{\xi_1}=\hat\O_{S,\xi_0\xi_1}$ the completion of the fraction field
$k(S)$ at $\xi_1$. By our construction, since now $ \eta_1 \in |M|$
is horizontal, this is
$$
f_*(\{t_{L, \eta_2}, \varpi_{\eta_1}\}^{-1})={\rm Norm}_{k(\eta_1)_{\eta_2}/k(S)_{\xi_1}}(
\partial  (\{t_{L, \eta_2}, \varpi_{\eta_1}\} ))
$$
where 
$$
\partial (\{t_{L, \eta_2}, \varpi_{\eta_1}\})=(-1)^{1\cdot 0} \frac{t^1_{L, \eta_2}}
{\varpi^0_{\eta_1}} \mod (\varpi_{\eta_1})=t_{L, \eta_2}\mod (\varpi_{\eta_1}).
$$
Since $M$ is locally at $\eta_2$ given by $\varpi_{\eta_1}=0$, 
and $L$ by $t_{L, \eta_2}=0$,
the norm of $t^{-1}_{L, \eta_2}\mod (\varpi_{\eta_1})$ is a generator of 
${\rm Norm}_{M/S}(\L_{|M})$ at the point $\xi_1$ of $S$ and the result
follows.  \end{proof}

\smallskip

Suppose $\E$ is a locally free coherent sheaf of 
$\O_Y$-modules on $Y$ of rank $n$. We denote by $\bar\chi(Y, \E)$ the stable Euler characteristic of $\E$
in ${\rm Pic}(R)\cong{\rm CH}^1_{\Bbb A}(S)$.
We also denote by $\omega_{Y/S}$ the (invertible) dualizing sheaf of 
the morphism $f:Y\to S$.

\begin{theorem}\label{genARR} (Adelic-Riemann-Roch theorem) Under the above assumptions,
\begin{equation}\label{generalARReq}
2\cdot (\bar\chi(Y, \E)-n\cdot \bar\chi(Y,\O_Y))= f_*(\det(\E)\cap \det(\E)-2c_2(\E) +\det(\E)\cap \omega^{-1}_{Y/S})
\end{equation}
in ${\rm Pic}(R)\cong{\rm CH}^1_{\Bbb A}(S)$.
\end{theorem}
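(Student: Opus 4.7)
The plan follows the strategy indicated in the introduction: I would factor the structure morphism as $f = h \circ \pi$, where $\pi: Y \to \PP^1_R$ is a finite flat morphism of some degree $d$ (provided by B.\ Green's theorem) and $h: \PP^1_R \to S$ is the projection. Since $\pi$ is finite flat, $\pi_*\E$ is a locally free coherent $\O_{\PP^1_R}$-module of rank $nd$, $\bar\chi(Y,\E) = \bar\chi(\PP^1_R,\pi_*\E)$, and $f_* = h_* \circ \pi_*$. The goal is then to establish two separate adelic Riemann--Roch identities --- one for the finite flat $\pi$, comparing $c_1(\pi_*\E)^{\cap 2} - 2c_2(\pi_*\E) + c_1(\pi_*\E)\cap\omega^{-1}_{\PP^1_R/S}$ with $\pi_*(c_1(\E)^{\cap 2} - 2c_2(\E) + c_1(\E)\cap\omega^{-1}_{Y/S})$, and one for $h: \PP^1_R \to S$ for general locally free sheaves --- and to combine them to produce \eqref{generalARReq}.

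For the identity for $\pi$, I would imitate \S 8. Choose local bases $\{e_{\eta'_i}^h\}$ for $\E$ at each point of $Y$ and a local basis $\{a_{i,n}\}$ for $\pi_*\O_Y$ as a free $\hat\O_{\PP^1_R,\eta_i}$-module, and combine them into bases $\{a_{i,n} e_{\eta'_i}^h\}$ for $\pi_*\E$ over $\PP^1_R$; the resulting transition matrices factor as $\lambda^\pi_{ij} = \Lambda_{ij}\cdot r(\mu_{ij})$ exactly as in \eqref{eq:fourteen}. To obtain a usable cocycle for $c_2(\pi_*\E)$, I would lift these products into the generalized central extension ${\rm GSt}$ of \S\ref{morecentral}, which applies here because all the relevant commutative multi-completions have trivial $\rSK_1$ by Corollary~\ref{cor:Blochcor}, and split the cocycle via Proposition~\ref{compareExt}(2) into a Steinberg-symbol (line-bundle) contribution and a genuine $\rK_2$-contribution. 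The latter matches $\pi_*c_2(\E)$ after pushdown; the former, combined with the classical decomposition $\det(\pi_*\E) \cong \mathrm{Nm}_\pi(\det\E)\otimes(\det\pi_*\O_Y)^{\otimes n}$ and the relations $\omega_{Y/S} = \pi^*\omega_{\PP^1_R/S}\otimes\omega_\pi$ and $(\det\pi_*\O_Y)^{\otimes -2}\cong \mathrm{Nm}_\pi(\omega_\pi)$, collapses under $h_*$ to exactly the $\omega$-correction, via Proposition~\ref{DP} and the bilinearity of the Deligne pairing.

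For the case $Y = \PP^1_R$, I would follow \S 9. By Horrocks's theorem (Theorem~\ref{thm:Horrocks}), any locally free $\F$ on $\PP^1_R$ is stably of the form $\E(R^m, g)$ for some $g \in \GL_m(R[t,t^{-1}])$. The left-hand side of \eqref{generalARReq}, namely $2\bar\chi(\PP^1_R,\F) - 2m\bar\chi(\PP^1_R,\O)$, can be expressed as a $\rK_1$-idele built from local trivializations of the determinant of cohomology exactly as in the derivation of \eqref{FroRR}, now using the extension ${\rm GSt}$ instead of $\Hh$, since $\det\F$ need no longer be trivial. For the right-hand side, the cocycle for $c_2(\F)$ is computed at the triples $(0,1_p,2_p)$ as in \S 9, while the extra contributions $c_1(\F)\cap c_1(\F)$ and $c_1(\F)\cap\omega^{-1}_{\PP^1_R/S}$ (using $\omega_{\PP^1_R/S}\cong \O(-2)$) are read off as Deligne pairings by Proposition~\ref{DP}. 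The two expressions then match by the tame symbol (Proposition~\ref{tameprop}), Proposition~\ref{compareExt}(1), and direct cocycle manipulation; the line-bundle case of the identity is of course already covered by the combination of Proposition~\ref{DP} with Deligne's Riemann--Roch theorem for relative curves.

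The main obstacle will be the identity for the finite flat $\pi$. Without any elementary-structure hypothesis, the universal property of the Steinberg extension exploited throughout \S 6--\S 9 is unavailable, which is precisely why the enlarged central extensions of Section~\ref{morecentral} are indispensable; in particular Proposition~\ref{compareExt}(2), which cleanly disentangles the Steinberg-symbol (line-bundle) contributions from the genuine $\rK_2$ part of a ${\rm GSt}$-cocycle, is the crucial technical input. The delicate bookkeeping is to check that, after pushdown by $h_*$, the accumulated Steinberg-symbol corrections coming from the $\Lambda_{ij}$'s and from $\det(\pi_*\O_Y)$ collapse precisely into the expected Deligne pairing involving $\omega_\pi$, using the classical norm/duality identity $(\det\pi_*\O_Y)^{\otimes -2}\cong\mathrm{Nm}_\pi(\omega_\pi)$. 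Once this is established, the rest of the argument is a routine adaptation of the cocycle calculations of \S 8--\S 9.
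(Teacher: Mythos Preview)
Your plan is workable in principle but takes a substantially harder route than the paper. The paper's proof avoids redoing the \S 8--\S 9 machinery for general bundles altogether, by a simple reduction trick: set $\DD=\det(\E)$ and $\E_0=\DD^{-1}\oplus\E$. Then $\det(\E_0)$ is trivial, so $\E_0$ has an elementary structure and the main theorem of the body of the paper (in the form of equation (\ref{gen9})) applies directly to $\E_0$, giving $\bar\chi(\E_0)-(n+1)\bar\chi(\O_Y)=-f_*c_2(\E_0)$. Proposition~\ref{compareExt} computes $c_2(\E_0)=c_2(\E)-\DD\cap\DD$, and subtracting the rank-one identity for $\DD^{-1}$ (which is Deligne's Riemann--Roch via Proposition~\ref{DP}) yields the general formula in two lines. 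In other words, the general case is a formal consequence of the trivial-determinant case already established plus the line-bundle case, with the GSt extensions used only to define $c_2$ and to check the additivity $c_2(\DD^{-1}\oplus\E)=c_2(\E)+c_2(\DD^{-1}\oplus\DD)$.

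By contrast, your approach would require reproving analogues of Proposition~\ref{prop:state} and Theorem~\ref{proHorr} for arbitrary bundles using GSt-lifts rather than Steinberg lifts. This is not impossible, but there is a genuine issue in your sketch for $\PP^1_R$: you propose to compute the Euler characteristic idele ``using the extension ${\rm GSt}$ instead of $\Hh$''. The extension $\Hh$ is what ties cocycles to determinants of cohomology (via the determinant theory $\delta$), and this is precisely how Euler characteristics enter in (\ref{FroRR}). The extension ${\rm GSt}$ has no such interpretation a priori; you would need to construct a compatible map ${\rm GSt}\to\Hh$ extending $\partial$ from ${\rm St}$, which amounts to reproving Proposition~\ref{tameprop} in an extended setting. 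None of this is needed with the paper's reduction.
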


\begin{proof} To ease notation, we will write $\bar\chi(\E)$
instead of $\bar\chi(Y, \E)$, etc. Notice that by our definitions, if $\E=\LL$ has rank $1$, then 
$c_2(\E)=0$ since the extension (\ref{c2}) splits over $A^*$. In this case,
(\ref{generalARReq}) amounts to
\begin{equation}\label{genDRR}
2\cdot (\bar\chi( \LL)-  \bar\chi( \O_Y))= f_*(   \LL\cap\LL +\LL\cap \omega^{-1}_{Y/S} ).
\end{equation}
In view of Proposition \ref{DP}, this follows
directly from one of the forms of Deligne's Riemann-Roch theorem \cite[(7.5.1)]{DeligneDet}. 
Another special case is when $\det(\E)$ is trivial; then $\E$ has an elementary structure 
 (\ref{adestable} (d)) and we can argue as in the body of the paper. 
For clarity, we explain the steps of the argument: 
We choose a finite flat morphism $\pi: Y\to {\Bbb P}^1_S$, set $\V=\pi_*\O_Y^n$
and denote by $\V^\vee $ the dual of $\V$. Denote by $h: {\Bbb P}^1_S\to S$ the structure morphism so that $f=h\cdot \pi$. We have
$$
\bar\chi(Y, \E-\O_Y^n)=\bar\chi({\Bbb P}^1_S, \pi_*\E-\V)
$$
Then the vector bundles $\pi_*\E\oplus\V^\vee$ and $\V\oplus \V^\vee$ 
on ${\Bbb P}^1_S$ have elementary structure (cf. Proposition \ref{prop:basec} (ii)). The adelic Riemann-Roch theorem for ${\Bbb P}^1_S$ (Theorem 
\ref{proHorr} for $G=\{1\}$ extended in the obvious manner to the base $R$) gives
$$
\bar\chi({\Bbb P}^1_S, \pi_*\E\oplus \V^\vee )=-h_* c_2(\pi_*\E\oplus\V^\vee),\quad 
\bar\chi({\Bbb P}^1_S, \V\oplus \V^\vee )=-h_* c_2(\V\oplus\V^\vee).
$$
Subtracting these gives
$$
\bar\chi(Y, \E-\O_Y^n)=\bar\chi({\Bbb P}^1_S, \pi_*\E-\V )=-h_*(c_2(\pi_*\E\oplus\V^\vee)-c_2(\V\oplus\V^\vee)).
$$
By  Proposition \ref{prop:state} for $G=\{1\}$,
$$
c_2(\pi_*\E\oplus\V^\vee)-c_2(\V\oplus\V^\vee)=\pi_*c_2(\E).
$$ 
Hence, the right hand side above is equal to 
$-h_*\pi_*c_2(\E)$ and the equation
\begin{equation}\label{gen9}
\bar\chi( \E)-n\cdot \bar\chi( \O_Y)=-f_*(c_2(\E))
\end{equation}
then follows from 
Lemma \ref{lem:pushpat}.

In general, 
for simplicity, set $\DD=\det(\E) $ and $\E_0= \DD^{-1}\oplus \E$. 
 The vector bundle
$\E_0$ has trivial determinant, hence, as above, we have
\begin{equation*}\label{gen1}
\bar\chi( \E_0)-(n+1)\cdot \bar\chi( \O_Y)=-f_*(c_2(\E_0)).
\end{equation*}
Proposition \ref{compareExt}   together with the definition of the
second Chern class implies
\begin{equation*}\label{gen2}
c_2(\E_0)=c_2(\E)+c_2(\DD^{-1}\oplus \DD)=c_2(\E)-\DD\cap\DD
\end{equation*}
in ${\rm CH}^2_{\Bbb A}(Y)$. 
Since $\bar\chi( \E_0)=\bar\chi( \E)+\bar\chi( \DD^{-1})$ these combine to
give us the identity
\begin{equation*}\label{gen3}
(\bar\chi( \E)-n\cdot \bar\chi( \O_Y))+(\bar\chi(\DD^{-1})-\bar\chi(\O_Y))=-f_* (c_2(\E)) +f_*(\DD\cap\DD).
\end{equation*}
We can easily see that the result  follows by combining the above with the identity
\begin{equation*}\label{gen4}
2\cdot (\bar\chi(\DD^{-1})-\bar\chi(\O_Y))=f_*(\DD^{-1}\cap\DD^{-1}+ \DD^{-1}\cap \omega^{-1}_{Y/S}))
\end{equation*}
which is (\ref{genDRR}) for $\LL=\DD^{-1}$.  
\end{proof}

\smallskip

 \bibliographystyle{plain}
 
\bibliography{Parshin1}

\end{document}